\newtheorem{thm}{Theorem}[section]
\newtheorem{lemma}[thm]{Lemma}
\newtheorem{prop}[thm]{Proposition}
\newtheorem{cor}[thm]{Corollary}
\newtheorem{thmx}{Theorem}
\newtheorem{propx}[thmx]{Proposition}
\theoremstyle{definition}
\newtheorem{rmk}[thm]{Remark}
\newtheorem{conj}[thm]{Conjecture}
\def\bysame{\leavevmode\hbox to3em{\hrulefill}\thinspace}
\newcommand{\ga}{\gamma}
\newcommand{\ep}{\epsilon}
\newcommand{\ve}{\varepsilon}
\newcommand{\vp}{\varphi}
\newcommand{\vr}{\varrho}
\newcommand{\vt}{\vartheta}
\newcommand{\pa}{\partial}
\newcommand{\ls}{{\lambda,\sigma}}
\newcommand{\bg}{\bar{g}}
\newcommand{\bx}{\bar{x}}
\newcommand{\by}{\bar{y}}
\newcommand{\hh}{\hat{h}}
\newcommand{\hw}{\hat{w}}
\newcommand{\ov}{\overline{V}}
\newcommand{\ox}{\overline{X}}
\newcommand{\tal}{\tilde{\alpha}}
\newcommand{\trho}{\tilde{\rho}}
\newcommand{\wtu}{\widetilde{U}}
\newcommand{\wtv}{\widetilde{V}}
\newcommand{\wPhi}{\widetilde{\Phi}}
\newcommand{\wPsi}{\widetilde{\Psi}}
\newcommand{\whw}{\widehat{W}}
\newcommand{\xr}{(X;\rho^{1-2\ga})}
\newcommand{\mb}{\mathbb{B}}
\newcommand{\mn}{\mathbb{N}}
\newcommand{\mr}{\mathbb{R}}
\newcommand{\ms}{\mathbb{S}}
\newcommand{\mcc}{\mathcal{C}}
\newcommand{\mcd}{\mathcal{D}}
\newcommand{\mcf}{\mathcal{F}}
\newcommand{\mcn}{\mathcal{N}}
\newcommand{\mcr}{\mathcal{R}}
\newcommand{\mcw}{\mathcal{W}}
\newcommand{\la}{\left\langle}
\newcommand{\ra}{\right\rangle}
\renewcommand{\(}{\left(}
\renewcommand{\)}{\right)}
\begin{document}
\title{Existence theorems of the fractional Yamabe problem}
\author{Seunghyeok Kim, Monica Musso, Juncheng Wei}

\newcommand{\Addresses}{{\bigskip \footnotesize

\medskip
\noindent (Seunghyeok Kim) \textsc{Facultad de Matem\'{a}ticas, Pontificia Universidad Cat\'{o}lica de Chile, Avenida Vicu\~{n}a Mackenna 4860, Santiago, Chile}\par\nopagebreak
\noindent \textit{E-mail address}: \texttt{shkim0401@gmail.com}

\medskip
\noindent (Monica Musso) \textsc{Facultad de Matem\'{a}ticas, Pontificia Universidad Cat\'{o}lica de Chile, Avenida Vicu\~{n}a Mackenna 4860, Santiago, Chile}\par\nopagebreak
\noindent \textit{E-mail address}: \texttt{mmusso@mat.puc.cl}

\medskip
\noindent (Juncheng Wei) \textsc{Department of Mathematics, University of British Columbia, Vancouver, B.C., Canada, V6T 1Z2}\par\nopagebreak
\noindent \textit{E-mail address}: \texttt{jcwei@math.ubc.ca}}}

\date{\today}
\maketitle

\begin{abstract}
Let $X$ be an asymptotically hyperbolic manifold and $M$ its conformal infinity.
This paper is devoted to deduce several existence results of the fractional Yamabe problem on $M$ under various geometric assumptions on $X$ and $M$:
Firstly, we handle when the boundary $M$ has a point at which the mean curvature is negative.
Secondly, we re-encounter the case when $M$ has zero mean curvature and is either non-umbilic or umbilic but non-locally conformally flat.
As a result, we replace the geometric restrictions given by Gonz\'alez-Qing (2013) \cite{GQ} and Gonz\'alez-Wang (2015) \cite{GW} with simpler ones.
Also, inspired by Marques (2007) \cite{Ma2} and Almaraz (2010) \cite{Al}, we study lower-dimensional manifolds.
Finally, the situation when $X$ is Poincar\'e-Einstein, $M$ is either locally conformally flat or 2-dimensional is covered
under the validity of the positive mass theorem for the fractional conformal Laplacians.
\end{abstract}

{\footnotesize \textit{2010 Mathematics Subject Classification.} Primary:  53C21, Secondary: 35R11, 53A30.}

{\footnotesize \textit{Key words and Phrases.} Fractional Yamabe problem, Conformal geometry, Existence.}

\allowdisplaybreaks
\numberwithin{equation}{section}

\section{Introduction and the Main Results}
Given $n \in \mn$, let $X^{n+1}$ be an $(n+1)$-dimensional smooth manifold with smooth boundary $M^n$.
A function $\rho$ in $X$ is called a {\it defining function} of the boundary $M$ in $X$ if
$$
\rho >0 \quad \text{in } X, \quad \rho = 0 \quad \text{on } M \quad \text{and} \quad d\rho \not= 0 \quad \text{on } M.
$$
A metric $g^+$ in $X$ is {\it conformally compact} with {\it conformal infinity} $(M, [\hh])$ if there exists a boundary defining function $\rho$
so that the closure $(\ox, \bg)$ of $X$ is compact for $\bg := \rho^2 g^+$ and $\bg|_{M} \in [\hh]$.
A manifold $(X^{n+1},g^+)$ is said to be {\it asymptotically hyperbolic} if $g^+$ is conformally compact and $|d\rho|_{\bg} \to 1$ as $\rho \to 0$.
Also if $(X, g^+)$ is asymptotically hyperbolic and Einstein, then it is called {\it Poincar\'e-Einstein} or {\it conformally compact Einstein}.

Suppose that an asymptotically hyperbolic manifold $(X, g^+)$ with the conformal infinity $(M^n, [\hh])$ is given.
Also, for any $\ga \in (0,1)$, let $P^{\ga}_{\hh} = P^{\ga}[g^+,\hh]$ be the {\it fractional conformal Laplacian} whose principle symbol is equal to $(-\Delta_{\hh})^{\ga}$ (see \cite{GZ} for its precise definition).
In this article, we are interested in finding a conformal metric $\hh$ on $M$ with constant {\it fractional scalar curvature} $Q^{\ga}_{\hh} = P^{\ga}_{\hh}(1)$.
This problem is referred to be the {\it fractional Yamabe problem} or the {\it $\ga$-Yamabe problem}, and it was introduced and investigated by Gonz\'alez-Qing \cite{GQ} and Gonz\'alez-Wang \cite{GW}.
By imposing some restrictions on the dimension and geometric behavior of the manifold, the authors obtained the existence results when $M$ is non-umbilic or it is umbilic but not locally conformally flat.
Here we relieve the hypotheses made in \cite{GQ, GW} and examine when the bubble (see \eqref{eq_bubble} below for its precise definition) cannot be used as an appropriate test function.

\medskip
As its name alludes, the fractional conformal Laplacian $P^{\ga}_{\hh}$ has the {\it conformal covariance property}: It holds that
\begin{equation}\label{eq_conf_cov}
P^{\ga}_{\hh_w}(u) = w^{-{n+2\ga \over n-2\ga}} P^{\ga}_{\hh}(wu)
\end{equation}
for a conformal change of the metric $\hh_w = w^{4/(n-2\ga)} \hh$.
Hence the fractional Yamabe problem can be formulated as looking for a positive solution of the nonlocal equation
\begin{equation}\label{eq_yamabe}
P^{\ga}_{\hh} u = c u^{n+2\ga \over n-2\ga} \quad \text{on } M
\end{equation}
for some $c \in \mr$ provided $n > 2\ga$.
On the other hand, if $\ga = 1$, $P^{\ga}_{\hh}$ and $Q^{\ga}_{\hh}$ precisely match with the classical {\it conformal Laplacian} $L_{\hh}$ and a constant multiple of the scalar curvature $R[\hh]$ on $(M,\hh)$
\begin{equation}\label{eq_conf_L}
P^1_{\hh} = L_{\hh} := - \Delta_{\hh} + {n-2 \over 4(n-1)} R[\hh] \quad \text{and} \quad Q^1_{\hh} = {n-2 \over 4(n-1)} R[\hh],
\end{equation}
respectively. If $\ga = 2$, they coincide with the Paneitz operator \cite{Pa} and Branson's $Q$-curvature \cite{B}.
Hence the $1$ or $2$-Yamabe problems are reduced to the classical Yamabe problem and the $Q$-curvature problem.

Thanks to the efforts of various mathematicians, a complete solution of the Yamabe problem has been known.
After Yamabe \cite{Ya} raised the problem and suggested an outline of the proof,
Trudinger \cite{Tr} first obtained a least energy solution to \eqref{eq_yamabe} under the setting that the scalar curvature of $(M, \hh)$ is nonpositive.
Successively, Aubin \cite{Au} examined the case when $n \ge 6$ and $M$ is non-locally conformally flat,
and Schoen \cite{Sc} gave an affirmative answer when $n = 3, 4, 5$ or $M$ is locally conformally flat by using the positive mass theorem \cite{SY1, SY2, SY4}.
In Lee-Parker \cite{LP}, the authors provided a new proof which unified the local proof of Aubin and the global proof of Scheon, introducing the notion of the conformal normal coordinates.

Also there have been lots of results on the $Q$-curvature problem $(\ga = 2)$ for 4-dimensional manifolds $(M^4,[\hh])$.
By the Chern-Gauss-Bonnet formula, the {\it total $Q$-curvature}
\[k_P := \int_{M^4} Q^2_{\hh}\, dv_{\hh}\]
is a conformal invariant.
Gursky \cite{Gu} proved that if a manifold $M^4$ has the positive Yamabe constant $\Lambda^1(M,[\hh]) > 0$ (see \eqref{eq_yamabe_f}) and satisfies $k_P \ge 0$,
then its Paneitz operator $P^2_{\hh}$ has the properties
\begin{equation}\label{eq_Q_con}
\text{ker}\, P^2_{\hh} = \mr \quad \text{and} \quad P^2_{\hh} \ge 0.
\end{equation}
Also Chang-Yang \cite{CY} proved that any compact $4$-manifold such that \eqref{eq_Q_con} and $k_P < 8 \pi^2$ hold has a solution to
\[P^2_{\hh} u + 2Q^2_{\hh} u = 2c e^{4u} \quad \text{on } M, \quad c \in \mr\]
where $Q^2_{\hh}$ is the $Q$-curvature.
This result was generalized by Djadli-Malchiodi \cite{DM} where only $\text{ker}\, P^2_{\hh} = \mr$ and $k_P \ne 8m \pi^2$ for all $m \in \mn$ are demanded.
For other dimensions than 4, Gursky-Malchiodi \cite{GM} recently discovered the strong maximum principle of $P^2_{\hh}$ for manifolds $M^n$ ($n \ge 5$) with non-negative scalar curvature and semi-positive $Q$-curvature.
Motivated by this result, Hang and Yang developed the existence theory of \eqref{eq_yamabe} for a general class of manifolds $M^n$ including ones
such that $\Lambda^1(M,[\hh]) > 0$ and there exists $\hh' \in [\hh]$ with $Q^2_{\hh'} > 0$, provided $n \ge 5$ \cite{HY2, HY4} or $n = 3$ \cite{HY, HY2, HY3}.
In \cite{HY4}, the positive mass theorem for the Paneitz operator \cite{HR, GM} was used to construct a test function.
We also point out that a solution to \eqref{eq_yamabe} was obtained in \cite{QR} for a locally conformally flat manifold $(n \ge 5)$ with positive Yamabe constant and Poincar\'e exponent less than $(n-4)/2$.

In addition, when $\ga = 1/2$, the fractional Yamabe problem has a deep relationship with the boundary Yamabe problem proposed by Escobar \cite{Es}, who regarded it as a generalization of the Riemann mapping theorem:
It asks if a compact manifold $\ox$ with boundary is conformally equivalent to one of zero scalar curvature whose boundary $M$ has constant mean curvature.
It was solved by the series of works by Escobar himself \cite{Es, Es3}, Marques \cite{Ma, Ma2} and Almaraz \cite{Al}.
It is worthwhile to mention that there is another type of boundary Yamabe problem also suggested by Escobar \cite{Es2}:
Find a conformal metric such that the scalar curvature of $X$ is constant and the boundary $M$ is minimal.
It was further studied by Brendle-Chen \cite{BC}.

\medskip
In \cite{CG} (see also \cite{CC}), Chang and Gonz\'alez observed that the fractional conformal Laplacian,
defined through the scattering theory in Graham-Zworski \cite{GZ}, can be described in terms of Dirichlet-Neumann operators.
Especially, \eqref{eq_yamabe} has an equivalent extension problem, which is degenerate elliptic but local.
\begin{thmx}\label{thm_ext}
Suppose that $n > 2\ga$, $\ga \in (0,1)$, $(X, g^+)$ is an asymptotically hyperbolic manifold with conformal infinity $(M, [\hh])$.
Assume also that $\rho$ is a defining function associated to $M$ such that $|d \rho|_{\bg} = 1$ near $M$ (such $\rho$ is called geodesic),
and $\bg = \rho^2 g^+$ is a metric of the compact manifold $\ox$.
In addition, we let the mean curvature $H$ on $(M, \hh) \subset (\ox, \bg)$ be 0 if $\ga \in (1/2,1)$, and set
\begin{equation}\label{eq_E}
E(\rho) = \rho^{-1-s} (-\Delta_{g^+}-s(n-s))\rho^{n-s} \quad \text{in } X
\end{equation}
where $s := n/2 + \ga$. It can be shown that  \eqref{eq_E} is reduced to
\begin{equation} \label{eq_E1}
E(\rho)= \({n-2\ga \over 4n}\) \left[ R[\bg] - (n(n+1)+R[g^+]) \rho^{-2} \right] \rho^{1-2\ga} \quad \text{near } M
\end{equation}
where $R[\bg]$ and $R[g^+]$ are the scalar curvature of $(\ox, \bg)$ and $(X, g^+)$, respectively.

\medskip \noindent $(1)$ If a positive function $U$ satisfies
\begin{equation}\label{eq_yamabe_e_1}
\begin{cases}
-\textnormal{div}_{\bg}\(\rho^{1-2\ga}\nabla U\) + E(\rho)U = 0 &\text{in } (X, \bg),\\
U = u &\text{on } M
\end{cases} \end{equation}
and
\begin{equation}\label{eq_yamabe_e_2}
\pa^{\ga}_{\nu} U := - \kappa_{\ga} \(\lim_{\rho \to 0+} \rho^{1-2\ga}{\pa U \over \pa \rho}\) = \begin{cases}
cu^{n+2\ga \over n-2\ga} &\text{for } \ga \in (0,1) \setminus \{1/2\},\\
cu^{n+2\ga \over n-2\ga} - \(\dfrac{n-1}{2}\)Hu &\text{for } \ga = \{1/2\}
\end{cases} \end{equation}
on $M$, then $u$ solves \eqref{eq_yamabe}.
Here $\kappa_{\ga} > 0$ is the constant whose explicit value is given in \eqref{eq_const} below and $\nu$ stands for the outward unit normal vector with respect to the boundary $M$.

\medskip \noindent $(2)$ Assume further that the first $L^2$-eigenvalue $\lambda_1(-\Delta_{g^+})$ of the Laplace-Beltrami operator $-\Delta_{g^+}$ satisfies
\begin{equation}\label{eq_eig}
\lambda_1(-\Delta_{g^+}) > {(n-1)^2 \over 4} - \ga^2.
\end{equation}
Then there is a special defining function $\rho^*$ such that $E(\rho^*) = 0$ in $X$ and $\rho^*(\rho) = \rho\, (1 + O(\rho^{2\ga}))$ near $M$.
Furthermore the function $\wtu := (\rho/\rho^*)^{(n-2\ga)/2}U$ solves a degenerate elliptic equation of pure divergent form
\[\begin{cases}
-\textnormal{div}_{\bg^*}\(\(\rho^*\)^{1-2\ga}\nabla \wtu\) = 0 &\text{in } (X, \bg^*),\\
\pa^{\ga}_{\nu} \wtu = - \kappa_{\ga} \(\lim\limits_{\rho^* \to 0+} (\rho^*)^{1-2\ga}\dfrac{\pa \wtu}{\pa \rho^*}\) = P^{\ga}_{\hh} u - Q^{\ga}_{\hh} u = cu^{n+2\ga \over n-2\ga} - Q^{\ga}_{\hh} u &\text{on } M
\end{cases}\]
where $\bg^* := (\rho^*)^2 g^+$ and $Q^{\ga}_{\hh}$ is the fractional scalar curvature.
\end{thmx}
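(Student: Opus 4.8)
The plan is to derive Theorem~\ref{thm_ext} by combining the scattering description of $P^\ga_\hh$ of Graham--Zworski \cite{GZ} with the degenerate-elliptic reformulation of Chang--Gonz\'alez \cite{CG, CC}; beyond those inputs, only conformal-change identities on the $(n+1)$-dimensional manifold $(\ox,\bg)$ are needed. Write $s=n/2+\ga$, so that $n-s=(n-2\ga)/2$ and $s(n-s)=n^2/4-\ga^2$. Given $u>0$ on $M$, let $V$ solve the eigenvalue equation $-\Delta_{g^+}V=s(n-s)V$ in $X$ with the Graham--Zworski expansion $V=\rho^{n-s}(u+o(1))+\rho^{s}(\Psi+o(1))$ near $M$; its existence and uniqueness hold because $s>n/2$ and $s(n-s)$ is non-resonant, and by the very definition of the scattering operator one has $P^\ga_\hh u=d_\ga\Psi$ for the explicit constant $d_\ga=2^{2\ga}\Gamma(\ga)/\Gamma(-\ga)<0$, while $Q^\ga_\hh$ is read off the same way from the choice $u\equiv 1$.

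For part~(1) I would set $U:=\rho^{s-n}V$, so that $U|_M=u$, and expand $(-\Delta_{g^+}-s(n-s))(\rho^{n-s}U)=0$ with the conformal identity $\Delta_{g^+}\phi=\rho^2\Delta_{\bg}\phi-(n-1)\rho\la\nabla\rho,\nabla\phi\ra_{\bg}$, using $|\nabla\rho|_{\bg}=1$ near $M$; invoking $(-\Delta_{g^+}-s(n-s))\rho^{n-s}=\rho^{1+s}E(\rho)$ from \eqref{eq_E} and dividing by $\rho^{\,n-s+1+2\ga}$ collapses the identity to \eqref{eq_yamabe_e_1}. The reduction \eqref{eq_E1} then comes out by computing $\Delta_{g^+}\rho^{n-s}$ from $|\nabla\rho|^2_{g^+}=\rho^2$ and $\Delta_{g^+}\rho=\rho^2\Delta_{\bg}\rho-(n-1)\rho$, and eliminating $\Delta_{\bg}\rho$ with the scalar-curvature conformal law $R[\bg]=\rho^{-2}(R[g^+]+n(n+1))-2n\rho^{-1}\Delta_{\bg}\rho$. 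Finally the expansion of $V$ gives $U=u+\Psi\rho^{2\ga}(1+o(1))$, hence $\lim_{\rho\to0}\rho^{1-2\ga}\pa_\rho U=2\ga\Psi$, so taking $\kappa_\ga:=-d_\ga/(2\ga)>0$ yields $\pa^\ga_\nu U=P^\ga_\hh u$ for $\ga\neq1/2$, and \eqref{eq_yamabe_e_2} becomes exactly \eqref{eq_yamabe}. For $\ga=1/2$ (so $s=(n+1)/2$ and the weight is trivial) the Graham--Zworski expansion contributes an extra mean-curvature term and $P^{1/2}_\hh u=\pa^{1/2}_\nu U+\tfrac{n-1}{2}Hu$, so \eqref{eq_yamabe_e_2} again reduces to \eqref{eq_yamabe}; this also explains why $H\equiv0$ is imposed for $\ga\in(1/2,1)$, namely so that $R[g^+]+n(n+1)=O(\rho^2)$ and $E(\rho)$ stays comparable to the weight $\rho^{1-2\ga}$, whereas for $\ga=1/2$ that curvature term is instead carried by the boundary condition.

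For part~(2), under \eqref{eq_eig} I would quote \cite{GQ} (which rests on the resolvent/scattering construction of \cite{GZ}) for the existence of a \emph{positive} solution $w$ of $(-\Delta_{g^+}-s(n-s))w=0$ with $w=\rho^{n-s}(1+o(1))$; its expansion $w=\rho^{n-s}(1+O(\rho^{2\ga}))+\rho^{s}(d_\ga^{-1}Q^\ga_\hh+o(1))$ shows that $\rho^*:=w^{1/(n-s)}$ is a defining function with $\rho^*(\rho)=\rho(1+O(\rho^{2\ga}))$, and $E(\rho^*)=(\rho^*)^{-1-s}(-\Delta_{g^+}-s(n-s))w=0$ by construction. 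Since $(n-2\ga)/2=n-s$ and $U=\rho^{s-n}V$, the rescaled function is $\wtu=(\rho/\rho^*)^{(n-2\ga)/2}U=(\rho^*)^{s-n}V=V/w$, a quotient of two solutions of the same eigenvalue equation; the elementary identity $\textnormal{div}_{g^+}(w^2\nabla(V/w))=w\Delta_{g^+}V-V\Delta_{g^+}w=0$ gives $\textnormal{div}_{g^+}(w^2\nabla_{g^+}\wtu)=0$, and transcribing this in $\bg^*=(\rho^*)^2g^+$, using $w^2=(\rho^*)^{2(n-s)}$, $dv_{g^+}=(\rho^*)^{-(n+1)}dv_{\bg^*}$ and $\la\cdot,\cdot\ra_{g^+}=(\rho^*)^2\la\cdot,\cdot\ra_{\bg^*}$, produces the weight $(\rho^*)^{2(n-s)+2-(n+1)}=(\rho^*)^{1-2\ga}$, i.e. $-\textnormal{div}_{\bg^*}((\rho^*)^{1-2\ga}\nabla\wtu)=0$. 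Dividing the expansions of $V$ and $w$ gives $\wtu=u+d_\ga^{-1}(P^\ga_\hh u-Q^\ga_\hh u)\rho^{2\ga}(1+o(1))$, so $\pa^\ga_\nu\wtu=-\kappa_\ga\cdot2\ga\,d_\ga^{-1}(P^\ga_\hh u-Q^\ga_\hh u)=P^\ga_\hh u-Q^\ga_\hh u$, which by \eqref{eq_yamabe_e_2} equals $cu^{(n+2\ga)/(n-2\ga)}-Q^\ga_\hh u$.

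The main obstacle lies not in these manipulations but in the two analytic inputs being quoted: the existence, uniqueness and precise polyhomogeneous expansion of the Poisson solution $V$, together with the identification of its $\rho^s$-coefficient with $d_\ga^{-1}P^\ga_\hh u$, which is the substance of \cite{GZ}; and the positivity of the special defining function $\rho^*$, which is exactly where the spectral hypothesis \eqref{eq_eig} enters and cannot be removed, this being \cite{GQ}. The remaining technical point is to set up, in the spirit of Caffarelli--Silvestre, the weighted Sobolev and trace theory that makes $\pa^\ga_\nu$ a well-defined bounded operator for the degenerate equation and legitimises reading the Neumann data off the expansions above.
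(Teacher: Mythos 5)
Your proposal is correct and follows exactly the route on which the paper itself relies: Theorem~\ref{thm_ext} is stated there without proof, as a quoted result of Chang--Gonz\'alez \cite{CG} (see also \cite{CC}) built on the Graham--Zworski scattering construction \cite{GZ}, with the special defining function and positivity input taken from \cite{GQ}. Your reconstruction --- the substitution $U=\rho^{s-n}V$, the derivation of \eqref{eq_E1} from the conformal change of the Laplacian and of the scalar curvature, the constant identification $\kappa_{\ga}=-d_{\ga}/(2\ga)$ matching \eqref{eq_const}, and the quotient trick $\wtu=V/w$ with $\rho^*=w^{1/(n-s)}$ producing the pure-divergence equation with weight $(\rho^*)^{1-2\ga}$ --- is an accurate account of that cited argument.
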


Notice that in order to seek a solution of \eqref{eq_yamabe}, it is natural to introduce the {\it $\ga$-Yamabe functional}
\begin{equation}\label{eq_yamabe_f}
I_{\hh}^{\ga}[u] = {\int_M u P^{\ga}_{\hh} u \ dv_{\hh} \over (\int_M u^{2n \over n-2\ga} dv_{\hh})^{n-2\ga \over n}} \quad \text{for } u \in C_c^{\infty}(M),\ u > 0 \text{ on } M
\end{equation}
and its infimum $\Lambda^{\ga}(M,[\hh])$, called the {\it $\ga$-Yamabe constant}.
By the previous theorem and the energy inequality due to Case \cite[Theorem 1.1]{Ca}, it follows under the assumption \eqref{eq_eig} that if one defines the functionals
\begin{align}
\overline{I}_{\hh}^{\ga}[U] &= {\kappa_{\ga} \int_X (\rho^{1-2\ga}|\nabla U|_{\bg}^2 + E(\rho)U^2)\, dv_{\bg} \over (\int_M |U|^{2n \over n-2\ga} dv_{\hh})^{n-2\ga \over n}}, \label{eq_yamabe_q}\\
\widetilde{I}_{\hh}^{\ga}[U] &= {\kappa_{\ga} \int_X (\rho^*)^{1-2\ga}|\nabla U|_{\bg}^2 dv_{\bg} + \int_M Q^{\ga}_{\hh} U^2 dv_{\hh} \over (\int_M |U|^{2n \over n-2\ga} dv_{\hh})^{n-2\ga \over n}} \nonumber
\end{align}
for $U \in W^{1,2}(X, \rho^{1-2\ga})$ such that $U \ne 0$ on $M$ (with a suitable modification for the $\ga = 1/2$ case), and values
\begin{align*}
\overline{\Lambda}^{\ga}(X,[\hh]) &= \inf\left\{\overline{I}_{\hh}^{\ga}[U]: U \in W^{1,2}(X, \rho^{1-2\ga}), U \ne 0 \text{ on } M \right\},\\
\widetilde{\Lambda}^{\ga}(X,[\hh]) &= \inf\left\{\widetilde{I}_{\hh}^{\ga}[U]: U \in W^{1,2}(X, \rho^{1-2\ga}), U \ne 0 \text{ on } M \right\},
\end{align*}
then
\[\Lambda^{\ga}(M,[\hh]) = \overline{\Lambda}^{\ga}(X,[\hh]) = \widetilde{\Lambda}^{\ga}(X,[\hh]) > -\infty.\]
Besides it is shown in \cite{GQ} that the sign of $c$ in \eqref{eq_yamabe} is the same as that of $\Lambda^{\ga}(M,[\hh])$ as in the local case ($\ga = 1$).

On the other hand, the Sobolev trace inequality
\begin{equation}\label{eq_S_trace}
\( \int_{\mr^n} |U(\bx,0)|^{2n \over n-2\ga} d\bx \)^{n-2\ga \over n}
\le S_{n,\ga} \int_0^{\infty}\int_{\mr^n} x_{n+1}^{1-2\ga} |\nabla U(\bx,x_n)|^2 d\bx dx_{n+1}
\end{equation}
is true for all functions $U \in W^{1,2}(\mr^{n+1}_+,x_{n+1}^{1-2\ga})$,
and the equality is attained by $U = cW_\ls$ for any $c \in \mr,\ \lambda > 0$ and $\sigma \in \mr^n = \partial \mr^{n+1}_+$
where $W_\ls$ are the {\it bubbles} defined as
\begin{equation}\label{eq_bubble}
\begin{aligned}
W_\ls(\bx, x_{n+1}) &= p_{n,\ga} \int_{\mr^n} {x_{n+1}^{2\ga} \over (|\bx - \by|^2+x_{n+1}^2)^{n+2\ga \over 2}} w_\ls(\by)\, d\by\\
&= g_{n,\ga} \int_{\mr^n} {1 \over (|\bx - \by|^2+x_{n+1}^2)^{n-2\ga \over 2}} w_\ls^{n+2\ga \over n-2\ga}(\by)\, d\by
\end{aligned}
\end{equation}
with
\begin{equation}\label{eq_bubble_2}
w_\ls(\bx) := \alpha_{n,\ga} \({\lambda \over \lambda^2 + |\bx - \sigma|^2}\)^{n-2\ga \over 2} = W_\ls(\bx, 0)
\end{equation}
($p_{n,\ga},\, g_{n,\ga}$ and $\alpha_{n,\ga}$ are positive numbers whose values can be found in \eqref{eq_const}).
Particularly, it holds that
\begin{equation}\label{eq_bubble_eq}
\begin{cases}
-\text{div}(x_{n+1}^{1-2\ga} \nabla W_\ls) = 0 &\text{in } \mr^{n+1}_+,\\
\pa^{\ga}_{\nu} W_\ls = - \kappa_{\ga} \(\lim\limits_{x_{n+1} \to 0+} x_{n+1}^{1-2\ga}\dfrac{\pa W_\ls}{\pa x_{n+1}}\) = (-\Delta)^{\ga} w_\ls  = w_\ls^{n+2\ga \over n-2\ga} &\text{on } \mr^n.
\end{cases} \end{equation}
(In light of the equation that $W_\ls$ solves, we say that $W_\ls$ is \textit{$\ga$-harmonic}. Refer to \cite{CS}. For future use, let $W_{\lambda} = W_{\lambda,0}$ and $w_{\lambda} = w_{\lambda,0}$.)
Moreover, if $S_{n,\ga} > 0$ denotes the best constant one can achieve in \eqref{eq_S_trace} and $(\ms^n, [g_c])$ is the standard unit $n$-dimensional sphere, then
\begin{equation}\label{eq_Lambda}
\Lambda^{\ga}(\ms^n, [g_c]) = S^{-1}_{n,\ga}\, \kappa_{\ga} = \(\int_{\mr^n} w_{\ls}^{2n \over n-2\ga} dx\)^{2\ga \over n}.
\end{equation}
Related to this fact, we have the following compactness result.
\begin{propx}\label{prop_y_exist}
Let $n > 2\ga$, $\ga \in (0,1)$ and $(X^{n+1},g^+)$ be an asymptotically hyperbolic manifold with the conformal infinity $(M^n, [\hh])$.
Also, assume that \eqref{eq_eig} is true. Then
\begin{equation}\label{eq_y_const}
-\infty < \Lambda^{\ga}(M, [\hh]) \le \Lambda^{\ga}(\ms^n, [g_c]),
\end{equation}
and the fractional Yamabe problem \eqref{eq_yamabe_e_1}-\eqref{eq_yamabe_e_2} has a positive solution if the strict inequality holds.
\end{propx}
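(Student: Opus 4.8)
The plan is to run the classical subcritical‑approximation scheme of Yamabe–Trudinger–Aubin inside the degenerate‑elliptic extension picture of Theorem~\ref{thm_ext}, combined with an Aubin‑type test‑function estimate for the upper bound.

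\emph{Step 1: the two bounds in \eqref{eq_y_const}.} The finiteness $\Lambda^\ga(M,[\hh]) > -\infty$ is already recorded in the discussion preceding the statement: under \eqref{eq_eig} one has $\Lambda^\ga(M,[\hh]) = \overline{\Lambda}^\ga(X,[\hh])$ by Theorem~\ref{thm_ext} and the energy inequality \cite[Theorem 1.1]{Ca}, and \eqref{eq_eig} also makes the extended energy $\overline{I}^\ga_{\hh}$ coercive, hence bounded below. For the inequality $\Lambda^\ga(M,[\hh]) \le \Lambda^\ga(\ms^n,[g_c])$, fix $p_0 \in M$ and $\hh$-normal coordinates centered there; since $\rho$ is geodesic, $\bg$ is a small perturbation of the flat half‑space metric near $p_0$, and the mean curvature at $p_0$ vanishes when $\ga \in (1/2,1)$ by hypothesis. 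Testing $\overline{I}^\ga_{\hh}$ with a cut‑off rescaled bubble $\eta\, W_\lambda$ ($W_\lambda$ as in \eqref{eq_bubble}, $\eta$ a fixed cut‑off supported near $p_0$) and letting $\lambda \to 0$, the leading term reproduces the flat Rayleigh quotient, whose infimum equals $S^{-1}_{n,\ga}\kappa_\ga = \Lambda^\ga(\ms^n,[g_c])$ by \eqref{eq_S_trace} and \eqref{eq_Lambda}; the contributions of $E(\rho)$ (controlled through \eqref{eq_E1} and asymptotic hyperbolicity), of the metric perturbation, and of the cut‑off are of strictly lower order in $\lambda$, the vanishing of $H$ for $\ga \in (1/2,1)$ being precisely what suppresses the otherwise leading boundary term.

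\emph{Step 2: subcritical minimizers.} Assume now the strict inequality $\Lambda^\ga(M,[\hh]) < \Lambda^\ga(\ms^n,[g_c])$. For $p$ slightly below the critical trace exponent $\frac{2n}{n-2\ga}$ set
\[
\overline{\Lambda}^\ga_p := \inf\left\{ \frac{\kappa_\ga \int_X \(\rho^{1-2\ga}|\nabla U|_{\bg}^2 + E(\rho)U^2\) dv_{\bg}}{\(\int_M |U|^p\, dv_{\hh}\)^{2/p}} \ :\ U \in W^{1,2}(X,\rho^{1-2\ga}),\ U|_M \ne 0 \right\}.
\]
Since the trace embedding $W^{1,2}(X,\rho^{1-2\ga}) \hookrightarrow L^p(M)$ is compact for $p < \frac{2n}{n-2\ga}$, the direct method produces a nonnegative minimizer $U_p$, which after the normalization $\int_M U_p^p\, dv_{\hh} = 1$ solves
\[
\begin{cases}
-\textnormal{div}_{\bg}\(\rho^{1-2\ga}\nabla U_p\) + E(\rho)U_p = 0 & \text{in } (X,\bg),\\
\pa^\ga_\nu U_p = \overline{\Lambda}^\ga_p\, U_p^{p-1} & \text{on } M
\end{cases}
\]
(with the usual modification of the Neumann datum when $\ga = 1/2$). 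Comparing with fixed smooth test functions gives $\limsup_p \overline{\Lambda}^\ga_p \le \Lambda^\ga(M,[\hh])$, while weak lower semicontinuity of the energy together with the normalization gives $\liminf_p \overline{\Lambda}^\ga_p \ge \Lambda^\ga(M,[\hh])$; hence $\overline{\Lambda}^\ga_p \to \Lambda^\ga(M,[\hh])$ as $p \uparrow \frac{2n}{n-2\ga}$. In particular $\{\overline{\Lambda}^\ga_p\}$ is bounded, and coercivity (from \eqref{eq_eig}) yields a uniform bound $\|U_p\|_{W^{1,2}(X,\rho^{1-2\ga})} \le C$.

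\emph{Step 3: passing to the limit and excluding concentration.} Up to a subsequence $U_p \rightharpoonup U$ in $W^{1,2}(X,\rho^{1-2\ga})$ and $u_p := U_p|_M \to u := U|_M$ in $L^r(M)$ for every $r < \frac{2n}{n-2\ga}$, and weakly in $L^{2n/(n-2\ga)}(M)$. If no loss of trace mass occurs, i.e. $\int_M u_p^p\, dv_{\hh} \to \int_M u^{2n/(n-2\ga)}\, dv_{\hh} = 1$, then weak lower semicontinuity of the (coercive) extended energy gives $\overline{I}^\ga_{\hh}[U] \le \Lambda^\ga(M,[\hh])$, so $U$ is a minimizer and, passing to the limit in the Euler–Lagrange system, a constant multiple of $U$ solves \eqref{eq_yamabe_e_1}–\eqref{eq_yamabe_e_2} with $c = \Lambda^\ga(M,[\hh])$. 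The strict inequality forces this non‑vanishing alternative: if part of the $L^{2n/(n-2\ga)}$-mass of $u_p$ were lost, a concentration‑compactness/Brezis–Lieb analysis of $U_p$ near the blow‑up point (carried out in the weighted space, using \eqref{eq_E1} to absorb the zeroth‑order term) would split off a nonzero $\ga$-harmonic function on $\mr^{n+1}_+$ realizing equality in \eqref{eq_S_trace} — a bubble $W_\ls$ as in \eqref{eq_bubble} — and energy accounting with \eqref{eq_Lambda} would yield $\liminf_p \overline{\Lambda}^\ga_p \ge S^{-1}_{n,\ga}\kappa_\ga = \Lambda^\ga(\ms^n,[g_c])$, contradicting $\overline{\Lambda}^\ga_p \to \Lambda^\ga(M,[\hh]) < \Lambda^\ga(\ms^n,[g_c])$. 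Finally $U \ge 0$, $U \not\equiv 0$, and the strong maximum principle and Hopf lemma for $-\textnormal{div}_{\bg}(\rho^{1-2\ga}\nabla\,\cdot\,) + E(\rho)$ (cf. \cite{CS}) give $U > 0$ in $X$ and $u > 0$ on $M$. When $\Lambda^\ga(M,[\hh]) \le 0$ no mass can be lost in the first place, so the conclusion is immediate.

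\emph{Expected main obstacle.} The delicate point is the exclusion of concentration in Step~3: one must carry out the blow‑up/concentration‑compactness analysis directly in $W^{1,2}(X,\rho^{1-2\ga})$, simultaneously controlling the degenerate weight $\rho^{1-2\ga}$, the zeroth‑order term $E(\rho)$ (whose size near $M$ is dictated by \eqref{eq_E1}), the expansion of $\bg$ in geodesic boundary coordinates, and the nonlocal boundary nonlinearity, and then invoke the rigidity of the Sobolev trace inequality \eqref{eq_S_trace} — equality only for the bubbles \eqref{eq_bubble} — to identify the blow‑up profile and extract the threshold $\Lambda^\ga(\ms^n,[g_c])$. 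Keeping the boundary contributions subleading throughout is exactly where the hypothesis $H \equiv 0$ for $\ga \in (1/2,1)$ is used.
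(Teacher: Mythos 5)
The paper gives no proof of this proposition at all — it defers entirely to Gonz\'alez--Qing \cite{GQ} — and your outline (truncated bubbles in the extended functional $\overline{I}^{\ga}_{\hh}$ for the upper bound in \eqref{eq_y_const}, then subcritical approximation in $W^{1,2}(X,\rho^{1-2\ga})$ with the strict gap below $\Lambda^{\ga}(\ms^n,[g_c])$ and the sharp trace inequality \eqref{eq_S_trace} ruling out concentration) is essentially the argument carried out in that reference. Only two small points to tidy: the lower bound $\liminf_p \overline{\Lambda}^{\ga}_p \ge \Lambda^{\ga}(M,[\hh])$ comes from H\"older's inequality comparing the $L^p(M)$ and $L^{2n/(n-2\ga)}(M)$ normalizations (not from weak lower semicontinuity alone), and the vanishing of $H$ for $\ga\in(1/2,1)$ is a hypothesis inherited from the extension result (Theorem \ref{thm_ext}), not stated in the proposition itself.
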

\noindent Refer to \cite{GQ} for its proof.
Moreover since \eqref{eq_y_const} automatically holds if the $\ga$-Yamabe constant $\Lambda^{\ga}(M, [\hh])$ is negative or 0,
\textbf{we assume that $\Lambda^{\ga}(M, [\hh]) > 0$ from now on}.

\medskip
The purpose of this paper is to construct a proper nonzero test function $\Phi \in W^{1,2}(X,\rho^{1-2\ga})$
such that $0 < \overline{I}_{\hh}^{\ga}[\Phi] < \Lambda^{\ga} (\ms^n, [g_c])$ when $\gamma \in (0,1)$, $(X^{n+1}, g^+)$ is an asymptotically hyperbolic manifold, \eqref{eq_eig} holds and
\begin{itemize}
\setlength\itemsep{0em}
\item[-] $M^n$ has a point where the mean curvature $H$ is negative, $n \ge 2$ and $\ga \in (0,1/2)$; or
\item[-] $M^n$ is the non-umbilic boundary of $X^{n+1}$, $n \ge 4$ and assumption \eqref{eq_main_l_1} holds; or
\item[-] $M^n$ is the umbilic but non-locally conformally flat boundary of $X^{n+1}$, $n > 4+2\ga$ and condition \eqref{eq_main_l_2} is satisfied; or
\item[-] $X^{n+1}$ is Poincar\'e-Einstein and either $M^n$ is locally conformally flat or $n = 2$.
\end{itemize}
Then Proposition \ref{prop_y_exist} would imply the existence of a positive solution to \eqref{eq_yamabe} automatically.
The natural candidate for a positive test function is certainly the standard bubble, possibly truncated.
Indeed, this is a good choice for the first case above mentioned. Nevertheless, to cover lower dimensional manifolds or locally conformally flat boundaries, it is necessary to find more accurate test functions than the truncated bubbles (cf. \cite{GQ, GW}).
To take into account the second and third situations, we shall add a correction term on the bubble by adapting the idea of Marques \cite{Ma2} and Almaraz \cite{Al}.
For the fourth case, assuming the validity of the positive mass theorem for $P^{\ga}_{\hh}$ for $\ga \in (0,1)$, we will construct an appropriate test function by utilizing Green's function.
In the local situation ($\ga = 1$), such an approach was successfully applied by Schoen \cite{Sc} who employed the classical positive mass theorem \cite{SY1, SY2, SY4}.
His idea was later extended by Escobar \cite{Es} in the work of the boundary Yamabe problem,
which has close relationship to the fractional Yamabe problem with $\ga = 1/2$.

\medskip
Our first main result reads as follows:
Let $\pi$ be the second fundamental form of $(M,\hh) \subset (\ox, \bg)$.
The boundary $M$ is called {\it umbilic} if the tensor $T := \pi - H\bg$ vanishes on $M$.
Also $M$ is {\it non-umbilic} if it possesses a point at which $T \ne 0$.
\begin{thm}\label{thm_main_l}
Suppose that $(X^{n+1}, g^+)$ is an asymptotically hyperbolic manifold, $(M, [\hh])$ is its conformal infinity and \eqref{eq_eig} holds.
Assume also that $\rho$ is a geodesic defining function of $(M, \hh)$ and $\bg = \rho^2 g^+ = d\rho^2 \oplus h_{\rho}$ near $M = \{\rho = 0\}$.
If either
\begin{itemize}
\setlength\itemsep{0em}
\item[-] $n \ge 2$, $\ga \in (0,1/2)$ and $M^n$ has a point at which the mean curvature $H$ is negative; or
\item[-] $n \ge 4$, $\ga \in (0,1)$, $M^n$ is the non-umbilic boundary of $X^{n+1}$ and
\begin{equation}\label{eq_main_l_1}
R[g^+] + n(n+1) = o(\rho^2) \quad \text{as } \rho \to 0 \text{ uniformly on } M,
\end{equation}
\end{itemize}
then the $\ga$-Yamabe problem is solvable - namely, \eqref{eq_yamabe} has a positive solution.
\end{thm}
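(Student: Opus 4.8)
The plan is to verify the strict inequality in Proposition \ref{prop_y_exist}, i.e.\ to produce a nonzero $\Phi\in W^{1,2}(X,\rho^{1-2\ga})$ with $\overline{I}^{\ga}_{\hh}[\Phi]<\Lambda^{\ga}(\ms^n,[g_c])$; the matching lower bound $\overline{I}^{\ga}_{\hh}[\Phi]\ge\overline{\Lambda}^{\ga}(X,[\hh])=\Lambda^{\ga}(M,[\hh])>0$ is then automatic. The competitor $\Phi$ will be a bubble $W_{\lambda}=W_{\lambda,0}$, written in suitable boundary coordinates, concentrated at a carefully chosen $p\in M$ — with $H(p)<0$ in the first alternative, and with $T(p)=(\pi-H\bg)(p)\ne0$ in the second — cut off away from $p$, and, in the second alternative only, corrected by a term in the spirit of Marques \cite{Ma2} and Almaraz \cite{Al}. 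Everything reduces to an energy expansion
\[\overline{I}^{\ga}_{\hh}[\Phi_{\lambda}]=\Lambda^{\ga}(\ms^n,[g_c])+c_1\,H(p)\,\lambda+o(\lambda),\qquad c_1>0,\]
in the first case, and
\[\overline{I}^{\ga}_{\hh}[\Phi_{\lambda}]=\Lambda^{\ga}(\ms^n,[g_c])-c_2\,|T(p)|^2\,\lambda^2+o(\lambda^2),\qquad c_2>0,\]
in the second; in either case letting $\lambda\to0^+$ closes the argument.

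First I would fix the geometry near $p$. Using the conformal covariance \eqref{eq_conf_cov}, I replace $\hh$ by a conformal representative and take its geodesic defining function so that, in Fermi coordinates $(\bx,\rho)$ centered at $p$, the metric $\bg=d\rho^2\oplus h_\rho$ satisfies the Lee--Parker conformal normal coordinate normalizations along $M$ and, in the second case, a ``conformal Fermi'' normalization — an adaptation of Marques \cite{Ma2} to the fractional setting — forcing the mean curvature of the level sets $\{\rho=\mathrm{const}\}$ to vanish to high order at $p$, in particular $H(p)=0$. I then record the Taylor expansions in $(\bx,\rho)$ of $h_\rho=\hh+2\rho\,\pi+\rho^2(\cdots)+\cdots$, of $dv_{\bg}$ and of $h_\rho^{ij}$, and, crucially, of $E(\rho)$: combining \eqref{eq_E1} with the asymptotically hyperbolic expansion of $R[g^+]+n(n+1)$ gives $E(\rho)=c_{n,\ga}\,H\,\rho^{-2\ga}+O(\rho^{1-2\ga})$ near $M$, so that in the first case the leading singularity of $E(\rho)$ carries the mean curvature, whereas in the second case hypothesis \eqref{eq_main_l_1} annihilates this singular part and leaves $E(\rho)=O(\rho^{1-2\ga})$.

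Next comes the energy computation. Write $\Phi_\lambda=\eta\,(W_\lambda+\varphi_\lambda)$, with $\eta$ a fixed cutoff, $\varphi_\lambda\equiv0$ in the first case, and, in the second, $\varphi_\lambda$ the (rescaled) solution of the linear degenerate elliptic equation whose source is the leading metric-perturbation term produced when $-\mathrm{div}_{\bg}(\rho^{1-2\ga}\nabla W_\lambda)$ is expanded against the Fermi expansion of $\bg$, with vanishing $\ga$-Neumann data. Inserting $\Phi_\lambda$ into $\overline{I}^{\ga}_{\hh}$ and rescaling $\bx=\lambda\by,\ \rho=\lambda t$, each piece of $\kappa_{\ga}\int_X(\rho^{1-2\ga}|\nabla\Phi_\lambda|_{\bg}^2+E(\rho)\Phi_\lambda^2)\,dv_{\bg}$ and of the denominator becomes an explicit power of $\lambda$ times a fixed integral over $\mr^{n+1}_+$ of $W_1$, its derivatives and $\varphi_1$; cutoff and conformal-normal-coordinate remainders are of higher order, and the order-one terms rebuild $\Lambda^{\ga}(\ms^n,[g_c])$ via \eqref{eq_S_trace}--\eqref{eq_Lambda}. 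In the first case the order-$\lambda$ contributions come from $dv_{\bg}$, from $h_\rho^{ij}$ and from the $\rho^{-2\ga}$-part of $E(\rho)$; the traceless part of $\pi$ drops out by the radial symmetry of $W_1$ in $\bx$, leaving only $H(p)$, and one must check that the resulting finite combination of convergent integrals has the positive sign claimed for $c_1$ — the range $n\ge2$, $\ga\in(0,1/2)$ being exactly what makes all these integrals converge. In the second case the order-$\lambda$ term vanishes by the conformal gauge; among the order-$\lambda^2$ terms, those built from $W_1$ alone either vanish by oddness/symmetry or are removed by the gauge and by \eqref{eq_main_l_1} (which confines $E(\rho)$ to order $\lambda^2$), while the interaction of $\varphi_\lambda$ with the first-order metric perturbation yields, after the Marques--Almaraz algebra, a strictly negative multiple of $|T(p)|^2\lambda^2$; the hypothesis $n\ge4$ is precisely the range in which the integrals involving $\varphi_1$ and the second-order metric terms converge.

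Finally, choosing $\lambda$ small makes $\overline{I}^{\ga}_{\hh}[\Phi_\lambda]<\Lambda^{\ga}(\ms^n,[g_c])$ in both cases, and Proposition \ref{prop_y_exist} delivers a positive solution of \eqref{eq_yamabe}. I expect the main obstacle to be the second-case expansion: building the conformal Fermi coordinates in the fractional framework, solving for $\varphi_\lambda$ with sharp weighted $W^{1,2}(X,\rho^{1-2\ga})$-estimates and decay, and organizing the long expansion so that every $\lambda^2$-term except $-c_2|T(p)|^2\lambda^2$ provably cancels or is dominated — with constant attention to which rescaled integrals converge (this is where $n\ge4$ and \eqref{eq_main_l_1} enter). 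A secondary but real difficulty is pinning down the sign of $c_1$ in the first case, which rests on the precise form of the expansion of $E(\rho)$ coming from \eqref{eq_E1}.
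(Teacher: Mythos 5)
Your overall route is the paper's: for the first alternative, test the truncated bubble centered at a point with $H<0$ and extract an $O(\ep)$ term proportional to $H(p)$ with a positive constant (this is Proposition \ref{prop_nonm}, including your observations that the singular $\rho^{-2\ga}$ part of $E(\rho)$ carries $H$ and that $\ga\in(0,1/2)$ is what makes $\int x_N^{-2\ga}W_1^2$ finite); for the second alternative, test a bubble plus a Marques--Almaraz-type correction and conclude via Proposition \ref{prop_y_exist} (this is Proposition \ref{prop_l}). So the strategy is sound and the first case is essentially complete modulo the explicit sign check, which the paper does in \eqref{eq_nonm}.

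In the non-umbilic case, however, there are two genuine gaps. First, the ``conformal Fermi'' normalization you invoke does not exist in this problem: one is only allowed to choose the representative $\hh$ on $M$, after which the geodesic defining function and $\bg=\rho^2g^+$ are completely determined, so you cannot force the mean curvature of the level sets $\{\rho=\mathrm{const}\}$ to vanish to high order at $p$ --- this is exactly the point of Remark \ref{rmk_main_l}(1). In the paper, $H\equiv 0$ on $M$ is instead a consequence of \eqref{eq_main_l_1} (Lemma \ref{lemma_H=0}), and the extrinsic quantity $R_{\rho\rho}[\bg](y)$ cannot be gauged away but is forced by \eqref{eq_main_l_1} to equal $\frac{1-2n}{2(n-1)}\|\pi(y)\|^2$ (Lemma \ref{lemma_rep}); any $\ep^2$-term you intend to ``remove by the gauge'' must in fact be computed with this value, and omitting it changes the final coefficient. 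Second, the decisive positivity of your $c_2$ is asserted rather than proved, and it cannot be taken for granted: with the plain bubble the coefficient is $\mcc'(n,\ga)$ in \eqref{eq_mcc'}, which is \emph{negative} for part of the range $n=4,5,6$, and it is precisely the explicit correction $\Psi_\ep=M_1\pi_{ij}x_ix_jx_Nr^{-1}\pa_rW_\ep$ with optimized $M_1$, together with the integral identities of Lemma \ref{lemma_int}, that upgrades it to $\mcc(n,\ga)>0$ for all $n\ge4$, $\ga\in(0,1)$. Your PDE-defined corrector $\varphi_\lambda$ would additionally require existence and weighted decay estimates that the paper's algebraic ansatz avoids. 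Relatedly, your explanation of the hypothesis $n\ge4$ is off: integrability only requires $n>2+2\ga$, while $n\ge4$ is what makes the final constant positive (one has $\mcc(3,\ga)>0$ only for $\ga>1/2$, which is incompatible with $n>2+2\ga$), so the dimension restriction is a sign condition, not a convergence condition.
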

\begin{rmk}\label{rmk_main_l}
(1) As pointed out in Gonz\'alez-Qing \cite{GQ}, we are only permitted to change the metric on the conformal infinity $M$.
Once the boundary metric $\hh$ is fixed, the geodesic boundary defining function $\rho$ and a compact metric $\bg$ on $X$ are automatically determined
by the relations $|d\rho|_{\rho^2g^+} = 1$ and $\bg = \rho^2 g^+$.
This is a huge difference between the fractional Yamabe problem (especially, with $\ga = 1/2$) and the boundary Yamabe problem
in that one has a freedom of conformal change of the metric in the whole manifold $X$ when he/she is concerned with the boundary Yamabe problem.

Due to this reason, while it is possible to make the `extrinsic' metric $H$ vanish at a point by a conformal change in the boundary Yamabe problem,
one cannot do the same thing in the setting of the fractional Yamabe problem.
This forced us to separate the cases in the statement of Theorem \ref{thm_main_l}.

\medskip \noindent (2) As a particular consequence of the previous discussion, the Ricci tensor $R_{\rho\rho}[\bg](y)$ of $(X, \bg)$ evaluated at a point $y$ on $M$ is governed by $\hh$ and \eqref{eq_main_l_1} (see Lemma \ref{lemma_rep}).
In the boundary Yamabe problem \cite{Es}, the author could choose a metric in $X$ such that the Ricci curvature $R_{ij}[\hh](y) = 0$ of $(M, \hh)$ and $R_{\rho\rho}[\bg](y) = 0$ simultaneously.

Moreover, by putting \eqref{eq_E1} and \eqref{eq_main_l_1} together, we get
\[E(\rho)= \({n-2\ga \over 4n}\) R[\bg]\, \rho^{1-2\ga} + o(\rho^{1-2\ga}) \quad \text{near } M.\]
Hence, on account of the energy expansion, \eqref{eq_main_l_1} is the very condition that makes the boundary Yamabe problem and the $1/2$-Yamabe problem identical modulo the remainder.
Refer to Subsections \ref{subsec_num_ene} and \ref{subsec_num_ene_2}.

\medskip \noindent (3) The sign of the mean curvature at a fixed point on $M$ and \eqref{eq_main_l_1} are `intrinsic' curvature conditions of an asymptotically hyperbolic manifold
in the sense that these properties are independent of the choice of a representative of the class $[\hh]$.
Refer to Lemma \ref{lemma_cur} below for its proof.
Also Lemma \ref{lemma_H=0} claims that \eqref{eq_main_l_1} implies $H = 0$ on $M$.

\medskip \noindent (4) Note also that $2 + 2\ga \in \mn$ and $\ga \in (0,1)$ if and only if $\ga = 1/2$, and the boundary Yamabe problem on non-umbilic manifolds in dimension $n = 2 + 2\ga = 3$ was covered in \cite{Ma2}.
We suspect that the strategy suggested in \cite{Ma2} can be applied for $1/2$-Yamabe problem in the same setting.

\medskip \noindent (5) Suppose that $n \in \mn$ and $\ga \in (0,1)$ satisfy $\mcc'(n,\ga) > 0$ where $\mcc'(n,\ga)$ is the quantity defined in \eqref{eq_mcc'} below.
Moreover assume that $(M^n, [\hh])$ is the conformal infinity of an asymptotic hyperbolic manifold $(X, g^+)$
such that \eqref{eq_eig} and \eqref{eq_main_l_1} hold, and the second fundamental form $\pi$ never vanishes on $M$.
Then the solution set of \eqref{eq_yamabe} (with $c > 0$) is compact in $C^2(M)$ as shown in \cite{KMW2}.
\end{rmk}

We next consider the case when the boundary $M$ is umbilic but non-locally conformally flat.
\begin{thm}\label{thm_main_l_2}
Suppose that $n > 4+2\ga$, $\ga \in (0, 1)$ (that is, either $n \ge 6$ and $\ga \in (0,1)$, or $n = 5$ and $\ga \in (0,1/2)$) and
$(X^{n+1}, g^+)$ is an asymptotic hyperbolic manifold such that \eqref{eq_eig} holds.
Furthermore, assume that $(M^n, [\hh])$ is the umbilic boundary of $X^{n+1}$ and there is a point $y \in M$ such that the Weyl tensor $W[\hh]$ on $M$ is nonzero at $y$.
If \begin{equation}\label{eq_main_l_2}
\begin{cases}
R[g^+] + n(n+1) = o(\rho^4),\\
\pa_{\bx}^m \(R[g^+] + n(n+1)\) = o(\rho^2) \quad (m = 1, 2),\\
\pa_\rho^m\(R[g^+] + n(n+1)\) = o(\rho^2) \quad (m = 1, 2)
\end{cases}
\end{equation}
as $\rho \to 0$ uniformly on $M$, then the $\ga$-Yamabe problem is solvable. Here $\bx$ is a coordinate on $M$.
\end{thm}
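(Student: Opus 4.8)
The plan is to construct a family of nonzero test functions $\Phi_\lambda \in W^{1,2}(X,\rho^{1-2\ga})$, concentrating as $\lambda \to 0^+$ at a boundary point $y \in M$ with $W[\hh](y) \ne 0$, and to show $0 < \overline{I}^\ga_{\hh}[\Phi_\lambda] < \Lambda^\ga(\ms^n,[g_c])$ for $\lambda$ small; by Proposition \ref{prop_y_exist} this yields a positive solution of \eqref{eq_yamabe}. Since the construction is purely local near $y$, no global information (in particular, no positive mass theorem) is needed here, in contrast with the Poincar\'e--Einstein case treated elsewhere in the paper.

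First I would normalize the geometry near $y$. After a conformal change within $[\hh]$ I may assume, following Lee--Parker \cite{LP}, that the $\hh$-geodesic normal coordinates $\bx$ on $M$ centered at $y$ are conformal normal coordinates, so that $\det(h_{ij}) = 1 + O(|\bx|^N)$ for large $N$ and the symmetrized covariant derivatives of $\mathrm{Ric}[\hh]$ vanish at $y$. Since $M$ is umbilic and \eqref{eq_main_l_2} is stronger than \eqref{eq_main_l_1}, Lemma \ref{lemma_H=0} gives $H \equiv 0$ on $M$, hence the second fundamental form of $(M,\hh)\subset(\ox,\bg)$ vanishes identically and $\pa_\rho h_{ij}|_{\rho=0}=0$. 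Writing $\bg = d\rho^2 \oplus h_\rho$ in the Fermi-type coordinates $(\bx,\rho)$ attached to the geodesic defining function $\rho$, I would expand $h_\rho$ to fourth order in $(\bx,\rho)$ and insert \eqref{eq_main_l_2} into \eqref{eq_E1}; this shows $E(\rho) = \tfrac{n-2\ga}{4n}R[\bg]\,\rho^{1-2\ga} + (\text{remainder})$, with the remainder and its relevant derivatives controlled of the order dictated by \eqref{eq_main_l_2}. The outcome of this expansion is that, after integrating against bubble-type functions, the only obstructing curvature quantity at the decisive fourth order is a fixed positive multiple of $|W[\hh](y)|^2$.

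Next, regarding the bubble $W_\lambda$ from \eqref{eq_bubble} (with $\lambda$ small) as a function on a fixed small half-ball $B^+_\delta(y)\subset X$ through the above coordinates, I would compute the error $\mathcal{E}_\lambda := -\mathrm{div}_{\bg}(\rho^{1-2\ga}\nabla W_\lambda) + E(\rho)W_\lambda$, which vanishes to flat order because the coordinates are adapted, and isolate the part $f_\lambda$ of $\mathcal{E}_\lambda$ that does not integrate to zero against $W_\lambda$ and its derivatives. Adapting Marques \cite{Ma2} and Almaraz \cite{Al}, I would solve the linear extension problem $-\mathrm{div}(x_{n+1}^{1-2\ga}\nabla\vp_\lambda)=f_\lambda$ in $\mr^{n+1}_+$ with $\pa^\ga_\nu\vp_\lambda=0$ on $\mr^n$, obtaining a correction of the form $\vp_\lambda(\bx,x_{n+1}) = \lambda^{2-\frac{n-2\ga}{2}}\,\vp(\lambda^{-1}\bx,\lambda^{-1}x_{n+1})$ for a fixed profile $\vp$ with decay of order $|z|^{2-(n-2\ga)}$ at infinity, so that $\vp_\lambda$ is a factor $\lambda^2$ smaller than $W_\lambda$ and the associated model integrals over $\mr^{n+1}_+$ converge; this convergence is one place the bound $n > 4+2\ga$ enters. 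With a fixed cutoff $\eta$ supported in $B^+_\delta(y)$ I then set $\Phi_\lambda := \eta\,(W_\lambda + \vp_\lambda) \in W^{1,2}(X,\rho^{1-2\ga})$.

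Finally I would carry out the energy expansion. Expanding the numerator $\kappa_\ga\int_X(\rho^{1-2\ga}|\nabla\Phi_\lambda|^2_{\bg} + E(\rho)\Phi_\lambda^2)\,dv_{\bg}$ and the denominator $(\int_M|\Phi_\lambda|^{2n/(n-2\ga)}\,dv_{\hh})^{(n-2\ga)/n}$ of \eqref{eq_yamabe_q}, and using \eqref{eq_bubble_eq}, \eqref{eq_Lambda} together with the equation defining $\vp_\lambda$ to cancel the cross terms at the offending order, one should obtain
\[
\overline{I}^\ga_{\hh}[\Phi_\lambda] = \Lambda^\ga(\ms^n,[g_c]) - \mcc(n,\ga)\,|W[\hh](y)|^2\,\lambda^4 + o(\lambda^4)
\]
(with a possible extra factor $\log(1/\lambda)$ in a borderline dimension), where $\mcc(n,\ga) > 0$. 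Here the truncation tail is $O(\lambda^{n-2\ga})$, the conformal-normal-coordinate remainder is of still higher order, and the error produced by \eqref{eq_main_l_2} is $o(\lambda^4)$, all of which are $o(\lambda^4)$ precisely because $n - 2\ga > 4$; hence $0 < \overline{I}^\ga_{\hh}[\Phi_\lambda] < \Lambda^\ga(\ms^n,[g_c])$ for $\lambda$ small and Proposition \ref{prop_y_exist} concludes the proof. The hard part is the last two steps carried out simultaneously: producing $\vp_\lambda$ with the correct decay in the degenerate-elliptic weighted setting, and --- above all --- verifying after all cancellations that the surviving fourth-order term is a genuinely \emph{negative} multiple of $|W[\hh](y)|^2$, i.e.\ computing $\mcc(n,\ga)$ explicitly and checking $\mcc(n,\ga)>0$ throughout the range $n>4+2\ga$, $\ga\in(0,1)$, while keeping every other contribution (truncation, metric remainder, and the error from \eqref{eq_main_l_2}) strictly below order $\lambda^4$.
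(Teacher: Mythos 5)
Your overall framework (a local test function built from the truncated bubble plus a Marques--Almaraz type correction, conformal normal coordinates at $y$, and an $\ep^4$-order energy expansion fed into Proposition \ref{prop_y_exist}) is the same as the paper's, but the central quantitative claim on which your argument rests is not correct, and it hides exactly the part of the proof that is hard. You assert that after the expansion ``the only obstructing curvature quantity at the decisive fourth order is a fixed positive multiple of $|W[\hh](y)|^2$'' and that the final expansion reads $\overline{I}^{\ga}_{\hh}[\Phi_\lambda]=\Lambda^{\ga}(\ms^n,[g_c])-\mcc(n,\ga)|W[\hh](y)|^2\lambda^4+o(\lambda^4)$. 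This is false: since $(X,g^+)$ is only asymptotically hyperbolic, the second-order term of $h_\rho$ in $\rho$ is extra data not determined by $\hh$, and the fourth-order coefficient of the energy genuinely contains the extrinsic quantities $R_{;NN}[\bg](y)$ (which has no sign) and $(R_{ij}[\bg](y))^2$, in addition to $\|W\|^2$; see \eqref{eq_um_0}--\eqref{eq_um_1}. The hypothesis \eqref{eq_main_l_2}, including its tangential and normal derivative conditions, is used precisely to pin down the relations among these fourth-order curvature terms (Lemma \ref{lemma_rep_2} (3)--(6)), not merely to control $E(\rho)$. The proof then has to verify three sign facts that your plan neither states nor addresses: the coefficient of the signless term $R_{;NN}[\bg](y)$ vanishes identically ($\mcd_2'=\mcd_2=0$ in \eqref{eq_mcd'}); the coefficient of $\|W\|^2$ is positive; and the coefficient of $(R_{ij}[\bg])^2$, which for the plain truncated bubble is of the wrong sign in part of the range $n>4+2\ga$ (this is why the correction term is needed at all), becomes nonnegative after adding the correction and optimizing its amplitude. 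If you tried to prove your stated formula you would find leftover terms you cannot absorb into $o(\lambda^4)$ or bound by $\|W\|^2$.

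Relatedly, your mechanism for the corrector is off. You build $\vp_\lambda$ to solve a linear problem with the ``bad part'' of the error as right-hand side and zero Neumann data, ``to cancel the cross terms at the offending order.'' In the actual computation the cross term between the bubble and the correction (driven by the $O(x_N^2)$ part of $\bg^{ij}-\delta^{ij}$, i.e.\ by $R_{iNjN}[\bg](y)$) does \emph{not} cancel and must not cancel: it is linear in the correction amplitude, and together with the quadratic self-energy of the correction it forms a quadratic expression whose minimization over the amplitude (the choice of $M_2$ in the paper's $\wPsi_\ep=M_2R_{iNjN}[\bg]x_ix_jx_N^2r^{-1}\pa_rW_\ep$) is what turns the coefficient $\mcd_3'(n,\ga)$ into the positive $\mcd_3(n,\ga)$ for all $n>4+2\ga$, $\ga\in(0,1)$. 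So the correction is used to \emph{lower} the energy via a Rayleigh-quotient argument, not to remove an error term. Until you (i) write the fourth-order coefficient with all three curvature quantities present, (ii) show the $R_{;NN}[\bg]$ coefficient is zero, and (iii) carry out the optimization showing the $(R_{ij}[\bg])^2$ coefficient can be made nonnegative in the full range $n>4+2\ga$, the proposal has a genuine gap at the decisive step.
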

\begin{rmk}
(1) As we will see later, the main order of the energy for the fractional Yamabe problem \eqref{eq_yamabe} is $\ep^4$ on an umbilic but non-locally conformal flat boundary $M$,
while it is $\ep^2$ on a non-umbilic boundary (see \eqref{eq_l_0}, \eqref{eq_l_1}, \eqref{eq_um_0} and \eqref{eq_um_1}).
Therefore it is natural to expect that Theorem \ref{thm_main_l_2} should require that $R[g^+] + n(n+1)$ decays $\rho^2$-faster than Theorem \ref{thm_main_l} near $M$.
Compare \eqref{eq_main_l_1} and \eqref{eq_main_l_2}.

Assumption \eqref{eq_main_l_2} is responsible for determining all the values of quantities which emerge in the coefficient of $\ep^4$ in the energy
(such as $R_{,ii}[\bg](y)$ and $R_{NN,ii}[\bg](y)$ - see Lemma \ref{lemma_rep_2})
and controlling the term $(n(n+1)+R[g^+]) \rho^{-2}$ in $E(\rho)$ to be ignorable.

\medskip \noindent (2) In light of Lemmas \ref{lemma_cur} and \ref{lemma_H=0}, condition \eqref{eq_main_l_2} is again intrinsic and sufficient to deduce that $H = 0$ on $M$.
Moreover every Poincar\'e-Einstein manifold satisfies \eqref{eq_main_l_2}.

\medskip \noindent (3) It is notable that $4 + 2\ga \in \mn$ and $\ga \in (0,1)$ if and only if $\ga = 1/2$, and
the boundary Yamabe problem for $n = 4 + 2\ga = 5$ was studied in \cite{Al}.
Hence it is natural to ask whether one can extend Theorem \ref{thm_main_l_2} for $\ga = 1/2$ and $n = 5$ by following the perturbation argument given in \cite{Al}.
\end{rmk}

In order to describe the last result, we first have to take into account of Green's function under our setting.
\begin{prop}\label{prop_G_exp}
Suppose that all the hypotheses of Theorem \ref{thm_ext} hold true (including \eqref{eq_eig}) and $H = 0$ on $M$.
In addition, assume further that $\Lambda^{\ga}(M, [\hh]) > 0$.
Then for each $y \in M$, there exists \textit{Green's function} $G(x,y)$ on $\ox \setminus \{y\}$ which satisfies
\begin{equation}\label{eq_G}
\begin{cases}
-\textnormal{div}_{\bg}\(\rho^{1-2\ga}\nabla G(\cdot,y)\) + E(\rho)\, G(\cdot,y) = 0 &\text{in } (X, \bg),\\
\pa^{\ga}_{\nu} G(\cdot,y) = \delta_y &\text{on } (M, \hh)
\end{cases}
\end{equation}
in the distribution sense where $\delta_y$ is the Dirac measure at $y$.
The function $G$ is unique and positive on $\ox$.
\end{prop}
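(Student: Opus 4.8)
The plan is to realize $G(\cdot,y)$ as the limit of solutions of the extension problem in which $\delta_y$ is replaced by a smooth approximation, and then to derive uniqueness and positivity from the maximum principle. Write $L_{\bg}U := -\textnormal{div}_{\bg}(\rho^{1-2\ga}\nabla U)+E(\rho)U$ and introduce the symmetric bilinear form
\begin{equation*}
Q(U,V) := \kappa_{\ga}\int_X\(\rho^{1-2\ga}\la\nabla U,\nabla V\ra_{\bg}+E(\rho)\,UV\)\,dv_{\bg},\qquad U,V\in W^{1,2}(X,\rho^{1-2\ga}),
\end{equation*}
so that $Q(U,V)=\kappa_{\ga}\int_X (L_{\bg}U)V\,dv_{\bg}+\int_M(\pa^{\ga}_{\nu}U)V\,dv_{\hh}$ after integrating by parts. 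Under \eqref{eq_eig} and the standing hypothesis $\Lambda^{\ga}(M,[\hh])>0$, the form $Q$ is positive definite and coercive on $W^{1,2}(X,\rho^{1-2\ga})$ and, by the identity $\overline{\Lambda}^{\ga}(X,[\hh])=\Lambda^{\ga}(M,[\hh])$, satisfies the trace coercivity $Q(U,U)\ge\Lambda^{\ga}(M,[\hh])\,\(\int_M|U|^{2n/(n-2\ga)}dv_{\hh}\)^{(n-2\ga)/n}$ (cf. Theorem~\ref{thm_ext} and \cite{Ca, GQ}); write $\mathcal H$ for $W^{1,2}(X,\rho^{1-2\ga})$ endowed with the inner product $Q$. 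The one structural obstruction is that $\delta_y\notin\mathcal H^{*}$, equivalently that the genuine Green's function has infinite $Q$-energy (exactly as $|x-y|^{-(n-2\ga)}$ fails to lie in $\dot H^{\ga}$), so the singular profile at $y$ must be produced by a comparison argument rather than by Riesz representation.

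\medskip
\noindent\emph{Construction and positivity.} For $\ep>0$ choose $\mu_{\ep}\in C^{\infty}_c(M)$ with $\mu_{\ep}\ge 0$, $\int_M\mu_{\ep}\,dv_{\hh}=1$ and $\textnormal{supp}\,\mu_{\ep}\subset B_{\ep}(y)$. Since the trace map $\mathcal H\to L^{2n/(n-2\ga)}(M)$ is bounded, $V\mapsto\int_M\mu_{\ep}V\,dv_{\hh}$ is a bounded functional on $\mathcal H$, so by the Lax--Milgram theorem there is a unique $G_{\ep}\in\mathcal H$ with $Q(G_{\ep},V)=\int_M\mu_{\ep}V\,dv_{\hh}$ for all $V\in\mathcal H$; that is, $L_{\bg}G_{\ep}=0$ in $X$ and $\pa^{\ga}_{\nu}G_{\ep}=\mu_{\ep}$ on $M$ in the weak sense. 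Testing with $G_{\ep}^{-}:=\min\{G_{\ep},0\}\in\mathcal H$ gives $Q(G_{\ep}^{-},G_{\ep}^{-})=Q(G_{\ep},G_{\ep}^{-})=\int_M\mu_{\ep}G_{\ep}^{-}\,dv_{\hh}\le 0$, so $G_{\ep}\ge 0$ by positive definiteness; the Harnack inequality for the $A_2$-degenerate operator $-\textnormal{div}_{\bg}(\rho^{1-2\ga}\nabla\cdot)$ (Fabes--Kenig--Serapioni), applied in the interior and, using $\pa^{\ga}_{\nu}G_{\ep}=0$ on $M\setminus B_{\ep}(y)$, also up to $M$, upgrades this to $G_{\ep}>0$ on $\ox\setminus B_{\ep}(y)$.

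\medskip
\noindent\emph{Uniform estimates and passage to the limit --- the crux.} In $\bg$-Fermi coordinates centred at $y$ one has $\bg=d\rho^{2}\oplus h_{\rho}$ with metric coefficients differing from the Euclidean ones by $O(\textnormal{dist}_{\bg}(\cdot,y)^{2})$, the first-order boundary term is absent because $H=0$, and $|E(\rho)|\lesssim\rho^{1-2\ga}$ near $M$ by \eqref{eq_E1}; hence $L_{\bg}$ is a lower-order perturbation of the Caffarelli--Silvestre model operator $-\textnormal{div}(x_{n+1}^{1-2\ga}\nabla\cdot)$ on $\mr^{n+1}_{+}$, whose fundamental solution with $\pa^{\ga}_{\nu}\Gamma=\delta_{0}$ is the explicit $\ga$-harmonic function $\Gamma(\bx,x_{n+1})=g_{n,\ga}\,(|\bx|^{2}+x_{n+1}^{2})^{-(n-2\ga)/2}$ (see \eqref{eq_bubble} and \cite{CS}). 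Building super- and subsolutions of the form $C\,\textnormal{dist}_{\bg}(\cdot,y)^{-(n-2\ga)}$ corrected by a bounded solution of $L_{\bg}v_{0}=O(1)$ with strictly positive Neumann data, the maximum principle gives $c_{1}\,\textnormal{dist}_{\bg}(\cdot,y)^{-(n-2\ga)}\le G_{\ep}\le c_{2}\,\textnormal{dist}_{\bg}(\cdot,y)^{-(n-2\ga)}$ near $y$ and, together with degenerate-elliptic De Giorgi--Nash--Moser and Schauder estimates on dyadic annuli, uniform $C^{\alpha}_{\textnormal{loc}}(\ox\setminus\{y\})$ bounds and $|\nabla G_{\ep}|\le C\,\textnormal{dist}_{\bg}(\cdot,y)^{-(n-2\ga)-1}$, all independent of $\ep$. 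Since $\textnormal{dist}_{\bg}(\cdot,y)^{-(n-2\ga)}$ and $\rho^{1-2\ga}\textnormal{dist}_{\bg}(\cdot,y)^{-(n-2\ga)-1}$ are integrable on $X$ and $\textnormal{dist}_{\hh}(\cdot,y)^{-(n-2\ga)}$ is integrable on $M$, the family $\{G_{\ep}\}$ is equi-integrable, and a subsequence converges to some $G(\cdot,y)$ in $L^{1}(M)$, in $W^{1,1}(X,\rho^{1-2\ga})$, and in $C^{0}_{\textnormal{loc}}(\ox\setminus\{y\})$. The limit lies in $W^{1,2}_{\textnormal{loc}}(\ox\setminus\{y\},\rho^{1-2\ga})$, has the singular behaviour $G(\cdot,y)=g_{n,\ga}\,\textnormal{dist}_{\bg}(\cdot,y)^{-(n-2\ga)}(1+o(1))$ at $y$, is positive on $\ox\setminus\{y\}$ by the bounds above together with the strong maximum principle, and, passing to the limit in $Q(G_{\ep},\phi)=\int_M\mu_{\ep}\phi\,dv_{\hh}$ (whose right side tends to $\phi(y)$) for every $\phi\in C^{\infty}(\ox)$, satisfies \eqref{eq_G} in the distributional sense.

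\medskip
\noindent\emph{Uniqueness.} If $G_{1},G_{2}$ both satisfy \eqref{eq_G}, then $D:=G_{1}-G_{2}$ solves $L_{\bg}D=0$ in $X$ with $\pa^{\ga}_{\nu}D=0$ on $M$, and the leading singularities at $y$ cancel, so $D$ is bounded near $y$; hence $D\in\mathcal H$, testing against $D$ gives $Q(D,D)=0$, and therefore $D\equiv 0$. The hard part is the third step: because the true Green's function has infinite energy the naive variational solution cannot be obtained in $\mathcal H$, and one must isolate the singular profile of the Caffarelli--Silvestre model operator and run a barrier/comparison argument robust under the $O(\rho^{1-2\ga})$ size of $E(\rho)$ and the second-order perturbation of $\bg$ --- this is where $H=0$ (to delete the boundary first-order term) and \eqref{eq_eig} together with $\Lambda^{\ga}(M,[\hh])>0$ (to make $Q$ positive definite, hence to furnish both the maximum principle and uniqueness) are used.
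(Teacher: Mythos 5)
Your overall skeleton (mollify $\delta_y$, solve the approximate problems, prove nonnegativity by testing with the negative part, pass to the limit, then invoke a maximum principle for strict positivity) matches the paper's, but the two analytic pillars you lean on are only asserted, and one of them is where the actual work lies. The paper first conformally normalizes so that $Q^{\ga}_{\hh_0}>0$ (via \cite[Corollary 4.3]{GQ}, together with a covariance lemma for Green's functions) and passes to the special defining function $\rho^*$ of Theorem \ref{thm_ext}(2), so the bulk operator becomes pure divergence form and the boundary condition reads $\pa^{\ga}_{\nu}U+Q^{\ga}_{\hh}U=f$; coercivity is then the cited norm equivalence, and the uniform control of the approximating solutions is the Brezis--Strauss type estimate $\|U\|_{\mcw^{1,\alpha}\xr}\le C\|f\|_{L^1(M)}$ for $1\le\alpha<\min\{\tfrac{n}{n-2\ga},\tfrac{2n+2}{2n+1}\}$ (Lemma \ref{lemma_weak_reg}), proved by duality against an adjoint problem whose solutions are bounded in $L^\infty$ by Moser iteration. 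In your scheme the corresponding step is the uniform two-sided bound $c_1\,\textnormal{dist}_{\bg}(\cdot,y)^{-(n-2\ga)}\le G_\ep\le c_2\,\textnormal{dist}_{\bg}(\cdot,y)^{-(n-2\ga)}$ near $y$; you call it the crux but do not construct the barriers. To run it you would need a comparison principle for $L_{\bg}$ whose zeroth-order coefficient $E(\rho)$ has uncontrolled sign (only size $O(\rho^{1-2\ga})$), barriers whose weighted Neumann derivative dominates $\mu_\ep$ uniformly as $\ep\to0$ in the shrinking region where the data lives, and value-comparison on inner spherical caps; none of this is carried out. Likewise your claim that the form $Q$ (with the indefinite $E(\rho)$ term, before any conformal normalization) is positive definite and coercive on $W^{1,2}(X,\rho^{1-2\ga})$ is not justified as stated; the trace inequality $Q(U,U)\ge\Lambda^{\ga}\|U\|^2_{L^{2n/(n-2\ga)}(M)}$ does not control the $W^{1,2}$ norm, and the paper sidesteps exactly this point by the $\rho^*$ reduction and the norm equivalence of \cite[Lemma 3.1]{CK}.

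The uniqueness step contains a genuine logical gap. A solution of \eqref{eq_G} in the distribution sense carries no asymptotic expansion at $y$; the expansion $g_{n,\ga}\,\textnormal{dist}_{\bg}(\cdot,y)^{-(n-2\ga)}(1+o(1))$ is precisely the content of Conjecture \ref{conj_pos} and, within your argument, is only known for the particular solution you constructed. Hence ``the leading singularities at $y$ cancel, so $D$ is bounded near $y$'' is unjustified for two arbitrary solutions. What is true is that $D=G_1-G_2$ solves the homogeneous problem across $y$; to conclude that $D$ is bounded, or lies in the energy space so that $Q(D,D)=0$ applies, one needs a regularity or removable-singularity statement for distributional solutions in the natural low-integrability class in which Green's functions live --- and this is exactly what the duality estimate \eqref{eq_weak_reg} provides in the paper, where uniqueness follows at once from linearity and that estimate with $f=0$. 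As written, the uniqueness argument is incomplete, and together with the unproven barrier estimates and the asserted coercivity, these are real gaps rather than routine omissions.
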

\noindent The proof is postponed until Subsection \ref{subsec_Green}.
The readers may compare the above result with Guillarmou-Qing \cite{GQ2}.
Based on the previous proposition and the fact that
\[G(x,\by) = {g_{n,\ga} \over |(\bx-\by, x_{n+1})|^{n-2\ga}} \quad \text{for all } (\bx, x_{n+1}) \in \mr^{n+1}_+ \text{ and } \by \in \mr^n\]
if $(X, \bg)$ is the Poincar\'e half-plane $(\mr^{n+1}_+, x_{n+1}^{-2} dx)$, we conjecture the following.
\begin{conj}\label{conj_pos}[Positive mass theorem]
Assume that $\ga \in (0,1)$, $n > 2\ga$ and $(X^{n+1}, g^+)$ is Poincar\'e-Einstein.
Also suppose that $\Lambda^{\ga}(M, [\hh]) > 0$ and either $(M^n, [\hh])$ is locally conformally flat or $n = 2$.
Then we have an asymptotic expansion on $G(\cdot, y)$ of the form
\begin{equation}\label{eq_G_1}
G(x,y) = g_{n,\ga}\, d_{\bg}(x,y)^{-(n-2\ga)} + A + \Psi(d_{\bg}(x,y)) \quad \text{with } A \ge 0
\end{equation}
for any $x \in \ox$ near $y \in M$, where $g_{n,\ga} > 0$ is a constant appeared in \eqref{eq_bubble}
and $\Psi$ is a function in a small closed neighborhood $\mcn \subset \overline{\mr_+^{n+1}}$ of $0$ such that
\begin{equation}\label{eq_Psi}
\Psi(0) = 0 \quad \text{and} \quad \|\Psi\|_{C^{\vt_1}(\mcn)} + \left\|\nabla_{\bx} \Psi \right\|_{C^{\vt_1}(\mcn)}
+ \left\| x_{n+1}^{1-2\ga} {\pa \Psi \over \pa x_{n+1}} \right\|_{C^{\vt_1}(\mcn)} \le C
\end{equation}
for some $\vt_1 \in (0,1)$.
Furthermore, $A = 0$ if and only if $(X^{n+1},\bg)$ is conformally diffeomorphic to the standard unit ball $\mb^{n+1}$ (which we denote by $(X^{n+1},\bg) \simeq \mb^{n+1}$).
\end{conj}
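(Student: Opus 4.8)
\medskip
\noindent\textbf{Toward a proof of Conjecture \ref{conj_pos}.}
The plan is to split the argument into three stages: (i) reduce the geometry near $y$ to the flat half-space model; (ii) derive the expansion \eqref{eq_G_1}--\eqref{eq_Psi} from the regularity theory of the degenerate extension operator; and (iii) establish the sign $A \ge 0$ together with the rigidity, which is the fractional positive mass theorem proper. For (i), I would first use that $(X^{n+1}, g^+)$ is Poincar\'e--Einstein together with the local conformal flatness of $(M^n,[\hh])$ (automatic when $n=2$) to normalize the collar: choosing a representative $\hh \in [\hh]$ which is flat in a neighbourhood of $y$ in $M$ and its geodesic defining function $\rho$, the Fefferman--Graham expansion of $g^+$ has vanishing obstruction and terminates, so that $\bg = \rho^2 g^+ = d\rho^2 \oplus h_\rho$ agrees with the Euclidean metric on a half-ball $\mcn \subset \ox$ centred at $y$; equivalently $(\mcn, g^+)$ is isometric to a piece of the Poincar\'e half-plane $(\mr^{n+1}_+, x_{n+1}^{-2}dx)$. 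Consequently $R[g^+] \equiv -n(n+1)$ and $R[\bg] \equiv 0$ on $\mcn$, so \eqref{eq_E1} gives $E(\rho) \equiv 0$ there, and on $\mcn \setminus \{y\}$ the system \eqref{eq_G} reduces to $-\textnormal{div}(x_{n+1}^{1-2\ga}\nabla G) = 0$ with $\pa^{\ga}_{\nu} G = 0$ on $(M \cap \mcn)\setminus\{y\}$; that is, $G(\cdot,y)$ is $\ga$-harmonic away from $y$ in the sense of \eqref{eq_bubble_eq}.

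For (ii), note that by \eqref{eq_bubble_eq} the function $x \mapsto g_{n,\ga}\,|x-y|^{-(n-2\ga)}$ is $\ga$-harmonic on $\mr^{n+1}_+ \setminus \{y\}$ and satisfies $\pa^{\ga}_{\nu}\big(g_{n,\ga}|\cdot-y|^{-(n-2\ga)}\big) = \delta_y$, since it is precisely the Green's function on the Poincar\'e half-plane recorded before the statement of the conjecture. Hence $\Theta := G(\cdot,y) - g_{n,\ga}|\cdot-y|^{-(n-2\ga)}$ solves $-\textnormal{div}(x_{n+1}^{1-2\ga}\nabla\Theta) = 0$ in $\mcn$ with $\pa^{\ga}_{\nu}\Theta = 0$ on $M \cap \mcn$, a priori with a possible singularity at $y$. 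I would even-reflect $\Theta$ across $\{x_{n+1}=0\}$, turning the equation into a degenerate elliptic one with the Muckenhoupt $A_2$-weight $|x_{n+1}|^{1-2\ga}$, and combine the De~Giorgi--Nash--Moser theory for such weights with a B\^ocher-type removable-singularity statement to conclude that $\Theta$ is bounded near $y$; the boundary Schauder estimates for the extension problem (see \cite{CS}) then give $\Theta \in C^{\vt_1}$ near $y$ for some $\vt_1 \in (0,1)$, with the control of $\nabla_{\bx}\Theta$ and of $x_{n+1}^{1-2\ga}\pa_{x_{n+1}}\Theta$ demanded in \eqref{eq_Psi}. Setting $A := \Theta(y)$ and $\Psi(z) := \Theta(y+z) - A$ yields \eqref{eq_G_1}--\eqref{eq_Psi}; positivity of $G$ on $\ox$ is already part of Proposition \ref{prop_G_exp}, so the singularity is isolated, but the sign of $A$ does not yet follow.

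Step (iii) is where the real difficulty lies, and in full generality it is open — this is exactly why the statement is recorded as a conjecture. The plan is to Kelvin-invert at $y$: in the flat coordinates of $\mcn$, with the inversion centred at $y$ (which preserves $\mr^{n+1}_+$), the function $\widehat G(z) := |z|^{-(n-2\ga)} G(z/|z|^2,y)$ is again $\ga$-harmonic near infinity with $\pa^{\ga}_{\nu}\widehat G = 0$ and has the expansion $\widehat G(z) = g_{n,\ga} + A\,|z|^{-(n-2\ga)} + o(|z|^{-(n-2\ga)})$ as $|z| \to \infty$; thus $A$ plays the role of an ADM-type mass of the resulting asymptotically Euclidean manifold-with-boundary carrying the boundary conformal data $\widehat G^{4/(n-2\ga)}\hh$ together with the nonnegativity of the relevant curvature. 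When $\ga = 1/2$ this is precisely Escobar's positive mass theorem for manifolds with boundary \cite{Es} (modelled on \cite{Sc, SY1, SY2, SY4}), which gives $A \ge 0$ and, in the equality case, that the inverted model is flat. For general $\ga \in (0,1)$ I would attempt to prove the needed statement by expressing $A$ as a sign-definite weighted energy/boundary integral over the Caffarelli--Silvestre extension $(X,\bg^*)$ and running a fractional analogue of the Schoen--Yau conformal-deformation argument, or, failing a direct proof, by a monotonicity argument showing that $A$ strictly decreases along a deformation of $g^+$ towards the hyperbolic metric unless $g^+$ is already hyperbolic near $M$. This fractional positive mass theorem is the main obstacle: steps (i)--(ii) are routine consequences of the Fefferman--Graham normal form and of the regularity theory for the weighted operator, whereas no Riemannian-mass interpretation of $A$ is available for $\ga \ne 1/2$.

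Granting such a theorem, the rigidity follows readily: $A = 0$ forces the inverted model to be isometric to the flat half-space, hence — undoing the inversion and the conformal normalizations of step (i) — $(X^{n+1},\bg) \simeq \mb^{n+1}$; conversely, if $(X^{n+1},\bg) \simeq \mb^{n+1}$ one pulls back the half-plane Green's function $g_{n,\ga}\,|(\bx-\by,x_{n+1})|^{-(n-2\ga)}$ under the conformal diffeomorphism and reads off $A = 0$ directly.
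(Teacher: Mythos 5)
The statement you are asked to prove is labelled a \emph{conjecture} in the paper, and the paper supplies no proof of it: Conjecture \ref{conj_pos} is an assumed hypothesis in Theorem \ref{thm_main}, and the authors only offer the heuristic remark that $\Psi$ is ``approximately'' $\ga$-harmonic near $y$. So there is no paper argument to compare yours against, and your proposal --- which you yourself present as a plan whose central step is open --- cannot be counted as a proof. You are right that the genuine obstacle is step (iii): no fractional analogue of the Schoen--Yau/Escobar positive mass theorem identifying the constant $A$ with a sign-definite mass is known for general $\ga\in(0,1)$, and even for $\ga=1/2$ the reduction to Escobar's theorem is not immediate, because (as Remark \ref{rmk_main_l}(1) stresses) in the fractional setting one may only conformally change the metric on $M$, not in the interior of $X$, so the asymptotically Euclidean manifold produced by your Kelvin inversion does not come with the freedom of interior conformal normalization that Escobar's argument uses.

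There is also a concrete overclaim in your step (i). Local conformal flatness of $M$ plus the Poincar\'e--Einstein condition does \emph{not} make $\bg$ exactly Euclidean near $y$: Lemma \ref{lemma_h^m} only kills the locally determined coefficients $h^{(1)},\dots,h^{(n-1)}$, leaving $\bg_{ij}=\delta_{ij}+O(x_N^{n})$ and $|\bg|=1+O(x_N^n)$ as in \eqref{eq_bg_exp_2}, because the order-$n$ term of the Fefferman--Graham expansion is globally determined and need not vanish. Consequently $E(\rho)=O(x_N^{n-1-2\ga})$ by \eqref{eq_E_1} rather than $E\equiv 0$, and your function $\Theta=G-g_{n,\ga}|\cdot-y|^{-(n-2\ga)}$ satisfies the weighted equation only up to error terms; the removable-singularity and weighted Schauder arguments of your step (ii) must be carried out for this inhomogeneous problem. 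The errors are of high enough order that this should be feasible, but as written your claim that $(\mcn,g^+)$ is isometric to a piece of the Poincar\'e half-plane, and hence that $G$ is exactly $\ga$-harmonic away from $y$, is false in general. In summary: steps (i)--(ii) need repair but are plausibly within reach, while step (iii) is precisely the open content of the conjecture, so the statement remains unproved.
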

\noindent Our expectation on the regularity \eqref{eq_Psi} of $\Psi$ is based on the fact that $\Psi$ is `approximately' $\ga$-harmonic near $y$.
Now we can state our third main theorem.
\begin{thm}\label{thm_main}
Suppose that $\ga \in (0,1)$, $n > 2\ga$ and $(X^{n+1}, g^+)$ is a Poincar\'e-Einstein manifold with conformal infinity $(M^n, [\hh])$.
Let $\rho$ be a geodesic defining function for $(M, \hh)$ and $\bg = \rho^2 g^+$.
If \eqref{eq_eig} holds, Conjecture \ref{conj_pos} is valid, and either $M^n$ is locally conformally flat or $n = 2$, then the fractional Yamabe problem is solvable.
\end{thm}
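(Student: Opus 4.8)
The plan is to invoke Proposition \ref{prop_y_exist}: it suffices to construct a test function $\Phi \in W^{1,2}(X,\rho^{1-2\ga})$ with $0 < \overline{I}_{\hh}^{\ga}[\Phi] < \Lambda^{\ga}(\ms^n,[g_c])$. Since $X^{n+1}$ is Poincar\'e-Einstein, $R[g^+] \equiv -n(n+1)$, so by \eqref{eq_E1} we have $E(\rho) = \frac{n-2\ga}{4n} R[\bg]\,\rho^{1-2\ga}$ near $M$; moreover Remark \ref{rmk_main_l}(2)/the Einstein condition forces $H = 0$ on $M$, so Proposition \ref{prop_G_exp} applies and Green's function $G(\cdot,y)$ exists, is positive, and (granting Conjecture \ref{conj_pos}) has the expansion \eqref{eq_G_1} with mass $A \ge 0$. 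Fix a point $y \in M$ — since the situation is either locally conformally flat or $n=2$, we may work in conformal normal coordinates centered at $y$ in which $\bg$ is as flat as possible (the conformal factor can be absorbed via the covariance \eqref{eq_conf_cov}, exactly as in Lee-Parker \cite{LP} and Schoen \cite{Sc}).

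First I would define the test function by gluing the bubble to Green's function in the manner of Schoen: let $\Phi_\ve = \eta\, W_\ve + (1-\eta)\,\ve^{(n-2\ga)/2}\, G(\cdot, y)/g_{n,\ga}$, where $W_\ve = W_{\ve,y}$ is the bubble of \eqref{eq_bubble} recentered at $y$ in the chosen coordinates, $\eta$ is a radial cutoff supported in a fixed small geodesic ball $B_{2r_0}(y)$ and $\equiv 1$ on $B_{r_0}(y)$, and the scaling $\ve^{(n-2\ga)/2}$ is chosen so that the two pieces match to leading order on the annulus (recall $W_\ve(x) \sim \ve^{(n-2\ga)/2} g_{n,\ga} d_{\bg}(x,y)^{-(n-2\ga)}$ for $d_{\bg}(x,y) \gg \ve$). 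Second, I would compute the numerator $\kappa_{\ga}\int_X (\rho^{1-2\ga}|\nabla \Phi_\ve|_{\bg}^2 + E(\rho)\Phi_\ve^2)\,dv_{\bg}$ by splitting $X$ into the inner region $B_{r_0}(y)$, the annular region $B_{2r_0}(y)\setminus B_{r_0}(y)$, and the exterior $X \setminus B_{2r_0}(y)$. On the inner region the integral is that of the bubble up to corrections governed by the deviation of $\bg$ from the flat half-space metric — which in conformal normal coordinates (locally conformally flat case) vanishes to high order, and in the $n=2$ case is harmless by the same mechanism used for the classical Yamabe problem in low dimensions; the key point is that the error terms from $E(\rho)$ are controlled precisely because $R[g^+]+n(n+1)\equiv 0$. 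On the exterior region $\Phi_\ve$ is a constant multiple of $G(\cdot,y)$, which is $\ga$-harmonic there with $E(\rho)$-potential, and an integration by parts reduces its energy to a boundary integral over $\partial B_{2r_0}(y)$; the cross and annular terms are where the constant $A$ enters. Matching the expansions, the numerator becomes $\kappa_{\ga} S_{n,\ga}^{-1}(\int w_{\ve}^{2n/(n-2\ga)})^{(n-2\ga)/n} - c_{n,\ga}\, A\, \ve^{n-2\ga} + o(\ve^{n-2\ga})$ for a positive constant $c_{n,\ga}$, while the denominator is $(\int_M \Phi_\ve^{2n/(n-2\ga)})^{(n-2\ga)/n} = (\int w_{\ve}^{2n/(n-2\ga)})^{(n-2\ga)/n}(1 + O(\ve^{n-2\ga}))$. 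Hence $\overline{I}_{\hh}^{\ga}[\Phi_\ve] = \Lambda^{\ga}(\ms^n,[g_c]) - c_{n,\ga}' A\,\ve^{n-2\ga} + o(\ve^{n-2\ga})$, which is strictly below $\Lambda^{\ga}(\ms^n,[g_c])$ for small $\ve$ provided $A > 0$. In the remaining case $A = 0$, Conjecture \ref{conj_pos} gives $(X^{n+1},\bg) \simeq \mb^{n+1}$, so $(M,[\hh]) = (\ms^n,[g_c])$ and the standard bubble itself is an exact solution — so the problem is trivially solvable.

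The main obstacle, and where the bulk of the technical work lies, is the energy expansion on the annular and exterior regions: one must show that the error from truncating the bubble against $G$ is $o(\ve^{n-2\ga})$ and that the sign-definite contribution of the mass $A$ is extracted cleanly. This requires the sharp regularity \eqref{eq_Psi} of the remainder $\Psi$ (so that $\Psi$ contributes only lower-order terms after integration by parts with the degenerate weight $\rho^{1-2\ga}$), careful handling of the weighted boundary integrals at $\partial B_{2r_0}(y)$, and — in the locally conformally flat case — verifying that in conformal normal coordinates the ambient metric corrections are genuinely negligible at order $\ve^{n-2\ga}$, which is the fractional analogue of the computation underlying Schoen's resolution of the Yamabe problem and Escobar's of the boundary Yamabe problem. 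The $n=2$ case must be treated separately since "locally conformally flat" is automatic and one relies instead on the low-dimensional structure, paralleling $n=3,4,5$ in \cite{Sc}.
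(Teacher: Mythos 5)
Your overall strategy is the paper's: reduce to Proposition \ref{prop_y_exist}, glue the bubble to Green's function, and use the mass $A>0$ from Conjecture \ref{conj_pos} to push the energy strictly below $\Lambda^{\ga}(\ms^n,[g_c])$, with the case $A=0$ dispatched by the rigidity statement. There is, however, one concrete gap: the flatness of the \emph{bulk} metric near $y$. You attribute the smallness of the ambient metric corrections to working in conformal normal coordinates and absorbing a conformal factor via \eqref{eq_conf_cov}, as in Lee--Parker/Schoen. But in this problem only the boundary metric $\hh$ may be conformally changed; once $\hh$ is fixed, the geodesic defining function and hence $\bg=\rho^2g^+$ are determined (Remark \ref{rmk_main_l}(1)), so choosing $\hh$ flat near $y$ only gives $\bg|_{x_N=0}=\delta$ and says nothing by itself about the expansion in $x_N$. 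What is actually needed, and what the Poincar\'e--Einstein hypothesis is used for beyond making $E(\rho)$ small, is that in the expansion \eqref{eq_bg_exp} all locally determined coefficients vanish, $h^{(1)}=\cdots=h^{(n-1)}=0$, so that $\bg_{ij}=\delta_{ij}+O(x_N^n)$; this is the paper's Lemma \ref{lemma_h^m}, proved from Graham's expansion for Einstein metrics. Without it, a nonvanishing $h^{(m)}$ with $m<n-2\ga$ would contribute metric errors of order $\ep^m$ in the inner region, which are not $o(\ep^{n-2\ga})$ and could swamp the mass term; so this step must be supplied, not inferred from the choice of coordinates.

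Two further, more minor points. First, even with $\bg=\delta+O(x_N^n)$ the errors are not $o(\ep^{n-2\ga})$ for a fixed cutoff radius: as in Proposition \ref{prop_lcf} they take the form $C\ep^{n-2\ga}\vr_0^{\min\{\vt_1,2\ga\}}+C\ep^{n-2\ga+\vt_2}\vr_0^{-(n-2\ga+\vt_2)}$, so the strict inequality requires the two-parameter choice $\ep\ll\vr_0\ll1$, with $\vr_0$ fixed first in terms of $A>0$; your bookkeeping should be phrased accordingly. Second, your test function (a partition-of-unity interpolation between $W_\ep$ and $\ep^{(n-2\ga)/2}G/g_{n,\ga}$) differs only cosmetically from the paper's \eqref{eq_Phi}, which keeps $W_\ep$ unchanged in $B^N(0,\vr_0)$ and outside multiplies $G-\chi_{\vr_0}\Psi$ by the interpolating factor $V_{\ep,\vr_0}$, subtracting the remainder $\Psi$ in the annulus precisely so that the regularity \eqref{eq_Psi} yields the clean boundary-term extraction of $A$; your treatment of $n=2$ (using $h^{(1)}=0$, so $\bg=\delta+O(|x|^2)$ and errors of size $O(\vr_0^{2\ga}\ep^{2-2\ga})$) and of the case $A=0$ agrees with the paper.
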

\begin{rmk}\label{rmk_sec_fund}
(1) Let us set a 2-tensor
\[F = \rho \(\text{Ric}[g^+] + ng^+\) \quad \text{in } X,\]
which is identically 0 if $(X, g^+)$ is Poincar\'e-Einstein.
As a matter of the fact, if $M$ is locally conformally flat, the only property of the tensor $F$ necessary to derive Theorem \ref{thm_main}
is that $\pa_\rho^m F|_{\rho = 0} = 0$ for $m = 0, \cdots, n-1$ (refer to Lemma \ref{lemma_h^m}).
We guess that \eqref{eq_G_1}, \eqref{eq_Psi}, and the condition on $A$ are still valid under this assumption.
Similarly, for the case $n = 2$, the assumption $\pa_\rho^m F|_{\rho = 0} = 0$ for $m = 0, 1$ would suffice.

\medskip \noindent (2) Since $(X^{n+1}, g^+)$ is Poincar\'e-Einstein, the second fundamental form on $M$ is trivial. Thus the mean curvature $H$ on $M$ vanishes and $M$ is umbilic.
\end{rmk}

This paper is organized as follows:
In Section \ref{sec_Mar}, we establish Theorem \ref{thm_main_l} by intensifying the ideas of Marques \cite{Ma2} and Gonz\'alez-Qing \cite{GQ}.
Section \ref{sec_Al} provides the proof of Theorem \ref{thm_main_l_2} which further develops the approach of Almaraz \cite{Al} and Gonz\'alez-Wang \cite{GW}.
In Section \ref{sec_lcf}, Theorem \ref{thm_main} is achieved under the validity of the positive mass theorem.
In particular, Subsection \ref{subsec_Green} is devoted to investigate the existence, uniqueness, positivity of Green's function (i.e. Proposition \ref{prop_G_exp}).
Then we are concerned with the case that $M$ is locally conformally flat (in Subsection \ref{subsec_lcf}) and 2-dimensional (in Subsection \ref{subsec_2_dim}).
Finally, we examine the asymptotic behavior of the bubble $W_{1,0}$ near infinity in Appendix \ref{sec_app},
and compute some integrations regarding $W_{1,0}$ which are needed in the energy expansions in Appendix \ref{sec_app_2}.

\bigskip \noindent \textbf{Notations.}

\medskip \noindent - The Einstein convention is used throughout the paper. The indices $i,\, j,\, k$ and $l$ always take values from $1$ to $n$, and $a$ and $b$ range over values from $1$ to $n+1$.

\medskip \noindent - For a tensor $T$, notations $T_{;a}$ and $T_{,a}$ indicate covariant differentiation and partial differentiation of $T$, respectively.

\medskip \noindent - For a tensor $T$ and a number $q \in \mn$, we use
\[\text{Sym}_{i_1 \cdots i_q} T_{i_1 \cdots i_q} = {1 \over q!} \sum_{\sigma \in S_q} T_{i_{\sigma(1)} \cdots i_{\sigma(q)}}\]
where $S_q$ is the group of all permutations of $q$ elements.

\medskip \noindent - We denote $N = n+1$. Also, for $x \in \mr^N_+ := \{(x_1, \cdots, x_n, x_N) \in \mr^N: x_N > 0\}$,
we write $\bx = (x_1, \cdots, x_n, 0) \in \partial \mr^N_+ \simeq \mr^n$ and $r = |\bx|$.

\medskip \noindent - For $n > 2\ga$, we set $p = (n+2\ga)/(n-2\ga)$.

\medskip \noindent - For any $\vr > 0$, $B^n(0,\vr)$ and $B^N_+(0,\vr)$ are the $n$-dimensional ball and the $N$-dimensional upper half-ball centered at 0 whose radius is $\vr$, respectively.

\medskip \noindent - $|\ms^{n-1}|$ is the surface area of the $(n-1)$-dimensional unit sphere $\ms^{n-1}$.

\medskip \noindent - For any $t \in \mr$, let $t_+ = \max\{0, t\} \ge 0$ and $t_- = \max\{0, -t\} \ge 0$ so that $t = t_+ - t_-$.

\medskip \noindent - The following positive constants are given in \eqref{eq_yamabe_e_2}, \eqref{eq_bubble} and \eqref{eq_bubble_2}:
\begin{equation}\label{eq_const}
\kappa_{\ga} = {\Gamma(\ga) \over 2^{1-2\ga} \Gamma(1-\ga)},
\quad p_{n,\ga} = {\Gamma\({n+2\ga \over 2}\) \over \pi^{n/2}\Gamma(\ga)},
\quad g_{n,\ga} = {\Gamma\({n-2\ga \over 2}\) \over \pi^{n/2} 2^{2\ga}\Gamma(\ga)},
\quad \alpha_{n, \ga} = 2^{n-2\ga \over 2} \({\Gamma\({n+2\ga \over 2}\) \over \Gamma\({n-2\ga \over 2}\)}\)^{n-2\ga \over 4\ga}.
\end{equation}

\medskip \noindent - $C > 0$ is a generic constant which may vary from line to line.

\section{Non-minimal and Non-umbilic Conformal Infinities}\label{sec_Mar}
\subsection{Geometric Background}
We initiate this section by proving that the sign of the mean curvature, \eqref{eq_main_l_1} and non-umbilicity of a point on $M$ are intrinsic conditions.
\begin{lemma}\label{lemma_cur}
Suppose that $(X, g^+)$ be an asymptotically hyperbolic manifold with conformal infinity $(M, [\hh])$.
Moreover, let $\rho$ and $\trho$ be the geodesic boundary defining functions associated to two representatives $\hh$ and $\tilde{h}$ of the class $[\hh]$, respectively.
We also define $\bg = \rho^2 g^+$ and $\tilde{g} := \trho^2 g^+$,
denote by $\pi = -\bg_{,N}/2$ and $\tilde{\pi}$ the second fundamental forms of $(M, \hh) \subset (\ox, \bg)$ and $(M, \tilde{h}) \subset (\ox, \tilde{g})$, respectively,
and set $H = \bg^{ij}\pi_{ij}/n$ and $\widetilde{H} = \tilde{g}^{ij} \tilde{\pi}_{ij}/n$.
Then we have
\begin{equation}\label{eq_cur_1}
C^{-1} \le {\trho \over \rho} \le C \quad \text{in } X \quad \text{and} \quad H = \left.\({\trho \over \rho}\)\right|_{\rho = 0} \widetilde{H} \quad \text{on } M
\end{equation}
for some $C > 1$. Furthermore if $H = 0$ on $M$, then
\begin{equation}\label{eq_cur_2}
\pi = \left.\({\rho \over \trho}\)\right|_{\rho = 0} \tilde{\pi} \quad \text{on } M.
\end{equation}
\end{lemma}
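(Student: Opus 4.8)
The plan is to exploit the conformal relationship between the two geodesic defining functions and to compute how the boundary metric and its normal derivative transform. First I would recall the standard fact (see e.g.\ Graham--Zworski \cite{GZ}) that since $\rho$ and $\trho$ are both geodesic defining functions for $g^+$ associated to conformal representatives $\hh$ and $\tilde h$, we may write $\trho = e^{\omega} \rho$ for a function $\omega$ on $\ox$, and the geodesic condition $|d\trho|_{\trho^2 g^+} = 1 = |d\rho|_{\rho^2 g^+}$ near $M$ forces a transport equation for $\omega$ along the $\rho$-flow; in particular $\omega$ is smooth up to $M$ and $e^{\omega}|_M$ is precisely the positive function $w$ with $\tilde h = w^2 \hh$ (up to the relevant power). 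This immediately gives the two-sided bound $C^{-1} \le \trho/\rho \le C$ in $X$, since $\omega$ is bounded on the compact manifold $\ox$, after noting that away from a collar neighbourhood both $\rho$ and $\trho$ are bounded above and below by positive constants.

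Next I would set $\phi := (\trho/\rho)|_{\rho=0} = e^{\omega}|_M > 0$ and compute the second fundamental forms. Writing $\bg = d\rho^2 \oplus h_\rho$ and $\tilde g = d\trho^2 \oplus \tilde h_{\trho}$ near $M$, we have $\pi = -\tfrac12 \pa_\rho h_\rho|_{\rho=0}$ and $\tilde\pi = -\tfrac12 \pa_{\trho} \tilde h_{\trho}|_{\trho=0}$. Since $\tilde g = (\trho/\rho)^2 \bg = e^{2\omega}\bg$, I would expand $e^{2\omega} = \phi^2(1 + c_1 \rho + \cdots)$ along the normal direction, use that $\pa_\rho = \pa_\rho\trho \cdot \pa_{\trho}$ with $\pa_\rho \trho \to \phi$ at $M$, and carefully track the leading terms. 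Taking the trace with respect to the appropriate metric, the $O(1)$ term yields $H = \phi\, \widetilde H$ on $M$: the scaling of the metric by $\phi^2$ and of the normal coordinate by $\phi^{-1}$ combine so that $H$, which has the dimensions of an inverse length relative to the ambient scale but is computed with a rescaled normal, picks up exactly one factor of $\phi$. For the tensorial statement \eqref{eq_cur_2}, when $H = 0$ the trace-free part of $\pi$ is all of $\pi$, and one checks that the first-order correction term in the expansion of $e^{2\omega}$ contributes only to the trace part (it is proportional to $\bg_{ij}$ at $M$), hence drops out, leaving $\pi = \phi^{-1}\tilde\pi$; note the inverse power relative to \eqref{eq_cur_1}, which is consistent with $\pi$ being essentially $H$ plus a trace-free piece that scales oppositely under the combined rescaling.

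The main obstacle I anticipate is the bookkeeping in the normal expansion: one must distinguish the partial derivative $\pa_\rho$ from $\pa_{\trho}$, and the relation between them is itself determined by the transport equation for $\omega$, so the coefficient $c_1$ above is not free but is tied to $\pa_\rho\omega|_{\rho=0}$, which in turn is governed by the geodesic normalization. Getting the powers of $\phi$ exactly right — and verifying that the $H=0$ hypothesis genuinely kills the unwanted first-order term rather than merely simplifying it — is where the care is needed. A clean way to organize this is to first derive the ODE satisfied by $\omega$ (this is classical: $2\rho\,\pa_\rho\omega + \rho|\nabla_{h_\rho}\omega|^2 \cdot(\text{lower order}) = $ something that forces $\pa_\rho\omega|_M = 0$ when expressed correctly, so that $e^{2\omega} = \phi^2 + O(\rho^2)$ along the normal), after which both identities in \eqref{eq_cur_1} and the identity \eqref{eq_cur_2} fall out of matching coefficients. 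I would present the transport equation, extract $\pa_\rho\omega|_M = 0$, and then read off the three conclusions in turn.
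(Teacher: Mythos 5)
Your proposal is correct, but it takes a genuinely different route from the paper. The paper simply quotes two known facts: the mean-curvature identity in \eqref{eq_cur_1} is cited from Gonz\'alez--Qing [GQ, Lemma 2.3], the bound on $\trho/\rho$ follows from $\tilde h = (\trho/\rho)^2\hh$ on $M$, and \eqref{eq_cur_2} is obtained from Escobar's conformal transformation law for the \emph{trace-free} part $T=\pi-H\bg$ (so that when $H=0$, and hence $\widetilde H=0$, one has $\tilde\pi=\widetilde T=(\trho/\rho)T=(\trho/\rho)\pi$). You instead work directly with $\omega=\log(\trho/\rho)$: the eikonal condition $|d\trho|_{\tilde g}=1$ together with $|d\rho|_{\bg}=1$ gives $2\pa_\rho\omega+\rho|d\omega|^2_{\bg}=0$, hence $\pa_\rho\omega|_{M}=0$, and then the standard conformal transformation of the second fundamental form, $\tilde\pi_{ij}=e^{\omega}\(\pi_{ij}+(\pa_\nu\omega)\,\bg_{ij}\)$, yields both identities at once since the normal-derivative term vanishes; this is self-contained and in fact proves \eqref{eq_cur_2} \emph{without} the hypothesis $H=0$, which the paper's citation-based argument needs. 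One small correction to your middle paragraph: the unwanted term proportional to $\bg_{ij}$ is not ``killed by $H=0$'' --- a pure-trace tensor does not vanish just because $\operatorname{tr}\pi=0$; it disappears either because $\pa_\rho\omega|_M=0$ (your transport-equation mechanism, which is the one your final paragraph correctly builds on) or, as in the paper, because one compares trace-free parts and uses $\widetilde H=0$ deduced from the first identity. With that point stated cleanly, your plan (derive the transport equation, extract $\pa_\rho\omega|_M=0$, read off the three conclusions) is complete and sound.
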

\begin{proof}
The assertion on $H$ in \eqref{eq_cur_1} is proved in \cite[Lemma 2.3]{GQ}.
For the first inequality in \eqref{eq_cur_1}, it suffices to observe that $\trho/\rho$ is bounded above and bounded away from 0 near $M$.
Indeed, this follows from the fact that
\[\tilde{h} = \tilde{g}|_M = \trho^2 g^+|_M = \({\trho \over \rho}\)^2 \bg|_M = \({\trho \over \rho}\)^2 \hh \quad \text{on } M.\]

Let us define tensors $T = \pi - H\bg$ and $\widetilde{T} = \tilde{\pi} - \widetilde{H} \tilde{g}$ on $M$.
Then we see from \cite[Proposition 1.2]{Es2} that
\[\tilde{\pi} = \widetilde{T} = \({\trho \over \rho}\)T = \({\trho \over \rho}\)\pi \quad \text{on } M\]
provided $H = 0$ on $M$, which confirms \eqref{eq_cur_2}.
\end{proof}

Given any fixed point $y \in M$, let $\bx = (x_1, \cdots, x_n)$ be normal coordinates on $M$ at $y$ (identified with $0$) and $x_N = \rho$.
In other words, let $x = (\bx, x_N)$ be {\it Fermi coordinates}.
The following lemma provides the expansion of the metric $\bg$ near $y = 0$. See \cite[Lemma 3.1]{Es} for its proof.
\begin{lemma}\label{lemma_metric}
Suppose that $(X, g^+)$ is an asymptotically hyperbolic manifold and $y$ is an arbitrary point on the conformal infinity $(M, [\hh])$. Then, in terms of Fermi coordinates around $y$, it holds that
\[\sqrt{|\bg|}(x) = 1 - nH x_N + {1 \over 2} \(n^2H^2 - \|\pi\|^2 - R_{NN}[\bg]\) x_N^2 - H_{,i}x_ix_N - {1 \over 6} R_{ij}[\hh] x_ix_j + O(|x|^3)\]
and
\[\bg^{ij}(x) = \delta_{ij} + 2 \pi_{ij} x_N + {1 \over 3} R_{ikjl}[\hh] x_kx_l + \bg^{ij}_{\phantom{ij},Nk} x_Nx_k + (3\pi_{ik}\pi_{kj} + R_{iNjN}[\bg]) x_N^2 + O(|x|^3)\]
near $y$ (identified with a small half-ball $B^N_+(0, 2\eta_0)$ near 0 in $\mr_+^N$).
Here $\|\pi\|^2 = \hh^{ik}\hh^{jl}\pi_{ij}\pi_{kl}$ is the square of the norm of the second fundamental form $\pi$ on $(M, \hh) \subset (\ox, \bg)$,
$R_{ikjl}[\hh]$ is a component of the Riemannian curvature tensor on $M$, $R_{iNjN}[\bg]$ is that of the Riemannian curvature tensor in $X$,
$R_{ij}[\hh] = R_{ikjk}[\hh]$ and $R_{NN}[\bg] = R_{iNiN}[\bg]$.
Every tensor in the expansions is computed at $y = 0$.
\end{lemma}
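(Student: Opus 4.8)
The plan is to reduce both expansions in Lemma~\ref{lemma_metric} to a second-order Taylor expansion of the tangential block $(\bg_{ij})_{1\le i,j\le n}$ in the Fermi coordinates $x=(\bx,x_N)$, and then to read off the inverse metric and the volume element by elementary linear algebra. The first step is to exploit the structure of the Fermi chart: since $x_N=\rho$ is a geodesic defining function, the integral curves of $\pa_N$ are unit-speed geodesics meeting $M$ orthogonally, so the Gauss lemma forces $\bg_{NN}\equiv 1$ and $\bg_{iN}\equiv 0$. Hence $\bg=dx_N^2\oplus h_{x_N}$ with $h_{x_N}=(\bg_{ij})_{1\le i,j\le n}$, and in particular $\sqrt{|\bg|}=\sqrt{\det(\bg_{ij})_{1\le i,j\le n}}$; thus it suffices to expand this single $n\times n$ block.

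Next I would compute the three ingredients of $\bg_{ij}(\bx,x_N)=\bg_{ij}(\bx,0)+\pa_N\bg_{ij}(\bx,0)\,x_N+\tfrac12\pa_N^2\bg_{ij}(\bx,0)\,x_N^2+O(x_N^3)$, each then expanded in $\bx$ about $y=0$. (i) The restriction $\bg_{ij}(\bx,0)=\hh_{ij}(\bx)$ is a metric written in its own normal coordinates at $y$, so the classical normal-coordinate expansion gives $\hh_{ij}(\bx)=\delta_{ij}-\tfrac13 R_{ikjl}[\hh]\,x_kx_l+O(|\bx|^3)$, with no linear term. (ii) By the definition $\pi=-\bg_{,N}/2$, one has $\pa_N\bg_{ij}(\bx,0)=-2\pi_{ij}(\bx)$; Taylor expanding the $\hh$-tensor $\pi_{ij}$ along $M$ about $y$ (where the $\hh$-Christoffel symbols vanish) produces $-2\pi_{ij}(0)-2\pi_{ij,k}(0)\,x_k+O(|\bx|^2)$, i.e. the linear term $-2\pi_{ij}x_N$ and the cross term whose leading coefficient is denoted $\bg^{ij}_{\phantom{ij},Nk}$ in the statement. (iii) For $\pa_N^2\bg_{ij}$ at $x=0$ I would differentiate the Riccati (radial-curvature) equation governing the evolution of the second fundamental form of the level hypersurfaces $\{x_N=\mathrm{const.}\}$ along the normal geodesics; evaluated at $x=0$ this writes $\pa_N^2\bg_{ij}(0,0)$ as a fixed combination of $\pi_{ik}\pi_{kj}$ and the ambient radial curvature $R_{iNjN}[\bg]$ (the combination being $2\pi_{ik}\pi_{kj}-2R_{iNjN}[\bg]$, which a consistency check against the final formula confirms).

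It then remains to assemble the two statements. For the inverse metric, invert $\delta_{ij}+P_{ij}$ with $P=O(|x|)$ by its Neumann series to second order: the $x_N^2$-coefficient of $\bg^{ij}$ is $4\pi_{ik}\pi_{kj}-\tfrac12\pa_N^2\bg_{ij}(0,0)=3\pi_{ik}\pi_{kj}+R_{iNjN}[\bg]$, while the $x_kx_l$- and $x_Nx_k$-terms come directly from (i)--(ii) and give $\tfrac13 R_{ikjl}[\hh]x_kx_l$ and $\bg^{ij}_{\phantom{ij},Nk}x_Nx_k$. For the volume element, use $\sqrt{\det(\delta_{ij}+P_{ij})}=1+\tfrac12\operatorname{tr}P+\tfrac18(\operatorname{tr}P)^2-\tfrac14\operatorname{tr}(P^2)+O(|x|^3)$; substituting the pieces from (i)--(iii) and using $\operatorname{tr}\pi=nH$, $\sum_{i,j}\pi_{ij}^2=\|\pi\|^2$, $\sum_i R_{iNiN}[\bg]=R_{NN}[\bg]$ and $\sum_i R_{ikil}[\hh]=R_{kl}[\hh]$ at $y$, one collects exactly the terms $-nHx_N$, the mean-curvature-gradient cross term, $-\tfrac16 R_{ij}[\hh]x_ix_j$ and $\tfrac12(n^2H^2-\|\pi\|^2-R_{NN}[\bg])x_N^2$, i.e. the asserted expansion; this reproduces \cite[Lemma 3.1]{Es}.

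There is no conceptual obstacle here — the statement is pure bookkeeping — but two points need care. The main one is ingredient (iii): one must derive the Riccati equation for the level-set second fundamental forms correctly and keep the orientation of the unit normal, the sign convention $\pi=-\bg_{,N}/2$, and the curvature sign convention mutually consistent, since an error there simultaneously corrupts the $x_N^2$-coefficients of both $\bg^{ij}$ and $\sqrt{|\bg|}$. The second is to propagate the normalization $H=\bg^{ij}\pi_{ij}/n$ faithfully through all the traces, especially in the coefficient of the $x_ix_N$ cross term, whose value is sensitive to whether $H$ stands for the averaged or the total trace of $\pi$. Everything else is a mechanical Taylor expansion.
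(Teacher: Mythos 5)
Your proposal is correct in substance, and it is essentially the computation behind the result: the paper itself gives no argument for this lemma but simply cites \cite[Lemma 3.1]{Es}, and the proof there is exactly the Fermi-coordinate Taylor expansion you describe (Gauss lemma giving $\bg = dx_N^2 \oplus h_{x_N}$, the normal-coordinate expansion of $\hh$, $\pa_N \bg_{ij} = -2\pi_{ij}$, the radial Riccati equation for $\pa_N^2 \bg_{ij}$, then the Neumann series for the inverse and the $\det(I+P)$ expansion). Two caveats. First, your step (iii) as written obtains $\pa_N^2\bg_{ij}(y) = 2\pi_{ik}\pi_{kj} - 2R_{iNjN}[\bg]$ partly by ``consistency with the final formula,'' which is circular if the lemma is what you are proving; you should actually derive this coefficient from the Jacobi/Riccati equation with the paper's conventions $\pi = -\bg_{,N}/2$ and $R_{NN}[\bg] = R_{iNiN}[\bg]$ (a quick model check on a round sphere in flat space and on hyperbolic space pins the signs, and the value you state is indeed the correct one). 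Second, with the paper's normalization $H = \bg^{ij}\pi_{ij}/n$, your computation of the mixed term gives $-\pi_{ii,k}(y)\,x_kx_N = -nH_{,k}(y)\,x_kx_N$ (using $\hh^{ij}_{\phantom{ij},k}(y) = 0$ in normal coordinates), i.e.\ a factor $n$ more than the $-H_{,i}x_ix_N$ printed in the statement; so either carry the factor $n$ or note explicitly which normalization of $H$ that coefficient refers to. This discrepancy is immaterial for the paper's applications, since that term is odd in $\bx$ and drops out of all the energy integrals, and $H \equiv 0$ in the cases where the expansion is used to higher order, but your write-up should not silently reproduce it. All the remaining coefficients you assemble, namely $3\pi_{ik}\pi_{kj} + R_{iNjN}[\bg]$ for the $x_N^2$-term of $\bg^{ij}$, and $-nHx_N$, $\tfrac12(n^2H^2 - \|\pi\|^2 - R_{NN}[\bg])x_N^2$, $-\tfrac16 R_{ij}[\hh]x_ix_j$ for $\sqrt{|\bg|}$, check out.
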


Now notice that the transformation law of the scalar curvature (see (1.1) of \cite{Es}) implies
\begin{equation}\label{eq_R_gp}
R[g^+] + n(n+1) = 2n \({\pa_{\rho} \sqrt{|\bg|} \over \sqrt{|\bg|}}\) \rho + R[\bg] \rho^2.
\end{equation}
It readily shows that \eqref{eq_main_l_1} and \eqref{eq_main_l_2} indicate $H = 0$ on $M$.
\begin{lemma}\label{lemma_H=0}
Suppose that $(X, g^+)$ is an asymptotically hyperbolic manifold with conformal infinity $(M, [\hh])$.
If $R[g^+] + n(n+1) = o(\rho)$ as $\rho \to 0$, then $H = 0$ on $M$.
\end{lemma}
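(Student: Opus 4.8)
The plan is to read off the vanishing of $H$ directly from the expansion in Lemma~\ref{lemma_metric} together with the transformation law \eqref{eq_R_gp}. First I would fix an arbitrary point $y \in M$ and work in Fermi coordinates $x = (\bx, x_N)$ around $y$ as in Lemma~\ref{lemma_metric}, so that $x_N = \rho$. By Lemma~\ref{lemma_metric},
\[
\sqrt{|\bg|}(x) = 1 - nH x_N + O(|x|^2) \quad \text{near } y,
\]
hence differentiating in $\rho = x_N$ and evaluating along $\bx = 0$ gives
\[
\frac{\pa_\rho \sqrt{|\bg|}}{\sqrt{|\bg|}}\bigg|_{\bx = 0} = -nH + O(\rho) \quad \text{as } \rho \to 0.
\]

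Next I would substitute this into \eqref{eq_R_gp}. Since $R[\bg]$ is a smooth function on the compact manifold $\ox$, the term $R[\bg]\rho^2$ is $O(\rho^2) = o(\rho)$, and therefore
\[
R[g^+] + n(n+1) = 2n\(\frac{\pa_\rho \sqrt{|\bg|}}{\sqrt{|\bg|}}\)\rho + R[\bg]\rho^2 = -2n^2 H(y)\, \rho + o(\rho) \quad \text{as } \rho \to 0,
\]
where the $o(\rho)$ is uniform near $y$. Comparing with the hypothesis $R[g^+] + n(n+1) = o(\rho)$ as $\rho \to 0$, we divide by $\rho$ and let $\rho \to 0$ to conclude $-2n^2 H(y) = 0$, i.e. $H(y) = 0$. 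Since $y \in M$ was arbitrary, $H \equiv 0$ on $M$.

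The argument is essentially a one-line matching of the leading $\rho$-coefficients, so there is no serious obstacle; the only point requiring a little care is that the error terms in Lemma~\ref{lemma_metric} and in \eqref{eq_R_gp} are genuinely $o(\rho)$ uniformly in $y$, which follows from the smoothness of $\bg$ up to the boundary of the compact manifold $\ox$ (so that $R[\bg]$ is bounded and the $O(|x|^2)$ remainder in $\sqrt{|\bg|}$ is controlled uniformly on a neighborhood of $M$). One should also note that the conclusion is consistent with Lemma~\ref{lemma_cur}: vanishing of $H$ is an intrinsic condition, independent of the choice of representative $\hh \in [\hh]$, so it suffices to verify it for the particular geodesic defining function $\rho$ appearing in the hypothesis. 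This same computation, pushed to higher order in $\rho$, is exactly what will later feed into the analyses of \eqref{eq_main_l_1} and \eqref{eq_main_l_2}.
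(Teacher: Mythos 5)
Your proposal is correct and follows essentially the same route as the paper: divide \eqref{eq_R_gp} by $\rho$, identify $\pa_\rho\sqrt{|\bg|}/\sqrt{|\bg|}\big|_{\rho=0}=-nH(y)$ via Lemma~\ref{lemma_metric}, and let $\rho\to 0$ to conclude $-2n^2H(y)=0$. Your added remarks on uniformity of the error terms and on the coordinate-independence of the conclusion are fine but not needed beyond what the paper's one-line argument already uses.
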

\begin{proof}
Fix any $y \in M$. By \eqref{eq_R_gp}, we have
\[o(1) = 2n \({\pa_{\rho} \sqrt{|\bg|}(y) \over \sqrt{|\bg|}(y)}\) + R[\bg](y) \rho + o(1) = -2n^2 H(y) + o(1)\]
as a point tends to $y$.
This implies $H(y) = 0$, and therefore the assertion follows.
\end{proof}

We next select a good background metric on $X$ under the validity of hypothesis \eqref{eq_main_l_1}.
\begin{lemma}\label{lemma_rep}
Let $(X, g^+)$ be an asymptotically hyperbolic manifold such that condition \eqref{eq_main_l_1} holds.
Then the conformal infinity $(M, [\hh])$ admits a representative $\hh \in [\hh]$, the geodesic boundary defining function $\rho$ and the metric $\bg = \rho^2 g^+$ satisfying
\begin{equation}\label{eq_rep}
H = 0 \text{ on } M, \quad R_{ij}[\hh](y) = 0 \quad \text{and} \quad R_{\rho\rho}[\bg](y) = {1-2n \over 2(n-1)} \|\pi(y)\|^2
\end{equation}
for a fixed point $y \in M$.
\end{lemma}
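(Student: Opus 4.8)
The plan is to combine the classical conformal normal coordinate construction of Lee--Parker \cite{LP} on the boundary $M$ with the freedom (limited, as Remark \ref{rmk_main_l}(1) emphasizes) that a choice of representative $\hh \in [\hh]$ forces a geodesic defining function $\rho$ and a compact metric $\bg = \rho^2 g^+$. First I would invoke the classical result that, given any metric in the conformal class on the $n$-manifold $M$, one may conformally rescale so that the new boundary metric $\hh$ satisfies $R_{ij}[\hh](y) = 0$ at the prescribed point $y$ (indeed one can kill the symmetrized covariant derivatives of the Ricci tensor to high order, but vanishing of $R_{ij}[\hh](y)$ alone suffices here). By Lemma \ref{lemma_cur}, both the sign of $H$ and condition \eqref{eq_main_l_1} are intrinsic — independent of the representative — so this conformal change does not disturb \eqref{eq_main_l_1}; and by Lemma \ref{lemma_H=0}, \eqref{eq_main_l_1} already forces $H = 0$ on all of $M$. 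Thus after choosing such a representative $\hh$ and letting $\rho$ be its associated geodesic defining function and $\bg = \rho^2 g^+$, the first two conditions in \eqref{eq_rep} hold automatically.

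The remaining task is to identify $R_{\rho\rho}[\bg](y)$. Here I would use the contracted Gauss equation (or equivalently the trace of the second variation of the induced metric) relating the ambient Ricci curvature in the normal direction to intrinsic quantities of $M$ and the second fundamental form, together with the expansion of $\sqrt{|\bg|}$ and $\bg^{ij}$ from Lemma \ref{lemma_metric}. Concretely, differentiating the metric expansion twice in $x_N = \rho$ at $y$ and using $H = 0$, one reads off $R_{NN}[\bg](y)$ in terms of $\|\pi(y)\|^2$, the scalar curvature $R[\hh](y)$, and $R[\bg]$; then the key input is that \eqref{eq_main_l_1}, via the transformation law \eqref{eq_R_gp}, pins down the $\rho^2$-coefficient on the left-hand side to be $o(1)$, which converts the identity $R[g^+] + n(n+1) = 2n(\partial_\rho \sqrt{|\bg|}/\sqrt{|\bg|})\rho + R[\bg]\rho^2$ into a relation forcing $R[\bg](y)$ to equal a specific combination of $\|\pi(y)\|^2$ and $R[\hh](y)$. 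Feeding $R_{ij}[\hh](y) = 0$ (hence $R[\hh](y) = 0$) into this combination yields the claimed value $R_{\rho\rho}[\bg](y) = \tfrac{1-2n}{2(n-1)}\|\pi(y)\|^2$.

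The main obstacle I anticipate is the bookkeeping in the second step: there are several curvature identities in play — the Gauss--Codazzi relations on the boundary, the Riccati/second-variation equation for $\partial_\rho h_\rho$, and the decomposition of $R[\bg]$ at $y$ into $2R_{NN}[\bg] + R[\hh] + (\text{terms in }\pi)$ — and the constant $\tfrac{1-2n}{2(n-1)}$ is sensitive to getting every trace and every factor of $H_{,i}$ and $\pi_{ik}\pi_{kj}$ right. In particular one must be careful that the Einstein-type normalization used to derive \eqref{eq_E1} and \eqref{eq_R_gp} is consistently applied, and that the vanishing of $H$ is used both to drop the $-nHx_N$ and $-H_{,i}x_ix_N$ terms in $\sqrt{|\bg|}$ and to simplify the $n^2H^2$ term. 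A useful consistency check is the Einstein case: when $(X,g^+)$ is Poincar\'e--Einstein, \eqref{eq_main_l_1} holds trivially, $\pi \equiv 0$ forces the right-hand side to vanish, and indeed $R_{\rho\rho}[\bg](y) = 0$ should drop out, which I would verify as a sanity check on the constant before finalizing.
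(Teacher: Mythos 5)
Your plan is correct and follows essentially the same route as the paper: choose $\hh$ via \cite[Theorem 5.2]{LP} so that $R_{ij}[\hh](y)=0$, get $H=0$ on $M$ from Lemmas \ref{lemma_cur} and \ref{lemma_H=0}, then divide \eqref{eq_R_gp} by $\rho^2$, use \eqref{eq_main_l_1} together with the expansion of $\sqrt{|\bg|}$ (equivalently $\bg^{ab}\bg_{ab,\rho\rho}$ at $y$) and the Gauss--Codazzi equation to arrive at $0 = 2(1-n)R_{\rho\rho}[\bg](y) + (1-2n)\|\pi(y)\|^2$. The bookkeeping you worry about is exactly the short computation the paper carries out, and your Poincar\'e--Einstein sanity check is consistent with it.
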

\begin{proof}
According to \cite[Theorem 5.2]{LP}, one may choose a representative $\hh$ of the conformal class $[\hh]$ such that $R_{ij}[\hh](y) = 0$.
Besides Lemmas \ref{lemma_H=0} and \ref{lemma_cur} assure that $H = 0$ on $M$ for any $\hh \in [\hh]$.
Hence assumption \eqref{eq_main_l_1} can be interpreted as
\begin{align*}
o(1) &= 2n \({\pa_{\rho} \sqrt{|\bg|} \over \rho \sqrt{|\bg|} }\) + R[\bg] = {n \over \rho} \bg^{ab}\bg_{ab,\rho} + R[\bg]
= n \(\bg^{ab}_{\phantom{ab},\rho}\, \bg_{ab,\rho} + \bg^{ab} \bg_{ab,\rho\rho}\) + R[\bg] + o(1) \\
&= -2n\(R_{\rho\rho}[\bg] + \|\pi\|^2\) + \(2R_{\rho\rho}[\bg] + \|\pi\|^2 + R[\hh] - H^2\) + o(1)
\end{align*}
as $\rho \to 0$ where we used $H = 0$ on $M$ for the third equality and the Gauss-Codazzi equation for the fourth equality (see the proof of Lemmas 3.1 and 3.2 of \cite{Es}).
Taking the limit to $y \in M$, we get
\[0 = 2(1-n)R_{\rho\rho}[\bg](y) + (1-2n)\|\pi(y)\|^2.\]
The third equality of \eqref{eq_rep} is its direct consequence.
\end{proof}

Lastly, we recall the function $E$ in \eqref{eq_E} and \eqref{eq_E1}. In a collar neighborhood of $M$ where $\rho = x_N$, it can be seen that
\begin{equation}\label{eq_E_2}
E(x_N) = \({n-2\ga \over 4n}\) \left[ R[\bg] - (n(n+1)+R[g^+]) x_N^{-2} \right] x_N^{1-2\ga} = -\({n-2\ga \over 2}\) \({\pa_N \sqrt{|\bg|} \over \sqrt{|\bg|}}\) x_N^{-2\ga}
\end{equation}
where the second equality holds because of \eqref{eq_R_gp}.

\subsection{Non-minimal Conformal Infinity}\label{subsec_nm_ene}
Let $y \in M$ be a point identified with $0 \in \mr^n$ such that $H(y) < 0$ and $B^N_+(0, 2\eta_0) \subset \mr_+^N$ its neighborhood which appeared in Lemma \ref{lemma_metric}.
Also, we select any smooth radial cut-off function $\psi \in C^{\infty}_c(\mr_+^N)$ such that $\psi = 1$ in $B^N_+(0, \eta_0)$ and 0 in $\mr_+^N \setminus B^N_+(0, 2\eta_0)$.
In this subsection, we shall show that $\overline{I}_{\hh}^{\ga}[\psi W_{\ep}] < \Lambda^{\ga}(\ms^n, [g_c])$ for any $n \ge 2$ and $\ga \in (0,1/2)$ where $W_{\ep} = W_{\ep,0}$ as before.

\medskip
Before starting the computation, let us make one useful observation:
Assume that $n > m + 2\ga$ for a certain $m \in \mn$.
Then we get from \eqref{eq_W_dec_2} and \eqref{eq_W_dec_3} that
\begin{equation}\label{eq_rem}
\int_{B^N_+(0, \eta_0)} x_N^{1-2\ga} |x|^{m+1} |\nabla W_\ep|^2 dx = \eta_0^{m-\zeta} \int_{B^N_+(0, \eta_0)} x_N^{1-2\ga} |x|^{m + \zeta} |\nabla W_\ep|^2 dx = O(\ep^{m + \zeta}) = o(\ep^m)
\end{equation}
by choosing a small number $\zeta > 0$ such that $n > m + 2\ga + \zeta$.

\begin{prop}\label{prop_nonm}
Suppose that $(X^{n+1},g^+)$ is an asymptotically hyperbolic manifold with conformal infinity $(M, [\hh])$ and $y \in M$ be a point such that $H(y) < 0$.
Then for any $\ep > 0$ small, $n \ge 2$ and $\ga \in (0,1/2)$, we have
\begin{equation}\label{eq_nonm}
\begin{aligned}
\overline{I}_{\hh}^{\ga}[\psi W_{\ep}] &\le \Lambda^{\ga}(\ms^n, [g_c]) + \ep \underbrace{\left[{2n^2-2n+1-4\ga^2 \over 2(1-2\ga)}\right]
\, \left[{\kappa_{\ga} \int_{\mr^N_+} x_N^{2-2\ga} |\nabla W_1|^2 dx \over \int_{\mr^n} w_1^{p+1} dx}\right]}_{>0} \, H(y) + o(\ep) \\
& < \Lambda^{\ga}(\ms^n, [g_c])
\end{aligned}
\end{equation}
where $\overline{I}_{\hh}^{\ga}$ is the $\ga$-Yamabe functional given in \eqref{eq_yamabe_q}, and $\Lambda^{\ga}(\ms^n, [g_c])$ and $\kappa_{\gamma}$ are positive constants introduced in \eqref{eq_Lambda} and \eqref{eq_const}.
\end{prop}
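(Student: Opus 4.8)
The plan is to compute the numerator $\bar N_\ep := \kappa_\ga \int_X (\rho^{1-2\ga}|\nabla(\psi W_\ep)|_{\bg}^2 + E(\rho)(\psi W_\ep)^2)\,dv_{\bg}$ and the denominator $\bar D_\ep := (\int_M |\psi w_\ep|^{2n/(n-2\ga)}dv_{\hh})^{(n-2\ga)/n}$ separately, each up to an $o(\ep)$ error, and then divide. The leading terms of both are those of the flat half-space (giving $\Lambda^\ga(\ms^n,[g_c])$ by \eqref{eq_Lambda}), so the whole point is to isolate the $\ep$-order correction and check that its coefficient, multiplied by $H(y)<0$, is negative. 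Since $\ga\in(0,1/2)$, the exponent $1-2\ga$ on $\rho$ is positive and the bubble $W_\ep$ and $w_\ep$ concentrate at $y$ at scale $\ep$; by the decay estimates of Appendix \ref{sec_app} (invoked through \eqref{eq_rem}) the cut-off $\psi$ contributes only $o(\ep)$, so I may work on $B^N_+(0,\eta_0)$ and replace $\psi W_\ep$ by $W_\ep$ there. All error terms of size $|x|^{m+1}$ times $x_N^{1-2\ga}|\nabla W_\ep|^2$ with $n>m+2\ga$ are $o(\ep^m)$ by \eqref{eq_rem}; here we only need $m=1$, which is why the hypothesis is $\ga\in(0,1/2)$ and $n\ge 2$ (so $n>1+2\ga$ fails only at the boundary case, but the strict inequality $n\geq 2 > 1+2\gamma$ holds since $2\gamma<1$).

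The first key step is to expand $dv_{\bg} = \sqrt{|\bg|}\,dx$ and $\bg^{ab}$ via Lemma \ref{lemma_metric}. For the gradient term, $|\nabla W_\ep|_{\bg}^2 = \bg^{ab}\pa_a W_\ep\,\pa_b W_\ep = |\nabla W_\ep|^2 + 2\pi_{ij}x_N\,\pa_i W_\ep\,\pa_j W_\ep + (\text{terms }O(|x|^2)\text{ giving }o(\ep))$, and by the radial symmetry of $W_\ep$ in $\bx$ the term $2\pi_{ij}x_N\,\pa_iW_\ep\,\pa_jW_\ep$ integrates (over spheres in $\bx$) to $\tfrac{2}{n}(\operatorname{tr}\pi)\,x_N\,|\nabla_{\bx}W_\ep|^2 = 2H x_N|\nabla_{\bx}W_\ep|^2$. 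Combined with $\sqrt{|\bg|} = 1 - nHx_N + O(|x|^2)$, the gradient contribution becomes $\kappa_\ga\int(1-nHx_N)|\nabla W_\ep|^2 + \kappa_\ga\int 2Hx_N|\nabla_{\bx}W_\ep|^2 + o(\ep)$, i.e. the flat integral plus $\ep H(y)$ times an explicit positive integral involving $W_1$ (after rescaling $x\mapsto \ep x$ and using $x_N|\nabla W_\ep|^2$ scales like $\ep$). The second key step is the $E(\rho)$ term: using the second equality in \eqref{eq_E_2}, $E(x_N) = -\tfrac{n-2\ga}{2}\tfrac{\pa_N\sqrt{|\bg|}}{\sqrt{|\bg|}}x_N^{-2\ga} = \tfrac{n-2\ga}{2}\,nH\,x_N^{-2\ga} + O(x_N^{1-2\ga})$, so $\kappa_\ga\int E(x_N)W_\ep^2 = \kappa_\ga\,\tfrac{n(n-2\ga)}{2}H\int x_N^{-2\ga}W_\ep^2 + o(\ep)$, again $\ep H(y)$ times an explicit positive constant (the integral $\int_{\mr^N_+}x_N^{-2\ga}W_1^2$ converges because $n>2\ga$ controls the behaviour near the boundary and near infinity). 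The third step is the denominator: $dv_{\hh} = \sqrt{|\hh|}\,d\bx = (1-\tfrac16 R_{ij}[\hh]x_ix_j+O(|\bx|^3))d\bx$ by the $x_N=0$ slice of Lemma \ref{lemma_metric}, and since the correction is $O(|\bx|^2)$ it contributes only $O(\ep^2)=o(\ep)$; hence $\bar D_\ep = (\int_{\mr^n}w_1^{2n/(n-2\ga)}d\bx)^{(n-2\ga)/n}(1+o(\ep))$ after rescaling.

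The final step is assembling: $\overline{I}_{\hh}^{\ga}[\psi W_\ep] = \bar N_\ep/\bar D_\ep$; the leading ratio is $\kappa_\ga\int_{\mr^N_+}x_N^{1-2\ga}|\nabla W_1|^2/(\int_{\mr^n}w_1^{2n/(n-2\ga)})^{(n-2\ga)/n} = S_{n,\ga}^{-1}\kappa_\ga = \Lambda^\ga(\ms^n,[g_c])$ by \eqref{eq_Lambda} and \eqref{eq_bubble_eq} (the latter gives $\kappa_\ga\int x_N^{1-2\ga}|\nabla W_1|^2 = \int w_1^{2n/(n-2\ga)}$ by integration by parts, so the ratio simplifies). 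Collecting the three $\ep H(y)$ contributions — the $-nH$ from $\sqrt{|\bg|}$, the $+2H$ from the metric on the gradient, and the $+\tfrac{n(n-2\ga)}{2}H$ from $E$ — and converting $\int x_N|\nabla_{\bx}W_1|^2$, $\int x_N|\nabla W_1|^2$, $\int x_N^{-2\ga}W_1^2$ to a common multiple of $\int_{\mr^N_+}x_N^{2-2\ga}|\nabla W_1|^2$ using the identities in Appendix \ref{sec_app_2}, the coefficient collapses to $\tfrac{2n^2-2n+1-4\ga^2}{2(1-2\ga)}\cdot\tfrac{\kappa_\ga\int_{\mr^N_+}x_N^{2-2\ga}|\nabla W_1|^2}{\int_{\mr^n}w_1^{p+1}}$, which is manifestly positive for $\ga\in(0,1/2)$; since $H(y)<0$ the $\ep$-term is negative, giving the strict inequality. \textbf{The main obstacle} I anticipate is the bookkeeping in this last step: correctly combining the several Appendix \ref{sec_app_2} integration-by-parts identities among $\int x_N^{a}W_1^b$ and $\int x_N^{a}|\nabla W_1|^2$ so that the three separate $H$-coefficients coalesce into the single clean constant $\tfrac{2n^2-2n+1-4\ga^2}{2(1-2\ga)}$, and in particular verifying the convergence of $\int_{\mr^N_+}x_N^{-2\ga}W_1^2$ and the rescaling powers of $\ep$ so that the odd power $\ep^1$ (not $\ep^2$) genuinely survives — the $\ga<1/2$ restriction is exactly what makes the $1-2\ga>0$ weight generate a first-order rather than higher-order term, and care is needed near both $x_N\to 0$ and $|x|\to\infty$ where the decay estimates of Appendix \ref{sec_app} must be applied.
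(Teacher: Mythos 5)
Your proposal is correct and follows essentially the same route as the paper's (sketched) proof: expand $\sqrt{|\bg|}$ and $\bg^{ij}$ via Lemma \ref{lemma_metric}, control the cut-off and higher-order errors through \eqref{eq_rem}, extract the two $\ep H$-contributions from the gradient term and the one from $E(\rho)$ via \eqref{eq_E_2}, note the boundary integral only changes at order $o(\ep)$, and combine with Lemma \ref{lemma_int_0} to obtain the coefficient $\tfrac{2n^2-2n+1-4\ga^2}{2(1-2\ga)}$. The paper itself only sketches exactly these steps (deferring details to \cite[Proposition 6.1]{CK}), so your write-up matches it in substance.
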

\begin{proof}
Since the proof is essentially the same as that of \cite[Proposition 6.1]{CK}, we briefly sketch it.
By Lemma \ref{lemma_metric} and \eqref{eq_rem}, we discover
\begin{multline*}
\int_{B^N_+(0, \eta_0)} x_N^{1-2\ga} |\nabla W_\ep|_{\bg}^2 dv_{\bg} \\
= \int_{B^N_+(0, \eta_0)} x_N^{1-2\ga} |\nabla W_1|^2 dx + \ep H \(2 \int_{\mr^N_+} x_N^{2-2\ga} |\nabla_{\bx} W_1|^2 dx - n \int_{\mr^N_+} x_N^{2-2\ga} |\nabla W_1|^2 dx\) + o(\ep)
\end{multline*}
and
\[\int_M (\psi W_{\ep})^{p+1} dv_{\hh} = \int_{B^n(0,\eta_0)} w_\ep^{p+1} \(1 + O(|\bx|^2)\) d\bx + O(\ep^n) = \int_{\mr^n} w_1^{p+1} dx + o(\ep).\]
Moreover, according to Lemma \ref{lemma_metric} and \eqref{eq_E_2}, we have
\[\int_{B^N_+(0, \eta_0)} E(x_N) W_{\ep}^2 dv_{\bg} = \left[{n(n-2\ga) \over 2}\right] \ep H \int_{\mr^N_+} x_N^{-2\ga} W_1^2 dx + o(\ep).\]
Thus the above estimates and Lemma \ref{lemma_int_0} confirm \eqref{eq_nonm}.
\end{proof}
\noindent Unlike the other existence results to be discussed later, we need to assume that $\ga \in (0,1/2)$ for Proposition \ref{prop_nonm}.
Such a restriction is necessary in two reasons:
First of all, $\ga \in (0,1/2)$ is necessary for the function $x_N^{-2\ga} W_1^2$ to be integrable in $\mr^N_+$.
Secondly the mean curvature $H$ should vanish for $\ga \in (1/2,1)$ to guarantee the validity of the extension theorem (Theorem \ref{thm_ext}).

\subsection{Non-umbilic Conformal Infinity: Higher Dimensional Cases}\label{subsec_num_ene}
We fix a non-umbilic point $y = 0 \in M$.
Let also $B^N_+(0, 2\eta_0) \subset \mr_+^N$ be a small neighborhood of $0$ and $\psi \in C^{\infty}_c(B^N_+(0, 2\eta_0))$ a cut-off function chosen in the previous subsection.

\begin{lemma}\label{lemma_h}
Let $J^{\ga}_{\hh}$ be the energy functional defined as
\begin{equation}\label{eq_J}
J^{\ga}_{\hh}[U; X] = \int_X (\rho^{1-2\ga}|\nabla U|_{\bg}^2 + E(\rho)U^2)\, dv_{\bg} \quad \text{for any } U \in W^{1,2}(X, \rho^{1-2\ga}).
\end{equation}
Assume also that \eqref{eq_rep} holds.
Then for any $\ep > 0$ small, $n > 2 + 2\ga$ and $\ga \in (0,1)$, it is valid that
\begin{multline}\label{eq_J_W}
J^{\ga}_{\hh} \left[\psi W_{\ep}; B^N_+(0, \eta_0)\right]
= \int_{B^N_+(0, \eta_0)} x_N^{1-2\ga} |\nabla W_1|^2 dx \\
+ \ep^2 \|\pi\|^2 \left[-\({1+b \over 2}\) \mcf_2 + \({3+b \over n}\) \mcf_3 + \({n-2\ga \over 2}\) (1+b) \mcf_1 \right] + o(\ep^2)
\end{multline}
where $b := (1-2n)/(2n-2)$, $\|\pi\|$ is the norm of the second fundamental form at $y = 0 \in M$, and the values $\mcf_1,\, \mcf_2$ and $\mcf_3$ are given in Lemma \ref{lemma_int}.
\end{lemma}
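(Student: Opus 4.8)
The plan is to compute the three ingredients of $\overline{I}_{\hh}^{\ga}[\psi W_\ep]$ separately on the half-ball $B^N_+(0,\eta_0)$ using the metric expansion from Lemma \ref{lemma_metric} together with the representative chosen in Lemma \ref{lemma_rep}, then assemble them. Concretely, $J^{\ga}_{\hh}[\psi W_\ep; B^N_+(0,\eta_0)]$ splits as $\int x_N^{1-2\ga}|\nabla W_\ep|_{\bg}^2\,dv_{\bg} + \int E(x_N) W_\ep^2\,dv_{\bg}$. For the Dirichlet part, I would write $|\nabla W_\ep|_{\bg}^2 dv_{\bg} = \bg^{ab}\pa_a W_\ep \pa_b W_\ep \sqrt{|\bg|}\,dx$ and insert the expansions of $\bg^{ij}$, $\bg^{iN} \equiv 0$ (Fermi), $\bg^{NN}=1$, and $\sqrt{|\bg|}$; since $y$ is umbilic-free but $H=0$ and $R_{ij}[\hh](y)=0$ by Lemma \ref{lemma_rep}, the linear-in-$x_N$ terms and the pure-$M$ curvature terms drop, leaving the $\ep^2$ contributions coming from $2\pi_{ij}x_N$ (which contributes at order $\ep^2$ after the change of variables $x = \ep z$, because the odd term $2\pi_{ij}x_N\pa_iW_\ep\pa_jW_\ep$ integrates to zero over $\mr^n$ by parity but its \emph{square}, i.e. the next order, survives — more precisely one must track the $O(x_N^2)$ pieces $3\pi_{ik}\pi_{kj}+R_{iNjN}[\bg]$ and the $\sqrt{|\bg|}$ term $\tfrac12(n^2H^2 - \|\pi\|^2 - R_{NN}[\bg])x_N^2$) and from the ${1\over 3}R_{ikjl}[\hh]x_kx_l$ term.

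Next, the potential term: from \eqref{eq_E_2} and \eqref{eq_main_l_1}, $E(x_N) = \left(\tfrac{n-2\ga}{4n}\right)R[\bg]x_N^{1-2\ga} + o(x_N^{1-2\ga})$, and evaluating $R[\bg](y)$ via the Gauss–Codazzi relation used in Lemma \ref{lemma_rep} together with $R_{\rho\rho}[\bg](y) = \tfrac{1-2n}{2(n-1)}\|\pi(y)\|^2$ converts this into an explicit multiple of $\ep^2\|\pi\|^2$ times $\int_{\mr^N_+}x_N^{1-2\ga}W_1^2\,dx = \mcf_1$ (after scaling $x=\ep z$; here the hypothesis $n>2+2\ga$ is exactly what makes $x_N^{1-2\ga}W_1^2$ integrable). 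The cut-off error from $\psi$ and the region $B^N_+(0,\eta_0)\setminus B^N_+(0,\eta_0/\ep \cdot \ep)$ is $o(\ep^2)$ by the decay estimates \eqref{eq_W_dec_2}–\eqref{eq_W_dec_3} quantified in \eqref{eq_rem}, and replacing upper limits of integration by $\mr^N_+$ costs only $o(\ep^2)$ for the same reason. After performing the rescaling and collecting terms, each surviving integral reduces to one of $\mcf_1,\mcf_2,\mcf_3$ from Lemma \ref{lemma_int} (these are the normalized integrals $\int x_N^{1-2\ga}W_1^2$, $\int x_N^{1-2\ga}|\nabla W_1|^2 \cdot(\text{weight})$, etc.), and the coefficient bookkeeping — in particular the appearance of $b=\tfrac{1-2n}{2n-2}$ through $R_{NN}[\bg](y)$ and $R[\bg](y)$ — yields precisely \eqref{eq_J_W}.

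The main obstacle I anticipate is the careful parity-and-scaling bookkeeping at order $\ep^2$: several pieces (the $2\pi_{ij}x_N\sqrt{|\bg|}$-cross terms, the $\bg^{ij}_{,Nk}x_Nx_k$ term, the radial curvature terms) contribute terms that are individually $O(\ep)$ but cancel by oddness in $\bx$ or in $x_N$, so one must expand to one order further and show that only the claimed combination of $\mcf_1,\mcf_2,\mcf_3$ remains. A secondary subtlety is justifying that the contributions of $\bg^{iN}$-type and mixed terms vanish in Fermi coordinates (here one uses $\bg = d\rho^2 \oplus h_\rho$, so the $dx_N\,dx_i$ components are identically zero, which simplifies $|\nabla W_\ep|_{\bg}^2$ to $\bg^{ij}\pa_iW_\ep\pa_jW_\ep + (\pa_NW_\ep)^2$). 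Once these cancellations are organized — essentially following the scheme of Marques \cite{Ma2} adapted to the weight $x_N^{1-2\ga}$ and to the nonlocal potential $E$ — the identity \eqref{eq_J_W} follows by direct substitution, and I would relegate the explicit evaluation of $\mcf_1,\mcf_2,\mcf_3$ to Appendix \ref{sec_app_2}.
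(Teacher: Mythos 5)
Your proposal is correct and follows essentially the same route as the paper's proof: expand $\bg^{ij}$ and $\sqrt{|\bg|}$ in Fermi coordinates under the normalization \eqref{eq_rep} (so the $H$- and $H_{,i}$-terms and the Ricci term drop, the $\frac13 R_{ikjl}[\hh]x_kx_l$ and $\bg^{ij}_{\phantom{ij},Nk}x_Nx_k$ pieces die by parity and by tracing against $R_{ij}[\hh](y)=0$), control tails via \eqref{eq_rem}, rescale, and identify the surviving $\ep^2$-integrals with $\mcf_1,\mcf_2,\mcf_3$; your evaluation of the potential term through $R[\bg](y)$ and Gauss--Codazzi yields the same coefficient $\tfrac{n-2\ga}{2}(1+b)\|\pi\|^2$ as the paper's use of $E(x_N)=-\tfrac{n-2\ga}{2}\,\pa_N\sqrt{|\bg|}/\sqrt{|\bg|}\;x_N^{-2\ga}$ together with \eqref{eq_met}. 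The only loose point is the parenthetical that the $2\pi_{ij}x_N$ term ``survives through its square'': that term vanishes at order $\ep$ because $H=\pi_{ii}/n=0$, and the order-$\ep^2$ contribution comes from the separate $x_N^2$-coefficients $3\pi_{ik}\pi_{kj}+R_{iNjN}[\bg]$ and the $\sqrt{|\bg|}$-term, exactly as you subsequently state.
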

\begin{proof}
We borrow the argument presented in \cite[Theorem 1.5]{GQ}.
According to Lemma \ref{lemma_metric} and \eqref{eq_rep}, there holds that
\begin{equation}\label{eq_met}
\sqrt{|\bg|}(\bx, x_N) = 1 - \({1+b \over 2}\) \|\pi\|^2 x_N^2 + O(|(\bx, x_N)|^3) \quad \text{in } B^N_+(0, \eta_0).
\end{equation}
Hence we obtain with \eqref{eq_rem} that
\begin{multline*}
\int_{B^N_+(0, \eta_0)} x_N^{1-2\ga} |\nabla W_\ep|_{\bg}^2 dv_{\bg}
= \int_{\mr^N_+} x_N^{1-2\ga} |\nabla W_\ep|^2 dx \\
+ \ep^2 \left[ (3\pi_{ik}\pi_{kj} + R_{iNjN}[\bg]) \int_{\mr^N_+} x_N^{3-2\ga} \pa_i W_1 \pa_j W_1 dx 
- \({1+b \over 2}\) \|\pi\|^2 \int_{\mr^N_+} x_N^{3-2\ga} |\nabla W_1|^2 dx \right] + o(\ep^2).
\end{multline*}
Also, in view of \eqref{eq_E_2} and \eqref{eq_met},
\[E(x_N) = \({n-2\ga \over 2}\) (1+b) \|\pi\|^2 x_N^{1-2\ga} + O(|x|^2x_N^{-2\ga})\]
for $x_N \ge 0$ small, so
\[\int_{B^N_+(0, \eta_0)} E(x_N) W_{\ep}^2 dv_{\bg} = \ep^2 \({n-2\ga \over 2}\) (1+b) \|\pi\|^2 \int_{\mr^N_+} x_N^{1-2\ga} W_1^2 dx + o(\ep^2).\]
Collecting every calculation, we discover \eqref{eq_J_W}.
\end{proof}

The previous lemma ensures the existence of a positive solution to \eqref{eq_yamabe} for non-umbilic conformal infinity $M^n$ with $n \in \mn$ sufficiently high.
\begin{cor}
Assume that $(X^{n+1},g^+)$ is an asymptotically hyperbolic manifold
and $\hh$ is the representative of the conformal infinity $M$ found in Lemma \ref{lemma_metric}.
If $n > 2 + 2\ga$ and $\ga \in (0,1)$, we have that
\begin{equation}\label{eq_l_0}
\overline{I}_{\hh}^{\ga}[\psi W_{\ep}] \le \Lambda^{\ga}(\ms^n, [g_c]) - \ep^2 \mcc'(n, \ga)\, \Lambda^{\ga}(\ms^n, [g_c])^{-{n-2\ga \over 2\ga}} \kappa_{\ga}\, |\ms^{n-1}| A_3B_2 \|\pi\|^2 + o(\ep^2)
\end{equation}
where the positive constants $\Lambda^{\ga}(\ms^n, [g_c])$, $\kappa_{\ga}$, $A_3$ and $B_2$ are introduced in \eqref{eq_Lambda}, \eqref{eq_const} and \eqref{eq_AB}, respectively,
and $\mcc'(n, \ga)$ is the number given by
\begin{equation}\label{eq_mcc'}
\mcc'(n, \ga) = {3n^2 + n(16\ga^2-22)+20(1-\ga^2) \over 8n(n-1)(1-\ga^2)}.
\end{equation}
\end{cor}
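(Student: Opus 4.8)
The plan is to expand the $\ga$-Yamabe quotient
\[
\overline{I}_{\hh}^{\ga}[\psi W_{\ep}] = \kappa_{\ga}\, J_{\hh}^{\ga}[\psi W_{\ep};X]\big/\big(\textstyle\int_M (\psi W_{\ep})^{p+1}\, dv_{\hh}\big)^{(n-2\ga)/n}
\]
in powers of $\ep$, reading the numerator off from Lemma \ref{lemma_h} and estimating the denominator, as well as the part of the numerator coming from outside $B^N_+(0,\eta_0)$, directly. I would take $\hh$ to be the representative of Lemma \ref{lemma_rep}, so that \eqref{eq_rep} holds — in particular $H\equiv0$ on $M$ and $R_{\rho\rho}[\bg](y)=b\,\|\pi\|^2$ with $b=(1-2n)/(2n-2)$ — which is precisely the hypothesis under which Lemma \ref{lemma_h} was established. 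Since $\psi$ is supported in $B^N_+(0,2\eta_0)$, a set lying in the collar where Fermi coordinates exist, we have $J_{\hh}^{\ga}[\psi W_{\ep};X] = J_{\hh}^{\ga}[\psi W_{\ep};B^N_+(0,\eta_0)] + J_{\hh}^{\ga}[\psi W_{\ep};B^N_+(0,2\eta_0)\setminus B^N_+(0,\eta_0)]$.

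First I would dispose of the annular term. On $\{\eta_0<|x|<2\eta_0\}$ the decay estimates of Appendix \ref{sec_app} (cf.\ \eqref{eq_W_dec_2}, \eqref{eq_W_dec_3}) give $W_{\ep}+|x|\,|\nabla W_{\ep}| = O(\ep^{(n-2\ga)/2})$, while \eqref{eq_E_2} gives $|E(x_N)| = O(x_N^{-2\ga})$, which is locally integrable; combined with the bounded cut-off factor $|\nabla\psi|$ this makes the annular contribution $O(\ep^{n-2\ga})$, hence $o(\ep^2)$ because $n>2+2\ga$. For the inner ball, Lemma \ref{lemma_h} provides
\[
J_{\hh}^{\ga}[\psi W_{\ep};B^N_+(0,\eta_0)] = \int_{\mr^N_+} x_N^{1-2\ga}|\nabla W_1|^2\, dx + \ep^2\,\|\pi\|^2\,\mathcal{A} + o(\ep^2),\qquad
\mathcal{A} := -\Big(\tfrac{1+b}{2}\Big)\mcf_2 + \Big(\tfrac{3+b}{n}\Big)\mcf_3 + \Big(\tfrac{n-2\ga}{2}\Big)(1+b)\mcf_1,
\]
where I have replaced the truncated $x_N$-integral of \eqref{eq_J_W} by the full one over $\mr^N_+$ at the cost of a tail $O(\ep^{n-2\ga})=o(\ep^2)$.

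For the denominator, on $M$ we have $\psi W_{\ep}=\psi w_{\ep}$ with $\psi\equiv1$ on $B^n(0,\eta_0)$, and since $R_{ij}[\hh](y)=0$ and $H\equiv0$, Lemma \ref{lemma_metric} gives $\sqrt{|\hh|}(\bx)=1+O(|\bx|^3)$ near $y$. Rescaling $\bx=\ep\by$ and using $w_1=O(|\by|^{-(n-2\ga)})$, the $O(|\bx|^3)$ error contributes $O(\ep^3)$ while the tail $\{|\bx|>\eta_0\}$ together with the annular piece contribute $O(\ep^n)$, all of which are $o(\ep^2)$; hence $\int_M(\psi W_{\ep})^{p+1}\,dv_{\hh}=\int_{\mr^n}w_1^{p+1}\,dx+o(\ep^2)$. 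Writing $I:=\int_{\mr^n}w_1^{p+1}\,dx$, testing \eqref{eq_bubble_eq} against $W_1$ and integrating by parts yields $\kappa_{\ga}\int_{\mr^N_+}x_N^{1-2\ga}|\nabla W_1|^2\,dx=I$, while \eqref{eq_Lambda} gives $\Lambda^{\ga}(\ms^n,[g_c])=I^{2\ga/n}$, so that $I^{(n-2\ga)/n}=\Lambda^{\ga}(\ms^n,[g_c])^{(n-2\ga)/(2\ga)}$. Combining everything,
\[
\overline{I}_{\hh}^{\ga}[\psi W_{\ep}] = \frac{\kappa_{\ga}\big(\kappa_{\ga}^{-1}I + \ep^2\|\pi\|^2\mathcal{A} + o(\ep^2)\big)}{\big(I + o(\ep^2)\big)^{(n-2\ga)/n}} = \Lambda^{\ga}(\ms^n,[g_c]) + \ep^2\,\kappa_{\ga}\,\Lambda^{\ga}(\ms^n,[g_c])^{-\frac{n-2\ga}{2\ga}}\,\mathcal{A}\,\|\pi\|^2 + o(\ep^2).
\]

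The last step, which I expect to be the \textbf{main obstacle} (tedious rather than deep), is the purely algebraic identity $\mathcal{A} = -\,\mcc'(n,\ga)\,|\ms^{n-1}|\,A_3B_2$. Here one substitutes the explicit values of $\mcf_1,\mcf_2,\mcf_3$ recorded in Lemma \ref{lemma_int} — each a product of $|\ms^{n-1}|$ with the Gamma-factor constants $A_i,B_j$ of \eqref{eq_AB}, linked by the Beta-integral recursions listed there — inserts $b=(1-2n)/(2n-2)$ (so $1+b=-\tfrac{1}{2n-2}$ and $3+b=\tfrac{4n-5}{2n-2}$), and checks that after cancelling the common factor $|\ms^{n-1}|A_3B_2$ the surviving rational function of $n$ and $\ga$ collapses to exactly $-\mcc'(n,\ga)=-\dfrac{3n^2+n(16\ga^2-22)+20(1-\ga^2)}{8n(n-1)(1-\ga^2)}$, as in \eqref{eq_mcc'}. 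Feeding this back into the last display and using $I^{(n-2\ga)/n}=\Lambda^{\ga}(\ms^n,[g_c])^{(n-2\ga)/(2\ga)}$ gives \eqref{eq_l_0}; the integral computations needed are those collected in Appendix \ref{sec_app_2}.
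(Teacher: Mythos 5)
Your proposal is correct and follows essentially the same route as the paper, which obtains \eqref{eq_l_0} precisely from Lemma \ref{lemma_h}, the integral values of Lemma \ref{lemma_int} and the quotient expansion as in Gonz\'alez--Qing (with $\hh$ the representative of Lemma \ref{lemma_rep}, as you correctly read the statement), and your algebraic reduction $\mathcal{A}=-\mcc'(n,\ga)\,|\ms^{n-1}|A_3B_2$ does check out. One small repair: on the annulus $x_N^{-2\ga}$ is \emph{not} locally integrable when $\ga\ge 1/2$, so instead bound $E$ there by $O(x_N^{1-2\ga})$, which follows from \eqref{eq_E_2} together with $H\equiv 0$ on $M$ under \eqref{eq_rep}; with this bound your $O(\ep^{n-2\ga})=o(\ep^2)$ estimate for the annular contribution goes through for all $\ga\in(0,1)$.
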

\begin{proof}
Estimate \eqref{eq_l_0} comes from Lemmas \ref{lemma_h}, \ref{lemma_int} and the computations made in the proof of \cite[Theorem 1.5]{GQ}.
The details are left to the reader.
\end{proof}
\noindent
By \eqref{eq_cur_2}, we still have that $\pi \ne 0$ at $y \in M$ even after picking a new representative of the conformal infinity.
Furthermore, the number $\mcc'(n, \ga)$ is positive when $n \ge 4$ for $\ga > \sqrt{5/11} \simeq 0.674$, $n \ge 5$ for $\ga > 1/2$, $n \ge 6$ for $\ga > \sqrt{1/19} \simeq 0.229$ and $n \ge 7$ for any $\ga > 0$.
Hence, in this regime, one is able to deduce the existence of a positive solution of \eqref{eq_yamabe} by testing the truncated standard bubble into the $\ga$-Yamabe functional.

\subsection{Non-umbilic Conformal Infinity: Lower Dimensional Cases}\label{subsec_num_ene_2}
We remind the non-umbilic point $y \in M$ identified with the origin of $\mr^N_+$, the small number $\eta_0 > 0$ and the cut-off function $\psi \in C^{\infty}_c(\mr^N_+)$. Furthermore, we introduce
\begin{equation}\label{eq_Psi_ep}
\Psi_{\ep}(\bx, x_N) = M_1 \pi_{ij} x_ix_j x_N r^{-1} \pa_r W_{\ep} = \ep \cdot \ep^{-{n-2\ga \over 2}} \Psi_1(\ep^{-1}\bx, \ep^{-1} x_N)
\end{equation}
for each $\ep > 0$ where $M_1 \in \mr$ is a number to be determined later, $\pi_{ij}$'s are the coefficients of the second fundamental form at $y$ and $r = |\bx|$.
Our ansatz to deal with lower dimensional cases is defined by
\[\Phi_{\ep} := \psi(W_{\ep} + \Psi_{\ep}) \quad \text{in } X.\]
The definition of $\Phi_{\ep}$ is inspired by \cite{Ma2}.

\medskip
The main objective of this subsection is to prove
\begin{prop}\label{prop_l}
Suppose that $(X^{n+1},g^+)$ is an asymptotically hyperbolic manifold.
Moreover $\hh$ is the representative of the conformal infinity $M$ satisfying \eqref{eq_rep}.
If $n > 2 + 2\ga$ and $\ga \in (0,1)$, we have that
\begin{equation}\label{eq_l_1}
\overline{I}_{\hh}^{\ga}[\Phi_{\ep}] \le \Lambda^{\ga}(\ms^n, [g_c]) - \ep^2 \mcc(n, \ga)\, \Lambda^{\ga}(\ms^n, [g_c])^{-{n-2\ga \over 2\ga}} \kappa_{\ga}\, |\ms^{n-1}| A_3B_2 \|\pi\|^2 + o(\ep^2)
\end{equation}
where $\mcc(n, \ga)$ is the number defined by
\[\mcc(n, \ga) = {3n^2 + n(16\ga^2-22)+20(1-\ga^2) \over 8n(n-1)(1-\ga^2)} + {16(n-1)(1-\ga^2) \over n(3n^2+n(2-8\ga^2)+4\ga^2-4)}.\]
\end{prop}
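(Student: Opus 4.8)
The plan is to expand the $\ga$-Yamabe functional $\overline{I}_{\hh}^{\ga}[\Phi_\ep]$ with $\Phi_\ep = \psi(W_\ep + \Psi_\ep)$ and to show that the correction term $\Psi_\ep$ lowers the coefficient of $\ep^2$ below that obtained with the bare truncated bubble in \eqref{eq_l_0}. Concretely, I would write $\overline{I}_{\hh}^{\ga}[\Phi_\ep] = \kappa_\ga J^{\ga}_{\hh}[\Phi_\ep; B^N_+(0,\eta_0)]\, \big(\int_M \Phi_\ep^{p+1}\, dv_{\hh}\big)^{-(n-2\ga)/n}$ up to negligible terms coming from the region where $\psi$ is not identically $1$ (controlled, as in Lemma \ref{lemma_h}, by \eqref{eq_rem} and the decay estimates \eqref{eq_W_dec_2}--\eqref{eq_W_dec_3} of the Appendix). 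The numerator splits, after using the metric expansion \eqref{eq_met} and the expansion of $E(x_N)$ from \eqref{eq_E_2}, as
\[
J^{\ga}_{\hh}[\Phi_\ep; B^N_+(0,\eta_0)] = \int_{\mr^N_+} x_N^{1-2\ga}|\nabla W_1|^2 dx + \ep^2\Big( \mathcal{Q}_0(\|\pi\|^2) + 2 M_1 \mathcal{L}(\pi) + M_1^2 \mathcal{Q}_1 \Big) + o(\ep^2),
\]
where $\mathcal{Q}_0$ is exactly the bracket already appearing in \eqref{eq_J_W}, $\mathcal{L}(\pi)$ is the cross term $\int_{\mr^N_+} x_N^{1-2\ga}\nabla(\psi W_1)\cdot\nabla \Psi_1\, dx$ plus the contribution of $E$ acting on $W_1\Psi_1$, and $\mathcal{Q}_1$ is the quadratic self-energy of $\Psi_1$. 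For the denominator, since $\Psi_\ep$ vanishes to even order in $\bx$ weighted by $x_N$, its contribution to $\int_M \Phi_\ep^{p+1}$ enters only at order $\ep^3$ or through a term that integrates to zero by parity, so $\int_M \Phi_\ep^{p+1}\, dv_{\hh} = \int_{\mr^n} w_1^{p+1} dx + o(\ep^2)$ as before; I would verify this parity cancellation carefully, as it is where the specific form $\Psi_\ep = M_1\pi_{ij}x_ix_jx_N r^{-1}\pa_r W_\ep$ pays off.

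The key computational step is to evaluate $\mathcal{L}(\pi)$ and $\mathcal{Q}_1$ in terms of the integrals $\mcf_1, \mcf_2, \mcf_3$ of Lemma \ref{lemma_int} together with a handful of new integrals of $W_1$ and its derivatives (these would be collected into the Appendix, Section \ref{sec_app_2}). The cross term $\mathcal{L}(\pi)$ should be proportional to $\|\pi\|^2$ with a constant depending only on $n$ and $\ga$; after integration by parts and using that $W_1$ is $\ga$-harmonic (i.e. $-\text{div}(x_N^{1-2\ga}\nabla W_1) = 0$ in $\mr^N_+$ by \eqref{eq_bubble_eq}), the boundary integral over $\mr^n = \{x_N=0\}$ is the dominant contribution and can be expressed through $w_1^{p+1}$-type integrals. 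Once $J^{\ga}_{\hh}[\Phi_\ep]$ is a quadratic polynomial $a\|\pi\|^2 + 2M_1 b\|\pi\|^2 + M_1^2 c$ in $M_1$ with $c > 0$, I optimize over $M_1$: the minimum is $a\|\pi\|^2 - b^2\|\pi\|^2/c$, attained at $M_1 = -b\|\pi\|^2/(c\|\pi\|^2) = -b/c$ (note $M_1$ is then a pure dimensional constant, consistent with \eqref{eq_Psi_ep}). Tracing the arithmetic, $a$ corresponds to $\mcc'(n,\ga)$ and the gain $b^2/c$ is precisely the extra summand $16(n-1)(1-\ga^2)/\big(n(3n^2+n(2-8\ga^2)+4\ga^2-4)\big)$ appearing in $\mcc(n,\ga)$, which yields \eqref{eq_l_1}. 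One must check the denominator $3n^2+n(2-8\ga^2)+4\ga^2-4$ is positive for $n > 2+2\ga$ so that $c > 0$ and the optimization is legitimate; this is a routine but necessary inequality.

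The main obstacle I anticipate is the bookkeeping of the cross term $\mathcal{L}(\pi)$: the correction $\Psi_\ep$ is built from $\pi_{ij}x_ix_j$, and when paired against the order-$\ep^2$ terms in $|\nabla W_\ep|_{\bg}^2$ — which involve $\pi_{ik}\pi_{kj}$, $R_{iNjN}[\bg]$, and the metric volume factor — one gets several tensorial contractions that must all be reduced to multiples of $\|\pi\|^2$ using the normal-coordinate identities (the vanishing of $R_{ij}[\hh](y)$ from \eqref{eq_rep} and the Gauss--Codazzi relation, as in Lemma \ref{lemma_rep}) together with the fact that odd moments of $W_1$ vanish by symmetry. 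Keeping track of which terms survive, and confirming that the surviving ones assemble into exactly the claimed constant, is the delicate part; everything else follows the template of \cite[Theorem 1.5]{GQ} and \cite{Ma2} that was already used for Lemma \ref{lemma_h}. I would also need Lemma \ref{lemma_int_0}, Lemma \ref{lemma_int} and the asymptotics \eqref{eq_W_dec_2}--\eqref{eq_W_dec_3} to justify all the $o(\ep^2)$ error estimates, in particular that $\Psi_\ep$ genuinely lies in $W^{1,2}(X,\rho^{1-2\ga})$ and that its truncation by $\psi$ costs only $o(\ep^2)$.
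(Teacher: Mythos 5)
Your overall strategy coincides with the paper's: expand $J^{\ga}_{\hh}[\psi(W_\ep+\Psi_\ep)]$ at order $\ep^2$ as a quadratic polynomial $a\|\pi\|^2+2M_1b\|\pi\|^2+M_1^2c\|\pi\|^2$, keep the denominator unchanged up to $o(\ep^2)$, optimize at $M_1=-b/c$, and identify the gain $b^2/c$ with the second summand of $\mcc(n,\ga)$; your arithmetic for that gain, and the need to check $3n^2+n(2-8\ga^2)+4\ga^2-4>0$, match the paper's \eqref{eq_l_6}, \eqref{eq_l_7} and \eqref{eq_l_2}.

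The one place your sketch misidentifies the mechanism is the cross term $\mathcal{L}(\pi)$. Since $\Psi_\ep$ carries an explicit factor $x_N$, it vanishes identically on $M$, so integrating by parts against the $\ga$-harmonic $W_\ep$ produces no boundary term at all: the flat-metric cross term $\int x_N^{1-2\ga}\nabla W_\ep\cdot\nabla\Psi_\ep\,dx$ is in fact exactly zero (the paper verifies this in \eqref{eq_l_4} by angular integration, using $\pi_{ii}=nH=0$ at $y$), and there is no ``dominant boundary integral over $\mr^n$ expressible through $w_1^{p+1}$-type integrals.'' The entire order-$\ep^2$ contribution $2M_1b\|\pi\|^2$ comes instead from the bulk pairing of $\pa_iW_\ep\,\pa_j\Psi_\ep$ against the off-diagonal term $2\pi_{ij}x_N$ in the expansion of $\bg^{ij}$ (Lemma \ref{lemma_metric}), evaluated through the interior constants $\mcf_3,\mcf_4$ of Lemma \ref{lemma_int}, which gives $b=(2/n)|\ms^{n-1}|A_3B_2$ as in \eqref{eq_l_6}; the $E(\rho)$-terms involving $\Psi_\ep$ are $o(\ep^2)$ since $\Psi_\ep$ is $O(\ep)$ relative to $W_\ep$. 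Likewise the denominator needs no parity argument: $\Psi_\ep=0$ on $\{x_N=0\}$, so only the volume factor $1+O(|\bx|^3)$ (available because $R_{ij}[\hh](y)=0$ by \eqref{eq_rep}) acts on $w_\ep^{p+1}$, exactly as in \eqref{eq_l_3}. With the cross term sourced correctly, your plan reproduces the paper's proof.
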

\noindent It can be checked that $\mcc(n, \ga) > 0$ whenever $n \ge 4$ and $\ga \in (0,1)$.
Thus the above proposition justifies the statement of Theorem \ref{thm_main_l}.
While we have $\mcc(3, \ga) > 0$ for $\ga > 1/2$, it also holds that $n > 2+2\ga > 3$.
Therefore we get no result for $n = 3$.

\begin{proof}[Proof of Proposition \ref{prop_l}]
The proof consists of 3 steps.

\medskip \noindent \textsc{Step 1 (Energy in the half-ball $B^N_+(0,\eta_0)$).}
Since $\psi = 1$ in $B^N_+(0,\eta_0)$, we discover
\begin{equation}\label{eq_l_5}
\begin{aligned}
&\ J^{\ga}_{\hh}\left[\psi(W_{\ep} + \Psi_{\ep}); B^N_+(0,\eta_0)\right] \\
&= J^{\ga}_{\hh} \left[\psi W_{\ep}; B^N_+(0, \eta_0)\right]
+ 2 \int_{B^N_+(0,\eta_0)} x_N^{1-2\ga} \la\nabla W_{\ep}, \nabla \Psi_{\ep}\ra_{\bg} \, dv_{\bg}
+ \int_{\mr^N_+} x_N^{1-2\ga} |\nabla \Psi_{\ep}|^2 dx + o(\ep^2)
\end{aligned}
\end{equation}
where the functional $J^\ga_{\hh}$ is defined in \eqref{eq_J}.
Moreover, we note from Lemma \ref{lemma_metric} that the mean curvature $H = \pi_{ii}/n$ vanishes at the origin, which yields
\begin{equation}\label{eq_l_4}
\begin{aligned}
&\ \int_{B^N_+(0,\eta_0)} x_N^{1-2\ga} \nabla W_{\ep} \cdot \nabla \Psi_{\ep} \, dx \\
&= \ep \, M_1 \, \int_{B^N_+(0,\eta_0/\ep)} x_N^{2-2\ga} \pi_{ij}x_ix_j \left[2 r^{-2} (\pa_r W_1)^2 + r \pa_r(r^{-1}\pa_r W_1) \right] dx \\
&\ + \ep M_1 \int_{B^N_+(0,\eta_0/\ep)} x_N^{1-2\ga} \pi_{ij}x_ix_j r^{-1} (\pa_N W_1) \left[ (\pa_r W_1) + x_N (\pa_{Nr} W_1) \right] dx \\
&= 0.
\end{aligned}
\end{equation}
Hence we obtain from the definition \eqref{eq_Psi_ep} of $\Psi_{\ep}$ and \eqref{eq_l_4} that
\begin{equation}\label{eq_l_6}
\begin{aligned}
&\ 2 \int_{B^N_+(0,\eta_0)} x_N^{1-2\ga} \la\nabla W_{\ep}, \nabla \Psi_{\ep}\ra_{\bg} \, dv_{\bg} \\
&= 2 \int_{B^N_+(0,\eta_0)} x_N^{1-2\ga} \nabla W_{\ep} \cdot \nabla \Psi_{\ep} \, dx
+ 4 \pi_{ij} \int_{\mr^N_+} x_N^{2-2\ga} \pa_i W_{\ep}\, \pa_j \Psi_{\ep}\, dx + o(\ep^2) \\
&= \ep^2 4M_1 \pi_{ij} \int_{\mr^N_+} x_N^{3-2\ga} x_i \left[2\pi_{jk} x_k  r^{-2} (\pa_r W_1)^2 + \pi_{kl}x_kx_lx_j r^{-2} (\pa_r W_1)\, \pa_r(r^{-1} \pa_r W_1) \right] dx + o(\ep^2) \\
&= \ep^2 4M_1 \left[{2 \over n} \mcf_3 + {2 \over n(n+2)} \(-\mcf_3 + \mcf_4\) \right] \|\pi\|^2 + o(\ep^2) \\
&= \ep^2 \({4 \over n}\) M_1 |\ms^{n-1}| A_3B_2 \|\pi\|^2 + o(\ep^2)
\end{aligned}
\end{equation}
where the constants $\mcf_3, \mcf_4$ as well as $\mcf_1, \mcf_2, \mcf_5, \cdots, \mcf_8$ are defined in Lemma \ref{lemma_int}.
In a similar fashion, it can be found that
\begin{equation}\label{eq_l_7}
\begin{aligned}
\int_{\mr^N_+} x_N^{1-2\ga} |\nabla \Psi_{\ep}|^2 dx &= \ep^2 \left[{2M_1^2 \over n(n+2)}\right] \(\mcf_3 - 2\mcf_4 + \mcf_5 + \mcf_6 + 2\mcf_7 + \mcf_8\) \|\pi\|^2 + o(\ep^2) \\
&= \ep^2 \left[{3n^2 + 2n(1-4\ga^2) - 4(1-\ga^2) \over 4n(n-1)(1-\ga^2)}\right] M_1^2 |\ms^{n-1}| A_3B_2 \|\pi\|^2 + o(\ep^2).
\end{aligned}
\end{equation}

\medskip \noindent \textsc{Step 2 (Energy in the half-annulus $B^N_+(0,2\eta_0) \setminus B^N_+(0,\eta_0)$).}
According to \eqref{eq_W_dec}, \eqref{eq_W_dec_2} and \eqref{eq_W_dec_3} (cf. \eqref{eq_rem}), it holds
\begin{equation}\label{eq_l_8}
J^{\ga}_{\hh}\left[\psi(W_{\ep} + \Psi_{\ep}); X \setminus B^N_+(0,\eta_0) \right] = o(\ep^2).
\end{equation}
Consequently, one deduces from \eqref{eq_l_5}, \eqref{eq_l_6}-\eqref{eq_l_8} and Lemma \ref{lemma_int} that
\begin{equation}\label{eq_l_2}
J^\ga_{\hh}[\psi(W_{\ep} + \Psi_{\ep}); X] \le \int_{\mr^N_+} x_N^{1-2\ga} |\nabla W_1|^2 dx - \ep^2 \mcc(n, \ga) |\ms^{n-1}| A_3B_2 \|\pi\|^2 + o(\ep^2)
\end{equation}
by choosing the optimal $M_1 \in \mr$.

\medskip \noindent \textsc{Step 3 (Completion of the proof).}
Lemma \ref{lemma_metric} and the fact that $\Psi_{\ep} = 0$ on $M$ tell us that
\begin{equation}\label{eq_l_3}
\int_M |\psi(W_{\ep} + \Psi_{\ep})|^{p+1} dv_{\hh} = \int_{B^n(0, 2\eta_0)} (\psi w_\ep)^{p+1} (1+O(|\bx|^3))\, d\bx
\ge 
\int_{\mr^n} w_1^{p+1} d\bx + o(\ep^2).
\end{equation}
Combining \eqref{eq_l_2} and \eqref{eq_l_3} gives estimate \eqref{eq_l_1}.
The proof is concluded.
\end{proof}

\section{Umbilic Non-locally Conformally Flat Conformal Infinities}\label{sec_Al}
\subsection{Geometric Background}
For a fixed point $y \in M$ identified with $0 \in \mr^n$, let $\bx = (x_1, \cdots, x_n)$ be the normal coordinate on $M$ at $y$ and $x_N = \rho$.
The following expansion of the metric is borrowed from \cite{Ma}.
\begin{lemma}\label{lemma_metric_2}
Suppose that $(X, g^+)$ is an asymptotically hyperbolic manifold and $y$ is a point in $M$ such that \eqref{eq_rep} holds and $\pi = 0$ on $M$.
Then, in terms of normal coordinates around $y$, it holds that
\begin{equation}\label{eq_bg_exp_4}
\begin{aligned}
\sqrt{|\bg|}(\bx, x_N) &= 1 - {1 \over 12} R_{ij;k}[\hh] x_ix_jx_k - {1 \over 2} R_{NN;i}[\bg] x_N^2x_i - {1 \over 6} R_{NN;N}[\bg] x_N^3 \\
&\ - {1 \over 20} \({1 \over 2} R_{ij;kl}[\hh] + {1 \over 9} R_{miqj}[\hh] R_{mkql}[\hh]\) x_ix_jx_kx_l - {1 \over 4} R_{NN;ij}[\bg] x_N^2x_ix_j \\
&\ - {1 \over 6} R_{NN;Ni}[\bg] x_N^3x_i - {1 \over 24} \left[ R_{NN;NN}[\bg] + 2 (R_{iNjN}[\bg])^2 \right] x_N^4 + O(|(\bx, x_N)|^5)
\end{aligned}
\end{equation}
and
\begin{equation}\label{eq_b_exp}
\begin{aligned}
\bg^{ij}(\bx, x_N) &= \delta_{ij} + {1 \over 3} R_{ikjl}[\hh] x_kx_l + R_{iNjN}[\bg] x_N^2 + {1 \over 6} R_{ikjl;m}[\hh] x_kx_lx_m + R_{iNjN;k}[\bg] x_N^2x_k \\
&\ + {1 \over 3} R_{iNjN;N}[\bg] x_N^3 + \({1 \over 20} R_{ikjl;mq}[\hh] + {1 \over 15} R_{iksl}[\hh] R_{jmsq}[\hh]\) x_kx_lx_mx_q \\
&\ + \({1 \over 2} R_{iNjN;kl}[\bg] + {1 \over 3} \textnormal{Sym}_{ij}(R_{iksl}[\hh] R_{sNjN}[\bg])\)x_N^2x_kx_l + {1 \over 3} R_{iNjN;kN}[\bg] x_N^3x_k \\
&\ + {1 \over 12} \(R_{iNjN;NN}[\bg] + 8 R_{iNsN}[\bg] R_{sNjN}[\bg]\) x_N^4 + O(|(\bx, x_N)|^5)
\end{aligned}
\end{equation}
near $y$ (identified with a small half-ball $B^N_+(0, 2\eta_0)$ near 0 in $\mr_+^N$).
Here every tensors are computed at $y$ and the indices $m,\, q$ and $s$ run from 1 to $n$ as well.
\end{lemma}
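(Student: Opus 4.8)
The plan is to derive both expansions from the defining properties of Fermi coordinates, exactly as in Marques \cite{Ma} but keeping one more order of Taylor expansion than what is needed in Lemma \ref{lemma_metric}. Recall that in these coordinates the metric has the split form $\bg = dx_N^2 \oplus h_{x_N}$, so $\bg_{NN} \equiv 1$, $\bg_{iN} \equiv 0$, and the whole content is encoded in the one-parameter family $h_{x_N}$ of metrics on $M$. The umbilicity hypothesis $\pi = 0$ together with \eqref{eq_rep} (which gives $H = 0$, $R_{ij}[\hh](y) = 0$, and the value of $R_{NN}[\bg](y)$) kills the lowest-order normal derivatives: $\pa_N h_{ij}|_{x_N = 0} = -2\pi_{ij} = 0$, so the expansion of $\bg_{ij}$ in powers of $x_N$ starts genuinely at order $x_N^2$ with coefficient governed by $R_{iNjN}[\bg]$.

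First I would recall the two standard ODEs that control Fermi coordinates: the radial Jacobi-type equation $\pa_N^2 h_{ij} = -2 R_{iNjN}[\bg] + (\text{quadratic in } \pa_N h)$, and, in the purely tangential directions, the fact that $x \mapsto (\bx, 0)$ restricts to normal coordinates on $(M,\hh)$, so $h_{ij}(\bx,0)$ has the classical Riemann-normal expansion $\delta_{ij} + \tfrac13 R_{ikjl}[\hh]x_kx_l + \tfrac16 R_{ikjl;m}[\hh]x_kx_lx_m + (\tfrac{1}{20}R_{ikjl;mq}[\hh] + \tfrac{1}{15} R_{iksl}[\hh]R_{jmsq}[\hh])x_kx_lx_mx_q + O(|\bx|^5)$. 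Differentiating the Jacobi equation in $x_N$ and in the tangential variables, and evaluating at $y$, expresses each Taylor coefficient of $\bg_{ij}$ of total degree $\le 4$ in terms of curvature components and their covariant derivatives at $y$; substituting these into $\bg^{ij} = \delta^{ij} - (\bg_{ij}-\delta_{ij}) + (\bg_{ij}-\delta_{ij})^2 - \cdots$ and collecting terms of each multidegree yields \eqref{eq_b_exp}. The mixed terms such as $\tfrac13 \operatorname{Sym}_{ij}(R_{iksl}[\hh]R_{sNjN}[\bg])x_N^2 x_kx_l$ and $8 R_{iNsN}[\bg]R_{sNjN}[\bg]$ come precisely from the quadratic feedback in the Jacobi equation combined with the tangential curvature correction, and the coefficients are fixed by matching.

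For the volume element, I would use $\sqrt{|\bg|} = \sqrt{\det(h_{x_N})}$ and the identity $\pa_N \log\sqrt{\det h} = \tfrac12 h^{ij}\pa_N h_{ij} = -\operatorname{tr}_h \pi_{x_N}$, whose $x_N$-derivatives at $y$ are the traces of the expansions just obtained; integrating in $x_N$ and using $\sqrt{\det h_{x_N}}\big|_{\bx = 0} = 1 - \tfrac16 R_{NN;N}[\bg]x_N^3 - \cdots$ on the normal geodesic, together with the tangential expansion $\sqrt{\det h_0}(\bx) = 1 - \tfrac16 R_{ij}[\hh]x_ix_j - \tfrac{1}{12}R_{ij;k}[\hh]x_ix_jx_k - \cdots$ and the vanishing $R_{ij}[\hh](y) = 0$, produces \eqref{eq_bg_exp_4}; the quartic coefficient $-\tfrac{1}{24}(R_{NN;NN}[\bg] + 2(R_{iNjN}[\bg])^2)$ is exactly $\tfrac{1}{4!}\pa_N^4 \log\sqrt{\det h}$ at $y$, where the $(R_{iNjN}[\bg])^2$ piece is the $-\tfrac14 h^{ik}h^{jl}\pa_N h_{ij}\pa_N h_{kl}$ contribution to the second derivative of $\tfrac12 h^{ij}\pa_N h_{ij}$.

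The main obstacle is purely bookkeeping: one must track the combinatorial coefficients in the two simultaneous expansions (tangential Riemann normal coordinates on $M$, and the normal Jacobi expansion off $M$) and make sure the cross terms of type $x_N^2 x_k x_l$ and $x_N^4$ carry the right numerical factors, since these are the terms that ultimately enter the coefficient of $\ep^4$ in the energy. This is done once and for all in \cite{Ma}; I would simply invoke that reference after indicating how the umbilicity condition and \eqref{eq_rep} reduce the general Fermi-coordinate expansion to the stated form, and note that no new computation beyond \cite{Ma} is required.
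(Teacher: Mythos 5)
Your proposal follows essentially the same route as the paper, which simply borrows these expansions from Marques \cite{Ma} after noting that \eqref{eq_rep} and $\pi=0$ eliminate the lower-order terms; your sketch of the Fermi-coordinate/Jacobi-equation derivation is a correct outline of what \cite{Ma} does. Only a cosmetic slip: the tangential slice expansion you quote with $+\tfrac13 R_{ikjl}[\hh]x_kx_l$ is that of the \emph{inverse} metric $\hh^{ij}$ (the metric $\hh_{ij}$ carries $-\tfrac13$), but since the detailed bookkeeping is delegated to \cite{Ma} this does not affect the argument.
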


To treat umbilic but non-locally conformally flat boundaries, we also need the following extension of Lemma \ref{lemma_rep}.
\begin{lemma}\label{lemma_rep_2}
For $n \ge 3$, let $(X^{n+1}, g^+)$ be an asymptotically hyperbolic manifold such that the conformal infinity $(M^n, [\hh])$ is umbilic and \eqref{eq_main_l_2} holds.
For a fixed point $y \in M$, there exist a representative $\hh$ of the class $[\hh]$,
the geodesic boundary defining function $\rho$ ($= x_N$ near $M$) and the metric $\bg = \rho^2 g^+$ such that
\begin{itemize}
\item[(1)] $R_{ij;k}[\hh](y) + R_{jk;i}[\hh](y) + R_{ki;j}[\hh](y) = 0$,
\item[(2)] $\textnormal{Sym}_{ijkl}\(R_{ij;kl}[\hh] + {2 \over 9} R_{miqj}[\hh] R_{mkql}[\hh]\)(y) = 0$,
\item[(3)] $\pi = 0$ on $M$, $R_{NN;N}[\bg](y) = R_{aN}[\bg](y) = 0$,
\item[(4)] $R_{;ii}[\bg](y) = -\dfrac{n\|W\|^2}{6(n-1)}$, $R_{NN;ii}[\bg](y) = - \dfrac{\|W\|^2}{12(n-1)}$, $R_{iNjN}[\bg](y) = R_{ij}[\bg](y)$,
\item[(5)] $R_{NN;NN}[\bg](y) = \dfrac{3}{2n} R_{;NN}[\bg](y) - 2 (R_{ij}[\bg](y))^2$,
\item[(6)] $R_{iNjN;ij}[\bg](y) = \(\dfrac{3-n}{2n}\) R_{;NN}[\bg](y) - (R_{ij}[\bg](y))^2 - \dfrac{\|W\|^2}{12(n-1)}$
\end{itemize}
if normal coordinates around $y \in (M,\hh)$ is assumed.
Here $\|W\|$ is the norm of the Weyl tensor of $(M,\hh)$ at $y$.
\end{lemma}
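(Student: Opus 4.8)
The plan is to reduce the statement to two kinds of input: the intrinsic conformal normal coordinate expansion of Lee--Parker for the boundary metric, which handles (1)--(2), and the analytic hypothesis \eqref{eq_main_l_2}, which, fed into the transformation law \eqref{eq_R_gp} together with the Fermi-coordinate expansion of Lemma \ref{lemma_metric_2} and the radial (Riccati) structure equations, produces (3)--(6).

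First I would fix $\hh \in [\hh]$ to be a representative in conformal normal coordinates at $y$ in the sense of \cite[Theorem 5.2]{LP}: then $\sqrt{|\hh|}(\bx) = 1 + O(|\bx|^{n})$ in $\hh$-normal coordinates about $y$, and expanding the metric and tracing the resulting order-three and order-four terms gives precisely (1) and (2), together with $R_{ij}[\hh](y) = 0$ (so $R[\hh](y)=0$) and the Lee--Parker identity $\Delta_{\hh}R[\hh](y) = -\tfrac16\|W[\hh]\|^{2}(y)$, both of which are used later. Once $\hh$ is fixed, the geodesic defining function $\rho$ and $\bg = \rho^{2}g^{+}$ are determined (Remark \ref{rmk_main_l}\,(1)). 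Since \eqref{eq_main_l_2} implies \eqref{eq_main_l_1}, hence $R[g^{+}] + n(n+1) = o(\rho)$, Lemma \ref{lemma_H=0} yields $H \equiv 0$ on $M$; with umbilicity this forces $\pi \equiv 0$ on $M$, hence also $\nabla^{M}\pi \equiv 0$ on $M$. This is the first assertion of (3); the Codazzi equation then gives $R_{iN}[\bg] \equiv 0$ on $M$, and the once-contracted Gauss equation at $y$ (using $\pi \equiv 0$ and $R_{ij}[\hh](y)=0$) gives $R_{iNjN}[\bg](y) = R_{ij}[\bg](y)$, the last identity of (4).

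Next I would extract the remaining identities from \eqref{eq_main_l_2}. Writing $f := R[g^{+}] + n(n+1)$, formula \eqref{eq_R_gp} reads $f = 2n\,(\pa_{\rho}\log\sqrt{|\bg|})\,\rho + R[\bg]\,\rho^{2}$, a function smooth up to $M$; in Fermi coordinates around $y$, \eqref{eq_main_l_2} amounts to the vanishing at $(y,0)$ of $\pa_{\rho}^{k}f$ for $0 \le k \le 4$ and of $\pa_{\rho}^{k}\pa_{\bx}^{\alpha}f$ for $1 \le |\alpha| \le 2$, $0 \le k \le 2$. Differentiating $f$ and inserting the $x_{N}$-expansion of $\sqrt{|\bg|}$ from Lemma \ref{lemma_metric_2} — whose relevant coefficients are built from $R_{NN}[\bg](y)$, $R_{NN;N}[\bg](y)$, $R_{NN;i}[\bg](y)$, $R_{NN;ij}[\bg](y)$ and $R_{NN;NN}[\bg](y) + 2(R_{iNjN}[\bg](y))^{2}$ — together with the Riccati equations $\pa_{\rho}(\pa_{\rho}\log\sqrt{|\bg|}) = -|A_{\rho}|^{2} - R_{NN}[\bg]$ and $\pa_{\rho}(A_{\rho})_{ij} = -(A_{\rho}^{2})_{ij} - R_{iNjN}[\bg]$ (with $A_{0} = 0$), one reads off order by order. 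From $k=2$: $R_{NN}[\bg](y) = \tfrac{1}{2n}R[\bg](y)$, which with the contracted Gauss equation $R[\bg]|_{M} = R[\hh] + 2R_{NN}[\bg]|_{M}$ and $R[\hh](y) = 0$ forces $R_{NN}[\bg](y) = R[\bg](y) = 0$. From $k=3$: $R_{;N}[\bg](y) = n\,R_{NN;N}[\bg](y)$, which with the contracted second Bianchi identity $\tfrac12 R_{;N}[\bg](y) = R_{NN;N}[\bg](y)$ (valid since $R_{iN}[\bg]\equiv 0$ on $M$) forces $R_{NN;N}[\bg](y) = R_{;N}[\bg](y) = 0$, completing (3). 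From $k=4$: $R_{NN;NN}[\bg](y) + 2(R_{iNjN}[\bg](y))^{2} = \tfrac{3}{2n}R_{;NN}[\bg](y)$, which with $R_{iNjN}[\bg](y) = R_{ij}[\bg](y)$ is (5). Finally, from the $i=j$ trace of the $|\alpha|=2$, $k=2$ condition, $R_{;ii}[\bg](y) = 2n\,R_{NN;ii}[\bg](y)$, which combined with the intrinsic identity $R_{;ii}[\bg](y) = \Delta_{\hh}R[\hh](y) + 2R_{NN;ii}[\bg](y) = -\tfrac16\|W\|^{2}(y) + 2R_{NN;ii}[\bg](y)$ (the $\hh$-Laplacian at $y$ of the restricted contracted Gauss equation, using $\pi\equiv 0$ and Lee--Parker) gives the two Weyl identities of (4); the $|\alpha|=1$ conditions likewise give $R_{NN;i}[\bg](y) = R_{;i}[\bg](y) = 0$, used implicitly.

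It remains to establish (6), and this is the step I expect to be the main obstacle. Here I would differentiate the once-contracted Gauss relation $R_{iNjN}[\bg]|_{M} = R_{ij}[\bg]|_{M} - R_{ij}[\hh]$ tangentially twice at $y$ — the normal-connection corrections drop out because $\pi \equiv 0$ and $\nabla^{M}\pi \equiv 0$ — to get $R_{iNjN;ij}[\bg](y) = R_{ij;ij}[\bg](y) - R_{ij;ij}[\hh](y)$, evaluate $R_{ij;ij}[\hh](y) = \tfrac12\Delta_{\hh}R[\hh](y) = -\tfrac{1}{12}\|W\|^{2}(y)$ by the twice-contracted Bianchi identity in $\hh$ (using $R_{ij}[\hh](y)=0$), and rewrite $R_{ij;ij}[\bg](y)$ by splitting the twice-contracted Bianchi identity $R_{ab;ab}[\bg](y) = \tfrac12\bigl(R_{;ii}[\bg](y) + R_{;NN}[\bg](y)\bigr)$ into tangential and normal indices, using $R_{iN}[\bg]\equiv 0$ on $M$, $R_{;i}[\bg](y) = R_{;N}[\bg](y) = 0$, the relations (4), (5) and $R_{iNjN}[\bg](y) = R_{ij}[\bg](y)$. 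Collecting terms should yield (6). The hard part is precisely this accounting: commuting covariant derivatives past the curvature of $\bg$ and passing between tangential and ambient differentiation generate many curvature-quadratic corrections, and one must check that they recombine into exactly $(R_{ij}[\bg](y))^{2}$ and $\tfrac{1}{12(n-1)}\|W\|^{2}(y)$; keeping the sign and index conventions of Lemma \ref{lemma_metric_2}, the Gauss--Codazzi equations and the Riccati equations mutually consistent is the other point that requires care. (The hypothesis $n \ge 3$ enters exactly where we divided by $n-2$ to conclude $R_{NN;N}[\bg](y) = 0$.)
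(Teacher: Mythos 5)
For items (1)--(5) your argument is essentially the paper's: Lee--Parker conformal normal coordinates give (1), (2), $R_{ij}[\hh](y)=0$ and $R_{;ii}[\hh](y)=-\|W\|^2/6$; $H=0$ (from Lemma \ref{lemma_H=0}) plus umbilicity gives $\pi\equiv 0$, Codazzi gives $R_{iN}[\bg]=0$ on $M$, and the contracted Gauss equation gives $R_{iNjN}[\bg](y)=R_{ij}[\bg](y)$; the remaining identities come from the $\rho$- and $\bx$-Taylor coefficients of $R[g^+]+n(n+1)$ extracted from \eqref{eq_R_gp} together with the expansion of $\sqrt{|\bg|}$ and Bianchi-type identities, exactly as in the paper. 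Your $k=2$ step (giving $R_{NN}[\bg](y)=R[\bg](y)=0$ via Gauss) replaces the paper's appeal to \eqref{eq_rep}, and your use of $\tfrac12 R_{;N}[\bg]=R_{NN;N}[\bg]$ at $y$ replaces the paper's second-Bianchi-plus-Codazzi computation; both are equivalent, and $n\ge3$ enters at the same place.

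For (6) you deviate from the paper, which differentiates the contracted Bianchi identity in $N$ to get $\tfrac12 R_{;NN}=R_{iN;iN}+R_{NN;NN}$, commutes via the Ricci identity ($R_{iN;iN}=R_{iN;Ni}+(R_{ij})^2-(R_{aN})^2$), and converts $R_{iN;Ni}$ by the second Bianchi identity into $-R_{iNjN;ij}+R_{NN;ii}$. Your first step is fine: since $\pi\equiv0$ on $M$ and $\nabla_i N=-\pi_{ij}e_j=0$, all correction terms vanish and the twice-differentiated Gauss relation $R_{ij;ij}[\hh](y)=R_{ij;ij}[\bg](y)-R_{iNjN;ij}[\bg](y)$ holds, with $R_{ij;ij}[\hh](y)=\tfrac12\Delta_{\hh}R[\hh](y)=-\|W\|^2/12$. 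The gap is in your second step as stated: splitting $R_{ab;ab}[\bg]=\tfrac12(R_{;ii}+R_{;NN})$ into tangential and normal indices is by itself circular and does not determine $R_{ij;ij}[\bg](y)$, and the mixed term $\sum_j\nabla_j\nabla_N R_{Nj}$ is \emph{not} a consequence of $R_{iN}[\bg]\equiv0$ on $M$ (that only controls tangential derivatives of $R_{iN}$). To evaluate it one must (i) use the contracted Bianchi identity differentiated in $N$, which together with (5) gives $\sum_i\nabla_N\nabla_i R_{iN}=\tfrac12 R_{;NN}-R_{NN;NN}$ at $y$, and (ii) commute the mixed derivatives via the Ricci identity, which is precisely what produces the quadratic term $(R_{ij}[\bg](y))^2$; this ``curvature-quadratic bookkeeping'' you defer is the whole content of the paper's proof of (6), not a routine check. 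For what it is worth, your route does close: with the paper's conventions $\sum_j\nabla_j\nabla_N R_{Nj}=\tfrac{n-3}{2n}R_{;NN}+(R_{ij})^2$ at $y$, hence $R_{ij;ij}[\bg](y)=\tfrac12 R_{;ii}-\tfrac{n-3}{2n}R_{;NN}-(R_{ij})^2$, and subtracting $-\|W\|^2/12$ and using (4) reproduces (6). So the plan is viable and marginally different from the paper at this one point, but as written the decisive step of (6) is asserted rather than proved.
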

\noindent Note that the first partial derivatives of $\hh$ and the Christoffel symbols $\Gamma^k_{ij}[\hh] = \Gamma^k_{ij}[\bg]$ at $y$ vanish.
Also a simple computation utilizing $\pi = 0$ on $M$ shows that
$\Gamma^b_{aa}[\bg] = \Gamma^a_{bN}[\bg] = 0$ on $M$.
\begin{proof}[Proof of Lemma \ref{lemma_rep_2}]
\cite[Theorem 5.2]{LP} guarantees the existence of a representative $\hh \in [\hh]$ on $M$ such that (1), (2) and $R_{ij}[\hh](y) = 0$ hold.
Furthermore, \cite[Proposition 1.2]{Es2} shows that umbilicity is preserved under the conformal transformation, and so $\pi = 0$ on $M$.
The proof of the remaining identities in (3)-(6) is presented in 2 steps.

\medskip
\noindent \textsc{Step 1.} By differentiating \eqref{eq_R_gp} in $x_N$ and using the assumption that $\pa_N \(R[g^+] + n(n+1)\) = o(x_N^2)$ as $x_N \to 0$ (see \eqref{eq_main_l_2}), we obtain
\begin{equation}\label{eq_dif}
o(1) = n \left[{\pa_N |\bg| \over |\bg| x_N^2} + {\pa_{NN} |\bg| \over |\bg| x_N}
- {\(\pa_N |\bg|\)^2 \over |\bg|^2 x_N}\right] + {2 R[\bg] \over x_N} + R_{,N}[\bg] \quad \text{as } x_N \to 0.
\end{equation}
Also, since we supposed that the mean curvature $H$ vanishes on the umbilic boundary $M$, we get from \eqref{eq_rep} that $R_{NN}[\bg](y) = \pi(y) = 0$.
This in turn gives that $|\bg|(y) = 1$ and $\pa_N |\bg|(y) = \pa_{NN} |\bg|(y) = R[\bg](y) = 0$.
Consequently, by taking the limit to $y$ in \eqref{eq_dif}, we find that
\begin{equation}\label{eq_dif_2}
\begin{aligned}
0 &= n \left[{\pa_{NNN} |\bg|(y) \over 2} + {\pa_{NNN} |\bg|(y)} - 0 \right] + 2R_{,N}[\bg](y) + R_{,N}[\bg](y) \\
&= n \pa_{NNN} |\bg|(y) + 2R_{,N}[\bg](y).
\end{aligned}
\end{equation}
Now we observe from Lemma \ref{lemma_metric_2} that $\pa_{NNN} |\bg|(y) = -2R_{NN;N}[\bg](y)$.
In addition, by the second Bianchi identity, the Codazzi equation 
and the fact that $\pi = 0$ on $M$, one can achieve
\begin{equation}\label{eq_R_r}
\begin{aligned}
R_{,N}[\bg] &= R_{;N}[\bg] = 2R_{NN;N}[\bg] + R_{ijij;N}[\bg] = 2R_{NN;N}[\bg] + (R_{ijiN;j}[\bg] - R_{ijjN;i}[\bg])\\
&= 2R_{NN;N}[\bg] + 2(\pi_{ii;jj} - \pi_{ij;ij}) = 2R_{NN;N}[\bg]
\end{aligned}
\end{equation}
and
\[R_{iN}[\bg] = \pi_{jj;i} - \pi_{ij;j} = 0\]
at $y \in M$. Combining \eqref{eq_dif_2} and \eqref{eq_R_r}, we get
\[0 = (2-n) R_{NN;N}[\bg](y).\]
Since $n \ge 3$, it follows that $R_{NN;N}[\bg](y) = 0$ as we wanted.

\medskip
\noindent \textsc{Step 2.}
It is well-known that $R_{,ii}[\hh](y) = R_{;ii}[\hh](y) = -\|W(y)\|^2/6$ in the normal coordinate around $y \in M$.
Therefore the Gauss-Codazzi equation and the fact that $H = \pi = 0$ on $M$ imply
\begin{equation}\label{eq_rep_21}
R_{,ii}[\bg](y) = 2R_{NN,ii}[\bg](y) - {\|W(y)\|^2 \over 6} \quad \text{and} \quad R_{iNjN}[\bg](y) = R_{ij}[\bg](y).
\end{equation}
Moreover, since $\Delta_{\bx} \(R[g^+] + n(n+1)\) = o(x_N^2)$ near $y \in \ox$ (refer to \eqref{eq_main_l_2}),
by differentiating \eqref{eq_R_gp} in $x_i$ twice, dividing the result by $x_N^2$ and then taking the limit to $y$, one obtains
\begin{equation}\label{eq_rep_22}
R_{,ii}[\bg](y) = 2n R_{NN,ii}[\bg](y).
\end{equation}
As a result, putting \eqref{eq_rep_22} into \eqref{eq_rep_21} and applying the relations at $y$
\[R_{;ii}[\bg] = R_{,ii}[\bg] 
\quad \text{and} \quad R_{NN;ii}[\bg] = R_{NN,ii}[\bg] - 2 (\pa_i\Gamma^a_{iN}[\bg]) R_{aN}[\bg] \underset{\text{by }(3)}{=} R_{NN,ii}[\bg]\]
allow one to find (4).

On the other hand, arguing as before but using the hypothesis that $\pa_{NN} \(R[g^+] + n(n+1)\) = o(x_N^2)$ near $y \in \ox$ at this time, one derives equalities
\[3 R_{,NN}[\bg](y) = -n\, \pa_{NNNN} |\bg|(y) = 2n \left[ R_{NN,NN}[\bg](y) + 2(R_{iNjN}[\bg](y))^2 \right].\]
Because $R_{;NN}[\bg](y) = R_{,NN}[\bg](y)$ and $R_{NN;NN}[\bg](y) = R_{NN,NN}[\bg](y)$, it is identical to (5).
Hence the contracted second Bianchi identity, the Ricci identity and (3)-(5) give
\begin{align*}
R_{;NN}[\bg] &= 2R_{iN;iN}[\bg] + 2R_{NN;NN}[\bg] = 2\left[R_{iN;Ni}[\bg] + (R_{ij}[\bg])^2 - (R_{aN}[\bg])^2 \right] + 2R_{NN;NN}[\bg] \\
&= 2 \(R_{iN;Ni}[\bg] + (R_{ij}[\bg])^2\) + \({3 \over n} R_{;NN}[\bg] - 4 (R_{ij}[\bg])^2\).
\end{align*}
at $y$. Now assertion (6) directly follows from the above equality and
\[R_{iN;Ni}[\bg](y) = R_{Njij;Ni}[\bg](y) = - R_{iNjN;ij}[\bg](y) + R_{NN;ii}[\bg](y) = - R_{iNjN;ij}[\bg](y) - {\|W(y)\|^2 \over 12(n-1)}.\]
This finishes the proof.
\end{proof}

\subsection{Computation of the Energy} \label{subsec_um_ene}
Like the previous section, we fix a smooth radial cut-off function $\psi \in C^{\infty}_c(\mr_+^N)$ such that $\psi = 1$ in $B^N_+(0, \eta_0)$ and 0 in $\mr_+^N \setminus B^N_+(0, 2\eta_0)$.
Also, assume that $W_{\ep} = W_{\ep, 0}$ denotes the bubble defined in \eqref{eq_bubble}.
\begin{lemma}\label{lemma_h_2}
Let $y = 0 \in M$ be any fixed point and $J^{\ga}_{\hh}$ the functional given in \eqref{eq_J}.
If \eqref{eq_rep} and (1)-(6) in Lemma \ref{lemma_rep_2} are valid, then
\begin{equation}\label{eq_h_0}
\begin{aligned}
&\ J^{\ga}_{\hh}[\psi W_{\ep}; B^N_+(0,\eta_0)] \\
&= \int_{B^N_+(0, \eta_0)} x_N^{1-2\ga} |\nabla W_1|^2 dx
+ \ep^4 \left[ {\|W\|^2 \over 4n} \({\mcf'_5 \over 12(n-1)} - {\mcf'_6 \over 2(n-1)(n+2)} - {(n-2\ga)\mcf'_4 \over 12n}\) \right.\\
&\hspace{25pt} \left. + {R_{;NN}[\bg] \over 2} \(-{\mcf'_2 \over 8n} + {\mcf'_3 \over 4n^2} - {(n-3) \mcf'_6 \over n^2(n+2)} + {(n-2\ga)\mcf'_1 \over 4n} \)
+ {(R_{ij}[\bg])^2 \over n} \({\mcf'_3 \over 2} - {\mcf'_6 \over n+2}\) \right]\\
&\quad + o(\ep^4)
\end{aligned}
\end{equation}
for any $\ep > 0$ small, $n > 4 + 2\ga$ and $\ga \in (0,1)$.
Here the tensors are computed at $y$ and the values $\mcf'_1, \cdots, \mcf'_6$ are given in Lemma \ref{lemma_int_2}.
\end{lemma}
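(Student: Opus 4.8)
The plan is to expand the energy functional $J^{\ga}_{\hh}[\psi W_{\ep}; B^N_+(0,\eta_0)]$ term by term, exactly mirroring the proof of Lemma \ref{lemma_h} but carrying the Taylor expansions of the metric one order further. Since $\psi \equiv 1$ on $B^N_+(0,\eta_0)$, the cut-off plays no role in this region, and the contribution from the half-annulus $B^N_+(0,2\eta_0)\setminus B^N_+(0,\eta_0)$ is $o(\ep^4)$ by the decay estimates for $W_{\ep}$ (cf. \eqref{eq_rem}, which here requires $n > 4+2\ga$ so that $|x|^{m+1}x_N^{1-2\ga}|\nabla W_\ep|^2$ is integrable up to order $m = 4$). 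The two pieces to handle are $\int x_N^{1-2\ga}|\nabla W_\ep|^2_{\bg}\, dv_{\bg}$ and $\int E(x_N) W_\ep^2\, dv_{\bg}$.

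First I would insert the expansion of $\bg^{ij}$ from Lemma \ref{lemma_metric_2} into $|\nabla W_\ep|^2_{\bg} = \bg^{ab}\pa_a W_\ep \pa_b W_\ep$ and the expansion of $\sqrt{|\bg|}$ into $dv_{\bg}$. Because $W_1$ is radial in $\bx$ and the coordinates are normal at $y$, every odd-order term (those linear or cubic in $\bx$, and the terms with a single $x_i$ factor) integrates to zero by parity; only the even-order contributions survive. Using \eqref{eq_rep} and identities (1)--(2) of Lemma \ref{lemma_rep_2}, the quadratic-order terms drop out as well (this is precisely what makes the umbilic non-l.c.f. case a $\ep^4$ rather than $\ep^2$ phenomenon), so the leading correction is collected from the quartic terms in $\sqrt{|\bg|}$ and $\bg^{ij}$. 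At this stage one rewrites all curvature quantities appearing — $R_{ij;kl}[\hh]$, $R_{miqj}[\hh]R_{mkql}[\hh]$, $R_{NN;ij}[\bg]$, $R_{iNjN;ij}[\bg]$, $R_{NN;NN}[\bg]$, $(R_{iNjN}[\bg])^2$, etc. — in terms of the three independent quantities $\|W\|^2$, $R_{;NN}[\bg](y)$ and $(R_{ij}[\bg](y))^2$ by invoking identities (3)--(6) of Lemma \ref{lemma_rep_2}; the symmetrized contractions against the radial integrals of $W_1$ produce the combinatorial coefficients $\mcf'_1,\dots,\mcf'_6$ tabulated in Lemma \ref{lemma_int_2}.

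Next I would treat the potential term. From \eqref{eq_E_2} one has $E(x_N) = -\big(\tfrac{n-2\ga}{2}\big)\big(\pa_N\sqrt{|\bg|}/\sqrt{|\bg|}\big) x_N^{-2\ga}$; expanding $\pa_N\sqrt{|\bg|}/\sqrt{|\bg|}$ using Lemma \ref{lemma_metric_2} and killing odd terms by parity, the surviving contribution to $\int E(x_N) W_\ep^2\, dv_{\bg}$ is of order $\ep^4$ and involves $R_{NN;ii}[\bg](y)$, $R_{NN;NN}[\bg](y)$ and $(R_{iNjN}[\bg](y))^2$, which again reduce via Lemma \ref{lemma_rep_2}(4)--(6) to $\|W\|^2$, $R_{;NN}[\bg](y)$ and $(R_{ij}[\bg](y))^2$. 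Here one also checks that the apparently singular piece $(n(n+1)+R[g^+])x_N^{-2}$ in \eqref{eq_E1} is genuinely controlled: the decay hypotheses \eqref{eq_main_l_2} on $R[g^+]+n(n+1)$ and its first two $\bx$- and $\rho$-derivatives are exactly what forces this term into the $o(\ep^4)$ remainder. Finally I would add the three contributions — the leading $\int x_N^{1-2\ga}|\nabla W_1|^2\,dx$, the gradient correction, and the potential correction — group the coefficients of $\|W\|^2$, $R_{;NN}[\bg]$ and $(R_{ij}[\bg])^2$, and verify that the result matches the displayed formula \eqref{eq_h_0}.

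The main obstacle is bookkeeping: one must carry a fourth-order Taylor expansion of a Riemannian metric in Fermi coordinates through a weighted gradient integral, and there are many curvature tensors with overlapping symmetries. The delicate points are (i) making sure \emph{every} odd-parity and every quadratic-order term really cancels, which relies crucially on the specific representative chosen in Lemma \ref{lemma_rep_2} and the conformal normal coordinate identities (1)--(2); (ii) correctly applying the second Bianchi and Ricci identities together with $\pi = 0$, $R_{aN}[\bg](y) = 0$, $R_{NN;N}[\bg](y) = 0$ to collapse everything onto the three-dimensional span $\{\|W\|^2, R_{;NN}[\bg], (R_{ij}[\bg])^2\}$; and (iii) the integrability/remainder estimates near $x_N = 0$, where the weight $x_N^{1-2\ga}$ is singular and the condition $n > 4+2\ga$ is sharp for the relevant integrals of $W_1$ to converge. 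The actual evaluation of the $\mcf'_k$ integrals is deferred to Lemma \ref{lemma_int_2} and Appendix \ref{sec_app_2}, so it need not be reproduced here.
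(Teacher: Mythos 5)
Your proposal follows essentially the same route as the paper's proof: expand $\bg^{ij}$ and $\sqrt{|\bg|}$ to fourth order via Lemma \ref{lemma_metric_2}, eliminate the lower-order contributions using parity together with the normalizations of Lemma \ref{lemma_rep_2} (noting that the cubic term $-\tfrac16 R_{NN;N}[\bg]\,x_N^3$, which is even in $\bx$, is removed by Lemma \ref{lemma_rep_2}(3) rather than by parity -- a fact you do list among your ingredients), reduce the surviving quartic curvature terms to $\|W\|^2$, $R_{;NN}[\bg]$ and $(R_{ij}[\bg])^2$ via (3)--(6), and express the radial integrals through $\mcf'_1,\dots,\mcf'_6$ with the remainder controlled by $n>4+2\ga$ as in \eqref{eq_rem}. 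The only cosmetic difference is that the paper treats the potential term by Taylor-expanding $R[\bg]$ in the first form of \eqref{eq_E_2}, using \eqref{eq_main_l_2} to absorb the singular piece and $R[\bg](y)=R_{,N}[\bg](y)=0$, whereas you use the $\pa_N\sqrt{|\bg|}/\sqrt{|\bg|}$ form; the two computations are equivalent.
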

\begin{proof}
\textsc{Step 1 (Estimate on the second and third order terms).}
To begin with, we ascertain that
\begin{equation}\label{eq_h_1}
J^{\ga}_{\hh}[\psi W_{\ep}; B^N_+(0,\eta_0)] = \int_{B^N_+(0, \eta_0)} x_N^{1-2\ga} |\nabla W_1|^2 dx + O(\ep^4).
\end{equation}
In fact, since $H = R_{NN}[\bg] =0$ at $y$ and the bubbles $W_{\ep}$ depends only on the variables $|\bx|$ and $x_N$, we have
\begin{multline}\label{eq_h_2}
\int_{B^N_+(0, \eta_0)} x_N^{1-2\ga}|\nabla W_{\ep}|_{\bg}^2\, dv_{\bg} = \int_{B^N_+(0, \eta_0)} x_N^{1-2\ga} |\nabla W_1|^2 dx \\
+ \ep^3 R_{NN;N}[\bg](y) \({1 \over 3n} \int_{\mr^N_+} x_N^{4-2\ga} |\nabla_{\bx} W_1|^2 dx - {1 \over 6} \int_{\mr^N_+} x_N^{4-2\ga} |\nabla W_1|^2 dx\) + O(\ep^4).
\end{multline}
Moreover, thanks to \eqref{eq_main_l_2}, \eqref{eq_E_2} and $R[\bg](y) = R_{,N}[\bg](y) = 0$, it holds that
\begin{equation}\label{eq_h_3}
\begin{aligned}
&\ \int_{B^N_+(0, \eta_0)} E(x_N) W_{\ep}^2 \, dv_{\bg}\\
&= \int_{B^N_+(0, \eta_0)} E(x_N) W_{\ep}^2 dx + O\(\ep^{4+\zeta} \int_{B^N_+(0, \eta_0)} x_N^{1-2\ga} W_1^2 |x|^{4+\zeta} dx\)\\
&= \ep^2 \({n-2\ga \over 4n}\) \int_{B^N_+(0, \eta_0/\ep)} x_N^{1-2\ga} \(R[\bg](y) + \ep R_{,a}[\bg](y) x_a + {\ep^2 \over 2} R_{,ab}[\bg](y) x_ax_b \) W_1^2 dx + o(\ep^4)\\
&= \ep^4 \({n-2\ga \over 4n}\) \cdot \left[{1 \over 2n} R_{;ii}[\bg](y)\, \mcf'_4 + {1 \over 2} R_{;NN}[\bg](y)\, \mcf'_1 \right] + o(\ep^4)
\end{aligned}
\end{equation}
where $\zeta > 0$ is a sufficiently small number.
Because $R_{NN;N}[\bg](y) = 0$ by Lemma \ref{lemma_rep_2} (3), we see from \eqref{eq_h_2} and  \eqref{eq_h_3} that estimate \eqref{eq_h_1} is true.

\medskip
\noindent \textsc{Step 2 (Estimate on the fourth order terms).}
Let $\sqrt{|\bg|}^{(4)}$ and $(\bg^{ij})^{(4)}$ be the fourth order terms in the expansions \eqref{eq_bg_exp_4} and \eqref{eq_b_exp} of $\sqrt{|\bg|}$ and $\bg^{ij}$.
In view of \eqref{eq_rem}, Lemma \ref{lemma_rep_2} (2) and \cite[Corollary 29]{Br}, one can show that
\begin{multline*}
\int_{B^N_+(0, \eta_0)} x_N^{1-2\ga} |\nabla W_\ep|^2 \sqrt{|\bg|}^{(4)} dx \\
= - \ep^4 \left[{1 \over 4n} R_{NN;ii}[\bg](y)\, \mcf'_5 + {1 \over 24} \( R_{NN;NN}[\bg](y) + 2 (R_{iNjN}[\bg](y))^2 \) \mcf'_2 \right] + o(\ep^4)
\end{multline*}
and
\begin{multline*}
\int_{B^N_+(0, \eta_0)} x_N^{1-2\ga} (\bg^{ij})^{(4)} \pa_iW_\ep \pa_jW_\ep dx
= \ep^4 \left[ {1 \over 2n(n+2)} \( R_{NN;ii}[\bg](y) + 2R_{iNjN;ij}[\bg](y) \) \mcf'_6 \right.
\\
\left. + {1 \over 12n} \( R_{NN;NN}[\bg](y) + 8 (R_{iNjN}[\bg](y))^2 \) \mcf'_3 \right] + o(\ep^4)
\end{multline*}
(cf. \cite[Section 4]{GW}).
Therefore \eqref{eq_rep}, \eqref{eq_h_2} and Lemma \ref{lemma_rep_2} (4)-(6) yield
\begin{align*}
&\ \int_{B^N_+(0, \eta_0)} x_N^{1-2\ga}|\nabla W_{\ep}|_{\bg}^2\, dv_{\bg} \\
&= \int_{B^N_+(0, \eta_0)} x_N^{1-2\ga} |\nabla W_1|^2 dx
+ \ep^4 \left[ {\|W\|^2 \over 8n(n-1)} \({\mcf'_5 \over 6} - {\mcf'_6 \over n+2} \) \right.\\
&\hspace{100pt} \left. + {R_{;NN}[\bg] \over 2n} \(-{\mcf'_2 \over 8} + {\mcf'_3 \over 4n} - {(n-3) \mcf'_6 \over n(n+2)} \)
+ {(R_{ij}[\bg])^2 \over n} \({\mcf'_3 \over 2} - {\mcf'_6 \over n+2}\) \right] + o(\ep^4).
\end{align*}
Now \eqref{eq_h_3} and the previous estimate lead us to \eqref{eq_h_0}.
The proof is accomplished.
\end{proof}

\begin{cor}
Assume that $(X^{n+1},g^+)$ is an asymptotically hyperbolic manifold,
$\hh$ is the representative of the conformal infinity $M$ in Lemma \ref{lemma_metric_2}
and $\overline{I}_{\hh}^{\ga}$ is the $\ga$-Yamabe functional in \eqref{eq_yamabe_q}.
If $n > 4 + 2\ga$ and $\ga \in (0,1)$, we have that
\begin{multline}\label{eq_um_0}
\overline{I}_{\hh}^{\ga}[\psi W_{\ep}] \le \Lambda^{\ga}(\ms^n, [g_c]) + \ep^4 \Lambda^{\ga}(\ms^n, [g_c])^{-{n-2\ga \over 2\ga}} \kappa_{\ga}\, |\ms^{n-1}| A_3B_2\\
\times \(- \|W\|^2 \mcd'_1(n, \ga)\, + R_{;NN}[\bg] \mcd'_2(n, \ga) - (R_{ij}[\bg])^2 \mcd'_3(n, \ga)\) + o(\ep^4)
\end{multline}
where the positive constants $\Lambda^{\ga}(\ms^n, [g_c])$, $\kappa_{\ga}$, $A_3$ and $B_2$ are introduced in \eqref{eq_Lambda}, \eqref{eq_const} and \eqref{eq_AB}, respectively. Furthermore
\begin{equation}\label{eq_mcd'}
\begin{aligned}
\mcd'_1(n, \ga) &= \tfrac{15n^4 - 135n^3 + 10n^2(43+3\ga-4\ga^2) - 180n(3+\ga-\ga^2) + 8(24+35-30\ga^2-5\ga^3+6\ga^4)} {480n(n-1)(n-4)(n-4-2\ga)(n-4+2\ga)(1-\ga^2)} > 0,\\
\mcd'_2(n, \ga) &= 0
\end{aligned}
\end{equation}
and
\[\mcd'_3(n, \ga) = {5n^2 - 4n(13-2\ga^2) + 28(4-\ga^2) \over 5n(n-4)(n-4-2\ga)(n-4+2\ga)}.\]
\end{cor}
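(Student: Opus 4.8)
The plan is to derive \eqref{eq_um_0} by feeding Lemma~\ref{lemma_h_2} into the $\ga$-Yamabe functional \eqref{eq_yamabe_q}, together with the standard estimates on the energy away from $y$ and on the denominator, and then to substitute the explicit values of $\mcf'_1,\dots,\mcf'_6$ from Lemma~\ref{lemma_int_2} in order to read off $\mcd'_1,\mcd'_2,\mcd'_3$.

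First I would assemble the numerator $\kappa_\ga J^\ga_\hh[\psi W_\ep;X]$. By Lemma~\ref{lemma_h_2} and the exterior estimate $J^\ga_\hh[\psi W_\ep;X\setminus B^N_+(0,\eta_0)]=o(\ep^4)$ --- which follows from the bubble decay estimates \eqref{eq_W_dec}, \eqref{eq_W_dec_2}, \eqref{eq_W_dec_3} exactly as in \eqref{eq_l_8}, and where $n>4+2\ga$ is used --- one obtains $J^\ga_\hh[\psi W_\ep;X]=\int_{\mr^N_+}x_N^{1-2\ga}|\nabla W_1|^2\,dx+\ep^4 E_4+o(\ep^4)$, with $E_4$ the bracketed quantity in \eqref{eq_h_0}. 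Next I would treat the denominator. Since $\psi\equiv1$ near $y$, $\pi=0$ on $M$ and $R_{ij}[\hh](y)=0$, Lemma~\ref{lemma_metric_2} evaluated at $x_N=0$ shows that $\sqrt{|\hh|}(\bx)-1$ consists of a cubic term, which integrates to $0$ against the radial function $w_\ep^{p+1}$ by oddness, and a quartic term whose totally symmetrized coefficient vanishes by Lemma~\ref{lemma_rep_2}(2); hence $\int_M|\psi W_\ep|^{p+1}\,dv_\hh=\int_{\mr^n}w_1^{p+1}\,d\bx+o(\ep^4)$. (This is precisely the reason for the particular choice of representative in Lemma~\ref{lemma_rep_2}.)

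Dividing and expanding, and invoking the equality case of the Sobolev trace inequality \eqref{eq_S_trace} for $W_1$ together with \eqref{eq_Lambda} --- so that $\kappa_\ga\int_{\mr^N_+}x_N^{1-2\ga}|\nabla W_1|^2\,dx=\Lambda^\ga(\ms^n,[g_c])\big(\int_{\mr^n}w_1^{p+1}\,d\bx\big)^{\frac{n-2\ga}{n}}$ and $\big(\int_{\mr^n}w_1^{p+1}\,d\bx\big)^{-\frac{n-2\ga}{n}}=\Lambda^\ga(\ms^n,[g_c])^{-\frac{n-2\ga}{2\ga}}$ --- one arrives at
\[
\overline{I}^{\ga}_{\hh}[\psi W_\ep]=\Lambda^\ga(\ms^n,[g_c])+\ep^4\,\kappa_\ga\,\Lambda^\ga(\ms^n,[g_c])^{-\frac{n-2\ga}{2\ga}}\,E_4+o(\ep^4).
\]
It then remains to substitute $\mcf'_i=c_i(n,\ga)\,|\ms^{n-1}|A_3B_2$ from Lemma~\ref{lemma_int_2} (with $c_i$ explicit rational functions of $n$ and $\ga$) into $E_4$ and to collect the coefficients of $\|W\|^2$, $R_{;NN}[\bg](y)$ and $(R_{ij}[\bg](y))^2$. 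Because the geometric reductions $R_{;ii}[\bg](y)=-\tfrac{n\|W\|^2}{6(n-1)}$, $R_{NN;ii}[\bg](y)=-\tfrac{\|W\|^2}{12(n-1)}$, $R_{iNjN}[\bg](y)=R_{ij}[\bg](y)$ and the identities (5)--(6) of Lemma~\ref{lemma_rep_2} have already been incorporated into \eqref{eq_h_0}, this is a purely algebraic computation, yielding $\mcd'_1,\mcd'_3$ as in \eqref{eq_mcd'}, the identity $\mcd'_2\equiv0$, and the positivity $\mcd'_1(n,\ga)>0$ for $n>4+2\ga$.

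The hard part will be this last algebraic step, and in particular the exact cancellation $\mcd'_2(n,\ga)=0$ of the coefficient multiplying $R_{;NN}[\bg](y)$. This cancellation is essential: hypothesis \eqref{eq_main_l_2} constrains only the first and second $\rho$- and $\bx$-derivatives of $R[g^+]+n(n+1)$ near $M$, so it gives no control on $R_{;NN}[\bg](y)$, and a nonzero $\mcd'_2$ would prevent one from fixing the sign of the $\ep^4$ term --- on which the solvability claim of Theorem~\ref{thm_main_l_2} ultimately depends. Establishing $\mcd'_2=0$ reduces to verifying a polynomial identity in $n$ and $\ga$ once the closed forms of $\mcf'_1,\dots,\mcf'_6$ are inserted.
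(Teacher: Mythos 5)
Your proposal is correct and follows essentially the same route as the paper: the denominator is handled exactly as in the paper via Lemma \ref{lemma_metric_2} and Lemma \ref{lemma_rep_2} (1)--(2) (cubic term vanishing, symmetrized quartic coefficient vanishing), the numerator comes from Lemma \ref{lemma_h_2} plus the $o(\ep^4)$ exterior estimate, and the constants $\mcd'_1,\mcd'_2,\mcd'_3$ are obtained by substituting the values $\mcf'_1,\dots,\mcf'_6$ from Lemma \ref{lemma_int_2} and collecting coefficients. Your extra remarks on the normalization through \eqref{eq_Lambda} and on the significance of the cancellation $\mcd'_2=0$ are accurate and consistent with the paper's discussion.
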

\begin{proof}
By Lemmas \ref{lemma_metric_2} and \ref{lemma_rep_2} (1)-(2), it holds that
\begin{align*}
&\ \int_M (\psi W_{\ep})^{p+1} dv_{\hh} \\
&= \int_{B^n(0,\eta_0)} w_{\ep}^{p+1} \left[1 - {1 \over 40} \(R_{ij,kl}[\hh] + {2 \over 9} R_{miqj}[\hh] R_{mkql}[\hh]\) x_ix_jx_kx_l + O(|\bx|^5) \right] d\bx + O(\ep^n) \\
&= \int_{\mr^n} w_1^{p+1} d\bx + o(\ep^4).
\end{align*}
Thus the conclusion follows from Lemmas \ref{lemma_h_2} and \ref{lemma_int_2} at once.
\end{proof}
\noindent \noindent It is interesting to observe that the quantity $R_{;NN}[\bg](y)$ does not contribute to the existence of a least energy solution,
since the coefficient of $R_{;NN}[\bg](y)$, denoted by $\mcd'_2(n, \ga)$, is always zero for any $n$ and $\gamma$.
Such a phenomenon has been already observed in the boundary Yamabe problem \cite{Ma}.
We also observe that the number $\mcd'_3(n, \ga)$ has a nonnegative sign in some situations: when $n = 7$ and $\ga \in [1/2, 1)$, or $n \ge 8$ and $\ga \in (0,1)$.
In order to cover lower dimensional cases, we need a more refined test function.

\medskip
Let $y \in M$ be a point such that $W[\hh](y) \ne 0$. Motivated by \cite{Al}, we define functions
\[\wPsi_{\ep} = \Psi_{\ep}(\bx, x_N) = M_2 R_{iNjN}[\bg] x_ix_j x_N^2 r^{-1} \pa_r W_{\ep} = \ep^2 \cdot \ep^{-{n-2\ga \over 2}} \wPsi_1(\ep^{-1}\bx, \ep^{-1} x_N)\]
for some $M_2 \in \mr$ and
\[\wPhi_{\ep} := \psi(W_{\ep} + \wPsi_{\ep}) \quad \text{in } X.\]
\begin{prop}
Suppose that $(X^{n+1},g^+)$ is an asymptotically hyperbolic manifold.
Moreover $\hh$ is the representative of the conformal infinity $M$ satisfying \eqref{eq_rep} and Lemma \ref{lemma_rep_2} (1)-(6).
If $n > 4 + 2\ga$ and $\ga \in (0,1)$, we have
\begin{multline}\label{eq_um_1}
\overline{I}_{\hh}^{\ga}[\wPhi_{\ep}] \le \Lambda^{\ga}(\ms^n, [g_c]) + \ep^4 \Lambda^{\ga}(\ms^n, [g_c])^{-{n-2\ga \over 2\ga}} \kappa_{\ga}\, |\ms^{n-1}| A_3B_2\\
\times \(- \|W\|^2 \mcd_1(n, \ga)\, + R_{;NN}[\bg] \mcd_2(n, \ga) - (R_{ij}[\bg])^2 \mcd_3(n, \ga)\) + o(\ep^4)
\end{multline}
where
\[\mcd_1(n, \ga) = \mcd'_1(n, \ga),\quad \mcd_2(n, \ga) = 0\]
(see \eqref{eq_mcd'} for the definition of the positive constant $\mcd'_1(n, \ga)$) and
\[\mcd_3(n, \ga) = \frac{25n^3 - 20n^2(9-\ga^2) + 100n(4-\ga^2) - 16(4 - \ga^2)^2}{5n (n-4-2\ga)(n-4+2\ga) (5n^2 - 4n(1+\ga^2) - 8 (4-\ga^2))}.\]
\end{prop}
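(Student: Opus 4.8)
The plan is to follow the same three-step strategy used in the proof of Proposition~\ref{prop_l}, but now with the refined ansatz $\wPhi_\ep = \psi(W_\ep + \wPsi_\ep)$, whose correction term $\wPsi_\ep$ carries one more power of $x_N$ than $\Psi_\ep$ (reflecting the fact that the leading-order obstruction sits at $\ep^4$ rather than $\ep^2$ on umbilic boundaries). First I would expand $J^\ga_{\hh}[\psi(W_\ep + \wPsi_\ep); B^N_+(0,\eta_0)]$ by splitting it as in \eqref{eq_l_5}: the unperturbed piece $J^\ga_{\hh}[\psi W_\ep; B^N_+(0,\eta_0)]$ is given by Lemma~\ref{lemma_h_2}, and what remains is the cross term $2\int x_N^{1-2\ga}\la \nabla W_\ep, \nabla \wPsi_\ep\ra_{\bg}\, dv_{\bg}$ together with the quadratic term $\int x_N^{1-2\ga}|\nabla \wPsi_\ep|^2\, dx$. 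Because $\wPsi_\ep = \ep^2 \ep^{-(n-2\ga)/2}\wPsi_1(\ep^{-1}\cdot)$, both of these terms are genuinely of order $\ep^4$, so to this order the metric may be replaced by the Euclidean one in the quadratic term and by its second-order Taylor expansion in the cross term (the odd-order terms drop out by parity, exactly as $H=\pi=0$ killed the lower-order contributions in \eqref{eq_l_4} and \eqref{eq_h_2}).

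Next I would reduce every resulting integral to a linear combination of the universal constants $\mcf'_1,\dots,\mcf'_6$ (and possibly a few further ones of the same type) through the angular-averaging identities for monomials $x_{i_1}\cdots x_{i_{2q}}$ against radial functions of $W_1$, as in the passage from \eqref{eq_l_6} to its final line. Since $\wPsi_\ep$ is built out of the symmetric tensor $R_{iNjN}[\bg] = R_{ij}[\bg]$ (by Lemma~\ref{lemma_rep_2}(4)), the contractions that appear will involve $\|R_{ij}[\bg]\|^2 = (R_{ij}[\bg])^2$ and $R_{;NN}[\bg]$ but not $\|W\|^2$ — which is why $\mcd_1 = \mcd'_1$: the correction term cannot improve the Weyl-tensor coefficient, only the Ricci-type coefficient. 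The cross term is linear in $M_2$, the quadratic term is quadratic in $M_2$, so the $\ep^4$-coefficient of $J^\ga_{\hh}$ becomes a quadratic polynomial in $M_2$ whose constant term is the bracketed expression of \eqref{eq_h_0}; minimizing over $M_2 \in \mr$ produces the optimal value and replaces $\mcd'_3$ by $\mcd_3$, exactly as $\mcc'$ was replaced by $\mcc$ in \eqref{eq_l_2}. Finally, since $\wPsi_\ep = 0$ on $M$, the denominator $\int_M |\wPhi_\ep|^{p+1}\, dv_{\hh}$ is unchanged to order $\ep^4$, just as in \eqref{eq_l_3}, so dividing the numerator estimate by it yields \eqref{eq_um_1}.

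The main obstacle I anticipate is bookkeeping rather than conceptual: correctly organizing the many fourth-order integrals coming from the cross term, where one must pair $\pa_i W_\ep$ or $\pa_N W_\ep$ against the three pieces of $\nabla \wPsi_\ep$ — the $\pa_r$-derivative hitting $M_2 R_{iNjN}[\bg] x_ix_j x_N^2 r^{-1}$, the $x_N$-derivative producing the extra $x_N$, and the angular part — and each of these, after the metric expansion \eqref{eq_b_exp}, spawns several contractions that must be reduced to $\mcf'_j$'s. One must also make sure that all the error terms are genuinely $o(\ep^4)$; here the hypothesis $n > 4 + 2\ga$ is exactly what guarantees integrability of $x_N^{1-2\ga}|x|^{4+\zeta}|\nabla W_1|^2$ over $\mr^N_+$ for small $\zeta > 0$, via \eqref{eq_rem} (with $m=4$) and the decay estimates \eqref{eq_W_dec}--\eqref{eq_W_dec_3} for the tail region, so the cutoff and the half-annulus $B^N_+(0,2\eta_0)\setminus B^N_+(0,\eta_0)$ contribute only $o(\ep^4)$ as in \eqref{eq_l_8}. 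Once the polynomial in $M_2$ is written down, verifying that its minimum has the claimed closed form $\mcd_3(n,\ga)$ is a routine (if lengthy) algebraic simplification, and the sign analysis of $\mcd_1$ and $\mcd_3$ that feeds into Theorem~\ref{thm_main_l_2} is then a separate elementary check.
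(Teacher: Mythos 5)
Your proposal is correct and follows essentially the same route as the paper: split $J^{\ga}_{\hh}[\wPhi_\ep;B^N_+(0,\eta_0)]$ into the unperturbed part handled by Lemma \ref{lemma_h_2}, a cross term linear in $M_2$ (computed with the metric expansion of Lemma \ref{lemma_metric_2}), and a Euclidean quadratic term in $M_2$, reduce everything to the constants of Lemma \ref{lemma_int_2}, optimize over $M_2$, and note that $\wPsi_\ep=0$ on $M$ leaves the denominator unchanged while the annulus contributes $o(\ep^4)$. The only imprecision is attributing the disappearance of the a priori $O(\ep^2)$ flat-metric part of the cross term to ``parity''; as in \eqref{eq_l_4}, it actually vanishes by the angular average producing the trace $R_{iNiN}[\bg](y)=R_{NN}[\bg](y)=0$, which is exactly the hypothesis the paper invokes at the start of its proof.
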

\begin{proof}
Since $R_{NN}[\bg](y) = 0$, we obtain
\begin{equation}\label{eq_um_2}
\begin{aligned}
J^{\ga}_{\hh}\left[\wPhi_{\ep}; B^N_+(0,\eta_0)\right] &= J^{\ga}_{\hh} \left[\psi W_{\ep}; B^N_+(0, \eta_0)\right]
+ 2 \int_{\mr^N_+} x_N^{1-2\ga} (\bg^{ij} - \delta^{ij})\, \pa_i W_{\ep} \pa_j \wPsi_{\ep} \, dx\\
&\ + \int_{\mr^N_+} x_N^{1-2\ga} |\nabla \wPsi_{\ep}|^2 dx + o(\ep^4).
\end{aligned}
\end{equation}
Also a tedious computation with Lemmas \ref{lemma_metric_2} and \ref{lemma_rep_2} (4) reveals that the second term of the right-hand side of \eqref{eq_um_2} is equal to
\begin{multline*}
{2 \over 3} R_{ikjl}[\hh] \int_{\mr^N_+} x_N^{1-2\ga} x_kx_l\, \pa_i W_{\ep} \pa_j \wPsi_{\ep} \, dx
+ 2R_{iNjN}[\bg] \int_{\mr^N_+} x_N^{3-2\ga} \pa_i W_{\ep} \pa_j \wPsi_{\ep} \, dx + o(\ep^4) \\
= 0 + \ep^4 4M_2 \left[{1 \over n} \mcf'_3 + {1 \over n(n+2)} (-\mcf'_3 + \mcf'_7) \right] (R_{ij}[\bg])^2 + o(\ep^4)
\end{multline*}
and it holds that
\[\int_{\mr^N_+} x_N^{1-2\ga} |\nabla \wPsi_{\ep}|^2 dx = \ep^4 \left[{2M_2^2 \over n(n+2)}\right] \(\mcf'_3 - 2\mcf'_7 + \mcf'_8 + 4\mcf'_6 + 4\mcf'_9 + \mcf'_{10}\) (R_{ij}[\bg])^2 + o(\ep^4) \]
(cf. \eqref{eq_l_6} and \eqref{eq_l_7}).
Here the constants $\mcf'_1, \cdots, \mcf'_{10}$ are defined in Lemma \ref{lemma_int_2}.

On the other hand, we have
\[J^{\ga}_{\hh}\left[\wPhi_{\ep}; X \setminus B^N_+(0,\eta_0) \right] = o(\ep^4),\]
and since $\wPsi_{\ep} = 0$ on $M$, the integral of $|\wPhi_{\ep}|^{p+1}$ over the boundary $M$ does not contribute to the fourth order term in the right-hand side of \eqref{eq_um_1}.
By combining all information, employing Lemma \ref{lemma_int_2} and selecting the optimal $M_2 \in \mr$, we complete the proof.
\end{proof}
\noindent One can verify that $\mcd_3(n, \ga) > 0$ whenever $n > 4 + 2\ga$ and $\ga \in (0,1)$.
Consequently we deduce Theorem \ref{thm_main_l_2} from the previous proposition.

\section{Locally Conformally Flat or 2-dimensional Conformal Infinities} \label{sec_lcf}
\subsection{Analysis of Green's function} \label{subsec_Green}
In this subsection, we prove Proposition \ref{prop_G_exp}.
By Theorem \ref{thm_ext}, solvability of problem \eqref{eq_G} for each $y \in M$ is equivalent to the existence of a solution $G^*$ to the equation
\[\begin{cases}
-\text{div}_{\bg^*}\((\rho^*)^{1-2\ga}\nabla G^*(\cdot,y)\) = 0 &\text{in } (X, \bg^*),\\
\pa^{\ga}_{\nu} G^*(\cdot,y) = \delta_y - Q^{\ga}_{\hh} G^*(\cdot, y) &\text{on } (M, \hh),
\end{cases}\]
and it holds that $|\bg^*_{iN}| + |\bg^*_{NN} - 1| = O(\rho^{2\ga})$.
We also recall \cite[Corollary 4.3]{GQ} which states that if $\Lambda^{\ga}(M, [\hh]) > 0$, then $M$ admits a metric $\hh_0 \in [\hh]$ such that $Q^{\ga}_{\hh_0} > 0$ on $M$.
Thanks to the following lemma, it suffices to show Proposition \ref{prop_G_exp} for $\hh_0 \in [\hh]$.
\begin{lemma}
Let $(X, g^+)$ be any conformally compact Einstein manifold with conformal infinity $(M, [\hh])$, $\rho$ the geodesic defining function of $M$ in $X$ and $\bg = \rho^2 g^+$.
For any positive smooth function $w$ on $M$, define a new metric $\hh_w = w^{4 \over n-2\ga} \hh$, denote the corresponding geodesic boundary defining function by $\rho_w$ and set $\bg_w = \rho_w^2 g^+$.
Suppose that $G = G(x,y)$ solves \eqref{eq_G}. Then the function
\[G_w(x,y) := \({\rho(x) \over \rho_w(x)}\)^{n-2\ga \over 2} w^{n+2\ga \over n-2\ga}(y)\, G(x,y) \quad \text{for } (x,y) \in \ox \times M,\, x \ne y\]
again satisfies \eqref{eq_G} with $(\bg_w, \hh_w)$ and $\rho_w$ substituted for $(\bg, \hh)$ and $\rho$, respectively.
\end{lemma}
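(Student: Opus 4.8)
The plan is to verify directly that $G_w(\cdot,y)$ solves \eqref{eq_G} with $(\bg,\hh,\rho)$ replaced by $(\bg_w,\hh_w,\rho_w)$, by exploiting the conformal covariance of the degenerate elliptic extension problem of Theorem \ref{thm_ext}. First set $\Theta:=\rho_w/\rho$, so that $\bg_w=\Theta^2\bg$ and $\rho_w=\Theta\rho$ on $\ox$; by Lemma \ref{lemma_cur} the function $\Theta$ is positive and bounded between two positive constants, and on the boundary $\Theta|_M=w^{2/(n-2\ga)}$ because $\hh_w=\bg_w|_M=(\Theta|_M)^2\bg|_M=(\Theta|_M)^2\hh$. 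Note that the asserted $G_w(\cdot,y)$ is of the form $\Theta^{-(n-2\ga)/2}\,G(\cdot,y)$ multiplied by a factor depending only on $y$. Next I would pass to the weak formulation: integrating \eqref{eq_G} against $\phi\in C^\infty(\ox)$ and integrating by parts (using the definition of $\pa^\ga_\nu$), ``$G(\cdot,y)$ solves \eqref{eq_G}'' is equivalent to
\[
\int_X\left(\rho^{1-2\ga}\langle\nabla G(\cdot,y),\nabla\phi\rangle_{\bg}+E(\rho)\,G(\cdot,y)\,\phi\right)dv_{\bg}=\kappa_\ga^{-1}\,\phi(y)\qquad\text{for every }\phi\in C^\infty(\ox),
\]
and ``$G_w(\cdot,y)$ solves the $(\bg_w,\hh_w,\rho_w)$-version of \eqref{eq_G}'' is equivalent to the same identity with $(\bg,\rho)$ replaced by $(\bg_w,\rho_w)$, the right-hand side remaining the point evaluation $\kappa_\ga^{-1}\phi(y)$ (which carries no boundary volume form, hence is unaffected by the conformal change).

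The key ingredient is the conformal covariance of the extension quadratic form: for all $V,W\in C^\infty(\ox)$,
\[
\int_X\left(\rho_w^{1-2\ga}\langle\nabla(\Theta^{-\frac{n-2\ga}{2}}V),\nabla(\Theta^{-\frac{n-2\ga}{2}}W)\rangle_{\bg_w}+E(\rho_w)\,(\Theta^{-\frac{n-2\ga}{2}}V)(\Theta^{-\frac{n-2\ga}{2}}W)\right)dv_{\bg_w}=\int_X\left(\rho^{1-2\ga}\langle\nabla V,\nabla W\rangle_{\bg}+E(\rho)\,VW\right)dv_{\bg}.
\]
This is the extension-level counterpart of the conformal covariance \eqref{eq_conf_cov} of $P^\ga_\hh$ and is built into the construction of \cite{CG,GQ}; I would either cite it or check it by hand. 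In the hand computation one notes that under $(\bg,\rho)\mapsto(\Theta^2\bg,\Theta\rho)$ the weighted density $\rho^{1-2\ga}dv_{\bg}$ attached to the gradient term scales by $\Theta^{n-2\ga}$, which is cancelled by the two factors $\Theta^{-(n-2\ga)/2}$ up to terms involving $\langle\nabla\Theta,\nabla(VW)\rangle_{\bg}$ and $|\nabla\Theta|^2_{\bg}\,VW$; after one integration by parts these are absorbed by the $E$-terms, since $E(\rho)=\rho^{-1-s}(-\Delta_{g^+}-s(n-s))\rho^{n-s}$ (with $s=n/2+\ga$) is designed precisely so that $-\Delta_{g^+}$ applied to a power of a defining function reproduces exactly such expressions. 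Because $(X,g^+)$ is conformally compact Einstein, $H=0$ on $M$ and $R[g^+]+n(n+1)\equiv0$, so the boundary terms produced in this integration by parts cancel and no extra surface contribution survives.

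Granting the covariance identity, the argument finishes in a few lines: for $\phi\in C^\infty(\ox)$ set $V:=\Theta^{(n-2\ga)/2}\phi$ and apply the identity with $W:=\Theta^{(n-2\ga)/2}G_w(\cdot,y)$, which by construction is a $y$-dependent multiple of $G(\cdot,y)$; the left-hand side of the $(\bg_w,\rho_w)$-weak identity for $G_w(\cdot,y)$ tested against $\phi$ then equals that $y$-dependent multiple times $\int_X(\rho^{1-2\ga}\langle\nabla G(\cdot,y),\nabla V\rangle_{\bg}+E(\rho)G(\cdot,y)V)dv_{\bg}=\kappa_\ga^{-1}V(y)$, and since $y\in M$ one has $V(y)=\Theta(y)^{(n-2\ga)/2}\phi(y)=w(y)\,\phi(y)$. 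The $y$-dependent normalizing factor in the definition of $G_w$ is thereby uniquely pinned down by the requirement that the resulting right-hand side reduce to $\kappa_\ga^{-1}\phi(y)$ — equivalently, by matching the leading-order singularity of $G_w(\cdot,y)$ normalized by $g_{n,\ga}$ exactly as in the model computation on the Poincar\'e half-space $(\mr^{n+1}_+,x_{n+1}^{-2}dx)$ — which completes the verification. The hard part is the middle step: establishing the conformal covariance of the extension operator \emph{with} the potential $E(\rho)$ present, and confirming that no boundary contribution is left over; the weak reformulation and the bookkeeping of the conformal factors at $y$ are then routine.
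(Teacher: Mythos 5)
Your route is genuinely different from the paper's. The paper argues pointwise: it rewrites the bulk equation of \eqref{eq_G} as $L_{\bg}\big(\rho^{\frac{1-2\ga}{2}}G\big)+\big(\ga^2-\tfrac14\big)\rho^{-\frac{3+2\ga}{2}}G=0$ and uses the conformal covariance of the conformal Laplacian (both terms scale by the same power of $\rho_w/\rho$), and then handles the boundary condition through the identification $\pa^{\ga}_{\nu}=P^{\ga}_{\hh}$ of Theorem \ref{thm_ext} together with \eqref{eq_conf_cov}; no weak formulation or energy identity enters. Your plan instead hinges on an exact conformal covariance of the quadratic form $\int_X(\rho^{1-2\ga}|\nabla U|^2_{\bg}+E(\rho)U^2)\,dv_{\bg}$ under $(\rho,\bg,U)\mapsto(\rho_w,\bg_w,\Theta^{-\frac{n-2\ga}{2}}U)$, $\Theta=\rho_w/\rho$. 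That identity is the crux and you do not prove it. It can be established, but not for the reason you give: each form equals, up to an exact divergence $\mathrm{div}_{\bg}\big((n-s)\rho^{-2\ga}U^2\nabla^{\bg}\rho\big)$, the divergent quantity $\int_X\big(|\nabla f|^2_{g^+}-s(n-s)f^2\big)dv_{g^+}$ with $f=\rho^{n-s}U=\rho_w^{n-s}U_w$, so the claimed equality reduces to showing that the limit of the flux $(n-s)\int_{\{\rho=\epsilon\}}\rho^{1-2\ga}\Theta^{-1}U^2\,\pa_\rho\Theta\,dS$ vanishes. What kills it is not that $X$ is Einstein or that $R[g^+]+n(n+1)\equiv0$, but the relation between two geodesic defining functions: writing $\rho_w=e^{\omega}\rho$, the eikonal condition gives $2\pa_\rho\omega+\rho|d\omega|^2_{\bg}=0$, hence $\pa_\rho\Theta=O(\rho)$ and the flux is $O(\epsilon^{2-2\ga})$. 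Without this argument (or an equivalent precise citation) your middle step is an assertion, not a proof.

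More seriously, your final bookkeeping does not close. Under your own convention that the Dirac datum is the metric-free evaluation $\phi\mapsto\phi(y)$, plugging the $G_w$ of the statement (for which $\Theta^{\frac{n-2\ga}{2}}G_w=w^{\frac{n+2\ga}{n-2\ga}}(y)\,G(\cdot,y)$) into your scheme gives
\[
\kappa_{\ga}\int_X\Big(\rho_w^{1-2\ga}\la\nabla G_w,\nabla\phi\ra_{\bg_w}+E(\rho_w)G_w\phi\Big)dv_{\bg_w}
= w^{\frac{n+2\ga}{n-2\ga}}(y)\,V(y)=w^{\frac{n+2\ga}{n-2\ga}}(y)\,w(y)\,\phi(y)=w^{\frac{2n}{n-2\ga}}(y)\,\phi(y),
\]
not $\phi(y)$; in other words your requirement pins the $y$-dependent factor to $w^{-1}(y)$ (consistent with the symmetric transformation rule $w^{-1}(x)w^{-1}(y)G(x,y)$ for Green's functions of conformally covariant operators), not to $w^{\frac{n+2\ga}{n-2\ga}}(y)$. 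So the phrase ``which completes the verification'' conceals an unperformed computation which, as you set it up, contradicts the stated normalization. The discrepancy is a convention issue about how $\delta_y$ is paired after the conformal change: the paper's factor corresponds to its formal manipulation $w^{-\frac{n+2\ga}{n-2\ga}}\delta_y=w^{-\frac{n+2\ga}{n-2\ga}}(y)\delta_y$, i.e.\ to pairing the new Neumann data against the old volume element $dv_{\hh}$, and the constant is harmless for the lemma's use; but a complete proof along your lines must carry the arithmetic through, state the convention explicitly, and reconcile (or correct) the factor. As written, the last step is a genuine gap.
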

\begin{proof}
By \eqref{eq_E}, the first equality in \eqref{eq_G} is re-expressed as
\begin{equation}\label{eq_G_0}
L_{\bg}\(\rho^{1-2\ga \over 2} G(\cdot, y)\) + \(\ga^2 - {1 \over 4}\) \rho^{-\({3+2\ga \over 2}\)} G(\cdot,y) = 0 \quad \text{in } (X, \bg)
\end{equation}
where $L_{\bg}$ is the conformal Laplacian in $(X, \bg)$ defined in \eqref{eq_conf_L}.
Therefore one observes from \eqref{eq_conf_cov} that $G_w$ is a solution of \eqref{eq_G_0} if $\bg$ and $\rho$ are replaced with $\bg_w$ and $\rho_w$, respectively.
Also, since $w = (\rho_w/\rho)^{(n-2\ga)/2}$ on $M$, we see
\begin{align*}
\pa^{\ga}_{\nu} G_w(\cdot,y) &= P^{\ga}_{\hh_w} G_w(\cdot,y)
= w^{n+2\ga \over n-2\ga}(y)\, P^{\ga}_{w^{4 \over n-2\ga} \hh}\( (\rho/\rho_w)^{n-2\ga \over 2} \, G(\cdot,y)\) \\
&= w^{n+2\ga \over n-2\ga}(y)\, P^{\ga}_{w^{4 \over n-2\ga} \hh}\( w^{-1} \, G(\cdot,y)\)
= w^{n+2\ga \over n-2\ga}(y)\, w^{-{n+2\ga \over n-2\ga}} P^{\ga}_{\hh}(G(\cdot,y)) \\
& = w^{n+2\ga \over n-2\ga}(y)\, w^{-{n+2\ga \over n-2\ga}} \pa^{\ga}_{\nu}(G(\cdot,y)) = w^{n+2\ga \over n-2\ga}(y)\, w^{-{n+2\ga \over n-2\ga}} \delta_y = \delta_y \quad \text{on } M
\end{align*}
where we have applied Theorem \ref{thm_ext} and \eqref{eq_conf_cov} for the first, fourth and fifth equalities.
\end{proof}

For brevity, we write $\hh = \hh_0$, $\bg = \bg^*$, $\rho = \rho^*$ and $G = G^*$ here and henceforth.
Further, recalling that $Q^{\ga}_{\hh} > 0$ on $M$, let us define a norm
\[\|U\|_{\mcw^{1,q}\xr} = \(\int_X \rho^{1-2\ga} |\nabla U|_{\bg}^q \, dv_{\bg} + \int_M Q^{\ga}_{\hh} U^q dv_{\hh}\)^{1/q}\]
for any $q \ge 1$ and set a space $\mcw^{1,q}\xr$ as the completion of $C^{\infty}_c(\ox)$ with respect to the above norm.

Given any bounded Radon measure $f$ (such as the dirac measures), we say that a function $U \in \mcw^{1,q}\xr$ is a {\it weak solution} of
\begin{equation}\label{eq_G_2}
\begin{cases}
-\text{div}_{\bg}\(\rho^{1-2\ga}\nabla U\) = 0 &\text{in } (X, \bg),\\
\pa^{\ga}_{\nu} U + Q^{\ga}_{\hh} U = f  &\text{on } (M, \hh),
\end{cases}
\end{equation}
if it is satisfied that
\begin{equation}\label{eq_weak}
\int_X \rho^{1-2\ga} \la \nabla U, \nabla \Psi \ra_{\bg} dv_{\bg} + \int_M Q^{\ga}_{\hh} U \Psi dv_{\hh} = \int_M f \Psi
\end{equation}
for any $\Psi \in C^1(\ox)$.

The $\mcw^{1,2}\xr$-norm is equivalent to the standard weighted Sobolev norm $\|U\|_{W^{1,2}\xr}$ (see \cite[Lemma 3.1]{CK}).
Thus for any fixed $f \in (H^{\ga}(M))^*$, the existence and uniqueness of a solution $U \in W^{1,2}\xr$ to \eqref{eq_G_2} are guaranteed by the Riesz representation theorem.
\begin{lemma}\label{lemma_weak_reg}
Assume that $n > 2\ga$, $f \in (H^{\ga}(M))^*$ and $1 \le \alpha < \min\{{n \over n-2\ga}, {2n+2 \over 2n+1}\}$.
Then there exists a constant $C = C(\ox, g^+, \rho, n, \ga, \alpha)$ such that
\begin{equation}\label{eq_weak_reg}
\|U\|_{\mcw^{1,\alpha}\xr} \le C\|f\|_{L^1(M)}
\end{equation}
for a weak solution $U \in W^{1,2}\xr$ to \eqref{eq_G_2}.
As a result, if $f$ is the dirac measure $\delta_y$ at $y \in M$, then \eqref{eq_G_2} has a unique nonnegative weak solution $G(\cdot,y) \in \mcw^{1,\alpha}\xr$.
\end{lemma}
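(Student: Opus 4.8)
The plan is to reduce the lemma to an a~priori estimate for the $W^{1,2}\xr$ weak solution associated with a general $f$ (whose existence and uniqueness is already supplied by the Riesz representation theorem when $f\in (H^\ga(M))^*$; if $f\notin L^1(M)$ the right‑hand side of \eqref{eq_weak_reg} is infinite and there is nothing to prove), and then to recover the Green's function case by approximating $\delta_y$ by mollifications $f_k\in C^\infty(M)$ with $\|f_k\|_{L^1(M)}=1$. Two standing facts are used throughout. First, the weak formulation \eqref{eq_weak} extends, by density of $C^1(\ox)$, to test functions in $W^{1,2}\xr$ whenever $U\in W^{1,2}\xr$, so two solutions of problems of the form \eqref{eq_G_2} may be tested against one another. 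Second, $\rho^{1-2\ga}$ is a Muckenhoupt $A_2$ weight (because $|1-2\ga|<1$), so the Fabes--Kenig--Serapioni machinery is available: weighted Sobolev and Poincar\'e inequalities, De~Giorgi--Nash--Moser estimates, and their boundary counterparts for the degenerate operator $-\textnormal{div}_{\bg}(\rho^{1-2\ga}\nabla\cdot)$ with the Robin‑type condition $\pa^\ga_\nu+Q^\ga_{\hh}$; recall also that $Q^\ga_{\hh}$ is bounded above and below by positive constants on the compact $M$.

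First I would bound the boundary part $\int_M Q^\ga_{\hh}|U|^\alpha\,dv_{\hh}$ by duality. For $h\in L^{\alpha'}(M)$ with $\alpha'=\alpha/(\alpha-1)$, let $\Psi_h\in W^{1,2}\xr$ be the weak solution of $-\textnormal{div}_{\bg}(\rho^{1-2\ga}\nabla\Psi_h)=0$ in $X$, $\pa^\ga_\nu\Psi_h+Q^\ga_{\hh}\Psi_h=h$ on $M$. Since $\alpha<n/(n-2\ga)$ is equivalent to $\alpha'>n/(2\ga)$, a boundary Moser iteration for this degenerate equation gives $\Psi_h\in L^\infty(\ox)$ with $\|\Psi_h\|_{L^\infty(\ox)}\le C\|h\|_{L^{\alpha'}(M)}$. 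Inserting $U$ as a test function in the weak formulation of the dual problem and $\Psi_h$ as a test function in that of \eqref{eq_G_2}, the common symmetric bilinear form cancels and one obtains $\int_M hU=\int_M f\Psi_h\le\|f\|_{L^1(M)}\|\Psi_h\|_{L^\infty(M)}\le C\|f\|_{L^1(M)}\|h\|_{L^{\alpha'}(M)}$; taking the supremum over $h\in L^{\alpha'}(M)$ yields $\|U\|_{L^\alpha(M)}\le C\|f\|_{L^1(M)}$.

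Next I would bound the gradient part $\int_X\rho^{1-2\ga}|\nabla U|^\alpha\,dv_{\bg}$ by a Boccardo--Gallou\"et truncation. With $T_j(t)=\max\{-j,\min\{j,t\}\}$, testing \eqref{eq_weak} against $T_{2^{m+1}}(U)-T_{2^m}(U)$ (which has $L^\infty$‑norm $2^m$) and using that $\int_M Q^\ga_{\hh}\,U\,(T_{2^{m+1}}(U)-T_{2^m}(U))\,dv_{\hh}\ge0$ gives the shell energy bound $\int_{\{2^m<|U|<2^{m+1}\}}\rho^{1-2\ga}|\nabla U|^2\,dv_{\bg}\le 2^m\|f\|_{L^1(M)}$. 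Feeding $T_j(U)$ into the weighted Sobolev inequality together with a Poincar\'e inequality anchored at the boundary trace, and using the $L^\alpha(M)$‑bound of the previous paragraph to control $\|T_j(U)\|_{L^2(M)}$, produces a geometric decay in $m$ of the weighted volume of $\{|U|\ge 2^m\}$ in $X$. H\"older's inequality on each dyadic shell $\{2^m\le|U|<2^{m+1}\}$ and summation over $m$ then converge exactly in the range $\alpha<(2n+2)/(2n+1)$, which delivers $\int_X\rho^{1-2\ga}|\nabla U|^\alpha\,dv_{\bg}\le C\|f\|_{L^1(M)}^\alpha$. Together with the first step this establishes \eqref{eq_weak_reg}.

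Finally, applying \eqref{eq_weak_reg} to the approximations $f_k\to\delta_y$, the solutions $U_k$ are bounded in the reflexive space $\mcw^{1,\alpha}\xr$, so a subsequence converges weakly to some $G(\cdot,y)\in\mcw^{1,\alpha}\xr$ for which \eqref{eq_weak_reg} persists; since $\nabla\Psi\in L^{\alpha'}(X;\rho^{1-2\ga})$ and $Q^\ga_{\hh}\Psi\in L^{\alpha'}(M)$ for every $\Psi\in C^1(\ox)$, and $\int_M f_k\Psi\to\Psi(y)$, one passes to the limit in \eqref{eq_weak} and sees that $G(\cdot,y)$ is a weak solution of \eqref{eq_G_2} with $f=\delta_y$. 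Testing the equation for $U_k$ against $\min\{U_k,0\}\in W^{1,2}\xr$ and using $f_k\ge0$ and $Q^\ga_{\hh}>0$ shows $U_k\ge0$, hence $G(\cdot,y)\ge0$. Uniqueness follows because the difference of two weak solutions in $\mcw^{1,\alpha}\xr$ solves the fully homogeneous problem, hence by interior and boundary De~Giorgi--Nash--Moser theory lies in $W^{1,2}\xr$ and is bounded, so testing it against itself and using $Q^\ga_{\hh}>0$ forces it to vanish. The main obstacle is precisely the up‑to‑the‑boundary regularity input: the $L^\infty$ bound for the dual problem (a boundary Moser iteration for the degenerate operator with the nonlocal‑type Robin condition) and the weighted Sobolev/Poincar\'e inequalities with a boundary trace term used in the truncation step; the admissible range of $\alpha$ is exactly what these ingredients and the dyadic summation can afford.
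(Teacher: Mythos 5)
Your proposal is correct in substance and reaches exactly the right exponents, but it takes a genuinely different route from the paper for the gradient estimate. The paper proves \eqref{eq_weak_reg} by a single Brezis--Strauss type duality: it sets up the adjoint problem \eqref{eq_w_dual} with \emph{both} a boundary datum $h_0\in L^q(M)$ and interior divergence-form data $H_1,\dots,H_N\in L^q\xr$, proves the $L^\infty$ bound \eqref{eq_w_dual_est} for that adjoint by a Moser iteration (this is where $q>\max\{\tfrac{n}{2\ga},2(n+1)\}$, i.e.\ $\alpha<\min\{\tfrac{n}{n-2\ga},\tfrac{2n+2}{2n+1}\}$, enters, the $2(n+1)$ coming from the $|H_a|^2$ terms), and then obtains the full $\mcw^{1,\alpha}\xr$ bound at once by testing $U$ against the adjoint solution. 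You instead use duality only for the boundary $L^\alpha(M)$ bound (with a dual problem carrying boundary data alone, which indeed needs only $\alpha'>\tfrac{n}{2\ga}$), and obtain the weighted $L^\alpha$ gradient bound by a Boccardo--Gallou\"et truncation: the dyadic shell energy bound plus the weighted Sobolev/trace inequality (the paper's \eqref{eq_w_dual_2}) gives decay of the weighted measure of superlevel sets, and H\"older on each shell sums precisely for $\alpha<\tfrac{2n+2}{2n+1}$. Your route is more elementary where the paper's is more unified: the truncation argument makes transparent where the exponent $\tfrac{2n+2}{2n+1}$ comes from and only requires the simpler boundary-data Moser iteration, while the paper's adjoint argument handles trace and gradient simultaneously at the cost of a heavier iteration. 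Two small repairs: in the level-set step you should control the boundary term $\int_M Q^{\ga}_{\hh}(T_j(U))^2\,dv_{\hh}$ directly from the truncation test itself (it is bounded by $j\|f\|_{L^1(M)}$), rather than via the $L^\alpha(M)$ bound, since $\alpha$ may be below $2$; and reflexivity of $\mcw^{1,\alpha}\xr$ fails at $\alpha=1$, so for the Green's function one simply fixes some admissible $\alpha>1$. Your uniqueness argument (regularity of homogeneous $\mcw^{1,\alpha}\xr$ solutions) is sketched at about the same level of detail as the paper's one-line appeal to \eqref{eq_weak_reg}; neither is fully spelled out, so this is not a gap relative to the paper's standard.
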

\begin{proof}
\textsc{Step 1.} We are going to verify estimate \eqref{eq_weak_reg} by suitably modifying the argument in \cite[Section 5]{BS}.
To this aim, we consider the {\it formal adjoint} of \eqref{eq_G_2}:
Given any $h_0 \in L^q(M)$ and $H_1, \cdots, H_N \in L^q\xr$ for some $q > \max\{{n \over 2\ga}, 2(n+1)\}$, we study a function $V$ such that
\begin{equation}\label{eq_w_dual}
\int_X \rho^{1-2\ga} \la \nabla V, \nabla \Psi \ra_{\bg} dv_{\bg} + \int_M Q^{\ga}_{\hh} V \Psi dv_{\hh} = \int_M h_0 \Psi dv_{\hh} + \sum_{a=1}^N \int_X \rho^{1-2\ga} H_a \pa_a \Psi dv_{\bg}
\end{equation}
for any $\Psi \in C^1(\ox)$.
Indeed, by the Lax-Milgram theorem, \eqref{eq_w_dual} possesses a unique solution $V \in W^{1,2}\xr$.
Moreover, as will be seen in Step 3 below, it turns out that $V$ satisfies
\begin{equation}\label{eq_w_dual_est}
\|V\|_{L^{\infty}(M)} + \|V\|_{L^{\infty}(X)} \le C \(\|h_0\|_{L^q(M)} + \sum_{a=1}^N \|H_a\|_{L^q\xr}\).
\end{equation}
Therefore taking $\Psi = U$ in \eqref{eq_weak} (which is allowed to do thanks to the density argument) and employing \eqref{eq_w_dual_est}, we find
\begin{align*}
\int_M U h_0 dv_{\hh} + \sum_{a=1}^N \int_X \rho^{1-2\ga} \pa_a U H_a dv_{\bg} &= \int_M fV dv_{\hh} \le \|f\|_{L^1(M)} \|V\|_{L^{\infty}(M)} \\
&\le C \|f\|_{L^1(M)} \(\|h_0\|_{L^q(M)} + \sum_{a=1}^N \|H_a\|_{L^q\xr}\).
\end{align*}
This implies the validity of \eqref{eq_weak_reg} with $\alpha = q'$ where $q'$ designates the H\"older conjugate of $q$.

\medskip \noindent \textsc{Step 2.}
Assume now that $f = \delta_y$ for some $y \in M$.
Then one is capable of constructing a sequence $\{f_m\}_{m \in \mn} \subset C^1(M)$ with an approximation to the identity or a mollifier so that $f_m \ge 0$ on $M$,
\[\sup_{m \in \mn} \|f_m\|_{L^1(M)} \le C, \quad f_m \to 0 \text{ in } C^1_{\text{loc}}(M \setminus \{y\}) \quad \text{and} \quad
f_m \rightharpoonup \delta_y \text{ in the distributional sense.}\]
Denote by $\{U_m\}_{m \in \mn} \subset W^{1,2}\xr$ a sequence of the corresponding weak solutions to \eqref{eq_G_2}.
By \eqref{eq_weak_reg} and elliptic regularity, there exist a function $G(\cdot, y)$ and a number $\ve_0 \in (0,1)$
such that $U_m \rightharpoonup G(\cdot, y)$ weakly in $\mcw^{1,\alpha}\xr$ and $U_m \to G(\cdot, y)$ in $C^{\ve_0}_{\text{loc}}(\ox \setminus \{y\})$.
It is a simple task to confirm that $G(\cdot, y)$ satisfies \eqref{eq_weak}.

Also, putting $(U_m)_- \in W^{1,2}\xr$ into \eqref{eq_weak} yields $U_m \ge 0$ in $X$,
which in turn gives $G(\cdot,y) \ge 0$ in $X$.
Finally, it is easy to see that the uniqueness of $G(\cdot,y)$ comes as a consequence of \eqref{eq_weak_reg}.
This completes the proof of the lemma except \eqref{eq_w_dual_est}.

\medskip \noindent \textsc{Step 3 (Justification of estimate \eqref{eq_w_dual_est}).}
We shall apply Moser's iteration technique so as to get \eqref{eq_w_dual_est}. Set
\[\zeta_0 = \|h_0\|_{L^q(M)} + \sum_{a=1}^N \|H_a\|_{L^q\xr} \quad \text{if } (h_0, H_1, \cdots, H_N) \ne (0, 0, \cdots, 0).\]
Otherwise let $\zeta_0$ be any positive number which we will make $\zeta_0 \to 0$ eventually. Then we define $\ov = V_+ + \zeta_0$ and
\[\ov_\ell = \begin{cases}
\ov &\text{if } V < \ell,\\
\ell + \zeta_0 &\text{if } V \ge \ell
\end{cases}\]
for each $\ell > 0$. Testing
\[\Psi = \ov_\ell^{\beta-1} \ov - \zeta_0^{\beta} \in W^{1,2}\xr\]
in \eqref{eq_w_dual} for a fixed exponent $\beta \ge 1$ shows that
\begin{multline} \label{eq_w_dual_1}
{1 \over 2\beta} \(\int_X \rho^{1-2\ga} \left| \nabla \wtv_{\ell} \right|_{\bg}^2 dv_{\bg}
+ \int_M Q^{\ga}_{\hh} \wtv_{\ell}^2 dv_{\hh}\) \\
\le 2 \int_M Q^{\ga}_{\hh} \wtv_{\ell}^2 dv_{\hh} + {1 \over \zeta_0} \int_M |h_0| \wtv_{\ell}^2  dv_{\hh}
+ \int_X \rho^{1-2\ga} \(\sum_{a=1}^N |H_a|\) \left| \nabla \(\ov_\ell^{\beta-1} \ov\) \right|_{\bg} dv_{\bg}
\end{multline}
where $\wtv_{\ell} := \ov_{\ell}^{\beta-1 \over 2} \ov$.
Then one sees that \eqref{eq_w_dual_1} is reduced to
\begin{multline} \label{eq_w_dual_11}
{1 \over 4\beta} \(\int_X \rho^{1-2\ga} \left| \nabla \wtv_{\ell} \right|_{\bg}^2 dv_{\bg}
+ \int_M Q^{\ga}_{\hh} \wtv_{\ell}^2 dv_{\hh}\) \\
\le 2 \int_M Q^{\ga}_{\hh} \wtv_{\ell}^2 dv_{\hh} + {1 \over \zeta_0} \int_M |h_0| \wtv_{\ell}^2 dv_{\hh}
+ {4 \beta^2 \over \zeta_0^2} \int_X \rho^{1-2\ga} \(\sum_{a=1}^N |H_a|^2\) \wtv_{\ell}^2 dv_{\bg}.
\end{multline}
Besides an application of the Sobolev inequality and the Sobolev trace inequality (see \cite{FKS, Xi}) yields
\begin{equation}\label{eq_w_dual_2}
\(\int_M \wtv_{\ell}^{p+1} dv_{\hh}\)^{2 \over p+1} + \(\int_X \rho^{1-2\ga} \wtv_{\ell}^{2(n+1) \over n} dv_{\bg}\)^{n \over n+1}
\le C \left[ \int_X \rho^{1-2\ga} \left|\nabla \wtv_{\ell} \right|_{\bg}^2 dv_{\bg} + \int_M Q^{\ga}_{\hh} \wtv_{\ell}^2 dv_{\hh} \right],
\end{equation}
while H\"older's inequality gives
\begin{equation}\label{eq_w_dual_3}
\begin{aligned}
&\ {1 \over \zeta_0} \int_M |h_0| \wtv_{\ell}^2 dv_{\hh}
+ {4 \beta^2 \over \zeta_0^2} \int_X \rho^{1-2\ga} \(\sum_{a=1}^N |H_a|^2\) \wtv_{\ell}^2 dv_{\bg} \\
&\le \left[\delta_1^{1 \over \theta_1} \(\int_M \wtv_{\ell}^{p+1} dv_{\hh}\)^{2 \over p+1}
+ \delta_1^{-{1 \over 1-\theta_1}} \(\int_M \wtv_{\ell}^2 dv_{\hh}\) \right] \\
&\quad + 4 \beta^2 \left[\delta_2^{1 \over \theta_2} \(\int_X \rho^{1-2\ga} \wtv_{\ell}^{2(n+1) \over n} dv_{\bg}\)^{n \over n+1}
+ \delta_2^{-{1 \over 1-\theta_2}} \(\int_X \rho^{1-2\ga} \wtv_{\ell}^2 dv_{\bg}\) \right]
\end{aligned}
\end{equation}
for any small $\delta_1, \delta_2 > 0$ and some $\theta_1, \theta_2 \in (0,1)$ satisfying
\[{2\theta_1 \over p+1} + (1-\theta_1) = {n\theta_2 \over n+1} + (1-\theta_2) = {1 \over q'}.\]
Note that such numbers $\theta_1$ and $\theta_2$ exist because of the assumption that $q > \max\{{n \over 2\ga}, 2(n+1)\}$.
Collecting \eqref{eq_w_dual_11}-\eqref{eq_w_dual_3} and applying Lebesgue's monotone convergence theorem, we arrive at
\begin{multline*}
\(\int_M \ov^{(\beta+1) \cdot \({p+1 \over 2}\)} dv_{\hh}\)^{2 \over p+1} + \(\int_X \rho^{1-2\ga} \ov^{(\beta+1) \cdot \(n+1 \over n\)} dv_{\bg}\)^{n \over n+1} \\
\le C \beta \left[ \(2 + \beta^{\theta_1 \over 1-\theta_1}\) \(\int_M \ov^{\beta+1} dv_{\hh}\)
+ \beta^{3\theta_2 \over 1-\theta_2} \(\int_X \rho^{1-2\ga} \ov^{\beta+1} dv_{\bg}\) \right]
\end{multline*}
for a constant $C > 0$ independent of the choice of $\beta$.
Consequently, the standard iteration argument (considering also the replacement of $V$ with $-V$) reveals that
there exists $C > 0$ depending only on $\ox,\, g^+,\, \rho,\, n,\, \ga,\, \tal$ and $q$ for each $\tal \ge 2$ such that
\begin{equation}\label{eq_w_dual_4}
\|V\|_{L^{\infty}(M)} + \|V\|_{L^{\infty}(X)} \le C \( \|V\|_{L^{\tal}(M)} + \|V\|_{L^{\tal}\xr} + \|h_0\|_{L^q(M)} + \sum_{a=1}^N \|H_a\|_{L^q\xr} \).
\end{equation}
Now \eqref{eq_w_dual_est} is achieved in view of the compactness of the Sobolev embedding $W^{1,2}\xr \hookrightarrow L^2\xr$ (refer to \cite{GO} and \cite[Corollary A.1]{JX}),
that of the trace operator $W^{1,2}\xr \hookrightarrow L^{p+1-\ve_1}(M)$ for any small $\ve_1 > 0$,
the coercivity of the bilinear form in the left-hand side of \eqref{eq_w_dual} and the assumption $q > 2(n+1) \ge 2$.
\end{proof}

\begin{proof}[Completion of the proof of Proposition \ref{prop_G_exp}]
The existence and nonnegativity of Green's function $G$ is deduced in the previous lemma.
Owing to Hopf's lemma (cf. \cite[Theorem 3.5]{GQ}), $G$ is positive on the compact manifold $\ox$.
Remind that the coercivity of \eqref{eq_weak} implies the uniqueness of $G$. The proof is finished.
\end{proof}

\subsection{Locally Conformally Flat Case} \label{subsec_lcf}
This subsection is devoted to provide the proof of Theorem \ref{thm_main} under the hypothesis that $M$ is locally conformally flat.
Since the explicit solutions are known when $(X^{n+1},\bg) \simeq \mb^{n+1}$, we shall exclude such a case throughout the section.

\medskip
Pick any point $y \in M$. Since it is supposed to be locally conformally flat, we can assume that $y$ is the origin in $\mr^N$
and identify a neighborhood $\mathcal{U}$ of $y$ in $M$ with a Euclidean ball $B^n(0,\vr_1)$ for some $\vr_1 > 0$ small (namely, $\hh_{ij} = \delta_{ij}$ in $\mathcal{U} = B^n(0,\vr_1)$).
Write $x_N$ to denote the geodesic defining function $\rho$ for the boundary $M$ near $y$. Then we have smooth symmetric $n$-tensors $h^{(1)},\, \cdots,\, h^{(n-1)}$ on $B^n(0, \vr_1)$ such that
\begin{equation}\label{eq_bg_exp}
\bg = h_{x_N} \oplus dx_N^2 \quad \text{where } (h_{x_N})_{ij}(\bx, x_N) = \delta_{ij} + \sum_{m=1}^{n-1} h^{(m)}_{ij}(\bx) x_N^m + O(x_N^n)
\end{equation}
for $(\bx, x_N) \in \mcr^N(\vr_1, \vr_2) := B^n(0,\vr_1) \times [0,\vr_2) \subset \ox$ where $\vr_2 > 0$ is a number small enough.
In fact, as we will see shortly, the local conformal flatness on $M$ and the assumption that $X$ is Poincar\'e-Einstein
together imply that all low-order tensors $h^{(m)}$ which can be locally determined should vanish.
\begin{lemma}\label{lemma_h^m}
If $(X, g^+)$ is Poincar\'e-Einstein, we have $h^{(m)} = 0$ in \eqref{eq_bg_exp} for each $m = 1, \cdots, n-1$.
\end{lemma}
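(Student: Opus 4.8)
The plan is to read off the vanishing of the coefficients $h^{(m)}$ from the Fefferman--Graham normal form of a conformally compact Einstein metric. Since $(X^{n+1},g^+)$ is Poincar\'e--Einstein we have $\mathrm{Ric}[g^+]=-ng^+$, and in the geodesic gauge $\rho=x_N$ the compactified metric is $\bg=\rho^{2}g^+=dx_N^{2}\oplus h_{x_N}$ with $h_{0}=\hh$. Writing the Einstein equation $\mathrm{Ric}[g^+]+ng^+=0$ in these coordinates turns it into a second-order ODE system in $x_N$ for the curve $x_N\mapsto h_{x_N}$, with $h_{0}=\hh$ prescribed; solving it formally produces the classical expansion in which, up to order $x_N^{n}$, only even powers of $x_N$ occur and the coefficient of each admissible power is a universal polynomial in $\mathrm{Riem}[\hh]$ and its iterated covariant derivatives. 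In particular every odd coefficient $h^{(m)}$ with $m\le n-1$ is zero, while for even $m\le n-1$ the coefficient $h^{(m)}$ is a curvature expression --- e.g.\ $h^{(2)}$ is a constant multiple of the Schouten tensor of $\hh$, and the later ones involve also the Weyl and Cotton tensors of $\hh$.

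First I would record the Einstein equation in the form it takes for $\bg=dx_N^{2}\oplus h_{x_N}$ and extract from it the recursion for the Taylor coefficients $h^{(m)}$ of $h_{x_N}$ at $x_N=0$; this is where asymptotic hyperbolicity and the geodesic normalization of $\rho$ enter. I would then observe the two structural features of the recursion that matter here: (i) the coefficients of the odd powers $x_N,x_N^{3},\dots$ below order $x_N^{n}$ are forced to vanish (so $h_{x_N}$ is even in $x_N$ modulo $O(x_N^{n})$), and (ii) for each $m\le n-1$ the coefficient $h^{(m)}$ is \emph{locally} determined, i.e.\ a polynomial in the curvature of $\hh$ and its covariant derivatives with no free integration constant; the first non-local datum is $h^{(n)}$ (together with the obstruction tensor carried by a possible $x_N^{n}\log x_N$ term when $n$ is even), but this lives at order $\ge n$ and plays no role. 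One may either invoke the standard Fefferman--Graham expansion for (ii), or run the recursion by hand and check inductively that $h^{(m)}$ for $m<n$ is built algebraically out of lower-order data together with $\mathrm{Riem}[\hh]$.

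Finally I would invoke the local conformal flatness of $M$: replacing $\hh$ by the flat representative on the neighbourhood $\mathcal{U}\simeq B^{n}(0,\vr_1)$ --- so $\hh_{ij}=\delta_{ij}$ there, as already arranged in \eqref{eq_bg_exp} --- makes $\mathrm{Riem}[\hh]$ and all its covariant derivatives vanish identically on $\mathcal{U}$. Substituting this into the curvature-polynomial formulas for the even coefficients annihilates every $h^{(m)}$ with $m$ even and $2\le m\le n-1$ on $\mathcal{U}$, and together with the vanishing of the odd coefficients this gives $h^{(m)}=0$ for all $m=1,\dots,n-1$, as claimed. The only point that needs care is the bookkeeping behind (ii): one must check that the indicial obstruction to uniquely solving the recursion occurs only at order $m=n$, so that no non-local constants can enter below that order; once this is in place the conclusion is purely formal, and there is no genuine analytic difficulty.
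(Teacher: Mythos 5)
Your proposal is correct and follows essentially the same route as the paper: there the Einstein equation in the geodesic gauge is written as Graham's equation (2.5) of \cite{Gr}, and the Taylor coefficients of $h_{x_N}$ at $x_N=0$ are killed one by one by an induction that uses the indicial factors $(m-n)$, $(m-2n)\neq 0$ for $m\le n-1$ together with $\hh_{ij}=\delta_{ij}$ (hence $R_{ij}[\hh]=0$) on the locally conformally flat chart --- precisely the ``run the recursion by hand'' option you describe. Invoking instead the standard Fefferman--Graham structure result (evenness of the expansion and local determination of the coefficients below order $n$) and then substituting the flat representative is a legitimate packaging of the same argument, with the only caveat being exactly the indicial bookkeeping you already flag.
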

\begin{proof}
We adapt the idea in \cite[Lemma 7.7]{GQ} and \cite[Lemma 2.2]{GW}.
According to (2.5) of \cite{Gr}, it holds that
\begin{multline}\label{eq_h^m}
x_N h_{ij,NN} + (1-n) h_{ij,N} - h^{kl} h_{kl,N} h_{ij} - x_N h^{kl} h_{ik,N} h_{jl,N} + {1 \over 2} x_N h^{kl} h_{kl,N} h_{ij, N} - 2x_N R_{ij}[h]\\
= -2x_N (R_{ij}[g^+] + ng^+) = 0
\end{multline}
for $h := h_{x_N}$. Here the first equality is true for any metric $\bg$ satisfying \eqref{eq_bg_exp},
whereas the second equality holds because $(X, g^+)$ is Poincar\'e-Einstein.
Putting $x_N = 0$ in \eqref{eq_h^m}, we get
\[(1-n)\, h_{ij,N} - \hh^{kl} h_{kl,N} \hh_{ij} = 0,\]
from which we observe
\[\quad (1-n)\, \text{tr}_{\hh} h_{,N} - n\, \text{tr}_{\hh} h_{,N} = (1-2n)\, \text{tr}_{\hh} h_{,N} = 0.\]
It follows that the trace $\text{tr}_{\hh} h_{,N}$ is 0, and eventually, one finds $h_{,N} = h^{(1)} = 0$ on $\{x_N = 0\}$.

On the other hand, it holds that $R_{ij}[\hh] = 0$ on $\{x_N = 0\}$, for $\hh_{ij} = \delta_{ij}$.
Thus, by differentiating the both sides of \eqref{eq_h^m} in $x_N$ and taking $x_N = 0$, we obtain
\[(2-n)\, h_{ij,NN} - \hh^{kl} h_{kl,NN} \hh_{ij} = 0 \quad \text{and} \quad (2-2n)\, \text{tr}_{\hh} h_{,NN} = 0,\]
which again gives $h_{ij,NN} = h^{(2)} = 0$ on $\{x_N = 0\}$.

Analogously, if we differentiating \eqref{eq_h^m} $(m-1)$-times ($m = 3, \cdots, n$) and putting $x_N = 0$, then we have
\[(m-n)\, \pa_N^m h_{ij} - \hh^{kl} \(\pa_N^m h_{kl}\) \hh_{ij} = 0.\]
This gives $\pa_N^m h_{ij} = h^{(m)} = 0$ on $\{x_N = 0\}$, proving the lemma.
\end{proof}
\noindent In particular, the second fundamental form $h^{(1)}$ on $M$ (up to a constant factor) is 0, which indicates Remark \ref{rmk_sec_fund} (2).

\medskip Therefore it reduces to
\begin{equation}\label{eq_bg_exp_2}
\bg_{ij}(\bx,x_N) = \delta_{ij} + O(x_N^n) \quad \text{and} \quad
|\bg| = 1 + O(x_N^n)
\quad \text{for } (\bx, x_N) \in \mcr^N(\vr_1, \vr_2) \subset \ox.
\end{equation}
Now Conjecture \ref{conj_pos} implies that there is a solution $G(\cdot, 0)$ to \eqref{eq_G} with $y = 0$ such that
\[G(x,0) = g_{n,\ga} |x|^{-(n-2\ga)} + A + \Psi(x) \quad \text{for } x \in \mcr^N(\vr_1/2, \vr_2/2)\]
where $g_{n,\ga},\, A > 0$ are fixed constants and $\Psi$ is a function having the behavior \eqref{eq_Psi}.

Choose any smooth cut-off function $\chi: [0, \infty) \to [0,1]$ such that $\chi(t) = 1$ for $0 \le t \le 1$ and 0 for $t \ge 2$.
Then we construct a nonnegative, continuous and piecewise smooth function $\Phi_{\ep, \vr_0}$ on $\ox$ by
\begin{equation}\label{eq_Phi}
\Phi_{\ep, \vr_0}(x) = \begin{cases}
W_{\ep}(x) &\text{if } x \in X \cap B^N(0,\vr_0),\\
V_{\ep, \vr_0}(x) \, \(G(x,0)-\chi_{\vr_0}(x)\Psi(x)\) &\text{if } x \in X \cap \(B^N(0,2\vr_0) \setminus B^N(0,\vr_0)\),\\
V_{\ep, \vr_0}(x) \, G(x,0) &\text{if } x \in X \setminus B^N(0,2\vr_0)
\end{cases} \end{equation}
where $0 < \ep \ll \vr_0 \le \min\{\vr_1, \vr_2\}/5$ sufficiently small, $\chi_{\vr_0}(x) := \chi(|x|/\vr_0)$ and
\begin{equation}\label{eq_V}
V_{\ep, \vr_0}(x) := \left[\alpha_{n,\ga} \({\ep^{n-2\ga \over 2} \over \vr_0^{n-2\ga}}\)
+ \chi_{\vr_0}(x) \(W_{\ep}(x) - \alpha_{n,\ga} {\ep^{n-2\ga \over 2} \over |x|^{n-2\ga}}\)\right] \cdot \(\vr_0^{-(n-2\ga)} + A\)^{-1}.
\end{equation}
We remark that the main block $V_{\ep, \vr_0}$ of the test function $\Phi_{\ep, \vr_0}$ is different from Escobar's (the function $W$ in (4.2) of \cite{Es}),
but they share common characteristics such as decay properties as proved in the next lemma.
\begin{lemma}\label{lemma_V_dec}
There are constants $C, \eta_1, \eta_2 > 0$ depending only on $n$ and $\ga$ such that
\begin{equation}\label{eq_V_dec}
|V_{\ep, \vr_0}(x)| \le C \ep^{n-2\ga \over 2} \quad \text{for any } x \in X \setminus B^N(0,\vr_0)
\end{equation}
and
\begin{equation}\label{eq_V_dec_2}
|\nabla_{\bx} V_{\ep, \vr_0}(x)| \le C \vr_0^{-\eta_1} \ep^{n-2\ga+2\eta_2 \over 2}
\quad \text{and} \quad
|\pa_N V_{\ep, \vr_0}(x)| \le C \rho_0^{-\eta_1} \( \ep^{n-2\ga+2\eta_2 \over 2} + x_N^{2\ga-1} \ep^{n+2\ga \over 2} \)
\end{equation}
for $x = (\bx, x_N) \in X \cap \(B^N(0,2\vr_0) \setminus B^N(0,\vr_0)\)$.
Also we have $\nabla V_{\ep, \vr_0} = 0$ in $X \setminus B^N(0,2\vr_0)$.
\end{lemma}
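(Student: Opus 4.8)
The strategy is to prove the three assertions region by region; the only genuine analytic ingredient is the behaviour of the bubble $W_{1}$ and of its first derivatives near infinity, recorded in Appendix~\ref{sec_app} (see \eqref{eq_W_dec}, \eqref{eq_W_dec_2} and \eqref{eq_W_dec_3}). Rescaling $W_{\ep}(x)=\ep^{-(n-2\ga)/2}W_{1}(x/\ep)$, these estimates say that $W_{1}(z)-\alpha_{n,\ga}|z|^{-(n-2\ga)}$, its tangential gradient, and its weighted normal derivative $z_{N}^{1-2\ga}\pa_{N}W_{1}$ decay strictly faster than the corresponding derivatives of $|z|^{-(n-2\ga)}$ as $|z|\to\infty$. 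Since in every region below $\ep\ll\vr_{0}\le|x|$, I would first translate this into pointwise bounds of the form
\[
\Big|W_{\ep}(x)-\alpha_{n,\ga}\tfrac{\ep^{(n-2\ga)/2}}{|x|^{n-2\ga}}\Big|\le C\,\ep^{\frac{n-2\ga}{2}}\Big(\tfrac{\ep}{\vr_{0}}\Big)^{2\eta_{2}}\vr_{0}^{-(n-2\ga)},
\]
together with the analogous tangential- and normal-gradient bounds, the $\pa_{N}$-bound carrying the extra boundary-singular factor $x_{N}^{2\ga-1}\ep^{(n+2\ga)/2}$ coming from the $x_{N}^{2\ga}$-term in the boundary expansion of $W_{1}$ near $\{x_{N}=0\}$.

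The two regions away from the origin are then immediate. If $x\in X\setminus B^{N}(0,2\vr_{0})$ then $\chi_{\vr_{0}}(x)=0$, so $V_{\ep,\vr_{0}}(x)$ equals the constant $\alpha_{n,\ga}\ep^{(n-2\ga)/2}\vr_{0}^{-(n-2\ga)}\big(\vr_{0}^{-(n-2\ga)}+A\big)^{-1}$; using $\big(\vr_{0}^{-(n-2\ga)}+A\big)^{-1}\le\vr_{0}^{n-2\ga}$ this is $\le\alpha_{n,\ga}\ep^{(n-2\ga)/2}$, which gives \eqref{eq_V_dec} here and, being constant, also $\nabla V_{\ep,\vr_{0}}=0$ on $X\setminus B^{N}(0,2\vr_{0})$. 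For $x$ in the half-annulus $B^{N}(0,2\vr_{0})\setminus B^{N}(0,\vr_{0})$, where $|x|\sim\vr_{0}$, the bracket in \eqref{eq_V} is $\alpha_{n,\ga}\ep^{(n-2\ga)/2}\vr_{0}^{-(n-2\ga)}$ plus $\chi_{\vr_{0}}$ times the difference estimated above, hence of size $O\big(\ep^{(n-2\ga)/2}\vr_{0}^{-(n-2\ga)}\big)$; multiplying by $\big(\vr_{0}^{-(n-2\ga)}+A\big)^{-1}\le\vr_{0}^{n-2\ga}$ yields \eqref{eq_V_dec}.

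It remains to establish the gradient bounds \eqref{eq_V_dec_2} on the half-annulus. I would differentiate $V_{\ep,\vr_{0}}$, noting that the constant summand and the prefactor in \eqref{eq_V} are annihilated, and expand $\nabla\big[\chi_{\vr_{0}}(x)\big(W_{\ep}(x)-\alpha_{n,\ga}\ep^{(n-2\ga)/2}|x|^{-(n-2\ga)}\big)\big]$ by the Leibniz rule: the term with $\nabla\chi_{\vr_{0}}=O(\vr_{0}^{-1})$ is paired with the $C^{0}$-estimate of the difference, and the term with $\chi_{\vr_{0}}$ with its $C^{1}$-estimate. Inserting $|x|\sim\vr_{0}$ and $\big(\vr_{0}^{-(n-2\ga)}+A\big)^{-1}\le\vr_{0}^{n-2\ga}$, the tangential derivative is bounded by $C\vr_{0}^{-\eta_{1}}\ep^{(n-2\ga+2\eta_{2})/2}$; for the normal derivative the smooth part $\pa_{N}\big(|x|^{-(n-2\ga)}\big)$ is harmless near the boundary, so the only singular contribution comes from $\pa_{N}W_{\ep}$ and produces the extra term $C\vr_{0}^{-\eta_{1}}x_{N}^{2\ga-1}\ep^{(n+2\ga)/2}$. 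Absorbing all powers of $\vr_{0}$ into one exponent $\eta_{1}$ and keeping a positive $\ep$-gain as $\eta_{2}$ completes the argument.

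I expect the delicate point to be the first step: pinning down, with a strictly positive gain $\eta_{2}$, the rate at which $W_{\ep}$ and $\nabla_{\bx}W_{\ep}$ approach $\alpha_{n,\ga}\ep^{(n-2\ga)/2}|x|^{-(n-2\ga)}$ away from the origin, and isolating the precise $x_{N}^{2\ga-1}$ boundary behaviour of $\pa_{N}W_{\ep}$. Once the asymptotics of Appendix~\ref{sec_app} are available these become routine rescalings, and the remaining steps are elementary.
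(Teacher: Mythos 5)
Your proposal is correct and follows essentially the same route as the paper: rescale the Appendix~\ref{sec_app} asymptotics \eqref{eq_W_dec}, \eqref{eq_W_dec_2}, \eqref{eq_W_dec_3} to $W_{\ep}$, bound the prefactor by $(\vr_0^{-(n-2\ga)}+A)^{-1}\le\vr_0^{n-2\ga}$, and apply the Leibniz rule to $\chi_{\vr_0}\,\big(W_{\ep}-\alpha_{n,\ga}\ep^{(n-2\ga)/2}|x|^{-(n-2\ga)}\big)$; the paper proves \eqref{eq_V_dec} in exactly this way and leaves \eqref{eq_V_dec_2} to the reader, citing the same three estimates. Only take care that for $\pa_N V_{\ep,\vr_0}$ you must estimate the combined quantity $\pa_N W_{\ep}+\alpha_{n,\ga}(n-2\ga)\ep^{(n-2\ga)/2}x_N|x|^{-(n-2\ga+2)}$ through \eqref{eq_W_dec_3} (as your ``$C^1$-estimate of the difference'' indicates), since $\pa_N W_{\ep}$ and $\ep^{(n-2\ga)/2}\pa_N\big(|x|^{-(n-2\ga)}\big)$ taken separately are each of size $\ep^{(n-2\ga)/2}x_N\vr_0^{-(n-2\ga)-2}$ with no $\ep$-gain, and only their cancellation yields the exponent $\eta_2>0$ in \eqref{eq_V_dec_2}.
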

\begin{proof}
We observe from \eqref{eq_W_dec} and \eqref{eq_V} that
\[|V_{\ep, \vr_0}(x)| \le C \vr_0^{n-2\ga} \left[\({\ep^{n-2\ga \over 2} \over \vr_0^{n-2\ga}}\)
+ \left|W_{\ep}(x) - \alpha_{n,\ga} {\ep^{n-2\ga \over 2} \over |x|^{n-2\ga}}\right|\right] \le C \(\ep^{n-2\ga \over 2} + {\ep^{n-2\ga+2\vt_2 \over 2} \over \vr_0^{\vt_2}}\)
\le C \ep^{n-2\ga \over 2}\]
for all $\vr_0 \le |x| \le 2\vr_0$ and some $\vt_2 \in (0,1)$, so \eqref{eq_V_dec} follows.
One can derive \eqref{eq_V_dec_2} by making the use of both \eqref{eq_W_dec}, \eqref{eq_W_dec_2} and \eqref{eq_W_dec_3}.
We leave the details to the reader.
\end{proof}

Now we assert the following proposition, which suffices to conclude that the fractional Yamabe problem is solvable in this case.
\begin{prop}\label{prop_lcf}
For $n > 2\ga$ and $\ga \in (0,1)$, let $(X^{n+1}, g^+)$ be a Poincar\'e-Einstein manifold with conformal infinity $(M^n, [\hh])$ such that \eqref{eq_eig} has the validity.
Assume also that $M$ is locally conformally flat.
If $(\ox, \bg)$ is not conformally diffeomorphic to the standard closed unit ball $\overline{\mb^N}$ and Conjecture \ref{conj_pos} holds, then
\[0 < \overline{I}_{\hh}^{\ga}[\Phi_{\ep, \vr_0}] < \Lambda^{\ga}(\ms^n, [g_c]).\]
\end{prop}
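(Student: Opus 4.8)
The lower bound is immediate: $\Phi_{\ep,\vr_0}$ is a nonnegative, continuous, piecewise smooth function lying in $W^{1,2}(X,\rho^{1-2\ga})$ by Lemma \ref{lemma_V_dec}, and it equals $w_\ep>0$ on a neighbourhood of $y$ in $M$, so the variational characterisation recalled after \eqref{eq_yamabe_q} gives $\overline{I}_{\hh}^{\ga}[\Phi_{\ep,\vr_0}]\ge\overline{\Lambda}^{\ga}(X,[\hh])=\Lambda^{\ga}(M,[\hh])>0$. For the strict upper bound the plan is the Schoen--Escobar test-function scheme: estimate the numerator $\kappa_{\ga}\,J^{\ga}_{\hh}[\Phi_{\ep,\vr_0};X]$ and the denominator $\big(\int_M|\Phi_{\ep,\vr_0}|^{2n/(n-2\ga)}\,dv_{\hh}\big)^{(n-2\ga)/n}$ by splitting $X$ into the inner half-ball $B^N_+(0,\vr_0)$, where $\Phi_{\ep,\vr_0}=W_\ep$; the transition shell $B^N_+(0,2\vr_0)\setminus B^N_+(0,\vr_0)$; and the exterior $X\setminus B^N_+(0,2\vr_0)$, where $\Phi_{\ep,\vr_0}=c_{\ep,\vr_0}\,G(\cdot,0)$ with the constant $c_{\ep,\vr_0}=\alpha_{n,\ga}\,\ep^{(n-2\ga)/2}(1+A\vr_0^{n-2\ga})^{-1}$. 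The governing order throughout is $\ep^{n-2\ga}$, and everything hinges on the sign of its coefficient. On the inner half-ball I would use Lemma \ref{lemma_h^m} together with \eqref{eq_bg_exp_2}, which give $\bg_{ij}=\delta_{ij}+O(x_N^n)$ and $\sqrt{|\bg|}=1+O(x_N^n)$, and \eqref{eq_E_2}, which then gives $E(\rho)=O(x_N^{\,n-1-2\ga})$; inserting these into $J^{\ga}_{\hh}[W_\ep;B^N_+(0,\vr_0)]$ and rescaling $x=\ep y$, the metric and potential corrections are bounded by $C\vr_0^{2\ga}\ep^{n-2\ga}$ (the rescaled integrals diverge only polynomially at scale $\vr_0/\ep$, by the bubble decay \eqref{eq_W_dec}--\eqref{eq_W_dec_3}), while replacing $\int_{B^N_+(0,\vr_0)}x_N^{1-2\ga}|\nabla W_\ep|^2\,dx$ by the integral over all of $\mr^N_+$ costs only the tail $O(\ep^{n-2\ga+2}\vr_0^{-(n-2\ga+2)})=o(\ep^{n-2\ga})$. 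Thus the inner contribution to $J^{\ga}_{\hh}$ is $\int_{\mr^N_+}x_N^{1-2\ga}|\nabla W_1|^2\,dx+O(\vr_0^{2\ga}\ep^{n-2\ga})+o(\ep^{n-2\ga})$.

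On the exterior, $J^{\ga}_{\hh}[\Phi_{\ep,\vr_0};X\setminus B^N_+(0,2\vr_0)]=c_{\ep,\vr_0}^2\,J^{\ga}_{\hh}[G(\cdot,0);X\setminus B^N_+(0,2\vr_0)]$; integrating by parts and using \eqref{eq_G}, the bulk term vanishes identically and the part of the boundary lying on $M$ contributes nothing because $\pa^{\ga}_{\nu}G(\cdot,0)=\delta_0$ is supported at the excluded point $0$, leaving the flux $-\int_{\pa B^N_+(0,2\vr_0)\cap X}\rho^{1-2\ga}\,G(\cdot,0)\,\pa_r G(\cdot,0)\,dS$, which by the expansion $G(x,0)=g_{n,\ga}|x|^{-(n-2\ga)}+A+\Psi(x)$ of Conjecture \ref{conj_pos} and the bounds \eqref{eq_Psi} equals $c_1\vr_0^{-(n-2\ga)}+c_2\,A+O(\vr_0^{\vt_1})$ with explicit $c_1,c_2>0$. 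On the transition shell I would estimate $J^{\ga}_{\hh}[\Phi_{\ep,\vr_0};B^N_+(0,2\vr_0)\setminus B^N_+(0,\vr_0)]$ from \eqref{eq_Phi}--\eqref{eq_V}, the decay of $V_{\ep,\vr_0}$ and of $\nabla V_{\ep,\vr_0}$ in \eqref{eq_V_dec}--\eqref{eq_V_dec_2}, and \eqref{eq_Psi}; here the terms that are not $o(\ep^{n-2\ga})$ are, by the very design of the profile $V_{\ep,\vr_0}$ in \eqref{eq_V}, exactly what is needed to cancel the $\vr_0$-divergent contribution $c_1\vr_0^{-(n-2\ga)}$ of the exterior flux against the truncation deficit of the bubble. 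Expanding $c_{\ep,\vr_0}^2=\alpha_{n,\ga}^2\ep^{n-2\ga}\big(1-2A\vr_0^{n-2\ga}+O(\vr_0^{2(n-2\ga)})\big)$ and assembling the three regions then yields $\kappa_{\ga}\,J^{\ga}_{\hh}[\Phi_{\ep,\vr_0};X]\le\kappa_{\ga}\int_{\mr^N_+}x_N^{1-2\ga}|\nabla W_1|^2\,dx-c_3\,A\,\ep^{n-2\ga}+O(\vr_0^{\delta}\ep^{n-2\ga})+o(\ep^{n-2\ga})$ for some $c_3>0$ and $\delta>0$, where the $o(\cdot)$ may depend on $\vr_0$.

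For the denominator, on the inner $M$-ball $\hh_{ij}=\delta_{ij}$, so $\int_{B^n(0,\vr_0)}|\Phi_{\ep,\vr_0}|^{2n/(n-2\ga)}\,d\bx=\int_{\mr^n}w_1^{2n/(n-2\ga)}\,d\bx+o(\ep^{n-2\ga})$, while on the shell and exterior $|\Phi_{\ep,\vr_0}|\le C\ep^{(n-2\ga)/2}\vr_0^{-(n-2\ga)}$ by \eqref{eq_V_dec}, contributing $O(\ep^{n}\vr_0^{-n})=o(\ep^{n-2\ga})$; hence the denominator equals $\big(\int_{\mr^n}w_1^{2n/(n-2\ga)}\,d\bx\big)^{(n-2\ga)/n}+o(\ep^{n-2\ga})$. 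Dividing and invoking \eqref{eq_Lambda}, which identifies $\kappa_{\ga}\int_{\mr^N_+}x_N^{1-2\ga}|\nabla W_1|^2\,dx\big/\big(\int_{\mr^n}w_1^{2n/(n-2\ga)}\,d\bx\big)^{(n-2\ga)/n}$ with $\Lambda^{\ga}(\ms^n,[g_c])$, one obtains $\overline{I}_{\hh}^{\ga}[\Phi_{\ep,\vr_0}]\le\Lambda^{\ga}(\ms^n,[g_c])-c_4\,A\,\ep^{n-2\ga}+O(\vr_0^{\delta}\ep^{n-2\ga})+o(\ep^{n-2\ga})$ with $c_4>0$. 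Since $(\ox,\bg)$ is not conformally diffeomorphic to $\overline{\mb^N}$, the last clause of Conjecture \ref{conj_pos} forces $A>0$; I would then fix $\vr_0$ so small that $|O(\vr_0^{\delta})|<\tfrac{1}{3}c_4A$ and afterwards $\ep$ so small that $|o(\ep^{n-2\ga})|\le\tfrac{1}{3}c_4A\ep^{n-2\ga}$, which gives $\overline{I}_{\hh}^{\ga}[\Phi_{\ep,\vr_0}]\le\Lambda^{\ga}(\ms^n,[g_c])-\tfrac{1}{3}c_4A\ep^{n-2\ga}<\Lambda^{\ga}(\ms^n,[g_c])$, as required. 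I expect the main obstacle to be the bookkeeping in the exterior/shell step: checking that the pieces of order $\vr_0^{-(n-2\ga)}\ep^{n-2\ga}$ produced by the exterior flux, by the transition shell, and by the deficit of the truncated bubble cancel, so that the only $\vr_0$-independent term surviving at order $\ep^{n-2\ga}$ is the positive-mass term $-c_4A\ep^{n-2\ga}$; here the strong vanishing $h^{(m)}\equiv 0$ for $m\le n-1$ furnished by Lemma \ref{lemma_h^m} is precisely what confines the inner-region and $E(\rho)$ corrections to the harmless size $O(\vr_0^{2\ga}\ep^{n-2\ga})$.
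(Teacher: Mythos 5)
Your overall scheme (truncated bubble glued to a rescaled Green's function, estimate numerator and denominator region by region, invoke $A>0$ from Conjecture \ref{conj_pos}, then choose $\vr_0$ small and $\ep\ll\vr_0$) is the same Schoen--Escobar strategy the paper follows, and your treatment of the denominator, of the metric and $E(\rho)$ errors in the inner region (size $O(\vr_0^{2\ga}\ep^{n-2\ga})$), and of the final choice of parameters matches the paper. But the heart of the proposition is precisely the step you defer to ``bookkeeping'', and your preliminary accounting of it is wrong. The truncation deficit of the bubble is \emph{not} $O(\ep^{n-2\ga+2}\vr_0^{-(n-2\ga+2)})=o(\ep^{n-2\ga})$: since $|\nabla W_\ep|\sim \ep^{(n-2\ga)/2}|x|^{-(n-2\ga+1)}$ for $|x|\ge\vr_0$, one has
\[
\int_{\mr^N_+\setminus B^N_+(0,\vr_0)}x_N^{1-2\ga}|\nabla W_\ep|^2\,dx \;=\;\Theta\bigl(\ep^{n-2\ga}\vr_0^{-(n-2\ga)}\bigr),
\]
i.e. exactly the same order as the $c_1\vr_0^{-(n-2\ga)}$ piece of the exterior flux and as the shell energy. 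If, as you propose, you bound the inner energy by the full-space bubble energy and discard this deficit, the remaining exterior and shell contributions are positive of size $\Theta(\ep^{n-2\ga}\vr_0^{-(n-2\ga)})$ with nothing left to cancel them, and the method only yields $\overline{I}_{\hh}^{\ga}[\Phi_{\ep,\vr_0}]\le\Lambda^{\ga}(\ms^n,[g_c])+C\vr_0^{-(n-2\ga)}\ep^{n-2\ga}$, which is useless; your later appeal to ``the truncation deficit of the bubble'' in the cancellation contradicts having already declared it negligible. Moreover the cancellation is not produced ``by design'' by the transition shell: Lemma \ref{lemma_V_dec} shows $\nabla V_{\ep,\vr_0}$ carries an extra $\ep^{\eta_2}$, so all shell terms are genuine error terms (of the type $C\ep^{n-2\ga+2\eta_2}\vr_0^{-(n-2\ga-2+2\eta_1)}$, $C\ep^{n-2\ga}\vr_0^{\min\{\vt_1,2\ga\}}$, etc.).

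What the paper does instead, and what your argument is missing, is to keep the divergent pieces as boundary fluxes and cancel them explicitly: integrating by parts in $X\cap B^N(0,\vr_0)$ (using \eqref{eq_bubble_eq}) produces $\Lambda^{\ga}(\ms^n,[g_c])\bigl(\int_{B^n(0,\vr_0)}w_\ep^{p+1}\bigr)^{(n-2\ga)/n}$ plus the hemisphere flux $\kappa_\ga\int_{X\cap\pa B^N(0,\vr_0)}x_N^{1-2\ga}W_\ep\,\pa_\nu W_\ep\,dS$, while integrating by parts in $X\setminus B^N(0,\vr_0)$ (using \eqref{eq_G}) produces $-\kappa_\ga\int_{X\cap\pa B^N(0,\vr_0)}x_N^{1-2\ga}V_{\ep,\vr_0}^2G\,\pa_\nu G\,dS$ plus small errors. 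The two fluxes are then combined, and the pointwise estimate of $\pa_\nu W_\ep-V_{\ep,\vr_0}\pa_\nu G$ (via Lemma \ref{lemma_W_dec} and \eqref{eq_Psi}) shows that the $\vr_0^{-(n-2\ga)}$ parts cancel and the surviving term is $\le-\,c\,\alpha_{n,\ga}(n-2\ga)A\,\ep^{(n-2\ga)/2}\vr_0^{-1}$; the constant $A$ enters with a negative sign precisely because $\pa_\nu G$ does not see the additive constant $A$ while the normalization in \eqref{eq_V} does. Multiplying by $W_\ep\ge\tfrac12\alpha_{n,\ga}\ep^{(n-2\ga)/2}\vr_0^{-(n-2\ga)}$ on the hemisphere gives the decisive $-cA\ep^{n-2\ga}$ with all errors either $o_\ep(\ep^{n-2\ga})$ or $O(\ep^{n-2\ga}\vr_0^{\delta})$. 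Asserting the outcome ($-c_4A\ep^{n-2\ga}$ with $c_4>0$) without carrying out this cancellation and sign computation leaves the essential content of the proof unproved.
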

\begin{proof}
The proof is divided into 3 steps.

\medskip \noindent \textsc{Step 1: Estimation in $X \cap B^N(0,\vr_0)$.}
Applying \eqref{eq_bubble_eq}, \eqref{eq_Lambda}, \eqref{eq_bg_exp_2}, \eqref{eq_W_dec_2}, \eqref{eq_W_dec_3}, Lemma \ref{lemma_W_dec_2} and integrating by parts, we obtain
\begin{equation}\label{eq_exp_1}
\begin{aligned}
&\ \kappa_{\ga} \int_{X \cap B^N(0,\vr_0)} x_N^{1-2\ga} |\nabla W_{\ep}|_{\bg}^2 dv_{\bg} \\
&\le \Lambda^{\ga}(\ms^n, [g_c]) \(\int_{B^n(0,\vr_0)} w_{\ep,0}^{p+1} d\bx\)^{n-2\ga \over n} 
+ \kappa_{\ga} \int_{{X \cap \pa B^N(0,\vr_0)}} x_N^{1-2\ga} W_{\ep} {\pa W_{\ep} \over \pa \nu} \, dS \\
&\ + \underbrace{O\(\int_{B^n(0,\vr_0)} x_N^{n+1-2\ga} |\nabla W_{\ep}|^2 d\bx\)}_{= O\(\vr_0^{2\ga} \ep^{n-2\ga}\)}
\end{aligned} \end{equation}
where $\nu$ is the outward unit normal vector and $dS$ is the Euclidean surface measure.
On the other hand, if we write $g^+ = x_N^{-2} (dx_N^2 + h_{x_N})$, then
\begin{equation}\label{eq_E_1}
E(x_N) = - \({n-2\ga \over 4}\) x_N^{-2\ga} \text{tr}\,(h_{x_N}^{-1} \pa_N h_{x_N}) = O(x_N^{n-1-2\ga})
\end{equation}
in $X \cap B^N(0,2\vr_0)$ (see \eqref{eq_E_2}).
Therefore
\begin{equation}\label{eq_exp_2}
\kappa_{\ga} \int_{X \cap B^N(0,\vr_0)} E(x_N) W_{\ep}^2\, dv_{\bg} = O\(\vr_0^{2\ga} \ep^{n-2\ga}\).
\end{equation}

\medskip \noindent \textsc{Step 2: Estimation in $X \setminus B^N(0,\vr_0)$.}
By its own definition \eqref{eq_Phi} of the test function $\Phi_{\ep, \vr_0}$, its energy on $\ox$ can be evaluated as
\begin{align*}
&\ \int_{X \setminus B^N(0,\vr_0)} \(\rho^{1-2\ga} |\nabla \Phi_{\ep, \vr_0}|_{\bg}^2 + E(\rho) \Phi_{\ep, \vr_0}^2\) dv_{\bg}\\
&= \int_{X \setminus B^N(0,\vr_0)} \(\rho^{1-2\ga} \la \nabla (V_{\ep, \vr_0}^2 G), \nabla G \ra_{\bg}
+ E(\rho) V_{\ep, \vr_0}^2 G^2 + \rho^{1-2\ga} |\nabla V_{\ep, \vr_0}|^2 (G-\chi_{\vr_0}\Psi)^2\) dv_{\bg} \\
&\ + \int_{X \cap \(B^N(0,2\vr_0) \setminus B^N(0,\vr_0)\)} \rho^{1-2\ga}\({1 \over 2} \la \nabla V_{\ep, \vr_0}^2, \nabla(-2G \chi_{\vr_0}\Psi + \chi_{\vr_0}^2\Psi^2) \ra_{\bg}\) dv_{\bg} \\
&\ + \int_{X \cap \(B^N(0,2\vr_0) \setminus B^N(0,\vr_0)\)} \rho^{1-2\ga} V_{\ep, \vr_0}^2 \(|\nabla (\chi_{\vr_0}\Psi)|^2 - 2\la \nabla G, \nabla(\chi_{\vr_0}\Psi)\ra_{\bg}\) dv_{\bg} \\
&\ + \int_{X \cap \(B^N(0,2\vr_0) \setminus B^N(0,\vr_0)\)} E(\rho) V_{\ep, \vr_0}^2 \(\chi_{\vr_0}^2\Psi^2 - 2G\chi_{\vr_0}\Psi\) dv_{\bg}
\end{align*}
where $G = G(\cdot, 0)$.
From \eqref{eq_G}, \eqref{eq_Psi}, \eqref{eq_E_1} and Lemma \ref{lemma_V_dec}, we see that
\begin{equation}\label{eq_exp_3}
\begin{aligned}
&\ \kappa_{\ga} \int_{X \setminus B^N(0,\vr_0)} \(\rho^{1-2\ga} |\nabla \Phi_{\ep, \vr_0}|_{\bg}^2 + E(\rho) \Phi_{\ep, \vr_0}^2\) dv_{\bg}\\
&\le - \kappa_{\ga} \int_{{X \cap \pa B^N(0,\vr_0)}} x_N^{1-2\ga} V_{\ep, \vr_0}^2G\, {\pa G \over \pa \nu} (1+O(x_N^n)) \, dS
+ C \ep^{n-2\ga+2\eta_2} \vr_0^{-(n-2\ga-2+2\eta_1)}\\
&\ + C \ep^{n-2\ga + \eta_2} \vr_0^{\min\{\vt_1, 2\ga\}+1-\eta_1}
+ C \ep^{n-2\ga} \vr_0^{\min\{\vt_1, 2\ga\}}
\end{aligned}
\end{equation}
where $\vt_1 \in (0,1)$ and $C > 0$ depends only on $n,\, \ga,\, \vr_1$ and $\vr_2$. For instance, we have
\begin{align*}
&\ \int_{X \setminus B^N(0,\vr_0)} \rho^{1-2\ga} |\nabla V_{\ep, \vr_0}|^2 (G-\chi_{\vr_0}\Psi)^2 dv_{\bg} \\
&\le C \vr_0^{-2\eta_1} \int_{B^N(0,2\vr_0) \setminus B^N(0,\vr_0)} x_N^{1-2\ga} \(\ep^{n-2\ga+2\eta_2} + x_N^{2(2\ga-1)} \ep^{n+2\ga}\) \cdot \({1 \over |x|^{2(n-2\ga)}} + 1\) dx \\
&\le C \(\ep^{n-2\ga+2\eta_2} \vr_0^{-(n-2\ga-2+2\eta_1)} + \ep^{n+2\ga} \vr_0^{-n+6\ga} |\log \vr_0|\)
\le C \ep^{n-2\ga+2\eta_2} \vr_0^{-(n-2\ga-2+2\eta_1)}
\end{align*}
for $0 < \ep \ll \vr_0$ small. The other terms can be managed in a similar manner.

\medskip \noindent \textsc{Step 3: Conclusion.}
By combining \eqref{eq_exp_1}, \eqref{eq_exp_2} and \eqref{eq_exp_3}, we deduce
\begin{equation}\label{eq_exp_4}
\begin{aligned}
&\ \kappa_{\ga} \int_X \(\rho^{1-2\ga} |\nabla \Phi_{\ep, \vr_0}|_{\bg}^2 + E(\rho) \Phi_{\ep, \vr_0}^2\) dv_{\bg} \\
&\le \Lambda^{\ga}(\ms^n, [g_c]) \(\int_{B^n(0,\vr_0)} w_{\ep,0}^{p+1} d\bx\)^{n-2\ga \over n}
+ \kappa_{\ga} \int_{X \cap \pa B^N(0,\vr_0)} \underbrace{x_N^{1-2\ga} \(W_{\ep} {\pa W_{\ep} \over \pa \nu} - V_{\ep, \vr_0}^2 G\, {\pa G \over \pa \nu}\)}_{=: I} \, dS\\
&\ + C \ep^{n-2\ga} \vr_0^{\min\{\vt_1, 2\ga\}}.
\end{aligned}
\end{equation}
Let us compute the integral of $I$ over the boundary $X \cap \pa B^N(0,\vr_0)$ in the right-hand side of \eqref{eq_exp_4}.
Because of Lemma \ref{lemma_W_dec} and \eqref{eq_Psi}, one has
\begin{align*}
{\pa W_{\ep} \over \pa \nu} - V_{\ep, \vr_0}{\pa G \over \pa \nu}
&\le - {\alpha_{n,\ga}(n-2\ga) \ep^{n-2\ga \over 2} \over \vr_0^{n-2\ga+1}} + \(\vr_0^{-(n-2\ga)} + A\)^{-1} {\alpha_{n,\ga}(n-2\ga)\ep^{n-2\ga \over 2} \over \vr_0^{2(n-2\ga)+1}} \\
&\ + C \ep^{n-2\ga \over 2} \vr_0^{\min\{0,2\ga-1\}} + C \ep^{{n-2\ga \over 2}+\vt_2} \vr_0^{-(n-2\ga+1+\vt_2)} \\
&\le - \alpha_{n,\ga} (n-2\ga) A {\ep^{n-2\ga \over 2} \over \vr_0} + C \ep^{n-2\ga \over 2} \vr_0^{\min\{0,2\ga-1\}} + C \ep^{{n-2\ga \over 2}+\vt_2} \vr_0^{-(n-2\ga+1+\vt_2)}
\end{align*}
on $\{|x| = \vr_0\}$ for some $\vt_2 \in (0,1)$. Therefore using the fact that $W_1(x) \ge \alpha_{n,\ga}\ep^{n-2\ga \over 2}\vr_0^{-(n-2\ga)}/2$ on $\{|x| = \vr_0\}$, we discover
\begin{align*}
\int_{X \cap \pa B^N(0,\vr_0)} I\, dS
&= \int_{X \cap \pa B^N(0,\vr_0)} x_N^{1-2\ga} \left[W_{\ep} \({\pa W_{\ep} \over \pa \nu} - V_{\ep, \vr_0}{\pa G \over \pa \nu}\) - V_{\ep, \vr_0}^2 {\pa G \over \pa \nu} \Psi\right] dS\\
&\le - {\alpha_{n,\ga}^2 (n-2\ga) \over 4} \(\int_{\pa B^N(0,1)} |x_N|^{1-2\ga} dS\) A \ep^{n-2\ga} + C \ep^{n-2\ga} \vr_0^{\min\{1,2\ga\}} \\
&\ + C \ep^{n-2\ga+\vt_2} \vr_0^{-(n-2\ga+\vt_2)} + C \ep^{n-2\ga} \vr_0^{\vt_1+n}.
\end{align*}
Now the previous estimate, \eqref{eq_exp_4} and \eqref{eq_Lambda} yield that
\begin{align*}
\overline{I}_{\hh}^{\ga}[\Phi_{\ep, \vr_0}] &\le \Lambda^{\ga}(\ms^n, [g_c]) - {\alpha_{n,\ga}^2 \kappa_{\ga} (n-2\ga) \over 8S_{n,\ga}} \cdot {|\ms^{n-1}| \over 2} B\(1-\ga, {n \over 2}\) \cdot A \ep^{n-2\ga} \\
&\ + C \ep^{n-2\ga+\vt_2} \vr_0^{-(n-2\ga+\vt_2)} + C \ep^{n-2\ga} \vr_0^{\min\{\vt_1, 2\ga\}} \\
&< \Lambda^{\ga}(\ms^n, [g_c])
\end{align*}
where $B$ is the Beta function.
Additionally the last strict inequality holds for $0 < \ep \ll \vr_0$ small enough.
This completes the proof.
\end{proof}

\subsection{Two Dimensional Case} \label{subsec_2_dim}
We are now led to treat the case when $(M, [\hh])$ is a 2-dimensional closed manifold.

\medskip
Fix an arbitrary point $p \in M$ and let $\bx = (x_1, x_2)$ be normal coordinates at $p$.
Since $X$ is Poincar\'e-Einstein, it holds $h^{(1)} = 0$ in \eqref{eq_bg_exp}, whence we have
\begin{equation}\label{eq_bg_exp_3}
\bg_{ij}(\bx,x_N) = \delta_{ij} + O(|x|^2) \quad \text{and} \quad
|\bg| = 1 + O(|x|^2)
\quad \text{for } (\bx, x_N) \in \mcr^N(\vr_1, \vr_2) \subset \ox
\end{equation}
where the rectangle $\mcr^N(\vr_1, \vr_2)$ is defined in the line following \eqref{eq_bg_exp}.

\medskip
With Proposition \ref{prop_y_exist} in the introduction, the next result will give the validity of Theorem \ref{thm_main} if $n = 2$.
\begin{prop}
For $\ga \in (0,1)$, let $(X^3, g^+)$ be a Poincar\'e-Einstein manifold with conformal infinity $(M^2, [\hh])$ such that \eqref{eq_eig} holds.
If $(\ox, \bg)$ is not conformally diffeomorphic to the standard unit ball $\overline{\mb^3}$ and Conjecture \ref{conj_pos} holds, then
\[0 < \overline{I}_{\hh}^{\ga}[\Phi_{\ep, \vr_0}] < \Lambda^{\ga}(\ms^2, [g_c])\]
for the test function $\Phi_{\ep, \vr_0}$ introduced in \eqref{eq_Phi}.
\end{prop}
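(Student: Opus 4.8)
The plan is to follow the proof of Proposition~\ref{prop_lcf} almost word for word, using the same test function $\Phi_{\ep,\vr_0}$ from \eqref{eq_Phi} with $n=2$, and to verify that the weaker metric expansion \eqref{eq_bg_exp_3} still produces only harmless remainders. Concretely, I would split the numerator $\kappa_{\ga}\int_X(\rho^{1-2\ga}|\nabla\Phi_{\ep,\vr_0}|_{\bg}^2+E(\rho)\Phi_{\ep,\vr_0}^2)\,dv_{\bg}$ into the contributions from $X\cap B^N(0,\vr_0)$ and from $X\setminus B^N(0,\vr_0)$, estimate the denominator $\int_M|\Phi_{\ep,\vr_0}|^{p+1}\,dv_{\hh}$ separately, and then optimise in $\ep$ and $\vr_0$.

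In the inner region $X\cap B^N(0,\vr_0)$ one has $\Phi_{\ep,\vr_0}=W_{\ep}$, so integrating by parts against the Euclidean identity $-\textnormal{div}(x_N^{1-2\ga}\nabla W_{\ep})=0$ (see \eqref{eq_bubble_eq}) gives the main term $\Lambda^{\ga}(\ms^2,[g_c])\,(\int_{B^2(0,\vr_0)}w_{\ep,0}^{p+1}\,d\bx)^{(2-2\ga)/2}$ plus a boundary integral over $X\cap\pa B^N(0,\vr_0)$, exactly as in \eqref{eq_exp_1}. The only discrepancy with the locally conformally flat case is that \eqref{eq_bg_exp_3} gives $\bg_{ij}=\delta_{ij}+O(|x|^2)$ and $|\bg|=1+O(|x|^2)$ rather than the $O(x_N^n)$ errors of \eqref{eq_bg_exp_2}; hence the metric-correction error becomes $O\big(\int_{B^N_+(0,\vr_0)}x_N^{1-2\ga}|x|^2|\nabla W_{\ep}|^2\,dx\big)$, while $E(x_N)=O(x_N^{1-2\ga})$ near $M$ because $h^{(1)}=0$ forces $\pa_N\sqrt{|\bg|}$ to vanish on $\{x_N=0\}$. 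A rescaling $x=\ep y$ as in \eqref{eq_rem} then shows $\int_{B^N_+(0,\vr_0)}x_N^{1-2\ga}|x|^2|\nabla W_{\ep}|^2\,dx=O\big(\ep^2\int_{B^N_+(0,\vr_0/\ep)}y_N^{1-2\ga}|y|^2|\nabla W_1|^2\,dy\big)=O(\ep^{2-2\ga}\vr_0^{2\ga})$, and likewise $\int_{B^N_+(0,\vr_0)}|E(x_N)|W_{\ep}^2\,dv_{\bg}=O(\ep^{2-2\ga}\vr_0^{2\ga})$: for $n=2$ the rescaled integrands behave like $|y|^{2\ga-1}\,d|y|$ at infinity, producing the factor $(\vr_0/\ep)^{2\ga}$, and are integrable at $y_N=0$ because $\ga<1$. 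Thus \eqref{eq_exp_1} and \eqref{eq_exp_2} would hold verbatim with $n=2$.

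In the outer region $X\setminus B^N(0,\vr_0)$ the test function is assembled from $V_{\ep,\vr_0}$ (see \eqref{eq_V}), the Green's function $G(\cdot,0)$ of Proposition~\ref{prop_G_exp}, and a cut-off of $\Psi$, where Conjecture~\ref{conj_pos} provides $G(x,0)=g_{2,\ga}|x|^{-(2-2\ga)}+A+\Psi(x)$ with $A\ge0$. The decay bounds of Lemma~\ref{lemma_V_dec}, the regularity \eqref{eq_Psi} of $\Psi$, and the estimate $E(\rho)=O(x_N^{1-2\ga})$ on $X\cap B^N(0,2\vr_0)$ (as in \eqref{eq_E_1}) all hold with $n=2$, so the derivation of \eqref{eq_exp_3} would go through unchanged. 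Adding the two regions and treating the boundary integral over $X\cap\pa B^N(0,\vr_0)$ of $x_N^{1-2\ga}(W_{\ep}\pa_{\nu}W_{\ep}-V_{\ep,\vr_0}^2G\,\pa_{\nu}G)$ exactly as in Step~3 of the proof of Proposition~\ref{prop_lcf} — the singular parts cancel, $W_1\ge\frac{1}{2}\alpha_{2,\ga}\ep^{(2-2\ga)/2}\vr_0^{-(2-2\ga)}$ on $\{|x|=\vr_0\}$, and the cross term together with the expansion of $G$ produces a strictly negative leading contribution $-c\,A\,\ep^{2-2\ga}$ with $c>0$ — one arrives at the numerator bound $\Lambda^{\ga}(\ms^2,[g_c])(\int_{B^2(0,\vr_0)}w_{\ep,0}^{p+1}\,d\bx)^{(2-2\ga)/2}-c\,A\,\ep^{2-2\ga}+O(\ep^{2-2\ga+\vt_2}\vr_0^{-(2-2\ga+\vt_2)})+O(\ep^{2-2\ga}\vr_0^{\min\{\vt_1,2\ga\}})$, the analogue of \eqref{eq_exp_4}. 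For the denominator, $dv_{\hh}=(1+O(|\bx|^2))\,d\bx$ near $p$ together with the decay of $\Phi_{\ep,\vr_0}$ already used on $\{|x|>\vr_0\}$ give $\int_M|\Phi_{\ep,\vr_0}|^{p+1}\,dv_{\hh}=\int_{\mr^2}w_1^{p+1}\,d\bx+O(\ep^2|\log\ep|)+o(\ep^{2-2\ga})$, and $\ep^2|\log\ep|=o(\ep^{2-2\ga})$ for every $\ga>0$. Dividing and invoking \eqref{eq_Lambda} then yields $\overline{I}_{\hh}^{\ga}[\Phi_{\ep,\vr_0}]\le\Lambda^{\ga}(\ms^2,[g_c])-c'\,A\,\ep^{2-2\ga}+O(\ep^{2-2\ga+\vt_2}\vr_0^{-(2-2\ga+\vt_2)})+O(\ep^{2-2\ga}\vr_0^{\min\{\vt_1,2\ga\}})$ with $c'>0$; since $(\ox,\bg)\not\simeq\overline{\mb^3}$ forces $A>0$ by Conjecture~\ref{conj_pos}, fixing $\vr_0$ small and then taking $\ep\ll\vr_0$ gives the strict upper bound, while the lower bound $0<\overline{I}_{\hh}^{\ga}[\Phi_{\ep,\vr_0}]$ follows from $\Lambda^{\ga}(M,[\hh])>0$ and the positivity of $\Phi_{\ep,\vr_0}$, since $G>0$ on $\ox$.

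The one genuinely new point, and the step I expect to demand the most care, is confirming that the weaker expansion $\bg=(\delta_{ij}+O(|x|^2))\oplus dx_N^2$ available in dimension $n=2$ — where only $h^{(1)}=0$ is forced, not the higher tensors $h^{(m)}$ — still yields remainders carrying a \emph{positive} power of $\vr_0$ times $\ep^{n-2\ga}$, so that they can be absorbed after choosing $\ep\ll\vr_0$; this rests on the borderline scaling for $n=2$ working out, which it does precisely because $n-2\ga=2-2\ga<2$ when $\ga<1$. The remaining essential input is Conjecture~\ref{conj_pos} itself, needed both for the expansion of $G$ with $A\ge0$ and for the rigidity $A=0\iff(\ox,\bg)\simeq\overline{\mb^3}$; as the paper notes, no unconditional proof is available.
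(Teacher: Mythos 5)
Your proposal is correct and follows essentially the same route as the paper: the paper's own proof likewise reduces everything to the argument of Proposition \ref{prop_lcf}, after checking that with the weaker expansion \eqref{eq_bg_exp_3} (only $h^{(1)}=0$ being available for $n=2$) the metric and $E(x_N)$ errors in $X\cap B^N(0,\vr_0)$ are $O(\vr_0^{2\ga}\ep^{2-2\ga})$, exactly the two estimates you verify by rescaling. Your additional remarks on the outer region, the boundary term, and the denominator are consistent with what the paper leaves implicit, so there is nothing to correct.
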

\begin{proof}
We compute the error in $X \cap B^N_+(0, \vr_0)$ due to the metric. As in \eqref{eq_exp_1} and \eqref{eq_exp_2}, one has
\[\int_{X \cap B^N(0,\vr_0)} x_N^{1-2\ga} |\nabla W_{\ep}|_{\bg}^2 dv_{\bg}
= \int_{X \cap B^N(0,\vr_0)} x_N^{1-2\ga} |\nabla W_{\ep}|^2 dx + \underbrace{O\(\int_{X \cap B^N(0,\vr_0)} x_N^{1-2\ga} |x|^2 |\nabla W_{\ep}|^2 dx\)}_{= O\(\vr_0^{2\ga} \ep^{2-2\ga}\)}\]
and
\[\int_{X \cap B^N(0,\vr_0)} E(x_N) W_{\ep}^2\, dv_{\bg} = O\(\int_{X \cap B^N(0,\vr_0)} x_N^{1-2\ga} W_{\ep}^2\, dx\) = O\(\vr_0^{2\ga} \ep^{2-2\ga}\)\]
from \eqref{eq_bg_exp_3}.
Therefore the error arising from the metric is ignorable, and the same argument in proof of Proposition \ref{prop_lcf} works.
The proof is completed.
\end{proof}

\medskip \noindent
{\footnotesize \textit{Acknowledgement.} S. Kim is supported by FONDECYT Grant 3140530.
M. Musso is partially supported by FONDECYT Grant 1120151 and Millennium Nucleus Center for Analysis of PDE, NC130017.
The research of J. Wei is partially supported by NSERC of Canada.
Part of the paper was finished while S. Kim was visiting the University of British Columbia and Wuhan University in 2015.
He appreciates the institutions for their hospitality and financial support.}

\appendix
\section{Expansion of the Standard Bubble $W_{1,0}$ near Infinity} \label{sec_app}
This appendix is devoted to find expansions of the function $W_1 = W_{1,0}$ (defined in \eqref{eq_bubble}) and its derivatives near infinity.
Especially we improve \cite[Lemma A.2]{CK} by pursuing a new approach based on conformal properties of $W_1$.

\medskip
For the functions $W_1$ and $x \cdot \nabla W_1$, we have
\begin{lemma}\label{lemma_W_dec}
Suppose that $n > 2\ga$ and $\ga \in (0,1)$. For any fixed large number $R_0 > 0$, we have
\begin{equation}\label{eq_W_dec}
\left|W_1(x) - {\alpha_{n,\ga} \over |x|^{n-2\ga}}\right| + \left|x \cdot \nabla W_1(x) + {\alpha_{n,\ga}(n-2\ga) \over |x|^{n-2\ga}}\right| \le {C \over |x|^{n-2\ga+\vt_2}}
\end{equation}
for $|x| \ge R_0$, where numbers $\vt_2 \in (0,1)$ and $C > 0$ rely only on $n,\, \ga$ and $R_0$.
\end{lemma}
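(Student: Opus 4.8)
The plan is to exploit the conformal (Kelvin) invariance of the bubble $W_1 = W_{1,0}$ under the inversion $x \mapsto x^* := x/|x|^2$ of $\mr^N$, which maps $\overline{\mr^N_+} \setminus \{0\}$ onto itself and carries a neighbourhood of infinity onto a punctured neighbourhood of the origin. The first step is to establish the identity
\[
W_1(x) = |x|^{-(n-2\ga)}\, W_1(x^*) \qquad \text{for all } x \in \overline{\mr^N_+} \setminus \{0\}.
\]
This can be read off directly from the Poisson representation \eqref{eq_bubble}: regarding $\by$ as a point of $\pa\mr^N_+$ so that $(|\bx - \by|^2 + x_N^2)^{(n+2\ga)/2} = |x - \by|^{n+2\ga}$, one replaces $x$ by $x^*$ in \eqref{eq_bubble} and changes variables $\by = \bar{z}/|\bar{z}|^2$, whose Jacobian is $d\by = |\bar{z}|^{-2n}\, d\bar{z}$. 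Using the standard inversion identities $|x^* - \by^*| = |x - \by|\,(|x|\,|\by|)^{-1}$ and $(x^*)_N = x_N |x|^{-2}$, together with the elementary fact that $w_1$ is itself Kelvin invariant, $w_1(\bar{z}/|\bar{z}|^2) = |\bar{z}|^{n-2\ga}\, w_1(\bar{z})$ (immediate from \eqref{eq_bubble_2}), all powers of $|\bar{z}|$ cancel and a global factor $|x|^{n-2\ga}$ emerges, giving $W_1(x^*) = |x|^{n-2\ga}\, W_1(x)$. (Equivalently, the identity follows from the conformal covariance of $\textnormal{div}(x_N^{1-2\ga}\nabla\,\cdot\,)$ under inversion and uniqueness of the Poisson extension.) Differentiating this identity on $\mr^N_+$ and using that the differential of $x \mapsto x^*$ sends $x$ to $-x^*$, one also gets
\[
x \cdot \nabla W_1(x) = -|x|^{-(n-2\ga)}\Bigl[(n-2\ga)\, W_1(x^*) + x^* \cdot \nabla W_1(x^*)\Bigr],
\]
which extends to $\pa\mr^N_+ \setminus \{0\}$ by continuity, since $x_N\, \pa_N W_1 = x_N^{2\ga}\,\bigl(x_N^{1-2\ga}\pa_N W_1\bigr) \to 0$ as $x_N \to 0$.

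The second step is a local regularity estimate for $W_1$ near the origin. Since $W_1$ is bounded on $\overline{\mr^N_+}$ (its trace $w_1$ is bounded and $W_1$ is the Poisson integral of $w_1$ against a kernel of unit mass), is $\ga$-harmonic, and carries the Neumann datum $\pa^{\ga}_{\nu} W_1 = w_1^{p}$ with $w_1$ smooth near $\bx = 0$, the boundary regularity theory for the degenerate operator $\textnormal{div}(x_N^{1-2\ga}\nabla\,\cdot\,)$ (see \cite{CS}) provides $\vt_0 \in (0,1)$ and constants $C, r_0 > 0$, with $r_0 \le 1$, such that on $\mcn_0 := \{z \in \overline{\mr^N_+} : |z| \le r_0\}$ one has $|W_1(z) - W_1(0)| \le C|z|^{\vt_0}$, $|\nabla_{\bx} W_1(z)| \le C$, and $\bigl|x_N^{1-2\ga}\pa_N W_1(z)\bigr| \le C$. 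Because $W_1(0) = w_1(0) = \alpha_{n,\ga}$, these bounds give $|W_1(z) - \alpha_{n,\ga}| \le C|z|^{\vt_0}$, $|\bx \cdot \nabla_{\bx} W_1(z)| \le C|z|$, and $|z_N\, \pa_N W_1(z)| = z_N^{2\ga}\,\bigl|z_N^{1-2\ga}\pa_N W_1(z)\bigr| \le C|z|^{2\ga}$ on $\mcn_0$ (here $\bx$ stands for the tangential part of $z$). Setting $\vt_2 := \min\{\vt_0, 2\ga\} \in (0,1)$ and using $|z| \le 1$, each right-hand side is $\le C|z|^{\vt_2}$, so $|W_1(z) - \alpha_{n,\ga}| + |z \cdot \nabla W_1(z)| \le C\, |z|^{\vt_2}$ on $\mcn_0$.

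Finally, I would combine the two steps. Choosing $R_0$ large enough that $|x| \ge R_0$ forces $x^* \in \mcn_0$ (that is, $R_0 \ge 1/r_0$), the first identity of Step 1 yields
\[
\Bigl|W_1(x) - \frac{\alpha_{n,\ga}}{|x|^{n-2\ga}}\Bigr| = |x|^{-(n-2\ga)}\,\bigl|W_1(x^*) - \alpha_{n,\ga}\bigr| \le C\, |x|^{-(n-2\ga)}\, |x^*|^{\vt_2} = \frac{C}{|x|^{n-2\ga+\vt_2}},
\]
and the second identity yields the same bound for $x \cdot \nabla W_1(x) + \alpha_{n,\ga}(n-2\ga)|x|^{-(n-2\ga)}$, since $(n-2\ga)|W_1(x^*) - \alpha_{n,\ga}| + |x^* \cdot \nabla W_1(x^*)| \le C|x^*|^{\vt_2}$. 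Adding the two estimates gives \eqref{eq_W_dec}. The conceptual core of the argument is the Kelvin identity of Step 1; once it is in hand, the lemma reduces to a standard boundary-regularity statement. The one genuinely delicate point is the degenerate normal derivative: $\pa_N W_1$ generically behaves like $z_N^{2\ga - 1}$ near $\pa\mr^N_+$, so the decay exponent in \eqref{eq_W_dec} cannot be pushed past $2\ga$, which is precisely why only the existence of some $\vt_2 \in (0,1)$ is asserted.
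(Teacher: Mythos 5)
Your proposal is correct, and it rests on the same core mechanism as the paper: the Kelvin invariance $W_1^*=W_1$ together with a local estimate at the origin, transferred to a neighborhood of infinity by inversion. The difference is in how the radial-derivative term is handled. The paper treats $x\cdot\nabla W_1$ as a $\ga$-harmonic function in its own right, applies the Kelvin transform to it, identifies the fractional Neumann data of $(x\cdot\nabla W_1)^*$ via \cite{FW} and \cite{CS}, evaluates $(x\cdot\nabla W_1)^*(0)=-\alpha_{n,\ga}(n-2\ga)$ through Green's representation formula (an explicit integral computation), and then invokes elliptic theory to get the modulus of continuity $\sum_i c_i|x_i|+c_Nx_N^{\vt_2}$ before inverting back. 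You instead differentiate the invariance identity $W_1(x)=|x|^{-(n-2\ga)}W_1(x^*)$ along the radial direction, which expresses $x\cdot\nabla W_1(x)$ in terms of $(n-2\ga)W_1(x^*)+x^*\cdot\nabla W_1(x^*)$ and reduces the whole lemma to H\"older continuity of $W_1$ at $0$ plus boundedness of $\nabla_{\bx}W_1$ and of $x_N^{1-2\ga}\pa_N W_1$ near the origin; these are immediate from the Poisson representation (or the regularity theory of \cite{CS}), and your exponent $\min\{\vt_0,2\ga\}$ is exactly the analogue of the paper's $x_N^{\vt_2}$ limitation. Your route buys a more elementary argument: no computation of the transformed Neumann datum and no Green's-function integral; what it costs is the (easy) gradient bounds for $W_1$ at the origin instead of mere continuity of the transformed function. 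One cosmetic point: the lemma asserts the bound for any fixed large $R_0$, while your argument needs $|x^*|\le r_0$; if the given $R_0<1/r_0$, the missing compact annulus $R_0\le|x|\le 1/r_0$ is handled trivially since both quantities in \eqref{eq_W_dec} are bounded there and $|x|^{-(n-2\ga+\vt_2)}$ is bounded below, at the price of a constant depending on $R_0$, which is allowed.
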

\begin{proof}
Given any function $F$ in $\mr^N_+$, let $F^*$ be its fractional Kelvin transform defined as
\[F^*(x) = {1 \over |x|^{n-2\ga}}\, F\({x \over |x|^2}\) \quad \text{for } x \in \mr^N_+.\]
Then it is known that $W_1^* = W_1$.
Let us claim that $(x \cdot \nabla W_1)^*(0) = - \alpha_{n,\ga} (n-2\ga)$ and $(x \cdot \nabla W_1)^*$ is $C^{\infty}$ in the $\bx$-variable and H\"older continuous in the $x_N$-variable.
Since
\[x_N^{2-2\ga} \pa_{NN} W_1 = -(1-2\ga) x_N^{1-2\ga} \pa_N W_1 - x_N^{2-2\ga} \Delta_{\bx} W_1 \quad \text{in } \mr^N_+,\]
we have
\[\begin{cases}
-\text{div}\(x_N^{1-2\ga} \nabla (x \cdot \nabla W_1)\) = 0 &\text{in } \mr^N_+,\\
\begin{aligned}
\pa^{\ga}_{\nu} (x \cdot \nabla W_1) &= \sum\limits_{i=1}^n x_i \pa_{x_i} \pa^{\ga}_{\nu} W_1
+ \pa^{\ga}_{\nu} W_1 - \lim\limits_{x_N \to 0} x_N^{2-2\ga} \pa_{NN} W_1\\
&= p \sum\limits_{i=1}^n x_i \pa_{x_i}(w_1^p) + 2\ga w_1^p
\end{aligned}
&\text{on } \mr^n.
\end{cases}\]
Employing \cite[Proposition 2.6]{FW}, \cite{CS} and doing some computations, we obtain that
\[\begin{cases}
-\text{div}\(x_N^{1-2\ga} \nabla (x \cdot \nabla W_1)^*\) = 0 &\text{in } \mr^N_+,\\
\pa^{\ga}_{\nu} (x \cdot \nabla W_1)^* = (-\Delta)^{\ga} (x \cdot \nabla W_1)^* = \alpha_{n,\ga}^p \(\dfrac{2\ga|\bx|^2 - n}{(1+|\bx|^2)^{n+2\ga+2 \over 2}}\) &\text{on } \mr^n.
\end{cases}\]
Therefore $(x \cdot \nabla W_1)^*$ has regularity stated above, and according to Green's representation formula,
\[(x \cdot \nabla W_1)^*(0) = \alpha_{n,\ga}^p g_{n,\ga} \int_{\mr^n} {1 \over |\by|^{n-2\ga}} \(\dfrac{2\ga|\by|^2 - n}{(1+|\by|^2)^{n+2\ga+2 \over 2}}\)\, d\by = - \alpha_{n,\ga} (n-2\ga).\]
This proves the assertion.

Now we can check \eqref{eq_W_dec} with the above observations.
By standard elliptic theory, there exist constants $c_1, \cdots, c_N > 0$ such that
\begin{equation}\label{eq_W_dec_1}
\left| W_1^*(x) - \alpha_{n,\ga} \right| + \left| (x \cdot \nabla W_1)^*(x) + \alpha_{n,\ga}(n-2\ga)\right| \le \sum_{i=1}^n c_i |x_i| + c_N x_N^{\vt_2}
\end{equation}
for any $|x| \le R_0^{-1}$ and some $\vt_2 \in (0,1)$.
Hence, by taking the Kelvin transform in \eqref{eq_W_dec_1}, we see that the desired inequality \eqref{eq_W_dec} is valid for all $|x| \ge R_0$.
\end{proof}

Besides we have the following decay estimate of the derivatives of $W_1$.
\begin{lemma}\label{lemma_W_dec_2}
Assume that $n > 2\ga$ and $\ga \in (0,1)$.
For any fixed large number $R_0 > 0$, there exist constants $C > 0$ and $\vt_3 \in (0,\min\{1,2\ga\})$ depending only on $n,\, \ga$ and $R_0$ such that
\begin{equation}\label{eq_W_dec_2}
\left|\nabla_{\bx} W_1(x) + {\alpha_{n,\ga}(n-2\ga)\bx \over |x|^{n-2\ga+2}}\right| \le {C \over |x|^{n-2\ga+1+\vt_3}}
\end{equation}
and
\begin{equation}\label{eq_W_dec_3}
\left|\pa_N W_1(x) + {\alpha_{n,\ga}(n-2\ga)x_N \over |x|^{n-2\ga+2}}\right| \le C \( {1 \over |x|^{n-2\ga+2}} + {x_N^{2\ga-1} \over |x|^{n+2\ga}} \)
\end{equation}
for $|x| \ge R_0$.
\end{lemma}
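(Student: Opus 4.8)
The plan is to use the conformal (Kelvin) invariance of the bubble, $W_1^*=W_1$, exactly as in the proof of Lemma~\ref{lemma_W_dec}, to reduce the decay of $\nabla W_1$ at infinity to the behavior of $\nabla(W_1^*)$ near the origin. Writing $F^*(x)=|x|^{-(n-2\ga)}F(x/|x|^2)$ and $z=y/|y|^2$ (so that $|z|=|y|^{-1}$, $z_N=y_N|y|^{-2}$, hence $z_N^{2\ga-1}=y_N^{2\ga-1}|y|^{2-4\ga}$), differentiating the identity $W_1(y)=|y|^{-(n-2\ga)}(W_1^*)(z)$ yields, for every $a\in\{1,\dots,N\}$,
\[
\pa_{y_a}W_1(y)=|y|^{-(n-2\ga)-2}\Big[-(n-2\ga)\,y_a\,(W_1^*)(z)+(\pa_aW_1^*)(z)-2\,y_a\,(z\cdot\nabla W_1^*)(z)\Big].
\]
Since $W_1^*(0)=\alpha_{n,\ga}$ (computed in the proof of Lemma~\ref{lemma_W_dec}), the first summand already contributes $-\alpha_{n,\ga}(n-2\ga)\,y_a\,|y|^{-(n-2\ga)-2}$ up to lower order, which is precisely the main term of \eqref{eq_W_dec_2} (for $a=i\le n$) and of \eqref{eq_W_dec_3} (for $a=N$); what remains is to show that everything else is a remainder of the required size.

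For this I would record the local structure of $W_1^*=W_1$ near $0$. On $\{x_N=0\}$ one has the explicit value $W_1^*(\bx,0)=w_1(\bx)=\alpha_{n,\ga}(1+|\bx|^2)^{-(n-2\ga)/2}$, so $W_1^*(\bx,0)-\alpha_{n,\ga}=O(|\bx|^2)$ and $\pa_iW_1^*(\bx,0)=\pa_iw_1(\bx)=O(|\bx|)$ for $i\le n$. By the proof of Lemma~\ref{lemma_W_dec}, $W_1^*$ is $C^\infty$ in $\bx$ and H\"older in $x_N$ near $0$, and differentiating the degenerate equation $-\text{div}(x_N^{1-2\ga}\nabla W_1^*)=0$ and its (smooth near $0$) Neumann data tangentially shows that each $\pa_iW_1^*$, $i\le n$, is likewise H\"older up to $\{x_N=0\}$. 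In the normal direction, the Caffarelli--Silvestre structure of a $\ga$-harmonic extension of a function smooth near $0$ makes the leading normal correction of size $x_N^{\min\{2,2\ga\}}=x_N^{2\ga}$; hence near $0$
\[
|W_1^*(x)-\alpha_{n,\ga}|+\sum_{i\le n}|\pa_iW_1^*(x)|\le C\big(|\bx|+x_N^{2\ga}\big),\qquad |\pa_NW_1^*(x)|\le C\big(1+x_N^{2\ga-1}\big).
\]

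Substituting these bounds into the bracket and converting powers of $z$ into powers of $y$ (using $y_a\le|y|$, $y_N\le|y|$, $z_N=y_N|y|^{-2}$), a routine bookkeeping shows that the remainders $-(n-2\ga)y_a(W_1^*(z)-\alpha_{n,\ga})$, the tangential part of $-2y_a(z\cdot\nabla W_1^*)(z)$, and $|y|^{-(n-2\ga)-2}(\pa_iW_1^*)(z)$ for $i\le n$ are all $O(|y|^{-(n-2\ga+1+\vt_3)})$ for a suitable $\vt_3\in(0,\min\{1,2\ga\})$, while the contributions of the singular normal derivative $\pa_NW_1^*$, namely $-2y_az_N(\pa_NW_1^*)(z)$ and, when $a=N$, also $|y|^{-(n-2\ga)-2}(\pa_NW_1^*)(z)$, are bounded by a multiple of $|y|^{-(n-2\ga)-2}+y_N^{2\ga-1}|y|^{-(n+2\ga)}$. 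This gives \eqref{eq_W_dec_2} and \eqref{eq_W_dec_3}; and, as in Lemma~\ref{lemma_W_dec}, these estimates, first established for $|y|$ large (equivalently $|z|$ small), extend to all $|y|\ge R_0$ because the quantities involved are bounded away from the origin, by standard interior and boundary elliptic estimates for the degenerate operator.

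The main obstacle is the failure of $C^1$ regularity of $W_1^*=W_1$ up to $\{x_N=0\}$ when $\ga<1/2$: the normal derivative $\pa_NW_1^*$ blows up like $x_N^{2\ga-1}$ there, so one cannot merely differentiate the estimate of Lemma~\ref{lemma_W_dec}. Controlling this requires the sharper Caffarelli--Silvestre description of the extension (equivalently, H\"older continuity up to the boundary of the weighted conormal derivative $x_N^{1-2\ga}\pa_NW_1^*$) together with careful tracking of how the weight $x_N^{1-2\ga}$ and the singular power $x_N^{2\ga-1}$ rescale under the Kelvin map; it is exactly this rescaling that produces the second, $x_N$-dependent term on the right-hand side of \eqref{eq_W_dec_3}, which has no analogue in \eqref{eq_W_dec_2}.
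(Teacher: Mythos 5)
Your proposal is correct in substance, but it follows a genuinely different route from the paper. The paper never differentiates the inversion identity for $W_1$; instead it works with the derivatives themselves: for the tangential part it expands $\pa_i W_1$ near the origin via the Poisson representation \eqref{eq_bubble} and a Taylor expansion of $\pa_i w_1$, and then transfers this to infinity using the identity $(\pa_i W_1)^* = \pa_i W_1$ (uniqueness of the $\ga$-harmonic extension); for the normal part it treats $\ga=1/2$ explicitly and otherwise passes to the Caffarelli--Silvestre dual equation satisfied by $x_N^{1-2\ga}\pa_N W_1$, applies a second Kelvin-type transform adapted to the conjugate weight $x_N^{1-2(1-\ga)}$, and estimates the resulting Poisson-type integrals by hand (this is where the $x_N^{2\ga-1}|x|^{-(n+2\ga)}$ term is produced). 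Your argument instead differentiates $W_1(y)=|y|^{-(n-2\ga)}W_1^*(y/|y|^2)$ and feeds in the local boundary structure of the extension near $0$, namely $|W_1-\alpha_{n,\ga}|+\sum_i|\pa_iW_1|\le C(|\bx|+x_N^{2\ga})$ and $|\pa_NW_1|\le C(1+x_N^{2\ga-1})$; I checked the chain-rule identity and the ensuing bookkeeping, and they do close, with the $x_N^{2\ga-1}$ term in \eqref{eq_W_dec_3} arising exactly as you say from the rescaling $z_N^{2\ga-1}=y_N^{2\ga-1}|y|^{2-4\ga}$. What each approach buys: the paper's computation is essentially self-contained (Poisson kernel plus explicit integrals, with the dual equation substituting for boundary regularity theory), while yours is shorter and more conceptual but outsources the key input to regularity theory for the degenerate operator --- in particular you genuinely need the sharp modulus $x_N^{2\ga}$ for the normal deviation of $W_1$ (equivalently local boundedness/H\"older continuity of $x_N^{1-2\ga}\pa_N W_1$ up to $\{x_N=0\}$, available from \cite{CS} or from the same dual formulation the paper uses), since a generic H\"older exponent $\vt<\min\{1,2\ga\}$ in the normal variable would not suffice to recover \eqref{eq_W_dec_3}; you correctly flag this point, and you also correctly note that on the intermediate region $R_0\le|x|\le C$ one must use the weighted bound $|\pa_NW_1|\le C(1+x_N^{2\ga-1})$ rather than plain boundedness when $\ga<1/2$.
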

\begin{proof}
The precise values of the constants $p_{n,\ga}$, $\alpha_{n,\ga}$ and $\kappa_{\ga}$, which will appear during the proof, are found in \eqref{eq_const}.

\medskip \noindent \textsc{Step 1.} By \eqref{eq_bubble}, \eqref{eq_bubble_2} and Taylor's theorem, it holds
\begin{align*}
\pa_i W_1(x) 
&= p_{n,\ga} \int_{\mr^n} {1 \over (|\by|^2+1)^{n+2\ga \over 2}}\, \pa_i w_1(\bx - x_N \by)\, d\by \\
&= p_{n,\ga} \int_{\mr^n} {1 \over (|\by|^2+1)^{n+2\ga \over 2}}\, \left[\pa_i w_1(- x_N \by) + \pa_{ij} w_1(- x_N \by) x_j + O(|\bx|^2) \right]\, d\by \\
&= p_{n,\ga} \int_{\mr^n} {1 \over (|\by|^2+1)^{n+2\ga \over 2}}\, \left[\pa_{ii} w_1(0) x_i + O((x_N |\by|)^{\vt_3}|\bx|) + O(|\bx|^2) \right]\, d\by \\
&= -\alpha_{n,\ga} (n-2\ga) x_i + O(|x|^{1+\vt_3})
\end{align*}
for $|x| \le R_0^{-1}$.
Here we also used the facts that the $C^2(\mr^n)$-norm of $w_1$ and the $C^{\vt_3}(\mr^n)$-norm of $\pa_{ij} w_1$ are bounded for some $\vt_3 \in (0,\min\{1,2\ga\})$.
On the other hand, the uniqueness of the $\ga$-harmonic extension yields that $(\pa_i W_1)^* = \pa_i W_1$ for $i = 1, \cdots, n$.
Therefore
\[\left| \pa_i W_1(x) + {\alpha_{n,\ga} (n-2\ga) x_i \over |x|^{n-2\ga+2}} \right| = \left| (\pa_i W_1)^*(x) + \alpha_{n,\ga} (n-2\ga) x_i^* \right| \le C (|x|^{1+\vt_3})^* \le {C \over |x|^{n+2\ga+1+\vt_3}} \]
for $|x| \ge R_0$, which is the desired inequality \eqref{eq_W_dec_2}.

\medskip \noindent \textsc{Step 2.}
If $\ga = 1/2$, it is known that
\[W_1(\bx, x_N) = \alpha_{n,1/2} \({1 \over |\bx|^2 + (x_n+1)^2}\)^{n-1 \over 2} \quad \text{for all } (\bx, x_N) \in \mr^N_+,\]
so direct computation shows
\[\left|\pa_N W_1(x) + {\alpha_{n,1/2}(n-1)x_N \over |x|^{n+1}}\right| \le {C \over |x|^{n+1}},\]
thereby implying \eqref{eq_W_dec_3}. Therefore it is sufficient to consider when $\ga \in (0,1) \setminus \{1/2\}$.
In light of duality \cite[Subsection 2.3]{CS}, we have that
\[\begin{cases}
- \text{div}\(x_N^{1-2(1-\ga)} \nabla \(x_N^{1-2\ga} \pa_N W_1\)\) = 0 &\text{in } \mr^N_+,\\
x_N^{1-2\ga} \pa_N W_1 = - \kappa_{\ga}^{-1} w_1^p &\text{on } \mr^n.
\end{cases}\]
Hence if we define
\[F^{**}(x) = {1 \over |x|^{n-2(1-\ga)}}\, F\({x \over |x|^2}\) \quad \text{for } x \in \mr^N_+.\]
for an arbitrary function $F$ in $\mr^N_+$, then
\[\begin{cases}
- \text{div}\(x_N^{1-2(1-\ga)} \nabla \(x_N^{1-2\ga} \pa_N W_1\)^{**}\) = 0 &\text{in } \mr^N_+,\\
\(x_N^{1-2\ga} \pa_N W_1\)^{**} = - \alpha_{n,\ga}^p \kappa_{\ga}^{-1} \dfrac{|\bx|^2}{(1 + |\bx|^2)^{n+2\ga \over 2}} &\text{on } \mr^n.
\end{cases}\]
This implies
\begin{equation}\label{eq_pa_N_W}
\begin{aligned}
\(x_N^{1-2\ga} \pa_N W_1\)^{**}(\bx, x_N)
&= - \alpha_{n,\ga}^p \kappa_{\ga}^{-1} p_{n,1-\ga}\, x_N^{2-2\ga}
\int_{\mr^n} {1 \over |\by|^{n-2\ga}} {1 \over (1 + |\by|^2)^{n+2\ga \over 2}}\, d\by + O\( x_N^{2-2\ga} |x| + |x|^2 \) \\
&= - \alpha_{n,\ga}(n-2\ga) x_N^{2-2\ga} + O\( x_N^{2-2\ga} |x| + |x|^2 \)
\end{aligned}
\end{equation}
for all $|x| \le R_0^{-1}$, where estimation of the remainder term is deferred to the end of the proof. Accordingly, we have
\[\left|x_N^{1-2\ga} \pa_N W_1(x) + {\alpha_{n,\ga}(n-2\ga) x_N^{2-2\ga} \over |x|^{n-2\ga+2}}\right| \le C \({x_N^{2-2\ga} \over |x|^{n-2\ga+3}} + {1 \over |x|^{n+2\ga}} \) \]
for $|x| \ge R_0$. Dividing the both sides by $x_N^{1-2\ga}$ finishes the proof of \eqref{eq_W_dec_3}.

\medskip \noindent \textbf{Estimation of the remainder term in \eqref{eq_pa_N_W}.}
The remainder term is equal to a constant multiple of
\begin{equation}\label{eq_rem_2}
\begin{aligned}
&\ \int_{\mr^n} \left[ {1 \over (1 + |\by|^2)^{n-2\ga+2 \over 2}} \cdot {(|\bx|^2 - 2x_N \bx \cdot \by + |x_N \by|^2) \over (1 + |x_N \by - \bx|^2)^{n+2\ga \over 2}}
- {1 \over |\by|^{n-2\ga+2}} \cdot {|x_N \by|^2 \over (1 + |x_N \by|^2)^{n+2\ga \over 2}} \right]\, d\by \\
&= O(|\bx|^2) + O\(x_N |\bx| \int_{\mr^n} {1 \over (1 + |\by|^2)^{n-2\ga+2 \over 2}} \cdot {|\by| \over (1 + |x_N \by - \bx|^2)^{n+2\ga \over 2}} d\by\) \\
& \quad + x_N^2 \int_{\mr^n} \left[{|\by|^2 \over (1 + |\by|^2)^{n-2\ga+2 \over 2}} \cdot {1 \over (1 + |x_N \by - \bx|^2)^{n+2\ga \over 2}}
- {1 \over |\by|^{n-2\ga}} \cdot {1 \over (1 + |x_N \by|^2)^{n+2\ga \over 2}} \right] d\by\\
& = \begin{cases}
O(|\bx|^2) + O(x_N |\bx|) + \left[O(x_N^2) + O(x_N^{2-2\ga} |\bx|)\right] &\text{for } \ga < 1/2,\\
O(|\bx|^2) + O(x_N^{2-2\ga}|\bx|) + \left[O(x_N^2) + O(x_N^{2-2\ga} |\bx|)\right] &\text{for } \ga > 1/2.
\end{cases}
\end{aligned}
\end{equation}
The estimate for the third term in the middle side of \eqref{eq_rem_2} can be done as
\begin{equation}\label{eq_rem_21}
\begin{aligned}
&\ x_N^2 \int_{\mr^n}  \left|{|\by|^2 \over (1 + |\by|^2)^{n-2\ga+2 \over 2}} - {1 \over |\by|^{n-2\ga}} \right| \cdot {1 \over (1 + |x_N \by - \bx|^2)^{n+2\ga \over 2}} d\by \\
&= O(x_N^2) + O\(x_N^{4-2\ga} \int_{\{|\by| > x_N\}} {1 \over |\by|^{n-2\ga+2}} \cdot {1 \over (1 + |\by-\bx|^2)^{n+2\ga \over 2}}  d\by\)
= O(x_N^2)
\end{aligned}
\end{equation}
with the aid of Taylor's theorem and the substitution $x_N \by \to \by$, and
\begin{align*}
&\ x_N^2 \int_{\mr^n} {1 \over |\by|^{n-2\ga}} \cdot \left[ {1 \over (1 + |x_N \by - \bx|^2)^{n+2\ga \over 2}} - {1 \over (1 + |x_N \by|^2)^{n+2\ga \over 2}} \right] d\by \\
&=  x_N^{2-2\ga} \int_{\mr^n} {1 \over |\by|^{n-2\ga}} \cdot \left[ {1 \over (1 + |\by - \bx|^2)^{n+2\ga \over 2}} - {1 \over (1 + |\by|^2)^{n+2\ga \over 2}} \right] d\by = O(x_N^{2-2\ga} |\bx|).
\end{align*}
Also we estimated the second term in the middle side of \eqref{eq_rem_2} by decomposing $\mr^n$ into two regions $\{|\by| \le 1\}$ and $\{|\by| > 1\}$ as in \eqref{eq_rem_21}.
This concludes the proof.
\end{proof}

\section{Some Integrations Regarding the Standard Bubble $W_{1,0}$ on $\mr^N_+$} \label{sec_app_2}
The following lemmas are due to Gonz\'alez-Qing \cite[Section 7]{GQ} and the authors \cite[Subsection 4.3]{KMW}.
\begin{lemma}\label{lemma_KMW_1}
Suppose that $n > 4\ga - 1$.
For each $x_N > 0$ fixed, let $\whw_1(\xi,x_N)$ be the Fourier transform of $W_1(\bx,x_N)$ with respect to the variable $\bx \in \mr^n$.
In addition, we use $K_{\ga}$ to signify the modified Bessel function of the second kind of order $\ga$.
Then we have that
\[\whw_1(\xi,x_N) = \hw_1(\xi)\, \vp(|\xi|x_N) \quad \text{for all } \xi \in \mr^n \text{ and } x_N > 0,\]
where $\vp(t) = d_1 t^{\ga}K_{\ga}(t)$ is the solution to
\begin{equation}\label{eq_phi_1}
\phi''(t) + {1-2\ga \over t} \phi'(t) - \phi(t) = 0, \quad \phi(0) = 1 \text{ and } \phi(\infty) = 0
\end{equation}
and $\hw_1(t) := \hw_1(|\xi|) = d_2 |\xi|^{-\ga}K_{\ga}(|\xi|)$ solves
\begin{equation}\label{eq_phi_2}
\phi''(t) + {1+2\ga \over t} \phi'(t) - \phi(t) = 0 \quad \text{and} \quad  \lim_{t \to 0} t^{2\ga}\phi(t) + \lim_{t \to \infty} t^{\ga+{1 \over 2}} e^t \phi(t) \le C
\end{equation}
for some $C > 0$. The numbers $d_1,\, d_2 > 0$ depend only on $n$ and $\ga$.
\end{lemma}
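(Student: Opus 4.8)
The plan is to pass to the tangential Fourier transform and reduce the degenerate elliptic extension problem \eqref{eq_bubble_eq} for $W_1 = W_{1,0}$ to an ordinary differential equation in the normal variable $x_N$. Writing out $-\textnormal{div}(x_N^{1-2\ga}\nabla W_1) = 0$ in $\mr^N_+$ and dividing by $x_N^{1-2\ga}$ gives $\pa_{NN}W_1 + \tfrac{1-2\ga}{x_N}\pa_N W_1 + \Delta_{\bx}W_1 = 0$, with the trace $W_1(\cdot,0) = w_1$ on $\mr^n$ coming from \eqref{eq_bubble}--\eqref{eq_bubble_2}. Taking the Fourier transform in $\bx$ (legitimate once the decay estimates \eqref{eq_W_dec}, \eqref{eq_W_dec_2}, \eqref{eq_W_dec_3} of Appendix \ref{sec_app} are at hand, which is where the dimensional restriction $n > 4\ga-1$ is genuinely used to control the relevant weighted integrals) turns $\Delta_{\bx}$ into $-|\xi|^2$, so for each fixed $\xi \ne 0$ the function $x_N \mapsto \whw_1(\xi,x_N)$ solves
\[ \pa_{NN}\whw_1 + \tfrac{1-2\ga}{x_N}\pa_N\whw_1 - |\xi|^2\,\whw_1 = 0 \ \text{ for } x_N>0, \qquad \whw_1(\xi,0) = \hw_1(\xi), \qquad \whw_1(\xi,x_N) \to 0 \ \text{ as } x_N \to \infty. \]

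Next I separate variables by putting $\whw_1(\xi,x_N) = \hw_1(\xi)\,\phi(t)$ with $t = |\xi|x_N$; the chain rule turns the displayed equation into exactly \eqref{eq_phi_1} for $\phi$, with $\phi(0) = 1$ (from the trace) and $\phi(\infty) = 0$ (from the decay). Since \eqref{eq_phi_1} together with these two conditions has a unique solution, and since conversely that solution produces through the ansatz a decaying solution of the Dirichlet problem above (whose solution is unique), the two coincide and the factorization $\whw_1 = \hw_1(\xi)\,\vp(|\xi|x_N)$ follows. To solve \eqref{eq_phi_1} I would substitute $\phi(t) = t^{\ga}\psi(t)$: a direct computation shows this transforms \eqref{eq_phi_1} into the modified Bessel equation $t^2\psi'' + t\psi' - (t^2+\ga^2)\psi = 0$ of order $\ga$. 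The condition $\phi(\infty) = 0$ discards the $I_\ga$-branch (which grows exponentially) and leaves $\psi = cK_\ga$; then $\phi(0) = 1$ together with $K_\ga(t) = \tfrac{1}{2}\Gamma(\ga)(t/2)^{-\ga}(1+o(1))$ as $t \to 0^+$ fixes $c$ and yields $\vp(t) = \phi(t) = d_1 t^{\ga}K_\ga(t)$ with $d_1 = 2^{1-\ga}/\Gamma(\ga)$.

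It remains to identify $\hw_1$ and to confirm \eqref{eq_phi_2}. As $w_1(\bx) = \alpha_{n,\ga}(1+|\bx|^2)^{-(n-2\ga)/2}$ is, up to the constant $\alpha_{n,\ga}$, a Bessel potential kernel, the classical formula for its Fourier transform gives $\hw_1(\xi) = d_2\,|\xi|^{-\ga}K_\ga(|\xi|)$ for some $d_2 = d_2(n,\ga) > 0$; equivalently, this can be read off from the Poisson representation \eqref{eq_bubble}, $W_1 = P(\cdot,x_N)\ast w_1$ (convolution in $\bx$) with $P$ the Caffarelli-Silvestre $\ga$-Poisson kernel, whose tangential Fourier transform is exactly $\vp(|\xi|x_N)$ (cf. \cite{CS}), so that $\whw_1 = \vp(|\xi|\,\cdot)\,\hw_1$ reproduces the factorization above. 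That $t \mapsto d_2 t^{-\ga}K_\ga(t)$ solves \eqref{eq_phi_2} is then immediate from the substitution $\phi(t) = t^{-\ga}\psi(t)$, which again reduces \eqref{eq_phi_2} to the order-$\ga$ modified Bessel equation for $\psi$; the two-sided side condition in \eqref{eq_phi_2} — boundedness of $t^{2\ga}\phi(t)$ as $t \to 0$ and of $t^{\ga+1/2}e^{t}\phi(t)$ as $t \to \infty$ — is satisfied precisely by the $K_\ga$-solution, the $I_\ga$-solution violating the growth bound at infinity.

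The special-function bookkeeping above is routine; the real obstacle is making the Fourier reduction rigorous. The weight $x_N^{1-2\ga}$ is singular at $x_N = 0$ and neither $W_1(\cdot,x_N)$ nor $w_1$ is Schwartz, so one has to argue — using the decay estimates of Appendix \ref{sec_app} together with an approximation/density argument, and this is the step in which $n > 4\ga - 1$ is genuinely invoked — that $\whw_1(\xi,\cdot)$ is a bona fide (classical, for a.e.\ $\xi$) solution of the ODE, that its trace at $x_N = 0$ is indeed $\hw_1(\xi)$, and that $\whw_1(\xi,x_N) \to 0$ as $x_N \to \infty$. With these in hand, the elementary uniqueness theory for the one-dimensional boundary value problems \eqref{eq_phi_1} and \eqref{eq_phi_2} finishes the proof.
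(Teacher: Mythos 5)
Your proof is correct and takes essentially the same route as the source the paper relies on: the paper does not reprove this lemma but cites Gonz\'alez--Qing \cite[Section 7]{GQ} and \cite[Subsection 4.3]{KMW}, where exactly this tangential Fourier transform reduction of the extension equation to the modified Bessel equation of order $\ga$ is performed, with the $K_\ga$-branch selected by the decay/boundedness conditions in \eqref{eq_phi_1}--\eqref{eq_phi_2}. Your normalization $d_1 = 2^{1-\ga}/\Gamma(\ga)$, the identification of $\hw_1$ through the Bessel-potential (equivalently, Caffarelli--Silvestre Poisson kernel) formula, and your acknowledgement that the only delicate point is justifying the Fourier reduction for the non-Schwartz function $W_1$ are all consistent with that treatment.
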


\begin{lemma}\label{lemma_KMW_2}
Let
\begin{equation}\label{eq_AB}
\begin{aligned}
&A_{\alpha} = \int_0^{\infty} t^{\alpha-2\ga}\vp^2(t)\, dt, &B_{\alpha} = \int_0^{\infty} t^{-\alpha+2\ga} \hw_1^2(t) t^{n-1} dt,\\
&A_{\alpha}' = \int_0^{\infty} t^{\alpha-2\ga} \vp(t)\, \vp'(t)\, dt, &B_{\alpha}' = \int_0^{\infty} t^{-\alpha+2\ga} \hw_1(t)\, \hw_1'(t) t^{n-1} dt,\\
&A_{\alpha}'' = \int_0^{\infty} t^{\alpha-2\ga} (\vp'(t))^2\, dt, &B_{\alpha}'' = \int_0^{\infty} t^{-\alpha+2\ga} (\hw_1'(t))^2 t^{n-1} dt
\end{aligned}
\end{equation}
for ${\alpha} \in \mn \cup \{0\}$. Then
\begin{align*}
A_{\alpha} &= \({\alpha+2 \over \alpha+1}\) \cdot \left[\({\alpha+1 \over 2}\)^2 - \ga^2\right]^{-1} A_{\alpha+2} = -\({\alpha+1 \over 2} - \ga\)^{-1} A'_{\alpha+1} \\
&= \({\alpha+1 \over 2} - \ga\) \({\alpha-1 \over 2} + \ga\)^{-1} A''_{\alpha}
\end{align*}
for $\alpha$ odd, $\alpha \ge 1$ and
\[B_{\alpha} = {4(n-\alpha+1)B_{\alpha-2} \over (n-\alpha)(n+2\ga-\alpha)(n-2\ga-\alpha)} = - {2 B'_{\alpha-1} \over n+2\ga-\alpha},
\, B_{\alpha-2} = {(n-2\ga-\alpha)  B''_{\alpha-2} \over n+2\ga-\alpha+2}\]
for $\alpha$ even, $\alpha \ge 2$.
\end{lemma}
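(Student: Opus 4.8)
The plan is to derive every identity from the two linear second order ODEs satisfied by $\vp$ and $\hw_1$, namely \eqref{eq_phi_1} and \eqref{eq_phi_2}, together with systematic integration by parts on $(0,\infty)$. Before any manipulation I would record the behaviour of the integrands near the two endpoints, using the Bessel representations $\vp(t)=d_1 t^{\ga}K_{\ga}(t)$ and $\hw_1(t)=d_2 t^{-\ga}K_{\ga}(t)$ from Lemma~\ref{lemma_KMW_1}: as $t\to\infty$ the functions $\vp,\hw_1$ and all their derivatives decay like $e^{-t}$, while as $t\to 0^+$ one has $\vp(t)=1+O(t^{\min\{2\ga,2\}})$ and $\vp'(t)=O(t^{2\ga-1})$, and $\hw_1(t)=O(t^{-2\ga})$ and $\hw_1'(t)=O(t^{-2\ga-1})$. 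Together with the hypothesis $n>4\ga-1$ and the stated parity and lower bounds on $\alpha$, these estimates show that each integral in \eqref{eq_AB} converges and, crucially, that all boundary contributions generated below vanish at $0$ and at $\infty$.

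For the three $A$-identities I would treat them in the order stated. The relation between $A_\alpha$ and $A'_{\alpha+1}$ is immediate: writing $A'_{\alpha+1}=\tfrac12\int_0^\infty t^{\alpha+1-2\ga}(\vp^2)'\,dt$ and integrating by parts once gives $A'_{\alpha+1}=-\bigl(\tfrac{\alpha+1}{2}-\ga\bigr)A_\alpha$. For the relation between $A_\alpha$ and $A''_\alpha$, I would multiply \eqref{eq_phi_1} by $t^{\alpha+1-2\ga}\vp'$, integrate, and use $\vp'\vp''=\tfrac12((\vp')^2)'$ and $\vp\vp'=\tfrac12(\vp^2)'$; two integrations by parts then express $A''_\alpha$ as an explicit multiple of $A_\alpha$. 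Finally, for the reduction from $A_{\alpha+2}$ to $A_\alpha$, I would multiply \eqref{eq_phi_1} instead by $t^{\alpha-2\ga}\vp$ and integrate; one integration by parts on the $\vp\vp''$-term yields a three-term relation among $A_{\alpha-2}$, $A_\alpha$ and $A''_\alpha$, into which the formula just obtained for $A''_\alpha$ is substituted so as to eliminate $A''_\alpha$; this leaves a proportionality between $A_{\alpha-2}$ and $A_\alpha$, and shifting $\alpha\mapsto\alpha+2$ produces the asserted coefficient $\bigl(\tfrac{\alpha+2}{\alpha+1}\bigr)\bigl[(\tfrac{\alpha+1}{2})^2-\ga^2\bigr]^{-1}$ after simplification.

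The three $B$-identities are obtained by repeating this argument word for word on \eqref{eq_phi_2}, with the weight $t^{\alpha-2\ga}$ systematically replaced by $t^{n-1-\alpha+2\ga}$ and the ODE coefficient $1-2\ga$ replaced by $1+2\ga$; it is this sign change that swaps the roles of the two exponents and explains why the $B$-indices run downward. Concretely, $B'_{\alpha-1}=\tfrac12\int_0^\infty t^{n-\alpha+2\ga}(\hw_1^2)'\,dt=-\tfrac{n+2\ga-\alpha}{2}B_\alpha$ gives the $B_\alpha$–$B'_{\alpha-1}$ identity; multiplying \eqref{eq_phi_2} by $t^{n+2-\alpha+2\ga}\hw_1'$ expresses $B''_{\alpha-2}$ as a multiple of $B_{\alpha-2}$; and multiplying \eqref{eq_phi_2} by $t^{n+1-\alpha+2\ga}\hw_1$, then using the previous step to eliminate the $(\hw_1')^2$-integral, gives the $B_\alpha$–$B_{\alpha-2}$ identity with coefficient $4(n-\alpha+1)\,[(n-\alpha)(n+2\ga-\alpha)(n-2\ga-\alpha)]^{-1}$.

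I do not expect a conceptual obstacle; the only delicate point is the bookkeeping of boundary terms, because for the smallest admissible indices ($\alpha=1$ for the $A$-integrals, $\alpha=2$ for the $B$-integrals) the endpoint exponents are borderline. It is exactly there that the parity restrictions, the lower bounds $\alpha\ge 1$ and $\alpha\ge 2$, and the hypothesis $n>4\ga-1$ are needed, and where one should simultaneously verify that none of the denominators $\tfrac{\alpha+1}{2}-\ga$, $(\tfrac{\alpha+1}{2})^2-\ga^2$, $n\pm 2\ga-\alpha$, $n+2\ga-\alpha+2$ degenerate. Everything else is a routine, if somewhat lengthy, sequence of integrations by parts.
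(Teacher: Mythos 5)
Your proposal is exactly the paper's proof: the paper simply applies \eqref{eq_phi_1}, \eqref{eq_phi_2} and the integration-by-parts identity $\int_0^\infty t^{\alpha-1}u\,u'\,dt=-\bigl(\tfrac{\alpha-1}{2}\bigr)\int_0^\infty t^{\alpha-2}u^2\,dt$, which is precisely your scheme of multiplying the ODEs by suitable weights and integrating by parts, with your endpoint bookkeeping (using $\vp'(t)=O(t^{2\ga-1})$ and $\hw_1(t)=O(t^{-2\ga})$ near $0$, exponential decay at infinity) being the only content the paper leaves implicit. One remark worth recording: carrying your third step to completion gives $A_\alpha=\bigl(\tfrac{\alpha-1}{2}+\ga\bigr)\bigl(\tfrac{\alpha+1}{2}-\ga\bigr)^{-1}A''_\alpha$, i.e.\ the reciprocal of the coefficient printed in the statement (a typo; the corrected value is the one consistent with the uses in Lemma \ref{lemma_int}, e.g.\ with $\mcf_2=\tfrac{3}{1+\ga}\,|\ms^{n-1}|A_3B_2$), while the $B$-identities come out exactly as stated.
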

\begin{proof}
Apply \eqref{eq_phi_1}, \eqref{eq_phi_2} and the identity
\[\int_0^{\infty} t^{\alpha-1} u(t)\, u'(t) dt = -\({\alpha-1 \over 2}\) \int_0^{\infty} t^{\alpha-2} u(t)^2 dt\]
which holds for any $\alpha > 1$ and $u \in C^1(\mr)$ decaying sufficiently fast.
\end{proof}

Utilizing the above lemmas, we compute some integrals regarding the standard bubble $W_1$ and its derivatives.
The next identities are necessary in the energy expansion when non-minimal conformal infinities are considered. See Subsection \ref{subsec_nm_ene}.
\begin{lemma}\label{lemma_int_0}
Suppose that $n \ge 2$ and $\ga \in (0,1/2)$. Then
\[\int_{\mr^N_+} x_N^{2-2\ga} |\nabla W_1|^2 dx = \({4 \over 1+2\ga}\) \int_{\mr^N_+} x_N^{2-2\ga} (\pa_r W_1)^2 dx = \({1-2\ga \over 2}\) \int_{\mr^N_+} x_N^{-2\ga} W_1^2 dx < \infty.\]
\end{lemma}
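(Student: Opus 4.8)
The plan is to diagonalize the three integrals in the tangential variable $\bx$ by means of Lemma~\ref{lemma_KMW_1}: after a Fourier transform in $\bx$, each of them becomes a one-dimensional integral of the profile $\vp$ times a common factor, and those one-dimensional integrals can then be evaluated explicitly through modified Bessel functions.

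First I would split $|\nabla W_1|^2 = |\nabla_{\bx}W_1|^2 + (\pa_N W_1)^2$ and use the radial symmetry of $W_1$ in $\bx$, so that $|\nabla_{\bx}W_1|^2 = (\pa_r W_1)^2$. Applying Plancherel's theorem in $\bx$, the representation $\widehat{W_1}(\xi,x_N) = \hw_1(|\xi|)\,\vp(|\xi|x_N)$ of Lemma~\ref{lemma_KMW_1}, the substitution $t = |\xi|x_N$ in the $x_N$-integral, and then polar coordinates in $\xi$, one checks that $\int_{\mr^N_+} x_N^{2-2\ga}|\nabla W_1|^2\,dx$, $\int_{\mr^N_+} x_N^{2-2\ga}(\pa_r W_1)^2\,dx$ and $\int_{\mr^N_+} x_N^{-2\ga}W_1^2\,dx$ equal $(A_2+A_2'')$, $A_2$ and $A_0$, respectively, each times one and the same positive constant, the finiteness of which amounts to $B_1<\infty$ (here $A_\alpha$, $A_\alpha''$, $B_\alpha$ are as in \eqref{eq_AB}). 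The latter holds because $n\ge2$ forces $n-2-2\ga>-1$ and $n>4\ga-1$ as demanded in Lemma~\ref{lemma_KMW_1}; as for the factors, $A_2,A_2''<\infty$ for every $\ga\in(0,1)$, whereas $A_0<\infty$ requires exactly $\ga<1/2$ because $\vp(0)=1$. Thus the lemma reduces to the two numerical identities $A_2 + A_2'' = \tfrac{4}{1+2\ga}A_2$ and $A_2 + A_2'' = \tfrac{1-2\ga}{2}A_0$. The second of these also admits a Fourier-free proof: testing $-\textnormal{div}_{\bg}(x_N^{1-2\ga}\nabla W_1)=0$ against $x_N W_1$ on $\mr^N_+$ (the boundary term on $\{x_N=0\}$ vanishing by \eqref{eq_bubble_eq}) gives $\int_{\mr^N_+} x_N^{2-2\ga}|\nabla W_1|^2\,dx = -\tfrac12\int_{\mr^N_+} x_N^{1-2\ga}\pa_N(W_1^2)\,dx$, and a further integration by parts in $x_N$ (legitimate since $x_N^{1-2\ga}W_1^2\to0$ as $x_N\to0$ when $\ga<1/2$) produces the factor $\tfrac{1-2\ga}{2}$.

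To finish I would compute $A_0$, $A_2$, $A_2''$ from the explicit profile $\vp(t)=d_1\,t^\ga K_\ga(t)$ of Lemma~\ref{lemma_KMW_1}. The classical identity $\frac{d}{dt}\big(t^\ga K_\ga(t)\big) = -t^\ga K_{\ga-1}(t) = -t^\ga K_{1-\ga}(t)$ gives $\vp'(t)^2 = d_1^2\,t^{2\ga}K_{1-\ga}(t)^2$, whence
\[
A_0 = d_1^2\!\int_0^\infty\! K_\ga(t)^2\,dt,\qquad A_2 = d_1^2\!\int_0^\infty\! t^2 K_\ga(t)^2\,dt,\qquad A_2'' = d_1^2\!\int_0^\infty\! t^2 K_{1-\ga}(t)^2\,dt.
\]
Invoking the standard formula $\int_0^\infty t^\mu K_\nu(t)^2\,dt = \tfrac{\sqrt{\pi}}{4}\,\Gamma\!\big(\tfrac{\mu+1}{2}\big)\Gamma\!\big(\tfrac{\mu+1}{2}+\nu\big)\Gamma\!\big(\tfrac{\mu+1}{2}-\nu\big)\big/\Gamma\!\big(\tfrac{\mu}{2}+1\big)$, valid for $\mu+1>2|\nu|$, with $(\mu,\nu)\in\{(0,\ga),(2,\ga),(2,1-\ga)\}$, and simplifying via $\Gamma(\tfrac12+\ga)\Gamma(\tfrac12-\ga)=\pi/\cos(\ga\pi)$, one finds that $A_0$, $A_2$, $A_2''$ equal $8$, $(1-2\ga)(1+2\ga)$, $(3-2\ga)(1-2\ga)$ respectively, up to the common positive factor $d_1^2\pi^2/(32\cos\ga\pi)$. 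Hence $A_2+A_2'' = 4(1-2\ga) = \tfrac{4}{1+2\ga}A_2 = \tfrac{1-2\ga}{2}A_0$, which together with the proportionality above establishes both equalities of the lemma, finiteness included.

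The main obstacle is this last step, i.e. the relation $A_0 = \tfrac{8}{1-4\ga^2}A_2$: it is genuinely transcendental, so it cannot be obtained by integrating the ODE \eqref{eq_phi_1} against test functions alone, and the recurrences of Lemma~\ref{lemma_KMW_2} are stated (and only valid) for odd indices, hence do not connect $A_0$ with $A_2$. One really has to use the explicit Bessel representation and the integral formula above — equivalently, to keep track of the non-vanishing boundary value of $t^{1-2\ga}\vp'(t)\vp(t)$ as $t\to0$ in an integration by parts, which reintroduces the normalizing constant $d_1$. A secondary, routine point is the verification that every boundary term arising in the integrations by parts, both on $\mr^N_+$ and in the one-dimensional reductions, actually vanishes, which rests on the decay of $W_1$ and its derivatives at infinity from Appendix~\ref{sec_app} together with the hypothesis $\ga<1/2$ near $\{x_N=0\}$.
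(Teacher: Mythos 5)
Your proposal is correct, but it necessarily differs from the paper, because the paper does not prove Lemma \ref{lemma_int_0} at all: its ``proof'' is the single line ``Refer to \cite[Lemma 6.3]{CK}.'' Your argument is a legitimate self-contained substitute, and it stays close in spirit to the machinery of Appendix \ref{sec_app_2}: the Plancherel/rescaling reduction indeed gives the three integrals as $|\ms^{n-1}|B_1(A_2+A_2'')$, $|\ms^{n-1}|B_1A_2$ and $|\ms^{n-1}|B_1A_0$, the finiteness discussion ($B_1<\infty$ from $n-2-2\ga>-1$ and exponential decay of $K_\ga$; $A_0<\infty$ exactly when $\ga<1/2$) is right, and your Bessel-moment evaluation checks out: with the Mellin formula one gets $A_0:A_2:A_2''=8:(1-2\ga)(1+2\ga):(3-2\ga)(1-2\ga)$ up to the common factor $d_1^2\pi^2/(32\cos\ga\pi)$, which yields both claimed equalities. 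The integration-by-parts proof of the second equality (testing against $x_NW_1$, boundary terms vanishing by \eqref{eq_bubble_eq} and $\ga<1/2$) is also fine. One remark: your closing claim that the relation between $A_0$ and $A_2$ is ``genuinely transcendental'' and unobtainable from the ODE is not accurate. Multiplying \eqref{eq_phi_1} by $t^{3-2\ga}\vp'$ and integrating by parts gives $A_2''=\frac{3-2\ga}{1+2\ga}A_2$ with all boundary terms vanishing for $\ga\in(0,1)$, while multiplying by $t^{2-2\ga}\vp$ gives $A_2+A_2''=\frac{1-2\ga}{2}A_0$, the boundary term $t^{1-2\ga}\vp^2$ at $t=0$ vanishing precisely because $\ga<1/2$; these are the same manipulations behind Lemma \ref{lemma_KMW_2}, which simply happen to remain valid at these indices under the hypothesis $\ga<1/2$. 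So the explicit formula $\vp(t)=d_1t^{\ga}K_\ga(t)$, though it makes your computation perfectly rigorous, is not actually needed for the two identities; it buys you explicit values, whereas the ODE route buys a shorter proof requiring no special-function tables.
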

\begin{proof}
Refer to \cite[Lemma 6.3]{CK}.
\end{proof}

The following information is used in the energy expansion for the non-umbilic case. Refer to Subsections \ref{subsec_num_ene} and \ref{subsec_num_ene_2}.
\begin{lemma}\label{lemma_int}
For $n > 2 + 2\ga$, it holds that 
\begin{align*}
\mcf_1 &:= \int_{\mr^N_+} x_N^{1-2\ga} W_1^2 dx = \left[{3 \over 2\(1-\ga^2\)}\right] |\ms^{n-1}| A_3B_2,\\
\mcf_2 &:= \int_{\mr^N_+} x_N^{3-2\ga} |\nabla W_1|^2 dx = \({3 \over 1+\ga}\) |\ms^{n-1}| A_3B_2,\\
\mcf_3 &:= \int_{\mr^N_+} x_N^{3-2\ga} (\pa_r W_1)^2 dx = |\ms^{n-1}| A_3B_2,\\
\mcf_4 &:= \int_{\mr^N_+} x_N^{3-2\ga} r(\pa_r W_1)(\pa_{rr} W_1) dx = -{n \over 2} |\ms^{n-1}| A_3B_2,\\
\mcf_5 &:= \int_{\mr^N_+} x_N^{3-2\ga} r^2 (\pa_{rr} W_1)^2 dx = \left[{5n^3 - 4n(1+\ga^2) + 4(1-4\ga^2) \over 20(n-1)}\right] |\ms^{n-1}| A_3B_2,\\
\mcf_6 &:= \int_{\mr^N_+} x_N^{1-2\ga} r^2 (\pa_r W_1)^2 dx = \left[{(n+2) (3n^2-6n+4-4\ga^2) \over 8(n-1)(1-\ga^2)}\right] |\ms^{n-1}| A_3B_2,\\
\mcf_7 &:= \int_{\mr^N_+} x_N^{2-2\ga} r^2 (\pa_r W_1)(\pa_{rx_N} W_1) dx = -\left[{(n+2) (3n^2-6n+4-4\ga^2) \over 8(n-1)(1+\ga)}\right] |\ms^{n-1}| A_3B_2,\\
\mcf_8 &:= \int_{\mr^N_+} x_N^{3-2\ga} r^2 (\pa_{rx_N} W_1)^2 dx = \left[{(2-\ga) (5n^3 - 4n(2-2\ga+\ga^2) + 8(1-\ga-2\ga^2)) \over 20(n-1)(1+\ga)}\right] |\ms^{n-1}| A_3B_2.
\end{align*}
Here $r = |\bx|$, and the positive constants $A_3$ and $B_2$ are defined by \eqref{eq_AB}.
\end{lemma}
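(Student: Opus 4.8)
The plan is to reduce each of the eight integrals to a fixed rational-in-$(n,\ga)$ multiple of the single quantity $|\ms^{n-1}|\,A_3 B_2$, by combining Plancherel's identity in the tangential variable $\bx$ with the separation of variables furnished by Lemma~\ref{lemma_KMW_1}, and then by collapsing the resulting products of one-dimensional integrals through the recursions of Lemma~\ref{lemma_KMW_2}. Since $W_1$ is radial in $\bx$, throughout I would work in polar coordinates $\bx=r\omega$, $\omega\in\ms^{n-1}$, so that the angular integration yields the factor $|\ms^{n-1}|$ and one is left with one-dimensional integrals in $r$ (or, on the Fourier side, in $\sigma=|\xi|$) and $x_N$.

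First I would dispose of the ``undifferentiated'' integral $\mcf_1$ and the first-order ones $\mcf_2,\mcf_3$. For $\mcf_1$, Plancherel's identity in $\bx$ (with a suitable normalisation of the Fourier transform), the formula $\widehat{W_1}(\xi,x_N)=\hw_1(\xi)\,\vp(|\xi|x_N)$ and passage to polar coordinates in $\xi$ give
\[
\int_{\mr^N_+} x_N^{1-2\ga}\,W_1^2\,dx \;=\; |\ms^{n-1}|\int_0^\infty\!\!\int_0^\infty x_N^{1-2\ga}\,\hw_1(\sigma)^2\,\vp(\sigma x_N)^2\,\sigma^{n-1}\,d\sigma\,dx_N;
\]
the substitution $t=\sigma x_N$ decouples this into $\bigl(\int_0^\infty \sigma^{\,n-3+2\ga}\hw_1(\sigma)^2\,d\sigma\bigr)\bigl(\int_0^\infty t^{\,1-2\ga}\vp(t)^2\,dt\bigr)=B_2\,A_1$, and the first identity of Lemma~\ref{lemma_KMW_2} rewrites $A_1=\tfrac{3}{2(1-\ga^2)}A_3$, which is exactly the claimed value of $\mcf_1$. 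The same scheme, using $\widehat{\pa_r W_1}$ (an extra factor $|\xi|$) and $\widehat{\pa_N W_1}=|\xi|\,\hw_1(\xi)\,\vp'(|\xi|x_N)$, produces $\mcf_3$ and $\mcf_2=\mcf_3+\int_{\mr^N_+} x_N^{3-2\ga}(\pa_N W_1)^2\,dx$: here the $x_N$-differentiation yields a $\vp'$, hence an $A'$- or $A''$-type radial integral, which Lemma~\ref{lemma_KMW_2} again turns into a multiple of $A_3$.

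For the integrals carrying extra powers of $r$ or higher radial derivatives ($\mcf_4,\mcf_5,\mcf_6,\mcf_7,\mcf_8$), I would integrate by parts in $r$, absorbing the Jacobian $r^{n-1}$; the boundary contributions vanish by the decay estimates of Appendix~\ref{sec_app} (Lemmas~\ref{lemma_W_dec} and~\ref{lemma_W_dec_2}) and the smoothness of $W_1$ at $r=0$. For instance a single integration by parts gives $\mcf_4=\tfrac12\int x_N^{3-2\ga}\,r^{n}\,\pa_r\!\bigl[(\pa_r W_1)^2\bigr]\,dr\,d\omega\,dx_N=-\tfrac n2\,\mcf_3$. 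In the same way, one or two integrations by parts in $r$, supplemented where needed by the Bessel ODE $\vp''+\tfrac{1-2\ga}{t}\vp'-\vp=0$ of Lemma~\ref{lemma_KMW_1} (to eliminate any $\vp''$ generated by $x_N$-differentiation) and by the equation $\textnormal{div}(x_N^{1-2\ga}\nabla W_1)=0$ (to trade a second $r$-derivative for $x_N$-derivatives), bring $\mcf_5,\mcf_6,\mcf_7,\mcf_8$ into finite linear combinations of $A_\alpha B_\beta$, $A'_\alpha B'_\beta$ and $A''_\alpha B''_\beta$; the recursions of Lemma~\ref{lemma_KMW_2} then collapse each onto a multiple of $A_3 B_2$. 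The hypothesis $n>2+2\ga$ is exactly what makes the lowest-index integral $B_2$ converge (near the origin $\hw_1(t)\sim t^{-2\ga}$, so $B_\alpha<\infty$ iff $n>\alpha+2\ga$) and renders all boundary terms negligible.

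I expect the genuine obstacle to lie in bookkeeping rather than in any new idea: correctly identifying which of $A_\alpha,A'_\alpha,A''_\alpha$ and $B_\beta,B'_\beta,B''_\beta$ occurs for $\mcf_5,\mcf_7,\mcf_8$, and verifying that, after invoking Lemma~\ref{lemma_KMW_2}, the numerical factors assemble into precisely the stated rational functions of $n$ and $\ga$. These computations run parallel to those of Gonz\'alez-Qing \cite[Section~7]{GQ} and the authors \cite[Subsection~4.3]{KMW}, which I would follow for the parts not reproduced here.
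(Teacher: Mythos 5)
Your handling of the first half of the lemma is fine and matches the paper: $\mcf_4$ is obtained by exactly the same integration by parts in $r$ (absorbing $r^{n-1}$ and giving $-\tfrac{n}{2}\mcf_3$), and your Plancherel-plus-scaling computation of $\mcf_1,\mcf_2,\mcf_3$ (substituting $t=\sigma x_N$ so that the double integral factors into $A_\alpha$- and $B_2$-type integrals, then invoking Lemma \ref{lemma_KMW_2}) reproduces correctly what the paper simply cites from \cite{GQ,KMW}; your convergence remark about $B_2$ and $n>2+2\ga$ is also correct.

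The gap is in $\mcf_5,\mcf_6,\mcf_7,\mcf_8$. For these the weight $r^2=|\bx|^2$ destroys the decoupling you rely on: after Plancherel the integrand is no longer a product of a function of $\sigma=|\xi|$ and a function of $\sigma x_N$, and the mechanism that actually works — the one used in the paper and in \cite{GQ,KMW} — is to convert multiplication by $|\bx|^2$ into $-\Delta_\xi$ acting on $\whw_1=\hw_1(|\xi|)\,\vp(|\xi|x_N)$. Concretely, the paper writes $\int_{\mr^N_+} x_N^{3-2\ga}r^2(\Delta_{\bx}W_1)^2dx=\mcf_5+2(n-1)\mcf_4+(n-1)^2\mcf_3$ and evaluates the left side as $\int_0^\infty x_N^{3-2\ga}\int_{\mr^n}(-\Delta_\xi)(|\xi|^2\hw_1\vp)\cdot(|\xi|^2\hw_1\vp)\,d\xi\,dx_N$; likewise $\mcf_6,\mcf_7,\mcf_8$ are reached through $\sum_{i}\int(-\Delta_\xi)(\xi_i\hw_1\vp)\cdot(\xi_i\hw_1\vp)\,d\xi$, with $\mcf_7$ first rewritten as $\tfrac12\int_0^\infty x_N^{2-2\ga}\,\pa_N\big(\int_{\mr^n}r^2(\pa_rW_1)^2d\bx\big)dx_N$ and $\mcf_8$ via $r^2(\pa_{rx_N}W_1)^2=|\bx|^2|\nabla_{\bx}(\pa_NW_1)|^2$. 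It is precisely this $\xi$-differentiation of the product $\hw_1\vp$ that generates the cross terms $A'_\alpha B'_\beta$ you anticipate; the route you actually describe — one or two integrations by parts in $r$, the ODE for $\vp$, and $\textnormal{div}(x_N^{1-2\ga}\nabla W_1)=0$ — neither removes the $r^2$ weight nor produces such cross terms, and trading $\pa_{rr}W_1$ for $x_N$-derivatives via the equation and then integrating by parts in $x_N$ creates boundary contributions on $\{x_N=0\}$ of the form $\int_{\mr^n}|\bx|^2w_1^{p+1}d\bx$, which lie outside the $A_\alpha B_\beta$ framework of Lemma \ref{lemma_KMW_2}. So, as written, your plan for the second half of the lemma does not go through; it becomes the paper's proof only after importing the Fourier-side $-\Delta_\xi$ device (together with the identity linking $\mcf_5$ to $(\Delta_{\bx}W_1)^2$ and the $\pa_N$-of-a-square rewriting of $\mcf_7$), which is the actual core of Steps 2 and 3 there.
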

\begin{proof}
The values $\mcf_1$, $\mcf_2$, $\mcf_3$ and $\mcf_6$ were computed in \cite{GQ, KMW}, so it suffices to consider the others.

\medskip \noindent \textsc{Step 1 (Calculation of $\mcf_4$).} Integration by parts gives
\begin{align*}
\mcf_4 &= \int_{\mr^N_+} x_N^{3-2\ga} r(\pa_r W_1)(\pa_{rr} W_1) dx = |\ms^{n-1}| \int_0^{\infty} x_N^{3-2\ga} \({1 \over 2} \int_0^{\infty} r^n \pa_r(\pa_r W_1)^2 dr\) dx_N\\
&= |\ms^{n-1}| \int_0^{\infty} x_N^{3-2\ga} \(-{n \over 2} \int_0^{\infty} r^{n-1} (\pa_r W_1)^2 dr\) dx_N = -{n \over 2} \mcf_3
= -{n \over 2} |\ms^{n-1}| A_3B_2.
\end{align*} 

\medskip \noindent \textsc{Step 2 (Calculation of $\mcf_5$).}
Since $\Delta_{\bx} W_1 = W_1'' + (n-1)r^{-1}W_1'$ (where $'$ stands for the differentiation in $r$), it holds that
\begin{equation}\label{eq_F_1}
\int_{\mr^N_+} x_N^{3-2\ga} r^2 (\Delta_{\bx} W_1)^2 dx = \mcf_5 + 2(n-1) \mcf_4 + (n-1)^2 \mcf_3.
\end{equation}
By the Plancherel theorem, Lemma \ref{lemma_KMW_1} and the relation
\begin{multline*}
\Delta_{\xi} (|\xi|^2 \hw_1(|\xi|) \vp(|\xi|x_N)) = 2n \hw_1 \vp + (n+2-2\ga) |\xi| \hw_1' \vp + (n+2+2\ga) |\xi| \hw_1 \vp' x_N\\
+ |\xi|^2 \hw_1 \vp + 2|\xi|^2 \hw_1' \vp' x_N + |\xi|^2 \hw_1 \vp x_N^2
\end{multline*}
where the variable of $\hw_1$ and $\hw_1'$ is $|\xi|$, that of $\vp$ and $\vp'$ is $|\xi|x_N$, and $'$ represents the differentiation with respect to the radial variable $|\xi|$, we see
\begin{align*}
&\ \int_{\mr^N_+} x_N^{3-2\ga} r^2 (\Delta_{\bx} W_1)^2 dx \\
&= \int_0^{\infty} x_N^{3-2\ga} \int_{\mr^n} (-\Delta_{\xi}) (|\xi|^2 \hw_1(|\xi|) \vp(|\xi|x_N)) \cdot (|\xi|^2 \hw_1(|\xi|) \vp(|\xi|x_N)) \, d\xi \, dx_N \\
&= |\ms^{n-1}| \left[2n A_3B_2 + (n+2-2\ga) A_3B_1' + (n+2+2\ga) A_4'B_2 + A_3B_0 + 2A_4'B_1' + A_5B_2 \right].
\end{align*}
Therefore Lemma \ref{lemma_KMW_2} implies
\[\int_{\mr^N_+} x_N^{3-2\ga} r^2 (\Delta_{\bx} W_1)^2 dx = \left[{5n^3 - 20n^2 + 4n(9-\ga^2) - 16(1+\ga^2) \over 20(n-1)}\right] |\ms^{n-1}| A_3B_2.\]
Now \eqref{eq_F_1} and the information on $\mcf_3$ and $\mcf_4$ yield the desired estimate for $\mcf_5$.

\medskip \noindent \textsc{Step 3 (Calculation of $\mcf_7$ and $\mcf_8$).}
Since the basic strategy is similar to Step 2, we will just sketch the proof. We observe
\begin{align*}
\mcf_7 &= {1 \over 2} \int_0^{\infty} x_N^{2-2\ga} \pa_N \(\int_{\mr^n} r^2 (\pa_r W_1)^2 d\bx\) dx_N
= {1 \over 2} \int_0^{\infty} x_N^{2-2\ga} \pa_N \(\sum_{i=1}^n \int_{\mr^n} |\bx|^2 (\pa_{x_i} W_1)^2 d\bx\) dx_N \\
&= {1 \over 2} \int_0^{\infty} x_N^{2-2\ga} \underbrace{\pa_N \(\sum_{i=1}^n \int_{\mr^n} (-\Delta_\xi) (\xi_i \hw_1(|\xi|) \vp(|\xi|x_N))
\cdot (\xi_i \hw_1(|\xi|) \vp(|\xi|x_N)) d\xi\)}_{= (I)} dx_N.
\end{align*}
Owing to Lemmas \ref{lemma_KMW_1} and \ref{lemma_KMW_2}, one can compute the term
\begin{align*}
(I) &= - \left[(n+1) \int_{\mr^n} \pa_N \(|\xi| (\hw_1 \hw_1')(|\xi|)\, \vp^2(|\xi|x_N) + |\xi| \hw_1^2(|\xi|)\, (\vp \vp')(|\xi|x_N) x_N\)\, d\xi \right.\\
&\qquad + \int_{\mr^n} \pa_N \(|\xi|^2 (\hw_1 \hw_1'')(|\xi|)\, \vp^2(|\xi|x_N) + 2|\xi|^2 (\hw_1 \hw_1')(|\xi|)\, (\vp \vp')(|\xi|x_N) x_N\)\, d\xi\\
&\qquad \left. + \int_{\mr^n} \pa_N \(|\xi|^2 \hw_1^2(|\xi|)\, (\vp \vp'')(|\xi|x_N) x_N^2\)\, d\xi \right]
\end{align*}
to get the value of $\mcf_7$ in the statement of the lemma. Moreover,
\begin{align*}
\mcf_8 &= \int_0^{\infty} x_N^{3-2\ga} \(\int_{\mr^n} |\bx|^2 |\nabla_{\bx} (\pa_N W_1)|^2 d\bx\) dx_N \\
&= \int_0^{\infty} x_N^{3-2\ga} \(\sum_{i=1}^n \int_{\mr^n} (-\Delta_{\xi})(\xi_i \pa_N \whw_1) \cdot (\xi_i \pa_N \whw_1) d\xi\) dx_N.
\end{align*}
The rightmost term is computable with Lemmas \ref{lemma_KMW_1} and \ref{lemma_KMW_2}.
The proof is completed.
\end{proof}

The next lemma lists the values of some integrals which are needed in the energy expansion for the umbilic case (see Subsection \ref{subsec_um_ene}).
Its proof is analogous to the proofs of Lemma \ref{lemma_int} and \cite[Lemma 4.4]{KMW}, so we skip it.
\begin{lemma}\label{lemma_int_2}
For $n > 4 + 2\ga$, we have
\begin{align*}
\mcf'_1 &:= \int_{\mr^N_+} x_N^{3-2\ga} W_1^2 dx = \left[{4(n-3) \over (n-4)(n-4-2\ga)(n-4+2\ga)}\right] |\ms^{n-1}| A_3B_2,\\
\mcf'_2 &:= \int_{\mr^N_+} x_N^{5-2\ga} |\nabla W_1|^2 dx = \left[{16(n-3)(2-\ga) \over (n-4)(n-4-2\ga)(n-4+2\ga)}\right] |\ms^{n-1}| A_3B_2,\\
\mcf'_3 &:= \int_{\mr^N_+} x_N^{5-2\ga} (\pa_r W_1)^2 dx = \left[{16(n-3)(4-\ga^2) \over 5(n-4)(n-4-2\ga)(n-4+2\ga)}\right] |\ms^{n-1}| A_3B_2,\\
\mcf'_4 &:= \int_{\mr^N_+} x_N^{1-2\ga} r^2 W_1^2 dx = \left[{n(3n^2 - 18n + 28 - 4\ga^2) \over 2(n-4)(n-4-2\ga)(n-4+2\ga)(1-\ga^2)}\right] |\ms^{n-1}| A_3B_2,\\
\mcf'_5 &:= \int_{\mr^N_+} x_N^{3-2\ga} r^2 |\nabla W_1|^2 dx = \left[{n(3n^2 + 2n(-7+2\ga) - 4(-4+3\ga+\ga^2)) \over (n-4)(n-4-2\ga)(n-4+2\ga)(1+\ga)}\right] |\ms^{n-1}| A_3B_2,\\
\mcf'_6 &:= \int_{\mr^N_+} x_N^{3-2\ga} r^2 (\pa_r W_1)^2 dx = \left[{(n+2)(5n^2 - 20n + 16 - 4\ga^2) \over 5(n-4)(n-4-2\ga)(n-4+2\ga)}\right] |\ms^{n-1}| A_3B_2,\\
\mcf'_7 &:= \int_{\mr^N_+} x_N^{5-2\ga} r(\pa_r W_1)(\pa_{rr} W_1) dx = -\left[{8n(n-3)(4-\ga^2) \over 5(n-4)(n-4-2\ga)(n-4+2\ga)}\right] |\ms^{n-1}| A_3B_2,\\ 
\mcf'_8 &:= \int_{\mr^N_+} x_N^{5-2\ga} r^2 (\pa_{rr} W_1)^2 dx = \left[{4(4-\ga^2)(7n^3 - 14n^2 - 4n(5 + \ga^2) + 4 - 16\ga^2) \over 35(n-4)(n-4-2\ga)(n-4+2\ga)}\right] |\ms^{n-1}| A_3B_2,\\
\mcf'_9 &:= \int_{\mr^N_+} x_N^{4-2\ga} r^2 (\pa_r W_1)(\pa_{rx_N} W_1) dx = -\left[{(n+2)(2-\ga)(5n^2 - 20n + 16 - 4\ga^2) \over 5(n-4)(n-4-2\ga)(n-4+2\ga)}\right] |\ms^{n-1}| A_3B_2,\\
\mcf'_{10} &:= \int_{\mr^N_+} x_N^{5-2\ga} r^2 (\pa_{rx_N} W_1)^2 dx = \left[\tfrac{4(2-\ga)(3-\ga) (7n^3 - 14n^2 - 4n(6 - 2\ga + \ga^2) + 8(2 - 3\ga - 2\ga^2))} {35(n-4)(n-4-2\ga)(n-4+2\ga)}\right] |\ms^{n-1}| A_3B_2
\end{align*}
where $r = |\bx|$, and the positive constants $A_3$ and $B_2$ are defined by \eqref{eq_AB}.
\end{lemma}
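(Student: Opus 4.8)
The plan is to follow verbatim the scheme of the proof of Lemma~\ref{lemma_int} (and of \cite[Lemma 4.4]{KMW}): pass to the Fourier transform in the tangential variable $\bx$, use the factorization $\whw_1(\xi,x_N)=\hw_1(\xi)\,\vp(|\xi|x_N)$ supplied by Lemma~\ref{lemma_KMW_1}, and then collapse every resulting one-dimensional integral to a rational multiple of $|\ms^{n-1}|A_3B_2$ by means of the ODEs \eqref{eq_phi_1}, \eqref{eq_phi_2} and the recursion identities of Lemma~\ref{lemma_KMW_2}. First I would record that, for $n>4+2\ga$, each of the ten integrals converges absolutely: on a fixed half-ball $W_1$ and its derivatives are smooth and each weight $x_N^{\alpha-2\ga}$ with $\alpha\ge 1$ is locally integrable, while near infinity the estimates \eqref{eq_W_dec}, \eqref{eq_W_dec_2}, \eqref{eq_W_dec_3} bound each integrand by a constant times $|x|^{-2(n-2\ga)}$ multiplied by the relevant power weight, which is integrable over $\mr^N_+$ exactly when $n>4+2\ga$. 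This also legitimizes the integrations by parts in $r$ and the differentiations under the integral sign in $x_N$ used below.

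For the integrals without an $x_N$-derivative, namely $\mcf'_1$ through $\mcf'_6$, I would apply the Plancherel theorem in $\bx$. When no factor $r^2$ is present, i.e. for $\mcf'_1,\mcf'_2,\mcf'_3$, the Fourier-side integrand is (a power of $|\xi|$ times) $\hw_1^2(|\xi|)\,\vp^2(|\xi|x_N)$ or $\hw_1^2(|\xi|)\,(\vp'(|\xi|x_N))^2$ against a power weight; here one uses that $W_1$ is radial in $\bx$ so that $|\nabla_{\bx}W_1|^2=(\pa_rW_1)^2$ and $\sum_i|\widehat{\pa_{x_i}W_1}|^2=|\xi|^2|\whw_1|^2$, together with $\widehat{\pa_NW_1}=\hw_1(|\xi|)\,|\xi|\,\vp'(|\xi|x_N)$. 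Passing to polar coordinates in $\xi$ and substituting $t=|\xi|x_N$ in the $x_N$-variable factors the integral into a product of a one-dimensional $A$-integral and a one-dimensional $B$-integral whose indices are read off from the powers of $x_N$ and $r$, and Lemma~\ref{lemma_KMW_2} brings the result to the normal form $|\ms^{n-1}|A_3B_2$ (using the odd-index reductions for the $A$'s and the even-index reductions for the $B$'s). The factor $r^2=|\bx|^2$ present in $\mcf'_4,\mcf'_5,\mcf'_6$ is dealt with through the correspondence $|\bx|^2\leftrightarrow-\Delta_\xi$: after Parseval one integrates $(-\Delta_\xi)$ of a factor of the shape $\hw_1(|\xi|)\vp(|\xi|x_N)$, $\xi_i\hw_1(|\xi|)\vp(|\xi|x_N)$, or $\hw_1(|\xi|)|\xi|\vp'(|\xi|x_N)$ against that same factor, expands the Laplacian by the product and chain rules, eliminates the second-derivative terms $\hw_1''$ and $\vp''$ via \eqref{eq_phi_1} and \eqref{eq_phi_2}, and reduces each surviving term of type $A_aB_b$, $A'_aB_b$, $A''_aB_b$, $A_aB'_b$, $A_aB''_b$ to a multiple of $A_3B_2$ by Lemma~\ref{lemma_KMW_2}; this is precisely the computation performed for $\mcf_5$, $\mcf_7$, $\mcf_8$ in the proof of Lemma~\ref{lemma_int}.

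The integrals $\mcf'_7,\mcf'_8,\mcf'_9,\mcf'_{10}$ involve one or two $x_N$-derivatives, for which I would use $\pa_{x_N}\whw_1(\xi,x_N)=\hw_1(\xi)\,|\xi|\,\vp'(|\xi|x_N)$. Two of them collapse immediately: writing $r(\pa_rW_1)(\pa_{rr}W_1)=\tfrac12 r\,\pa_r\bigl((\pa_rW_1)^2\bigr)$ and integrating by parts in $r$ against the measure $r^{n-1}dr$ gives $\mcf'_7=-\tfrac{n}{2}\mcf'_3$, while writing $r^2(\pa_rW_1)(\pa_{rx_N}W_1)=\tfrac12 r^2\,\pa_{x_N}\bigl((\pa_rW_1)^2\bigr)$ and integrating by parts in $x_N$ gives $\mcf'_9=-(2-\ga)\mcf'_6$. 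For $\mcf'_8$ I would use $\Delta_{\bx}W_1=\pa_{rr}W_1+\tfrac{n-1}{r}\pa_rW_1$ to obtain the identity $\int_{\mr^N_+}x_N^{5-2\ga}r^2(\Delta_{\bx}W_1)^2\,dx=\mcf'_8+2(n-1)\mcf'_7+(n-1)^2\mcf'_6$, compute the left-hand side on the Fourier side (where $\widehat{\Delta_{\bx}W_1}=-|\xi|^2\whw_1$ and $r^2\leftrightarrow-\Delta_\xi$) exactly as for $\mcf_5$, and then solve for $\mcf'_8$ using the already-known $\mcf'_6,\mcf'_7$. Finally $\mcf'_{10}$ is obtained in the same way with $\pa_{x_N}W_1$ in place of $W_1$: since $\pa_{x_N}W_1$ is radial in $\bx$ one has $(\pa_{rx_N}W_1)^2=|\nabla_{\bx}(\pa_{x_N}W_1)|^2$, so that $\mcf'_{10}$ reduces to a Plancherel integral of $(-\Delta_\xi)\bigl(\xi_i|\xi|\hw_1(|\xi|)\vp'(|\xi|x_N)\bigr)$ against itself, evaluated by the same differentiation-and-recursion procedure.

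The only genuine difficulty is bookkeeping. After the Laplacian-in-$\xi$ step, each of $\mcf'_5$, $\mcf'_8$, $\mcf'_{10}$ breaks into a fairly long sum of terms of the types $A_aB_b$, $A'_aB_b$, $A''_aB_b$, $A_aB'_b$, $A_aB''_b$ with $a$ ranging over a handful of values around $3$ and $b$ around $2$, and one must apply the correct recursion from Lemma~\ref{lemma_KMW_2} to each term before summing and simplifying to the stated closed form. There is no conceptual obstacle; it is precisely this arithmetic that makes the computation tedious, which is why in the text the proof is merely said to be analogous to those of Lemma~\ref{lemma_int} and \cite[Lemma 4.4]{KMW} and only its scheme is indicated here.
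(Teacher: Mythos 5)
Your overall scheme is exactly the one the paper intends: it omits the proof of Lemma \ref{lemma_int_2} precisely because it is the same Fourier-side computation as in Lemma \ref{lemma_int} (and \cite[Lemma 4.4]{KMW}), i.e.\ Plancherel in $\bx$, the factorization $\whw_1(\xi,x_N)=\hw_1(|\xi|)\vp(|\xi|x_N)$ from Lemma \ref{lemma_KMW_1}, the correspondence $r^2\leftrightarrow-\Delta_\xi$, and the reductions of Lemma \ref{lemma_KMW_2}; your convergence check for $n>4+2\ga$ and the shortcuts $\mcf'_7=-\tfrac{n}{2}\mcf'_3$ (integration by parts in $r$, as for $\mcf_4$) and $\mcf'_9=-(2-\ga)\mcf'_6$ (integration by parts in $x_N$) are correct and consistent with the stated values.

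There is, however, one concrete error in your treatment of $\mcf'_8$. Expanding $\Delta_{\bx}W_1=\pa_{rr}W_1+\tfrac{n-1}{r}\pa_r W_1$ gives
$x_N^{5-2\ga}\,r^2(\Delta_{\bx}W_1)^2=x_N^{5-2\ga}\bigl[r^2(\pa_{rr}W_1)^2+2(n-1)\,r(\pa_rW_1)(\pa_{rr}W_1)+(n-1)^2(\pa_rW_1)^2\bigr]$,
so the correct identity is
$\int_{\mr^N_+}x_N^{5-2\ga}r^2(\Delta_{\bx}W_1)^2\,dx=\mcf'_8+2(n-1)\mcf'_7+(n-1)^2\mcf'_3$:
in the last term the factor $r^2$ has been cancelled by $r^{-2}$, and the weight is still $x_N^{5-2\ga}$, so the quantity appearing is $\mcf'_3$, not $\mcf'_6$ (which carries the weight $x_N^{3-2\ga}$ and retains the factor $r^2$). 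Since $\mcf'_3\ne\mcf'_6$, solving your identity for $\mcf'_8$ "using the already-known $\mcf'_6,\mcf'_7$" would yield a value different from the one stated in the lemma. The fix is immediate (replace $\mcf'_6$ by $\mcf'_3$ and mirror the computation of $\mcf_5$ in the proof of Lemma \ref{lemma_int} with the weight raised from $x_N^{3-2\ga}$ to $x_N^{5-2\ga}$), and with that correction your argument goes through as the paper's own would.
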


{\footnotesize
}

\Addresses
\end{document}